\documentclass[12pt]{article}
\usepackage[a4paper,margin=1.1in]{geometry}
\usepackage{amssymb}
\usepackage{amsmath}
\usepackage{amsfonts}
\usepackage{amsthm}
\usepackage{color}
\usepackage{float}
\usepackage[all]{xy}
\usepackage{delimset}

\def\ward{\mathop{\mbox{\textsl{W}}}\nolimits}
\def\Exp{\mathop{\mbox{\textup{Exp}}}\nolimits}
\def\E{\mathop{\mbox{\textup{E}}}\nolimits}

\newcommand{\fibonomial}{\genfrac{\{}{\}}{0pt}{}}

\newcommand{\C}{\mathbb{C}}
\newcommand{\Z}{\mathbb{Z}}
\newcommand{\K}{\mathbb{K}}
\newcommand{\N}{\mathbb{N}}

\newcommand{\R}{\mathbb{R}}
\newcommand{\Per}{\mathbb{P}}
\newcommand{\e}{\mathrm{e}}
\newcommand{\EE}{\mathrm{E}}

\newtheorem{theorem}{Theorem}

\newtheorem{definition}{Definition}
\newtheorem{proposition}{Proposition}
\newtheorem{corollary}{Corollary}
\newtheorem{example}{Example}

\begin{document}

\title{\textbf{Existence of Solutions of Functional-Difference Equations with Proportional Delay on Deformed Generalized Fibonacci Polynomials via Successive Approximation and Bell Polynomials}}
\author{Ronald Orozco L\'opez}
\newcommand{\Addresses}{{
  \bigskip
  \footnotesize

  \textit{E-mail address}, R.~Orozco: \texttt{rj.orozco@uniandes.edu.co}
  
}}

\maketitle
\tableofcontents

\begin{abstract}
In this paper, we study the existence of solutions of the functional difference equations with proportional delay on deformed generalized Fibonacci polynomials via successive approximation method and Bell polynomials. First, we introduce the deformed generalized Fibonacci polynomials and show that the $q$-numbers can be viewed as "bifurcation" of deformed $(s,t)$-numbers. These deformations are closely related to proportional delay. Second, a differential and integral calculus on deformed generalized Fibonacci polynomials is introduced. The main reason for introducing this calculation is to have a framework for solving proportional functional equations and thus obtain the Pell calculus, Jacobsthal calculus, Chebysheff calculus, and Mersenne calculus, among others. We study the convergence of $(s,t)$-exponential type series and its dependence on the deformation parameter. We define the deformed $(s,t)$-exponential functions and we give its analytic and algebraic properties. In addition, we study the $(1,u)$-deformed $(s,t)$-exponential function and use it to prove the existence of functional difference equations with proportional delay. The solution is not unique when it is related to $q$-periodic functions.
\end{abstract}
\noindent 2020 {\it Mathematics Subject Classification}:
Primary 39B22 Secondary 11B39, 11B65, 30B10, 34K06.

\noindent \emph{Keywords: } Fibonacci polynomial, deformed functions, Pell calculus, Jacobsthal calculus, Chebysheff calculus, Mersenne calculus, functional-difference equation, Pantograph equation.

\section{Introduction}

The main objective of this paper is to study the existence of solutions of functional equations of the form
\begin{equation}\label{eqn_funct}
    y(ax)-y(bx)=(a-b)x f(x,y(x),y(cx)),\ y(\eta)=\xi,
\end{equation}
$a,b,c\in\R$, or equivalently, the existence of solutions of functional difference equation with proportional delay
\begin{equation}\label{eqn_funct2}
    \frac{y(ax)-y(bx)}{(a-b)x}=f(x,y(x),y(ux)),\ y(\eta)=\xi.
\end{equation}
The difference quotient of the left-hand side of Eq.(\ref{eqn_funct2}) reminds us of the derivative operator of the $(p,q)$-calculus
\begin{equation}\label{eqn_pq_diff}
    \mathbf{D}_{p,q}f(x)=
    \begin{cases}
    \frac{f\left(px\right)-f\left(qx\right)}{(p-q)x},&\text{ if } x\neq0;\\
    f^{\prime}(0), &\text{ if } x=0,
    \end{cases}
\end{equation}
provided that $f$ is differentiable at 0. Therefore, from Eq.(\ref{eqn_pq_diff}) follows that Eq.(\ref{eqn_funct2}) is equivalent to
\begin{equation}\label{eqn_funct3}
    \mathbf{D}_{a,b}y(x)=f(x,y(x),y(u x)).
\end{equation}
Then to construct a theory to solve problems involving functional equations of the form Eq.(\ref{eqn_funct}) it is necessary to use the $(p,q)$-calculus, but since $a$ and $b$ are fixed real numbers, we wish to use a more specialized calculus. For example, if we want to solve the equation
\begin{equation*}
    f(2x)-f(-x)=3xf(x)+3xf(ux),\ f(0)=1,\ 0<u<2,
\end{equation*}
we must construct a calculus on the Jacosbthal numbers
\begin{equation*}
      J_n=(0,1,1,2,3,5,11,21,43,85,171,\ldots).
\end{equation*}
Then we must construct a new calculus, and in a very optimistic way, a new mathematics, on sequences of numbers. Fonten\'e in \cite{font} published a paper in which he generalized the binomial coefficients by replacing $\binom{n}{k}=\frac{n(n-1)\cdots(n-k+1)}{1\cdot2\cdots k}$, consisting of natural numbers, with $\binom{n}{k}_\psi=\frac{\psi_n\psi_{n-1}\cdots\psi_{n-k+1}}{\psi_1\psi_2\cdots\psi_k}$, formed by an arbitrary sequence $\psi=\{\psi_n\}$ of real or complex numbers. He gave a fundamental recurrence relation for these coefficients such that when we make $\psi_n=n$ we recover the ordinary binomial coefficients and when we make $\psi_n=\brk[s]{n}_q=\frac{q^{n}-1}{q-1}$ we recover the $q$-binomial coefficients studied by Euler \cite{euler}, Gauss \cite{gauss_1,gauss_2}, Jackson \cite{jackson,jackson_2,jackson_3,jackson_4,jackson_5,jackson_6,jackson_7} and others \cite{kac}.

Subsequently, Ward \cite{ward} developed a symbolic calculus on sequences $\psi=\{\psi_n\}$ with $\psi_0=0$, $\psi_1=1$, and $\psi_n\neq0$ for all $n\geq1$, and thus generalized the ordinary calculus and the $q$-calculus of Jackson. Recently, Chakrabarti and Jagannathan \cite{jag}, Brodimas et al. \cite{brodi}, Wachs and White \cite{wahcs}, and Arik et al. \cite{arik} developed a new calculus, extension of the $q$-calculus, called the $(p,q)$-calculus or Post Quantum Calculus. Other well-studied calculus emerged from his work, the Fibonomial calculus, where $\psi_n=F_n$ is the Fibonacci sequence defined recursively by $F_0=0$, $F_1=1$, $F_{n+1}=F_n+F_{n-1}$. For more details on some works on this subject see \cite{pashaev_1,pashaev_2,ozvatan}. 

In this paper, we investigate a Ward calculus defined on generalized Fibonacci polynomials
\begin{align}\label{eqn_fibo}
 F_{n+1}&=sF_n+tF_{n-1},
\end{align}
with initial values $F_0=0$ and $F_1=1$, in the variables $s,t$. As special cases, we obtain the integral and differential calculus of Pell, Jacobsthal, Chebysheff of the second kind, Mersenne, and Repunits, among others. When $s=P$ and $t=-Q$, with $P$ and $Q$ integers, we obtain the $(P,-Q)$-Lucas differential calculus and if $Q=-1$, we obtain the $P$-Fibonacci differential calculus. 

On the other hand, we have the following definition of $(p,q)$-number
\begin{equation*}
    \brk[s]{n}_{p,q}=\frac{p^n-q^n}{p-q}.
\end{equation*}
When $p=1$, the $(p,q)$-numbers reduce to the $q$-numbers $\brk[s]{n}_q$. In general, the $(p,q)$-calculus reduce to the $q$-calculus when $p=1$. The $(p,q)$-analogue of $n!$ is
\begin{equation*}
    \brk[s]{n}_{p,q}!=
    \begin{cases}
        \brk[s]{1}_{p,q}\brk[s]{2}_{p,q}\cdots\brk[s]{n-1}_{p,q}\brk[s]{n}_{p,q}, &\text{ if }n\geq1;\\
        1,&\text{ if }n=0.
    \end{cases}
\end{equation*}
From here the $(p,q)$-binomial coefficients are
\begin{equation*}
    \binom{n}{k}_{p,q}=\frac{\brk[s]{n}_{p,q}}{\brk[s]{k}_{p,q}\brk[s]{n-k}_{p,q}}.
\end{equation*}
Some basic $(p,q)$-functions are: the $(p,q)$-analogue of $(x-a)^n$
\begin{equation*}
    (x\ominus a)_{p,q}^{n}=
    \begin{cases}
        \prod_{k=0}^{n-1}(p^kx-q^ka),&\text{ if }n\geq1;\\
        1,&\text{ if }n=0,
    \end{cases}
\end{equation*}
and the $(p,q)$-exponential functions
\begin{align}
    \e_{p,q}(x)&=\sum_{n=0}^{\infty}p^{\binom{n}{2}}\frac{x^n}{\brk[s]{n}_{p,q}!},\label{eqn_exp_p}\\
    \E_{p,q}(x)&=\sum_{n=0}^{\infty}q^{\binom{n}{2}}\frac{x^n}{\brk[s]{n}_{p,q}!}\label{eqn_exp_q}.
\end{align}
Pashaev et al. \cite{pashaev_1} introduced the Fibonomial calculus or Golden $q$-calculus as a special case of the $q$-calculus. They defined the golden derivative of the function $f(x)$ as
\begin{equation*}
    \mathbf{D}_{F}f(x)=
    \begin{cases}
    \frac{f\left(\varphi x\right)-f\left(-x/\varphi\right)}{(\varphi+\frac{1}{\varphi})x},&\text{ if } x\neq0;\\
    f^{\prime}(0), &\text{ if } x=0.
    \end{cases}
\end{equation*}
Also, the Golden exponential functions are
\begin{align}
    e^x_{F}&=\sum_{n=0}^{\infty}\frac{x^n}{F_n!},\label{eqn_exp_fibo}
    \end{align}
and
\begin{align}
    \E^x_{F}&=\sum_{n=0}^{\infty}(-1)^{\frac{n(n-1)}{2}}\frac{x^n}{F_n!},\label{eqn_exp_fibom}
\end{align}
where $F_n!=F_1F_2\cdots F_n$ is the $F$-analogue of $n!$. The Golden Binomial is
\begin{equation*}
    (x+y)_{F}^{n}=(x+\varphi^{n-1}y)(x-\varphi^{n-3}y)\cdots(x+(-1)^{n-1}\varphi_{-n+1}y).
\end{equation*}
The Fibonomial calculus also turns out to be a special case of the $(p,q)$-calculus. Every calculus obtained from Eq.(\ref{eqn_fibo}) can be seen as a special case of the $(p,q)$-calculus. It is enough to make $p+q=s$ and $pq=-t$ to obtain
\begin{align*}
    p&=\frac{s+\sqrt{s^2+4t}}{2},\\
    q&=\frac{s-\sqrt{s^2+4t}}{2}.
\end{align*}
In this paper, we will restrict ourselves only to the polynomials $\brk[c]{n}_{s,t}$ such that $\brk[c]{n}_{s,t}\in\R[s,t]$, i.e., when $s\neq0$ and $s^2+4t>0$. With this family of generalized Fibonacci polynomials, we obtain the Fibonacci, Pell, Jacobsthal, and Mersenne numbers and the Chebysheff of second-order polynomials. It is well known that these sequences have a wide number of applications in fields as diverse as combinatorics, number theory, algebra, computer science, patterns in nature, optics physics, music, and others \cite{thomas}, \cite{thomas2}, \cite{manson}. Just as Fibonacci calculus also has direct application in quantum mechanics, we hope with this paper to construct new calculus with possible applications in mathematics and physics.

Another example of a functional difference equation is
\begin{equation*}
    \mathbf{D}_{a,b}f(x)=f(cx),\ f(0)=1
\end{equation*}
which is the $(a,b)$-analog of the Pantograph functional differential equation
\begin{equation}\label{eqn_sokal}
    y^{\prime}(x)=y(qx),\ \ f(0)=1.
\end{equation}
Morris et al. \cite{morris}, Langley \cite{lan}, and Mahler \cite{mahler} studied the function
\begin{equation*}
    \Exp(x,q)=\sum_{n=0}^{\infty}q^{\frac{n(n-1)}{2}}\frac{x^n}{n!}
\end{equation*}
which is the solution of Eq.(\ref{eqn_sokal}). The function $\Exp(x,y)$ is a deformed exponential function since when $q\rightarrow 1$, then $\Exp(x,y)\rightarrow e^x$. It is closely related to the generating function for the Tutte polynomials of the complete graph $K_n$ in combinatorics \cite{gessel2}, the partition function of one-site lattice gas with fugacity $x$ and two-particle Boltzmann weight $q$ in statistical mechanics \cite{sokal}, cell division \cite{brunt}, and brightness of the galaxy \cite{ambart}.

As the functions in Eqs.(\ref{eqn_exp_p}), (\ref{eqn_exp_q}), (\ref{eqn_exp_fibo}), and (\ref{eqn_exp_fibom}) satisfy the equations 
\begin{align}
    \mathbf{D}_{p,q}f(x)&=f(ux),\ \ u=p,q,\label{eqn_panto_pq}\\
    \mathbf{D}_{F}f(x)&=f(ux),\ \ u=1,-1\label{eqn_panto_fibo}
\end{align}
with $f(0)=1$, which are analogous to the Pantograph differential equation in Eq. (\ref{eqn_sokal}), we can then call such functions 
deformed exponential functions. Another well-studied Pantograph equation is \cite{ebaid}, \cite{ebaid2}, \cite{kato}, \cite{ock}
\begin{equation}\label{eqn_pantograph}
    y^{\prime}(x)=ay(x)+by(qx).
\end{equation}
We will find the solution of the $(s,t)$-analog of the Eq.(\ref{eqn_pantograph}) and we will use this to prove the existence of solutions of Eq.(\ref{eqn_funct3}). Generalizations of the Eqs.\ref{eqn_sokal}) and (\ref{eqn_pantograph}) can be found in \cite{ise_1,ise_2}.

We divide this paper as follows. In Section 2, we introduce deformed generalized Fibonacci polynomials in the variables $s,t$, with their respective most important specializations. In addition, from the point of view of the theory of difference equations, the deformed generalized Fibonacci polynomials have the q-numbers as bifurcation points. In Section 3, we define the Ward ring of deformed $(s,t)$- exponential generating functions and we show that the convergence of these series depends strongly on the deformation parameter. In addition, differential and integral calculus on generalized Fibonacci polynomials are introduced. Also, we introduce the $q$-periodic functions, which play an important role in the existence of multiple solutions of Eq.(\ref{eqn_funct3}). Finally, in this section, we study the deformed $(s,t)$-exponential functions, and some of their analytical properties are given. In section 4, we study the existence of solutions of Eq.(\ref{eqn_funct3}) via the successive approximation method and Bell polynomials. As a particular case, we study the solutions of the equations $\mathbf{D}_{s,t}y(x)=ay(ux)$ and $\mathbf{D}_{s,t}y(x)=ay(x)+by(ux)$.

\section{Deformed generalized Fibonacci polynomials}

\subsection{Basic properties}

The generalized Fibonacci polynomials depending on the variables $s,t$ are defined by
\begin{align*}
    \brk[c]{0}_{s,t}&=0,\\
    \brk[c]{1}_{s,t}&=1,\\
    \brk[c]{n+2}_{s,t}&=s\{n+1\}_{s,t}+t\{n\}_{s,t}.
\end{align*}
Below are some important specializations of Fibonacci polynomials.
\begin{enumerate}
    \item When $s=0,t=0$, then $\brk[c]{0}_{0,0}=0$, $\brk[c]{1}_{0,0}=1$ and $\brk[c]{n}_{0,0}=0$ for all $n\geq2$.
    \item When $s=0,t\neq0$, then $\brk[c]{2n}_{0,t}=0$ and $\brk[c]{2n+1}_{0,t}=t^n$.
    \item When $s\neq0,t=0$, then $\brk[c]{n}_{s,0}=s^{n-1}$.
    \item When $s=2,t=-1$, then $\brk[c]{n}_{2,-1}=n$, the positive integer.
    \item When $s=1,t=1$, then $\brk[c]{n}_{1,1}=F_n$, the Fibonacci numbers.
    \item When $s=2,t=1$, then $\brk[c]{n}_{2,1}=P_n$, where $P_n$ are the Pell numbers
    \begin{equation*}
        P_n=(0,1,2,5,12,29,\ldots).  
    \end{equation*}
    \item When $s=1,t=2$, then $\brk[c]{n}_{1,2}=J_n$, where $J_n$ are the Jacosbthal numbers
    \begin{equation*}
        J_n=(0,1,1,2,3,5,11,21,43,85,171,\ldots).
    \end{equation*}
    \item When $s=p+q,t=-pq$, then $\brk[c]{n}_{p+q,-pq}=\brk[s]{n}_{p,q}$, where $\brk[s]{n}_{p,q}$ are the $(p,q)$-numbers
    \begin{equation*}
        \brk[s]{n}_{p,q}=(0,1,\brk[s]{2}_{p,q},[3]_{p,q},[4]_{p,q},[5]_{p,q},[6]_{p,q},[7]_{p,q},[8]_{p,q}\ldots).
    \end{equation*}
    \item When $s=2t,t=-1$, then $\brk[c]{n}_{2t,-1}=U_{n-1}(t)$, where $U_n(t)$ are the Chebysheff polynomials of the second kind, with $U_{-1}(t)=0$
    \begin{equation*}
        U_n(t)=(0,1,2t,4t^2-1,8t^3-4t,16t^4-12t^2+1,32t^5-32t^3+6t,\ldots).
    \end{equation*}
    \item When $s=3,t=-2$, then $\brk[c]{n}_{3,-2}=M_n$, where $M_n=2^n-1$ are the Mersenne numbers
    \begin{equation*}
        M_n=(0,1,3,7,15,31,63,127,255,\ldots).
    \end{equation*}
    \item When $s=b+1,t=-b$, then $\brk[c]{n}_{b+1,-b}=R_n^{(b)}$, where $R_n^{(b)}$ are the Repunit numbers in base $b$
    \begin{equation*}
        R_n^{(b)}=(0,1,b+1,b^2+b+1,b^3+b^2+b+1,\ldots).
    \end{equation*}
    \item When $s=P,t=-Q$, then $\brk[c]{n}_{P,-Q}=U_n(P,Q)$, where $U_n(P,Q)$ is the Lucas sequence, with $P,Q$ integer numbers,
    \begin{equation*}
        U_n(P,Q)=(0,1,P,P^2-Q,P^3-2PQ,P^4-3P^2Q+Q^2,\ldots).
    \end{equation*}
    If $Q=-1$, then the sequence $U_n(P,-1)$ reduces to the $P$-Fibonacci sequence. If $s=x$ and $t=1$, we obtain the Fibonacci polynomials
    \begin{equation*}
        F_n(x)=(0,1,x,x^2+1,x^3+2x,x^4+3x^2+1,\ldots).
    \end{equation*}
\end{enumerate}

The $(s,t)$-Fibonacci constant is the ratio toward which adjacent $(s,t)$-Fibonacci polynomials tend. This is the only positive root of $x^{2}-sx-t=0$. We will let $\varphi_{s,t}$ denote this constant, where
\begin{equation*}
    \varphi_{s,t}=\frac{s+\sqrt{s^{2}+4t}}{2}
\end{equation*}
and
\begin{equation*}
    \varphi_{s,t}^{\prime}=s-\varphi_{s,t}=-\frac{t}{\varphi_{s,t}}=\frac{s-\sqrt{s^{2}+4t}}{2}.
\end{equation*}
Some specializations of the constants $\varphi_{s,t}$ and $\varphi_{s,t}^\prime$ are:
\begin{enumerate}
    \item When $s=0$ and $t=0$, then $\varphi_{0,0}=0$ and $\varphi_{0,0}^\prime=0$.
    \item When $s=0$ and $t>0$, then $\varphi_{0,t}=\sqrt{t}$ and $\varphi_{0,t}^\prime=-\sqrt{t}$.
    \item When $s\neq0$ and $t=0$, then $\varphi_{s,0}=s$ and $\varphi_{s,0}^\prime=0$.
    \item When $s=2$ and $t=-1$, then $\varphi_{2,-1}=1$ and $\varphi_{2,-1}^\prime=1$.
    \item When $s=1$ and $t=1$, then $\varphi_{1,1}=\varphi=\frac{1+\sqrt{5}}{2}$ and $\varphi_{1,1}^\prime=\varphi^\prime=\frac{1-\sqrt{5}}{2}$.
    \item When $s=2$ and $t=1$, then $\varphi_{2,1}=1+\sqrt{2}$ and $\varphi_{2,1}^\prime=1-\sqrt{2}$.
    \item When $s=1$ and $t=2$, then $\varphi_{1,2}=2$ and $\varphi_{1,2}^\prime=-1$.
    \item When $s=p+q$ and $t=-pq$, then $\varphi_{p+q,-pq}=p$ and $\varphi_{p+q,-pq}^\prime=q$.
    \item When $s=2t$ and $t=-1$, then $\varphi_{2t,-1}=t+\sqrt{t^2-1}$ and $\varphi_{2t,-1}^\prime=t-\sqrt{t^2-1}$.
    \item When $s=3$ and $t=-2$, then $\varphi_{3,-2}=2$ and $\varphi_{3,-2}^\prime=1$.
    \item When $s=b+1$ and $t=-b$, then $\varphi_{b+1,-b}=b$ and $\varphi_{b+1,-b}^\prime=1$.
    \item When $s=P$ and $t=-Q$, then $\varphi_{P,-Q}=\frac{P+\sqrt{P^2-4Q}}{2}$ and $\varphi_{P,-Q}^\prime=\frac{P-\sqrt{P^2-4Q}}{2}$.
\end{enumerate}

In this paper we are not interested in all values that the parameters $s,t$ can take, but rather those for which it is satisfied that $s,t\in\R$ such that $s\neq0$ and $s^{2}+4t>0$, because in this way $\brk[c]{n}_{s,t}\in\R[s,t]$, the ring of polynomials in the variables $s,t$ and coefficients in $\R$. In the remainder of the paper we will assume that $q=\varphi_{s,t}^{\prime}/\varphi_{s,t}$. If $t>0$ and $s\neq0$, then $q<0$. If $t<0$ and $s>0$, then $0<q<1$. If $t<0$ and $s<0$, then $q>1$.

The Binet's $(s,t)$-identity is
\begin{equation}\label{eqn_binet}
    \brk[c]{n}_{s,t}=
    \begin{cases}
    \frac{\varphi_{s,t}^{n}-\varphi_{s,t}^{\prime n}}{\varphi_{s,t}-\varphi_{s,t}^{\prime}},&\text{ if }s\neq\pm2i\sqrt{t};\\
    n(\pm i\sqrt{t})^{n-1},&\text{ if }s=\pm2i\sqrt{t}.
    \end{cases}
\end{equation}
As $\varphi_{us,u^2t}=u\varphi_{s,t}$ and $\varphi_{us,u^2t}^{\prime}=u\varphi_{s,t}^{\prime}$, then $\varphi_{s,t}$ and $\varphi_{s,t}^{\prime}$ are quasi-homogeneous functions of type $(u,u^2)$ and follows that $\brk[c]{n}_{us,u^2t}=u^{n-1}\brk[c]{n}_{s,t}$. Thus, we have the following definition.

\begin{definition}
For a non-zero complex number $u$ define the deformed Fibonacci polynomials as $\brk[c]{n}_{us,u^2t}=u^{n-1}\brk[c]{n}_{s,t}$, for all $n\geq1$.
\end{definition}
For example, with $u=\frac{1}{2}$ we can obtain the deformed Fibonacci sequence $(1/2)^{n-1}F_n$, i.e.,
\begin{equation*}
    \brk[c]{n}_{1/2,1/4}=\frac{1}{2^{n-1}}\brk[c]{n}_{1,1}=(0,1,1/2,1/2,3/8,5/16,1/4,13/64,\ldots)
\end{equation*}
generated by the recurrence equation
\begin{equation*}
    \brk[c]{n+2}_{1/2,1/4}=\frac{1}{2}\brk[c]{n+1}_{1/2,1/4}+\frac{1}{4}\brk[c]{n}_{1/2,1/4}.
\end{equation*}
In general, the deformed sequence $\brk[c]{n}_{su,tu^2}$ satisfies the recurrence relation
\begin{equation}\label{eqn_fib_defor}
    \brk[c]{n+2}_{su,tu^2}=su\brk[c]{n+1}_{su,tu^2}+tu^2\brk[c]{n}_{su,tu^2},
\end{equation}
so the deformed Fibonacci polynomials $\brk[c]{n}_{su,tu^2}$ are associated with the characteristic polynomial $p_{s,t}(x,u)=x^2-sux-tu^2$ whose ratios are $\varphi_{su,tu^2}=u\varphi_{s,t}$ and $\varphi_{su,tu^2}^{\prime}=u\varphi_{s,t}^{\prime}$.

\begin{proposition}
For all $s,t\in\R$
\begin{enumerate}
    \item The deformed $(s,t)$-Fibotorial is
    \begin{equation}\label{eqn_fibotorial}
    \brk[c]{n}_{us,u^2t}!=u^{\binom{n}{2}}\prod_{k=1}^{n}\brk[c]{k}_{s,t}=u^{\binom{n}{2}}\brk[c]{n}_{s,t}!,\ n\geq1,\ \brk[c]{0}_{s,t}!=1.
\end{equation}
\item The deformed $(s,t)$-Fibonomial polynomials
\begin{equation}\label{eqn_fibonomial}
    \fibonomial{n}{k}_{us,u^2t}=u^{k(n-k)}\fibonomial{n}{k}_{s,t}.
\end{equation}    
\end{enumerate}
\end{proposition}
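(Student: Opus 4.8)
The plan is to derive both identities directly from the defining relation $\brk[c]{n}_{us,u^2t}=u^{n-1}\brk[c]{n}_{s,t}$ established in the Definition above, using nothing more than the telescoping of a product and one elementary binomial identity. Neither part requires any hypothesis on $s,t$; both are formal identities in the parameters.

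For part (1), I would expand the deformed Fibotorial as a product over its factors: by definition $\brk[c]{n}_{us,u^2t}!=\prod_{k=1}^{n}\brk[c]{k}_{us,u^2t}$, and substituting $\brk[c]{k}_{us,u^2t}=u^{k-1}\brk[c]{k}_{s,t}$ gives $\prod_{k=1}^{n}u^{k-1}\brk[c]{k}_{s,t}=u^{\sum_{k=1}^{n}(k-1)}\prod_{k=1}^{n}\brk[c]{k}_{s,t}$. Since $\sum_{k=1}^{n}(k-1)=\sum_{j=0}^{n-1}j=\binom{n}{2}$, the right-hand side equals $u^{\binom{n}{2}}\brk[c]{n}_{s,t}!$, which is the claim; the case $n=0$ reduces to the empty-product convention $\brk[c]{0}_{s,t}!=1=u^{0}$.

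For part (2), I would start from the definition of the Fibonomial coefficient as the quotient $\fibonomial{n}{k}_{s,t}=\brk[c]{n}_{s,t}!\big/\bigl(\brk[c]{k}_{s,t}!\,\brk[c]{n-k}_{s,t}!\bigr)$, write the same quotient for the deformed polynomials, and feed in part (1) for each of the three Fibotorials. The powers of $u$ combine to $u^{\binom{n}{2}-\binom{k}{2}-\binom{n-k}{2}}$ times $\fibonomial{n}{k}_{s,t}$, so it remains only to verify the exponent identity $\binom{n}{2}-\binom{k}{2}-\binom{n-k}{2}=k(n-k)$. This is immediate by direct expansion, or one can read it off by counting the unordered pairs from an $n$-element set split into a $k$-set and an $(n-k)$-set: pairs internal to the $k$-set, pairs internal to the $(n-k)$-set, and the $k(n-k)$ mixed pairs.

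There is essentially no obstacle here beyond bookkeeping; the only points needing a word of care are the empty-product base case $n=0$ in (1) and, in (2), the implicit assumption $0\le k\le n$ so that all three Fibotorials are defined. Since everything is an identity in $\R[s,t]$ (and formally in $u$), no convergence or positivity restrictions on $s,t$ enter this particular proposition.
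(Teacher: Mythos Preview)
Your proof is correct and is exactly the natural argument: the paper states this proposition without proof, treating both parts as immediate consequences of the defining relation $\brk[c]{n}_{us,u^2t}=u^{n-1}\brk[c]{n}_{s,t}$, and your write-up simply supplies those routine details.
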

Some deformed $(s,t)$-Fibonomial polynomials are
\begin{align*}
    \fibonomial{n}{0}_{us,u^2t}&=\fibonomial{n}{n}_{us,u^2t}=1,\\
    \fibonomial{2}{1}_{us,u^2t}&=su,\\
    \fibonomial{3}{1}_{us,u^2t}&=\fibonomial{3}{2}_{us,u^2t}=(s^2+t)u^2,\\
    \fibonomial{4}{1}_{us,u^2t}&=\fibonomial{4}{3}_{us,u^2t}=(s^3+2st)u^3,\ \fibonomial{4}{2}_{us,u^2t}=(s^2+t)(s^2+2t)u^4.
\end{align*}
For extreme cases $(s,0)$ and $(0,t)$, $t>0$, we have
\begin{align*}
    \brk[c]{n}_{us,0}!&=(us)^{\binom{n}{2}},\\
    \brk[c]{n}_{0,u^2t}!&=u^{\binom{n}{2}}\brk[c]{1}_{0,t}\brk[c]{2}_{0,t}\cdots\brk[c]{n}_{0,t}\\
    &=u^{\binom{n}{2}}(t^0)(0)(t)(0)\cdots=0.
\end{align*}
and $\fibonomial{n}{k}=s^{k(n-k)}$. Then the case $s=0$ and $t\neq0$, nor any other case with $\brk[c]{n}_{s,t}=0$ for some $n\in\N$, will be of interest in this paper.

From Eq. (\ref{eqn_binet}) we obtain that
\begin{equation*}
    \brk[c]{n}_{s,t}=\varphi_{s,t}^{n-1}\frac{1-\left(\frac{\varphi_{s,t}^{\prime}}{\varphi_{s,t}}\right)^{n}}{1-\left(\frac{\varphi_{s,t}^{\prime}}{\varphi_{s,t}}\right)}.
\end{equation*}
If we set $q=\frac{\varphi_{s,t}^{\prime}}{\varphi_{s,t}}$, then
\begin{equation}\label{eqn_fibo_q}
    \brk[c]{n}_{s,t}=\varphi_{s,t}^{n-1}\frac{1-q^n}{1-q}=\varphi_{s,t}^{n-1}\brk[s]{n}_q=\varphi_{s,t}^{\prime(n-1)}\brk[s]{n}_{q^{-1}},
\end{equation}
where $\brk[s]{n}_q=\frac{1-q^n}{1-q}$, from which the relationship between $(s,t)$-Fibonacci polynomials and the $q$-numbers is clear. The Eq.(\ref{eqn_fibo_q}) implies the following identities
\begin{eqnarray}
    \brk[c]{n}_{s,t}!&=&\varphi_{s,t}^{\binom{n}{2}}\brk[s]{n}_q!,\label{eqn_fibo_fact}=\varphi_{s,t}^{\prime\binom{n}{2}}\brk[s]{n}_{q^{-1}}!,\\
    \fibonomial{n}{k}_{s,t}&=&\varphi_{s,t}^{k(n-k)}\binom{n}{k}_q=\varphi_{s,t}^{\prime k(n-k)}\binom{n}{k}_{q^{-1}}.\label{eqn_fibo_bin}
\end{eqnarray}
When $u=\varphi_{s,t}$, then
\begin{align*}
    \varphi_{s,t}^{n-1}\brk[s]{n}_q&=\varphi_{s,t}^{n-1}\brk[c]{n}_{1+q,-q}\\
    &=\brk[c]{n}_{(1+q)\varphi,-q\varphi^2}\\
    &=\brk[c]{n}_{\varphi+\varphi^{\prime},-\varphi\varphi^{\prime}}\\
    &=\brk[c]{n}_{s,t}
\end{align*}
and accordingly $\brk[c]{n}_{s,t}$ is a deformation of $\brk[s]{n}_q$.

On the other hand, the Chebysheff polynomial of the second kind $U_n(t)$ is a polynomial of degree $n$ in the variable $t$ defined by
\begin{equation*}
U_n(t)=\frac{\sin(n+1)\theta}{\sin\theta}
\end{equation*}
where $t=\cos\theta$. The polynomials $U_n(t)$ satisfy the recurrence relation
\begin{equation*}
U_n(t)=2tU_{n-1}(t)-U_{n-2}(t)
\end{equation*}
together with the initial conditions $U_0(t)=1$ and $U_1(t)=2t$. Then the polynomials $U_n(t)$ are $(2t,-1)$-Fibonacci polynomials with $\brk[c]{n}_{2t,-1}=U_{n-1}(t)$, where $\brk[c]{0}_{2t,-1}=U_{-1}(t)=0$. Then an explicit expression for Chebysheff polynomials of the second kind is through Binet's form
\begin{equation}\label{eqn_fibo_che}
    U_{n-1}(t)=\frac{(t+\sqrt{t^{2}-1})^{n}-(t-\sqrt{t^{2}-1})^{n}}{2\sqrt{t^{2}-1}}.
\end{equation}
Then we can use this representation to express the polynomials $\brk[c]{n}_{s,t}$ in terms of Chebysheff polynomials of the second kind.
For $t\neq0$ and $s\neq\pm2i\sqrt{t}$, the $(s,t)$-Fibonacci polynomials are related to the Chebysheff polynomials in the following way \cite{udrea}
\begin{align}\label{theo_fibo_che}
    \left(\frac{i}{\sqrt{t}}\right)^{n-1}\brk[c]{n}_{s,t}&=U_{n-1}\left(\frac{is}{2\sqrt{t}}\right).
\end{align}
For $s\neq0$ and $t\neq0$ let $\theta_{s,t}$ denote the function
\begin{equation*}
    \theta_{s,t}=\arccos\left(\frac{is}{2\sqrt{t}}\right).
\end{equation*}
Then a trigonometric expression for $(s,t)$-Fibonacci polynomials is
\begin{equation*}
    \brk[c]{n}_{s,t}=(-i\sqrt{t})^{n-1}\frac{\sin(n\theta_{s,t})}{\sin(\theta_{s,t})}=\frac{2(-i)^{n-1}(\sqrt{t})^{n}}{\sqrt{s^{2}+4t}}\sin(n\theta_{s,t}),
\end{equation*}
where it is clear that $\sin(\theta_{s,t})\neq0$ provided that $s\neq0$ and  $s^2+4t>0$. Also, the $(s,t)$-Fibotorial and the $(s,t)$-Fibonomial polynomials can be expressed as
\begin{align*}
    \brk[c]{n}_{s,t}!&=(-i\sqrt{t})^{\binom{n}{2}}U_{n-1}(t)!=(-i\sqrt{t})^{\binom{n}{2}}\frac{\prod_{k=1}^{n}\sin(k\theta_{s,t})}{\sin^{n}(\theta_{s,t})}\label{eqn_fibotorial_che}
\end{align*}
and
\begin{align*}
    \fibonomial{n}{k}_{s,t}&=(-i\sqrt{t})^{k(n-k)}\frac{\prod_{j=k+1}^{n}\sin(j\theta_{s,t})}{\prod_{j=1}^{n-k}\sin(j\theta_{s,t})}
\end{align*}
respectively. Set $u=i/\sqrt{t}$ and $\tau=is/2\sqrt{t}$, then
\begin{align*}
    (i/\sqrt{t})^{-n+1}\brk[c]{n}_{2\tau,-1}&=(i/\sqrt{t})^{-n+1}\brk[c]{n}_{2(is/2\sqrt{t}),-1}\\
    &=\brk[c]{n}_{2(is/2\sqrt{t})(i/\sqrt{t})^{-1},-(i/\sqrt{t})^{-2}}\\
    &=\brk[c]{n}_{s,t}
\end{align*}
and thus $\brk[c]{n}_{s,t}$ is a deformation of $\brk[c]{n}_{2\tau,-1}$. Then, by deformation of $\brk[c]{n}_{s,t}$ we obtain Chebysheff polynomials. Moreover, it follows from Eq. (\ref{eqn_binet}) that $\brk[c]{n}_{\pm2i\sqrt{t},t}$ is a deformation of $n$.

\begin{proposition}\label{prop_abs_nst}
For $s\neq0$ and for all $t\in\R$ such that $s^2+4t>0$,
\begin{enumerate}
    \item  $\vert\brk[c]{n}_{s,t}\vert=\brk[c]{n}_{\vert s\vert,t}$.
    \item $\vert\brk[c]{n}_{s,t}!\vert=\brk[c]{n}_{\vert s\vert,t}!$.
    \item $\vert\fibonomial{n}{k}_{s,t}\vert=\fibonomial{n}{k}_{\vert s\vert,t}$.
\end{enumerate}
\end{proposition}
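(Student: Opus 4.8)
The plan is to observe that all three identities collapse onto two facts: (i) $\brk[c]{n}_{\sigma,t}\geq 0$ whenever $\sigma>0$ (still under $\sigma^{2}+4t>0$), and (ii) flipping the sign of the first parameter only multiplies $\brk[c]{n}_{s,t}$ by $(-1)^{n-1}$. Granting these, item (1) is immediate; item (2) then follows from (1) by multiplicativity of $\lvert\cdot\rvert$, since $\brk[c]{n}_{s,t}!=\prod_{k=1}^{n}\brk[c]{k}_{s,t}$; and item (3) follows from (1)--(2) because $\fibonomial{n}{k}_{s,t}$ is the quotient $\brk[c]{n}_{s,t}!\big/\bigl(\brk[c]{k}_{s,t}!\,\brk[c]{n-k}_{s,t}!\bigr)$, so $\lvert\fibonomial{n}{k}_{s,t}\rvert=\brk[c]{n}_{\lvert s\rvert,t}!\big/\bigl(\brk[c]{k}_{\lvert s\rvert,t}!\,\brk[c]{n-k}_{\lvert s\rvert,t}!\bigr)=\fibonomial{n}{k}_{\lvert s\rvert,t}$. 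Thus the only real work is (i) for $n\geq1$ together with (ii).

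For (i), fix $\sigma>0$ and $t\in\R$ with $\sigma^{2}+4t>0$; then $\sigma\neq\pm 2i\sqrt{t}$, so Binet's identity \eqref{eqn_binet} applies in its first form and the roots $\varphi_{\sigma,t}=\tfrac{1}{2}\bigl(\sigma+\sqrt{\sigma^{2}+4t}\bigr)$, $\varphi_{\sigma,t}'=\tfrac{1}{2}\bigl(\sigma-\sqrt{\sigma^{2}+4t}\bigr)$ are real with $\varphi_{\sigma,t}>0$. The single inequality to establish is $\varphi_{\sigma,t}>\lvert\varphi_{\sigma,t}'\rvert$; a short case split on the sign of $t$ does it, since $\varphi_{\sigma,t}-\lvert\varphi_{\sigma,t}'\rvert$ equals $\sqrt{\sigma^{2}+4t}>0$ when $t\leq 0$ (where $\varphi_{\sigma,t}'\geq 0$) and equals $\sigma>0$ when $t>0$ (where $\varphi_{\sigma,t}'<0$). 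From $\varphi_{\sigma,t}>\lvert\varphi_{\sigma,t}'\rvert\geq 0$ one gets $\varphi_{\sigma,t}^{\,n}>\lvert\varphi_{\sigma,t}'\rvert^{\,n}\geq\varphi_{\sigma,t}'^{\,n}$ for $n\geq 1$, so the numerator $\varphi_{\sigma,t}^{\,n}-\varphi_{\sigma,t}'^{\,n}>0$, while the denominator $\varphi_{\sigma,t}-\varphi_{\sigma,t}'=\sqrt{\sigma^{2}+4t}>0$; hence $\brk[c]{n}_{\sigma,t}>0$ (and $\brk[c]{0}_{\sigma,t}=0$, which settles item (1) at $n=0$).

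For (ii) and the conclusion: if $s>0$ then $\lvert s\rvert=s$ and item (1) is exactly (i). If $s<0$, apply Definition 1 with $u=-1$; since $t\neq 0$ we have $(-1)^{2}t=t$ and $(-1)s=\lvert s\rvert$, so $\brk[c]{n}_{\lvert s\rvert,t}=(-1)^{\,n-1}\brk[c]{n}_{s,t}$ for $n\geq 1$, whence $\lvert\brk[c]{n}_{s,t}\rvert=\lvert\brk[c]{n}_{\lvert s\rvert,t}\rvert=\brk[c]{n}_{\lvert s\rvert,t}$ by (i); items (2)--(3) then follow as in the first paragraph (or, equivalently, by taking $u=-1$ in \eqref{eqn_fibotorial} and \eqref{eqn_fibonomial}, which yield the prefactors $(-1)^{\binom{n}{2}}$ and $(-1)^{k(n-k)}$ directly). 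The only step with genuine content is the positivity claim (i): it is invisible in the recurrence $\brk[c]{n+2}=\sigma\brk[c]{n+1}+t\brk[c]{n}$ when $t<0$, so one must pass to Binet's closed form, and the mildly delicate point there is forcing the two regimes $t>0$ (with $\varphi_{\sigma,t}'<0$) and $t<0$ (with $0<\varphi_{\sigma,t}'<\varphi_{\sigma,t}$) into the single estimate $\varphi_{\sigma,t}>\lvert\varphi_{\sigma,t}'\rvert$; the rest is bookkeeping with identities already proved in the excerpt.
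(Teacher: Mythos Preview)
Your proof is correct and follows essentially the same strategy as the paper's: establish positivity of $\brk[c]{n}_{\sigma,t}$ for $\sigma>0$ via Binet's identity (splitting on the sign of $t$), then handle $s<0$ by the sign-flip $\brk[c]{n}_{-s,t}=(-1)^{n-1}\brk[c]{n}_{s,t}$, with items (2)--(3) following formally. The only cosmetic differences are that the paper derives the sign-flip by direct computation with $\varphi_{s,t}=-\varphi_{-s,t}'$ rather than by invoking Definition~1 with $u=-1$, and that your parenthetical ``since $t\neq0$'' is unnecessary (and the proposition does allow $t=0$): the identity $\brk[c]{n}_{-s,(-1)^{2}t}=(-1)^{n-1}\brk[c]{n}_{s,t}$ holds for all $t$.
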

\begin{proof}
Follow easily if $s>0$ and $t>0$. If $s>0$ and $t<0$, then $0<\varphi_{s,t}^\prime<\varphi_{s,t}$ and $\vert\brk[c]{n}_{s,t}\vert=\brk[c]{n}_{s,t}$. If $s<0$ and $t\neq0$, then for $s=-u$, $u>0$
\begin{align*}
    \varphi_{s,t}&=\frac{-u+\sqrt{u^2+4t}}{2}=-\frac{u-\sqrt{u^2+4t}}{2}=-\varphi_{-s,t}^\prime,\\
    \varphi_{s,t}^{\prime}&=\frac{-u-\sqrt{u^2+4t}}{2}=-\frac{u+\sqrt{u^2+4t}}{2}=-\varphi_{-s,t}.
\end{align*}
In this way
\begin{align*}
\brk[c]{n}_{s,t}&=\frac{\varphi_{s,t}^{n}-\varphi_{s,t}^{\prime n}}{\varphi_{s,t}-\varphi_{s,t}^{\prime}}=\frac{(-\varphi_{-s,t}^{\prime})^n-(-\varphi_{-s,t})^n}{-\varphi_{-s,t}^\prime+\varphi_{-s,t}}=(-1)^{n+1}\brk[c]{n}_{-s,t}
\end{align*}
and
\begin{align*}
    \vert\brk[c]{n}_{s,t}\vert=\vert(-1)^{n+1}\brk[c]{n}_{-s,t}\vert=\vert\brk[c]{n}_{-s,t}\vert=\brk[c]{n}_{-s,t}=\brk[c]{n}_{\vert s\vert,t}
\end{align*}
and thus we obtain the first result. The other statements follow from here.
\end{proof}

\subsection{$q$-numbers as "bifurcation" of $(s,t)$-numbers}

\begin{theorem}\label{theo_lim_seq}
Set $q=\varphi_{s,t}^{\prime}/\varphi_{s,t}$ and take $u\in\R$.
\begin{enumerate}
    \item If $0<\vert q\vert<1$, then
    \begin{equation*}
        \lim_{n\rightarrow\infty}u^{n-1}\brk[c]{n}_{s,t}=
        \begin{cases}
            0,&\text{ if }0\leq u<\varphi_{s,t}^{-1};\\
            \frac{1}{1-q},&\text{ if } u=\varphi_{s,t}^{-1};\\
            \infty,&\text{ if } u>\varphi_{s,t}^{-1}.
        \end{cases}
    \end{equation*}
    \item If $\vert q\vert>1$, then
    \begin{equation*}
        \lim_{n\rightarrow\infty}u^{n-1}\brk[c]{n}_{s,t}=
        \begin{cases}
            0,&\text{ if }0\leq u<\varphi_{s,t}^{\prime-1};\\
            \frac{1}{1-q^{-1}},&\text{ if }u=\varphi_{s,t}^{\prime-1};\\
            \infty,&\text{ if }u>\varphi_{s,t}^{\prime-1}.
        \end{cases}
    \end{equation*}
\end{enumerate}
\end{theorem}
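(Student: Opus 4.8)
The plan is to reduce the whole statement to the elementary behaviour of a real geometric sequence, using identity \eqref{eqn_fibo_q}, which already exhibits $\brk[c]{n}_{s,t}$ as a geometric factor times a convergent sequence. Multiplying \eqref{eqn_fibo_q} by $u^{n-1}$ gives the two forms I want,
\[
u^{n-1}\brk[c]{n}_{s,t}=(u\varphi_{s,t})^{n-1}\,\brk[s]{n}_{q}=(u\varphi_{s,t}^{\prime})^{n-1}\,\brk[s]{n}_{q^{-1}},\qquad \brk[s]{n}_{q}=\frac{1-q^{n}}{1-q}.
\]
The auxiliary fact I would prove first is: if $0<|q|<1$ then $q^{n}\to0$, hence $\brk[s]{n}_{q}\to\frac{1}{1-q}$, and in fact $\brk[s]{n}_{q}>0$ for every $n\ge1$ (since $q<1$ gives $1-q>0$ and $|q|<1$ gives $1-q^{n}>0$), so the limit is a strictly positive real; the same statement applied to $q^{-1}$ covers the case $|q|>1$.

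For part (1) I would first note that $0<|q|<1$ forces $s>0$: from the discussion of the sign of $q$ preceding Eq.~\eqref{eqn_binet}, $t<0$ with $s<0$ gives $q>1$, while $t>0$ with $s<0$ gives $|q|>1$; so the surviving possibilities ($s>0$, either sign of $t$) all make $\varphi_{s,t}=\frac{s+\sqrt{s^{2}+4t}}{2}$ the larger of the two roots of $x^{2}-sx-t$, and positive. Hence $\varphi_{s,t}^{-1}>0$ and the three ranges for $u$ are non-empty. Now I split on the size of $u\varphi_{s,t}$: if $0\le u<\varphi_{s,t}^{-1}$ then $0\le u\varphi_{s,t}<1$, so $(u\varphi_{s,t})^{n-1}\to0$ and, $\brk[s]{n}_{q}$ being bounded, the product tends to $0$; if $u=\varphi_{s,t}^{-1}$ then $u\varphi_{s,t}=1$ and $u^{n-1}\brk[c]{n}_{s,t}=\brk[s]{n}_{q}\to\frac{1}{1-q}$; if $u>\varphi_{s,t}^{-1}$ then $u\varphi_{s,t}>1$, so $(u\varphi_{s,t})^{n-1}\to+\infty$, and multiplying by a sequence converging to the positive constant $\frac{1}{1-q}$ gives $+\infty$.

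For part (2), $|q|>1$ means $|q^{-1}|<1$, so I would run exactly the same three-way split on the second identity $u^{n-1}\brk[c]{n}_{s,t}=(u\varphi_{s,t}^{\prime})^{n-1}\brk[s]{n}_{q^{-1}}$, with threshold $\varphi_{s,t}^{\prime-1}$ and limiting constant $\frac{1}{1-q^{-1}}$. The one genuinely delicate point — and the place I expect the work to concentrate — is the sign of $\varphi_{s,t}^{\prime}$, which in Case (2) is never positive; to keep the geometric factor well behaved I would invoke Proposition \ref{prop_abs_nst}, writing $\brk[c]{n}_{s,t}=(-1)^{n+1}\brk[c]{n}_{|s|,t}$ for $s<0$, observing that the pair $(|s|,t)$ has ratio parameter $q^{-1}$ with $|q^{-1}|<1$, so part (1) applies to it, and checking that $\varphi_{|s|,t}=-\varphi_{s,t}^{\prime}$ so that the thresholds coincide; transferring the three cases (and absorbing the harmless factor $(-1)^{n+1}$ into the modulus) then finishes part (2). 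Apart from this sign bookkeeping, everything is the single observation that, up to the fixed power $\varphi_{s,t}^{n-1}$ (resp. $\varphi_{s,t}^{\prime\,n-1}$), the sequence $\brk[c]{n}_{s,t}$ is just the convergent $q$-number sequence $\brk[s]{n}_{q}$ (resp. $\brk[s]{n}_{q^{-1}}$).
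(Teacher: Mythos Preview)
Your approach is essentially the paper's own: multiply Eq.~\eqref{eqn_fibo_q} by $u^{n-1}$ to get $u^{n-1}\brk[c]{n}_{s,t}=(u\varphi_{s,t})^{n-1}\brk[s]{n}_{q}=(u\varphi_{s,t}^{\prime})^{n-1}\brk[s]{n}_{q^{-1}}$ and then read off the three regimes from the geometric factor. The paper's proof stops there; your write-up simply fills in details the paper leaves implicit (positivity of $\varphi_{s,t}$ when $|q|<1$, positivity and convergence of $\brk[s]{n}_q$).

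One word of caution on your part~(2) treatment. You correctly observe that $|q|>1$ forces $s<0$ and hence $\varphi_{s,t}^{\prime}<0$, and you propose to pass to $(|s|,t)$ via Proposition~\ref{prop_abs_nst}. But the resulting factor $(-1)^{n+1}$ is not ``harmless'' in the middle case: if $u^{n-1}\brk[c]{n}_{|s|,t}$ tends to a nonzero constant, then $(-1)^{n+1}$ times it does \emph{not} converge at all. In fact, for $u\ge0$ the condition $u=\varphi_{s,t}^{\prime-1}$ in part~(2) is vacuous (since $\varphi_{s,t}^{\prime-1}<0$), and likewise $0\le u<\varphi_{s,t}^{\prime-1}$ is empty; only the third case $u>\varphi_{s,t}^{\prime-1}$ is live, and there the modulus argument you suggest does work. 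This is a wrinkle in the theorem statement itself rather than in your method, and the paper's brief proof does not address it either; but you should not describe $(-1)^{n+1}$ as harmless without restricting to the cases where the claimed limit is $0$ or $\infty$.
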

\begin{proof}
From Eq.(\ref{eqn_fibo_q}), 
\begin{equation*}
    u^{n-1}\brk[c]{n}_{s,t}=(u\varphi_{s,t})^{n-1}\brk[s]{n}_{q}=(u\varphi_{s,t}^{\prime})^{n-1}\brk[s]{n}_{q^{-1}},
\end{equation*}
with $q=\varphi_{s,t}^{\prime}/\varphi_{s,t}$. Now we apply limits when $n$ tends to $\infty$ and take into account the given conditions.
\end{proof}
We can write the deformed generalized Fibonacci polynomials, Eq.(\ref{eqn_fib_defor}), as a 2-dimensional system 
\begin{equation*}
    \left(
    \begin{array}{c}
         \brk[c]{n+2}_{us,u^2t}\\
         \brk[c]{n+1}_{us,u^2t}
    \end{array}
    \right)=
    \left(
    \begin{array}{cc}
         us&u^{2}t  \\
         1&0 
    \end{array}
    \right)\left(
    \begin{array}{c}
         \brk[c]{n+1}_{us,u^2t} \\
         \brk[c]{n}_{us,u^2t}
    \end{array}
    \right)
\end{equation*}
and in vector form
\begin{equation}\label{eqn_sys_gfp}
    \overset{\rightarrow}{F}_{n+1}=A\overset{\rightarrow}{F}_{n},
\end{equation}
which yields $\overset{\rightarrow}{F}_{n}=A^{n}\overset{\rightarrow}{F}_{0}$. The eigenvalues of the matrix $A$ are $u\varphi_{s,t}$ and $u\varphi_{s,t}^{\prime}$ corresponding to the respective eigenvectors
\begin{equation*}
    \overset{\rightarrow}{\mu}=
    \left(
    \begin{array}{c}
         u\varphi_{s,t} \\
         1
    \end{array}
    \right)
\end{equation*}
and
\begin{equation*}
    \overset{\rightarrow}{\nu}=
    \left(
    \begin{array}{c}
         u\varphi_{s,t}^{\prime}\\
         1
    \end{array}
    \right).
\end{equation*}
As the initial value is
\begin{equation*}
    \overset{\rightarrow}{F}_{0}=\frac{1}{u\sqrt{s^{2}+4t}}\overset{\rightarrow}{\mu}-\frac{1}{u\sqrt{s^{2}+4t}}\overset{\rightarrow}{\nu},
\end{equation*}
it follows that the $n$-th term is
\begin{equation*}
    \overset{\rightarrow}{F}_{n}=\frac{(u\varphi_{s,t})^{n}}{u\sqrt{s^{2}+4t}}\overset{\rightarrow}{\mu}-\frac{(u\varphi_{s,t}^{\prime})^n}{u\sqrt{s^{2}+4t}}\overset{\rightarrow}{\nu}.
\end{equation*}

\begin{theorem}\label{theo_matrixn}
For all $u\in\C$ and $n\in\N$, 
\begin{equation*}
    \left(
    \begin{array}{cc}
     us  &  u^{2}t\\
      1   & 0
    \end{array}
    \right)^{n}
    =
    \left(
    \begin{array}{cc}
        u^{n}\brk[c]{n+1}_{s,t} &u^{n+1}t\brk[c]{n}_{s,t}  \\
        u^{n-1}\brk[c]{n}_{s,t} & u^{n}t\brk[c]{n-1}_{s,t}
    \end{array}
    \right)
\end{equation*} 
\end{theorem}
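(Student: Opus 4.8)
The plan is to prove the identity by induction on $n$. Write $A=\left(\begin{array}{cc} us & u^2t \\ 1 & 0\end{array}\right)$ and let $M_n$ denote the proposed value of $A^n$, i.e. the matrix whose entries, reading left to right and top to bottom, are $u^n\brk[c]{n+1}_{s,t}$, $u^{n+1}t\brk[c]{n}_{s,t}$, $u^{n-1}\brk[c]{n}_{s,t}$, $u^nt\brk[c]{n-1}_{s,t}$. For the base case $n=1$ I would invoke the initial values $\brk[c]{0}_{s,t}=0$, $\brk[c]{1}_{s,t}=1$, $\brk[c]{2}_{s,t}=s$, which give $M_1=\left(\begin{array}{cc} us & u^2t \\ 1 & 0\end{array}\right)=A$, as required. (If one insists on including $n=0$, the recurrence forces the extension $\brk[c]{-1}_{s,t}=1/t$, which is harmless since the standing hypotheses exclude $t=0$; alternatively one simply starts the induction at $n=1$.)

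For the inductive step, assume $A^n=M_n$ and compute $A^{n+1}=A\cdot M_n$ (equivalently $M_n\cdot A$). Each entry of the product is a sum of two terms; after factoring out the appropriate power of $u$, the two left-column entries reduce to the combination $s\brk[c]{n+1}_{s,t}+t\brk[c]{n}_{s,t}$, and the two right-column entries to $s\brk[c]{n}_{s,t}+t\brk[c]{n-1}_{s,t}$. By the defining recurrence these equal $\brk[c]{n+2}_{s,t}$ and $\brk[c]{n+1}_{s,t}$ respectively, and one checks directly that the resulting matrix is exactly $M_{n+1}$: for instance the $(1,1)$ entry becomes $us\cdot u^n\brk[c]{n+1}_{s,t}+u^2t\cdot u^{n-1}\brk[c]{n}_{s,t}=u^{n+1}\brk[c]{n+2}_{s,t}$, while the $(2,1)$ entry becomes $u^n\brk[c]{n+1}_{s,t}$, and similarly for the second column. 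This closes the induction.

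An alternative route, already half-prepared by the eigen-decomposition of $A$ displayed above (eigenvalues $u\varphi_{s,t}$ and $u\varphi_{s,t}^{\prime}$ with the eigenvectors $\overset{\rightarrow}{\mu}$, $\overset{\rightarrow}{\nu}$), is to diagonalize $A$, raise the diagonal form to the $n$-th power, conjugate back, and then recognize the entries via Binet's identity Eq.(\ref{eqn_binet}) together with the relation $\brk[c]{n}_{us,u^2t}=u^{n-1}\brk[c]{n}_{s,t}$; this requires $s^2+4t\neq0$ but can be patched in the confluent case $s=\pm2i\sqrt{t}$ by continuity, exactly as Eq.(\ref{eqn_binet}) already does. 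I expect no genuine obstacle here: the only point demanding care is the bookkeeping of the exponents of $u$ across the four entries and the consistent use of the recurrence on the shifted index in the right column; everything else is a one-line verification.
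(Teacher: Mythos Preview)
Your proposal is correct and follows essentially the same route as the paper: induction on $n$, with the inductive step carried out by a single matrix multiplication and an appeal to the recurrence $\brk[c]{n+2}_{s,t}=s\brk[c]{n+1}_{s,t}+t\brk[c]{n}_{s,t}$. The only cosmetic difference is that the paper multiplies on the right ($A^{n+1}=A^n\cdot A$) whereas your primary computation multiplies on the left; your verbal summary of which entries invoke which index-shift is slightly garbled, but your explicit entry computations are correct and match the paper's.
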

\begin{proof}
The proof is by induction on $n$. The result is valid for $n=1$. Suppose true for $n$ and let us prove for $n+1$. We have that
\begin{align*}
    \left(
    \begin{array}{cc}
     us  &  u^{2}t\\
      1   & 0
    \end{array}
    \right)^{n+1}
    &=
    \left(
    \begin{array}{cc}
     us  &  u^{2}t\\
      1   & 0
    \end{array}
    \right)^{n}
    \left(
    \begin{array}{cc}
     us  &  u^{2}t\\
      1   & 0
    \end{array}
    \right)\\
    &=
    \left(
    \begin{array}{cc}
        u^{n}\brk[c]{n+1}_{s,t} &u^{n+1}t\brk[c]{n}_{s,t}  \\
        u^{n-1}\brk[c]{n}_{s,t} & u^{n}t\brk[c]{n-1}_{s,t}
    \end{array}
    \right)
    \left(
    \begin{array}{cc}
     us  &  u^{2}t\\
      1   & 0
    \end{array}
    \right)\\
    &=\left(
    \begin{array}{cc}
        u^{n+1}(s\brk[c]{n+1}_{s,t}+t\brk[c]{n}_{s,t}) &u^{n+2}t\brk[c]{n+1}_{s,t}  \\
        u^{n}(s\brk[c]{n}_{s,t}+t\brk[c]{n-1}_{s,t}) & u^{n+1}t\brk[c]{n}_{s,t}
    \end{array}
    \right)\\
    &=\left(
    \begin{array}{cc}
        u^{n+1}\brk[c]{n+2}_{s,t} &u^{n+2}t\brk[c]{n+1}_{s,t}  \\
        u^{n}\brk[c]{n+1}_{s,t} & u^{n+1}t\brk[c]{n}_{s,t}
    \end{array}
    \right)
\end{align*}
and the statement turns out to be true for all $n$.
\end{proof}
From Theorems \ref{theo_lim_seq} and \ref{theo_matrixn} we have the following results.
\begin{theorem}
For all $u\geq0$,
\begin{equation*}
    \lim_{n\rightarrow\infty}
    \left(
     \begin{array}{cc}
          us& u^{2}t \\
          1&0 
     \end{array}   
    \right)^{n}
    =
    \begin{cases}
     \left(
     \begin{array}{cc}
         0 & 0 \\
         0 & 0
     \end{array}   
     \right),&\text{ if }0\leq u<\varphi_{s,t}^{-1} \text{ or }0\leq u<\varphi_{s,t}^{\prime-1};\\
     \frac{1}{\varphi_{s,t}-\varphi_{s,t}^{\prime}}
     \left(
     \begin{array}{cc}
         \varphi_{s,t} & t\varphi_{s,t}^{-1} \\
         \varphi_{s,t} & t\varphi_{s,t}^{-1}
     \end{array}   
     \right),
     &\text{ if }u=\varphi_{s,t}^{-1};\\
     \frac{-1}{\varphi_{s,t}-\varphi_{s,t}^{\prime}}
     \left(
     \begin{array}{cc}
         \varphi_{s,t}^{\prime} & t\varphi_{s,t}^{\prime-1} \\
         \varphi_{s,t}^{\prime} & t\varphi_{s,t}^{\prime-1}
     \end{array}   
     \right),
     &\text{ if }u=\varphi_{s,t}^{\prime-1}.
    \end{cases}
\end{equation*}    
\end{theorem}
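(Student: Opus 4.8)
The plan is to combine Theorem~\ref{theo_matrixn}, which evaluates the $n$-th power of $A=\bigl(\begin{smallmatrix} us & u^{2}t\\ 1 & 0\end{smallmatrix}\bigr)$ explicitly, with Theorem~\ref{theo_lim_seq}, which describes $\lim_{n\to\infty}u^{n-1}\brk[c]{n}_{s,t}$. First I would set $a_m:=u^{m-1}\brk[c]{m}_{s,t}$, the sequence appearing in Theorem~\ref{theo_lim_seq}, and read off from Theorem~\ref{theo_matrixn} that
\[
A^n=\begin{pmatrix} u^{n}\brk[c]{n+1}_{s,t} & u^{n+1}t\brk[c]{n}_{s,t}\\ u^{n-1}\brk[c]{n}_{s,t} & u^{n}t\brk[c]{n-1}_{s,t}\end{pmatrix}=\begin{pmatrix} a_{n+1} & u^{2}t\,a_n\\ a_n & u^{2}t\,a_{n-1}\end{pmatrix}.
\]
Since $u^{2}t$ is a fixed scalar and translating the index of a sequence by a bounded amount does not change its limit, as soon as $L:=\lim_{m\to\infty}a_m$ exists and is finite we obtain $\lim_{n\to\infty}A^n=\bigl(\begin{smallmatrix} L & u^{2}tL\\ L & u^{2}tL\end{smallmatrix}\bigr)$. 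Thus the whole problem reduces to inserting the value of $L$ supplied by Theorem~\ref{theo_lim_seq}.

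Next I would run through the cases. When $0<|q|<1$ and $0\le u<\varphi_{s,t}^{-1}$, Theorem~\ref{theo_lim_seq} gives $L=0$, so $\lim A^n$ is the zero matrix; likewise when $|q|>1$ and $0\le u<\varphi_{s,t}^{\prime-1}$. When $0<|q|<1$ and $u=\varphi_{s,t}^{-1}$, Theorem~\ref{theo_lim_seq} gives $L=\tfrac{1}{1-q}$; substituting $q=\varphi_{s,t}^{\prime}/\varphi_{s,t}$ (so that $\tfrac{1}{1-q}=\tfrac{\varphi_{s,t}}{\varphi_{s,t}-\varphi_{s,t}^{\prime}}$) and $u^{2}t=\varphi_{s,t}^{-2}t$, together with $t=-\varphi_{s,t}\varphi_{s,t}^{\prime}$ if one wants the entries in the cleanest form, turns $\bigl(\begin{smallmatrix} L & u^{2}tL\\ L & u^{2}tL\end{smallmatrix}\bigr)$ into exactly the second matrix displayed in the statement. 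The case $|q|>1$, $u=\varphi_{s,t}^{\prime-1}$ is identical with the roles of $\varphi_{s,t}$ and $\varphi_{s,t}^{\prime}$ interchanged, using $\tfrac{1}{1-q^{-1}}=\tfrac{-\varphi_{s,t}^{\prime}}{\varphi_{s,t}-\varphi_{s,t}^{\prime}}$, and yields the third matrix. This exhausts the three listed cases.

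I do not expect a genuine obstacle: the analytic content is entirely carried by Theorems~\ref{theo_lim_seq} and~\ref{theo_matrixn}, and what remains is only the bounded index-shift remark (so that the $a_{n\pm1}$ entries share the limit of $a_n$) and the elementary identities $\tfrac{1}{1-q}=\tfrac{\varphi_{s,t}}{\varphi_{s,t}-\varphi_{s,t}^{\prime}}$ and $\tfrac{1}{1-q^{-1}}=\tfrac{-\varphi_{s,t}^{\prime}}{\varphi_{s,t}-\varphi_{s,t}^{\prime}}$. The one point worth a sentence is that the three cases of the statement tacitly presuppose $0<|q|<1$ in the second and $|q|>1$ in the third (for $u$ strictly larger than the relevant critical value the entries diverge, which is why the theorem claims nothing there). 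As a cross-check I would also note that diagonalizing $A$ as in the discussion around Eq.~(\ref{eqn_sys_gfp})---writing $A^n$ as a fixed linear combination of $(u\varphi_{s,t})^n$ and $(u\varphi_{s,t}^{\prime})^n$ with rank-one coefficient matrices built from the eigenvectors $\vec\mu,\vec\nu$, and then letting $n\to\infty$---reproduces the same three matrices and makes transparent why the limiting matrix is rank one with the displayed column structure.
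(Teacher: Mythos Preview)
Your proposal is correct and follows exactly the route the paper intends: the paper simply writes ``From Theorems~\ref{theo_lim_seq} and~\ref{theo_matrixn} we have the following results'' and states the theorem without further argument, so your rewriting of the entries of $A^n$ as $a_{n+1}$, $u^2t\,a_n$, $a_n$, $u^2t\,a_{n-1}$ and your substitution of the limits from Theorem~\ref{theo_lim_seq} is precisely the computation the paper leaves implicit. Your algebraic simplifications $\tfrac{1}{1-q}=\tfrac{\varphi_{s,t}}{\varphi_{s,t}-\varphi_{s,t}^{\prime}}$ and $\tfrac{1}{1-q^{-1}}=\tfrac{-\varphi_{s,t}^{\prime}}{\varphi_{s,t}-\varphi_{s,t}^{\prime}}$, together with $u^2t=\varphi_{s,t}^{-2}t$ (resp.\ $\varphi_{s,t}^{\prime-2}t$), reproduce the displayed matrices verbatim, and the diagonalization cross-check you mention is a nice sanity check but not needed.
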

From all of the above we can conclude that
\begin{equation*}
    \lim_{n\rightarrow\infty}\overset{\rightarrow}{F}_{n}=\lim_{n\rightarrow\infty}A^{n}\overset{\rightarrow}{F}_{0}
    =
    \begin{cases}
    (0,0)^{\mathrm{T}},&\text{ if }0\leq u<\varphi_{s,t}^{-1} \text{ or if }0\leq u<\varphi_{s,t}^{\prime-1};\\
    \left(\frac{1}{1-q},\frac{1}{1-q}\right)^{\mathrm{T}},&\text{ if }u=\varphi_{s,t}^{-1};\\
    \left(\frac{1}{1-q^{-1}},\frac{1}{1-q^{-1}}\right)^{\mathrm{T}},&\text{ if }u=\varphi_{s,t}^{\prime-1};\\
    (\infty,\infty)^{\mathrm{T}},&\text{ if }u>\varphi_{s,t}^{-1}\text{ or if }u>\varphi_{s,t}^{\prime-1}.
    \end{cases}
\end{equation*}
Then $(0,0)$, $\left(\frac{1}{1-q},\frac{1}{1-q}\right)$ and $\left(\frac{1}{1-q^{-1}},\frac{1}{1-q^{-1}}\right)$ are asymptotically stable fixed points of Eq.(\ref{eqn_sys_gfp}), provided that either $u\in[0,\varphi_{s,t}^{-1}]$ or $u\in[0,\varphi_{s,t}^{\prime-1}]$. If either $u>\varphi_{s,t}^{-1}$ or $u>\varphi_{s,t}^{\prime-1}$, the the solution $\overset{\rightarrow}{F}_{n}$ remains unbounded, i.e., $\vert\overset{\rightarrow}{F}_{n}\vert\rightarrow\infty$ as $n\rightarrow\infty$. Then $u=\varphi_{s,t}^{-1}$ and $u=\varphi_{s,t}^{\prime-1}$ are bifurcation points of Eq.(\ref{eqn_sys_gfp}). At these points, $u^{n-1}\brk[c]{n}_{s,t}=\brk[s]{n}_{q}$ and $u^{n-1}\brk[c]{n}_{s,t}=\brk[s]{n}_{q^{-1}}$, respectively, and thus the $q$-numbers become "bifurcation points" of the family of deformed $(s,t)$-numbers.

\section{Calculus on deformed generalized Fibonacci polynomials}

\subsection{The Ward ring of Fibonomial exponential generating functions}

Now let $\ward_{s,t,\C}[[x]]$ denote the set of $(s,t)$-exponential generating functions of the form $\sum_{n=0}^{\infty}a_n(x^{n}/\brk[c]{n}_{s,t}!)$ with coefficients in $\C$. It is clear that $(\ward_{s,t,\C}[[x]],+,\cdot)$ is a ring with sum and product ordinary of series, that is,
$$f(x)+g(x)=\sum_{n=0}^{\infty}(a_n+b_n)\frac{x^{n}}{\brk[c]{n}_{s,t}!}$$
and
$$f(x)\cdot g(x)=\sum_{n=0}^{\infty}\sum_{k=0}^{n}\fibonomial{n}{k}_{s,t}a_{k}b_{n-k}\frac{x^{n}}{\brk[c]{n}_{s,t}!},$$ 
where $f(x)=\sum_{n=0}^{\infty}a_n(x^{n}/\brk[c]{n}_{s,t}!)$, 
$g(x)=\sum_{n=0}^{\infty}b_n(x^{n}/\brk[c]{n}_{s,t}!)\in\ward_{s,t,\C}[[x]]$. 
\begin{definition}\label{defi_st_ward}
The ring $\ward_{s,t,\C}[[z]]$ will be called generalized Ward-Fibonomial ring of $(s,t)$-exponential generating functions, or $(s,t)$-Ward ring,
\begin{equation*}
    \sum_{n=0}^{\infty}a_n\frac{z^n}{\brk[c]{n}_{s,t}!}
\end{equation*}
with coefficient in $\C$. Let $f_{s,t}(z,u)$ denote the function
\begin{equation*}
    f_{s,t}(z,u)=\sum_{n=0}^{\infty}u^{\binom{n}{2}}a_{n}z^{n}/\brk[c]{n}_{s,t}!
\end{equation*}
in $\ward_{s,t,\C}[[z]]$ We will call to $f_{s,t}(z,u)$ a deformed function.
\end{definition}

\begin{theorem}\label{theo_conv_st}
Set $s,t\in\R$ such that $s\neq0$, $s^2+4t>0$ and $(s,t)\neq(2,-1)$. Take $f_{s,t}(z,u)$ in $\ward_{s,t,\C}[[z]]$, such that $\brk[c]{n}_{s,t}!\nmid a_{n}$ and $u^{-\binom{n}{2}}\nmid a_{n}$ for all $n\geq0$. Set $\alpha=\lim_{n\rightarrow\infty}\vert a_{n+1}/a_{n}\vert\geq0$.
If $\vert q\vert<1$, the function $f_{s,t}(z,u)$ is:
\begin{enumerate}
    \item[1.] An entire function if $\vert u\vert<\vert\varphi_{s,t}\vert$.
    \item[2.] Convergent in the disk $\vert z\vert<1/\alpha\vert1-q\vert$ when $\alpha>0$ and an entire function when $\alpha=0$, provided that $\vert u\vert=\vert\varphi_{s,t}\vert$.
    \item[3.] Convergent in $z=0$ when $\vert u\vert>\vert\varphi_{s,t}\vert$.
\end{enumerate}
Suppose that $\vert q\vert>1$. The function $f_{s,t}(z,u)$ is:
\begin{enumerate}
    \item[4.] An entire when $\vert u\vert<\vert\varphi_{s,t}^\prime\vert$.
    \item[5.] Convergent in the disk $\vert z\vert<1/\alpha\vert q^{-1}-1\vert$ when $\alpha>0$ and an entire function when $\alpha=0$, provided that $\vert u\vert=\vert\varphi_{s,t}^\prime\vert$.
    \item[6.] Convergent in $z=0$ when $\vert u\vert>\vert\varphi_{s,t}^\prime\vert$.
\end{enumerate}
\end{theorem}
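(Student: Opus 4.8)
The plan is to reduce the statement to an elementary estimate of the radius of convergence of the complex power series $f_{s,t}(z,u)=\sum_{n\ge 0}u^{\binom{n}{2}}a_{n}z^{n}/\brk[c]{n}_{s,t}!$, using the Binet-type factorisations of Section~2. Under the standing hypotheses ($s\neq 0$, $s^{2}+4t>0$, $(s,t)\neq(2,-1)$) the ratio $q=\varphi_{s,t}^{\prime}/\varphi_{s,t}$ is a real number with $|q|\neq 1$, so exactly one of $|q|<1$, $|q|>1$ holds; I would treat $|q|<1$ in detail and then transcribe. For $|q|<1$, Eq.~(\ref{eqn_fibo_fact}) gives $\brk[c]{n}_{s,t}!=\varphi_{s,t}^{\binom{n}{2}}\brk[s]{n}_{q}!$, whence
\[
  f_{s,t}(z,u)=\sum_{n\ge 0}c_{n}z^{n},\qquad
  c_{n}=\left(\frac{u}{\varphi_{s,t}}\right)^{\binom{n}{2}}\frac{a_{n}}{\brk[s]{n}_{q}!}.
\]
The nondegeneracy hypotheses on $a_{n}$ guarantee $c_{n}\neq 0$ for all $n$, so the ratio test is available.

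The first technical ingredient is the asymptotics of the $q$-factorial for $|q|<1$, namely
\[
  \lim_{n\to\infty}\bigl|\brk[s]{n+1}_{q}\bigr|=\lim_{n\to\infty}\left|\frac{1-q^{n+1}}{1-q}\right|=\frac{1}{|1-q|},
  \qquad
  \lim_{n\to\infty}\bigl|\brk[s]{n}_{q}!\bigr|^{1/n}=\frac{1}{|1-q|},
\]
the second limit because $\brk[s]{n}_{q}!=\bigl(\prod_{k=1}^{n}(1-q^{k})\bigr)/(1-q)^{n}$ and $\prod_{k=1}^{n}(1-q^{k})$ converges to a nonzero value as $n\to\infty$. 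Combining this with $\binom{n+1}{2}-\binom{n}{2}=n$, I get
\[
  \left|\frac{c_{n+1}}{c_{n}}\right|
  =\left|\frac{u}{\varphi_{s,t}}\right|^{n}\,\left|\frac{a_{n+1}}{a_{n}}\right|\,\frac{1}{\bigl|\brk[s]{n+1}_{q}\bigr|},
\]
in which the middle factor $\to\alpha$ and the last $\to|1-q|$, so the limiting behaviour is dictated entirely by the geometric factor $|u/\varphi_{s,t}|^{n}$.

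The three cases then fall out. If $|u|<|\varphi_{s,t}|$ then $|u/\varphi_{s,t}|^{n}\to0$, so $|c_{n+1}/c_{n}|\to0$, the radius of convergence is infinite, and $f_{s,t}(\cdot,u)$ is entire (item~1). If $|u|=|\varphi_{s,t}|$ the factor is identically $1$ and $|c_{n+1}/c_{n}|\to\alpha|1-q|$, giving radius $1/\alpha|1-q|$ when $\alpha>0$ and an entire function when $\alpha=0$ (item~2). If $|u|>|\varphi_{s,t}|$ then $|u/\varphi_{s,t}|^{n}\to\infty$, so for $\alpha>0$ we get $|c_{n+1}/c_{n}|\to\infty$, radius $0$, and convergence only at $z=0$ (item~3). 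Items~4--6 are obtained by repeating everything with $\varphi_{s,t}^{\prime}$, $q^{-1}$ and $\brk[s]{n}_{q^{-1}}!$ in place of $\varphi_{s,t}$, $q$ and $\brk[s]{n}_{q}!$ (legitimate because $|q^{-1}|<1$), using the companion identity $\brk[c]{n}_{s,t}!=\varphi_{s,t}^{\prime\,\binom{n}{2}}\brk[s]{n}_{q^{-1}}!$ from Eq.~(\ref{eqn_fibo_fact}) and $1/\bigl|\brk[s]{n+1}_{q^{-1}}\bigr|\to 1/|q^{-1}-1|$.

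I expect the main obstacle to be the borderline subcase $\alpha=0$ within item~3 (symmetrically item~6): there $|u/\varphi_{s,t}|^{n}\to\infty$ while $|a_{n+1}/a_{n}|\to0$, an indeterminate product, and without a constraint on $a_{n}$ one could make $a_{n}$ decay fast enough --- carrying a compensating factor of order $\varphi_{s,t}^{-\binom{n}{2}}$ or $u^{-\binom{n}{2}}$ --- to render $f_{s,t}(\cdot,u)$ entire. Ruling this out is precisely what the hypotheses ``$\brk[c]{n}_{s,t}!\nmid a_{n}$'' and ``$u^{-\binom{n}{2}}\nmid a_{n}$'' are for, and I would make the estimate quantitative by passing to the Cauchy--Hadamard form
\[
  R^{-1}=\limsup_{n\to\infty}|c_{n}|^{1/n}
  =|1-q|\,\limsup_{n\to\infty}\left(\left|\frac{u}{\varphi_{s,t}}\right|^{(n-1)/2}|a_{n}|^{1/n}\right),
\]
invoking those hypotheses to keep $|a_{n}|^{1/n}$ from cancelling the super-exponential growth of $|u/\varphi_{s,t}|^{\binom{n}{2}}$. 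The only remaining routine point --- needed only if one prefers to run the root test from the start --- is that the existence of $\alpha=\lim|a_{n+1}/a_{n}|$ forces $|a_{n}|^{1/n}\to\alpha$; everything else is bookkeeping with the Binet identities.
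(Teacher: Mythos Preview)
Your approach is essentially the same as the paper's: both apply the D'Alembert ratio test to the coefficients, use the Binet identity $\brk[c]{n+1}_{s,t}=\varphi_{s,t}^{n}(1-q^{n+1})/(1-q)$ to isolate the geometric factor $|u/\varphi_{s,t}|^{n}$, and read off the three cases from whether this factor tends to $0$, $1$, or $\infty$. The paper's proof is in fact terser than yours --- it computes
\[
\lim_{n\to\infty}\left|\frac{zu^{n}a_{n+1}}{a_{n}\brk[c]{n+1}_{s,t}}\right|
=\alpha|z||1-q|\lim_{n\to\infty}\frac{1}{|1-q^{n+1}|}\left|\frac{u}{\varphi_{s,t}}\right|^{n}
\]
and then simply says ``now follow the first 3 statements'', handling $|q|>1$ symmetrically. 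Your discussion of the $\alpha=0$ subcase in item~3, and the proposed recourse to Cauchy--Hadamard together with the nondivisibility hypotheses, goes beyond what the paper actually writes; the paper factors $\alpha$ out of the limit without commenting on the indeterminacy when $\alpha=0$ and $|u/\varphi_{s,t}|^{n}\to\infty$.
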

\begin{proof}
We will use the D'alembert test. If $\vert q\vert<1$, then
\begin{align*}
    \lim_{n\rightarrow\infty}\Bigg\vert\frac{xu^{n}a_{n+1}}{a_{n}\brk[c]{n+1}_{s,t}!}\Bigg\vert&=\alpha\vert z\vert\lim_{n\rightarrow\infty}\bigg\vert\frac{u^n}{\brk[c]{n+1}_{s,t}}\bigg\vert\\
    &=\alpha\vert z\vert\lim_{n\rightarrow\infty}\Bigg\vert\frac{(\varphi_{s,t}-\varphi_{s,t}^{\prime})u^{n}}{\varphi_{s,t}^{n+1}-\varphi_{s,t}^{\prime(n+1)}}\Bigg\vert\\
    &=\alpha\vert z\vert\vert1-q\vert\lim_{n\rightarrow\infty}\bigg\vert\frac{1}{1-q^{n+1}}\bigg\vert\bigg\vert\frac{u^n}{\varphi_{s,t}^n}\bigg\vert.
\end{align*}
Now, follow the first 3 statements. Now suppose that $\vert q\vert>1$. Then
\begin{align*}
    \lim_{n\rightarrow\infty}\Bigg\vert\frac{xu^{n}a_{n+1}}{a_{n}\brk[c]{n+1}_{s,t}!}\Bigg\vert&=\alpha\vert z\vert\vert q^{-1}-1\vert\lim_{n\rightarrow\infty}\bigg\vert\frac{1}{q^{-(n+1)}-1}\bigg\vert\bigg\vert\frac{u^n}{\varphi_{s,t}^{\prime n}}\bigg\vert.
\end{align*}
Therefore follow the last three statements.
\end{proof}

The above theorem is consistent with Theorem \ref{theo_lim_seq}, in the sense that the convergence of a deformed function $f_{s,t}(z,u)$ depends strongly on the values taken by the parameter $u$. In the following theorem, we analyze the case $(s,t)=(2,-1)$.

\begin{theorem}\label{theo_conv_2-1}
Let $f_{2,-1}(z,u)$ denote the function $f_{2,-1}(z,u)=\sum_{n=0}^{\infty}u^{\binom{n}{2}}a_{n}z^{n}/n!$ with $a_{n}\in\C$ such that $n!\nmid a_{n}$ and $u^{-\binom{n}{2}}\nmid a_{n}$ for all $n\geq0$. Set $\alpha=\lim_{n\rightarrow\infty}\vert a_{n+1}/a_{n}\vert$. Then $f_{2,-1}(z,u)$ is an entire function when $0<u\leq1$ and convergent in $z=0$ when $u>1$.
\end{theorem}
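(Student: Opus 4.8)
The plan is to argue directly with the D'Alembert ratio test, exactly as in the proof of Theorem~\ref{theo_conv_st}, the point being that $(s,t)=(2,-1)$ is precisely the boundary case $s^2+4t=0$ excluded there. Here $\varphi_{2,-1}=\varphi_{2,-1}^{\prime}=1$, so $q=1$ and Binet's identity degenerates to $\brk[c]{n}_{2,-1}=n$, whence $\brk[c]{n}_{2,-1}!=n!$. Thus $f_{2,-1}(z,u)=\sum_{n\ge 0}u^{\binom n2}a_n z^n/n!$ is an ordinary exponential-type series twisted by the deformation factor $u^{\binom n2}$, and the hypotheses $n!\nmid a_n$, $u^{-\binom n2}\nmid a_n$ are there to guarantee that $a_n\ne 0$ for every $n$ (so every term is nonzero) and that $\alpha=\lim_{n\to\infty}\lvert a_{n+1}/a_n\rvert$ is a genuine finite number controlling the series.

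First I would compute the ratio of consecutive terms. Using the elementary identity $\binom{n+1}{2}-\binom n2=n$,
\[
\left\lvert\frac{u^{\binom{n+1}{2}}a_{n+1}z^{n+1}/(n+1)!}{u^{\binom n2}a_n z^n/n!}\right\rvert
=\lvert z\rvert\,\frac{\lvert a_{n+1}\rvert}{\lvert a_n\rvert}\,\frac{u^{n}}{n+1},
\]
so that letting $n\to\infty$ the right-hand side tends to $\alpha\,\lvert z\rvert\,\lim_{n\to\infty}u^{n}/(n+1)$. Then I would split into the two regimes dictated by where $u$ sits relative to $\varphi_{2,-1}^{-1}=1$. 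If $0<u\le 1$, then $u^{n}/(n+1)\to 0$, hence the limiting ratio is $0<1$ for every $z\in\C$; the ratio test shows the series converges absolutely on all of $\C$, i.e.\ $f_{2,-1}(z,u)$ is entire. (At $u=1$ this recovers the classical statement that $\sum a_n z^n/n!$ is entire when $\alpha<\infty$, so the conclusion is continuous across $u=1$.) If $u>1$, then $u^{n}/(n+1)\to\infty$, so for every $z\ne 0$ the limiting ratio is $\infty>1$ and the series diverges; since it trivially converges at $z=0$, the radius of convergence is $0$. This is exactly consistent with Theorem~\ref{theo_lim_seq}: $u=1$ is the value at which $u^{n-1}\brk[c]{n}_{2,-1}=n$ ceases to be bounded.

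I do not expect a real obstacle; the only delicate point is the bookkeeping of the hypotheses, ensuring that the divisibility-type conditions on $a_n$ are invoked only to exclude $a_n=0$ (which would break the ratio test) and coefficient sequences decaying so fast that the twist $u^{\binom n2}$ could overwhelm the factorial in a way $\alpha$ no longer sees. If one prefers to sidestep this, one may instead use Cauchy--Hadamard: write $1/R=\limsup_{n\to\infty}\lvert u^{\binom n2}a_n/n!\rvert^{1/n}=\limsup_{n\to\infty}u^{(n-1)/2}\,\lvert a_n\rvert^{1/n}\,\lvert n!\rvert^{-1/n}$ and combine $\lvert n!\rvert^{-1/n}\to 0$ (Stirling) with $\lvert a_n\rvert^{1/n}\to\alpha$, obtaining $1/R=0$ when $0<u\le 1$ and $1/R=\infty$ when $u>1$. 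Either way, the substantive content is simply the observation that $(2,-1)$ falls outside the scope of Theorem~\ref{theo_conv_st} and must be handled by this separate, short ratio-test computation.
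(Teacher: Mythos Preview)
Your approach is correct and is exactly what the paper intends: the paper does not supply an explicit proof of Theorem~\ref{theo_conv_2-1}, but the argument is clearly meant to be the D'Alembert ratio-test computation carried over from the proof of Theorem~\ref{theo_conv_st}, specialised to the degenerate boundary case $(s,t)=(2,-1)$ where $\varphi_{2,-1}=\varphi_{2,-1}'=1$ and $\brk[c]{n}_{2,-1}=n$. Your ratio computation and the case split at $u=1$ reproduce this precisely.
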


\begin{example}
The function $\frac{1}{1-z}=\sum_{n=0}^{\infty}z^{n}$ is convergent when $\vert z\vert<1$. Then by the above theorem, $\Exp(z,u)=\sum_{n=0}^{\infty}u^{\binom{n}{2}}\frac{z^n}{n!}$ is an entire function when $0<u\leq1$.
\end{example}

Because of what we have seen in the previous section and because of Theorems \ref{theo_conv_st} and \ref{theo_conv_2-1}, in the remainder of the paper we will restrict ourselves only to deformed functions with deformation parameter $u\in(0,\vert\varphi_{s,t}\vert]$ and $u\in(0,\vert\varphi_{s,t}^{\prime}\vert]$. 

\subsection{The deformed $(s,t)$-derivative}

In this section, we introduce the deformed differential operator. In addition, we introduce the set of $q$-periodic functions and it will be shown that these functions are invariant by deformations.

\begin{definition}
Set $s\neq0$ and $s^2+4t>0$. For all $u\in(0,\vert\varphi_{s,t}\vert]$ or $u\in(0,\vert\varphi_{s,t}^\prime\vert]$ define the deformed $(s,t)$-derivative $\mathbf{D}_{su,tu^2}$ of the function $f(x)$ as
\begin{equation*}
(\mathbf{D}_{su,tu^2}f)(x)=\frac{f(u\varphi_{s,t}x)-f(u\varphi_{s,t}^{\prime}x)}{u(\varphi_{s,t}-\varphi_{s,t}^{\prime})x}
\end{equation*}
for all $x\neq0$ and $(\mathbf{D}_{us,u^2t}f)(0)=f^{\prime}(0)$, provided $f^{\prime}(0)$ exists.
\end{definition}
The deformed $(s,t)$-derivative is a particular case of the $(p,q)$-derivative with $p=u\varphi_{s,t}$ and $q=u\varphi_{s,t}^{\prime}$. Next, we will give the definitions of deformed derivatives according to each specialization.

\begin{definition}
Take $s=1,\ t=1$. For all $u\in(0,\frac{1+\sqrt{5}}{2}]$ or $u\in(0,\vert\frac{1-\sqrt{5}}{2}\vert]$, we define the deformed Fibonacci derivative
\begin{equation*}
    \mathbf{D}_{u,u^2}f(x)=\mathbf{D}_{F,u}f(x)=
    \begin{cases}
        \frac{f\left(u\varphi x\right)-f\left(u\varphi^{\prime}x\right)}{u\sqrt{5}x},&\text{ if }x\neq0;\\
        f^{\prime}(0), &\text{ if } x=0.
    \end{cases}
\end{equation*}
\end{definition}

\begin{definition}
Take $s=2,\ t=1$. For all $u\in(0,1+\sqrt{2}]$ or $u\in(0,\vert1-\sqrt{2}\vert]$ we define the deformed Pell derivative
\begin{equation*}
    \mathbf{D}_{2u,u^2}f(x)=\mathbf{D}_{P,u}f(x)=
    \begin{cases}
        \frac{f\left(u(1+\sqrt{2})x\right)-f\left(u(1-\sqrt{2})x\right)}{2u\sqrt{2}x},&\text{ if }x\neq0;\\
        f^{\prime}(0), &\text{ if } x=0.
    \end{cases}
\end{equation*}
\end{definition}

\begin{definition}
Take $s=1,\ t=2$. For all $u\in(0,2]$ or $u\in(0,1]$ we define the deformed Jacobsthal derivative
\begin{equation*}
    \mathbf{D}_{u,2u^2}f(x)=\mathbf{D}_{J,u}f(x)=
    \begin{cases}
    \frac{f\left(2ux\right)-f\left(-ux\right)}{3ux},&\text{ if }x\neq0;\\
    f^{\prime}(0),&\text{ if }x=0.
    \end{cases}
\end{equation*}
\end{definition}

\begin{definition}
Take $s=3,\ t=-2$. For all $u\in(0,2]$ or $u\in(0,1]$ we define the deformed Mersenne derivative
\begin{equation*}
    \mathbf{D}_{u,2u^2}f(x)=\mathbf{D}_{M,u}f(x)=
    \begin{cases}
    \frac{f\left(2ux\right)-f\left(ux\right)}{ux},&\text{ if }x\neq0;\\
    f^{\prime}(0),&\text{ if }x=0.
    \end{cases}
\end{equation*}
\end{definition}

\begin{definition}
Take $s=p+q,\ t=-pq$. For all $u\in(0,\vert p\vert]$ or $u\in(0,\vert q\vert]$ we define the deformed $(p,q)$-derivative
\begin{equation*}
    \mathbf{D}_{u(p+q),-pqu^2}f(x)=D_{p,q,u}f(x)=
    \begin{cases}
    \frac{f\left(upx\right)-f\left(uqx\right)}{u(p-q)x},&\text{ if }x\neq0;\\
    f^{\prime}(0),&\text{ if }x=0.
    \end{cases}
\end{equation*}
\end{definition}

\begin{definition}
Take $s=2r,\lvert r\rvert>1,\ t=-1$. For all $u\in(0,r+\sqrt{r^2-1}]$ or $u\in(0,\vert r-\sqrt{r^2-1}]$ we define the deformed Chebysheff of second kind derivative
\begin{equation*}
    \mathbf{D}_{2ru,-u^2}f(x)=D_{U,u}f(x)=
    \begin{cases}
    \frac{f\left(u(r+\sqrt{r^{2}-1})x\right)-f\left(u(r-\sqrt{r^{2}-1})x\right)}{2u\sqrt{r^{2}-1}x},&\text{ if }x\neq0;\\
    f^{\prime}(0),&\text{ if }x=0.
    \end{cases}
\end{equation*}
\end{definition}

\begin{definition}
Take $s=P,\ t=-Q$. For all 
\begin{equation*}
u\in\left(0,\frac{1}{2}\vert P+\sqrt{P^2-4Q}\vert\right] \text{ or } u\in\left(0,\frac{1}{2}\vert P-\sqrt{P^2-4Q}\vert\right]
\end{equation*}
we define the deformed Lucas-derivative
\begin{equation*}
    \mathbf{D}_{uP,-Qu^2}f(x)=D_{P,-Q,u}f(x)=
    \begin{cases}
    \frac{f\left(u\varphi_{P,-Q}x\right)-f\left(u\varphi_{P,-Q}^{\prime}x\right)}{u(\varphi_{P,-Q}-\varphi_{P,-Q}^{\prime})x},&\text{ if }x\neq0;\\
    f^{\prime}(0),&\text{ if }x=0.
    \end{cases}
\end{equation*}
\end{definition}

\begin{example}
For function $f(x)=x^{5}$, deformed $(s,t)$-derivative is obtained as
\begin{equation*}
    \mathbf{D}_{su,tu^2}x^{5}=\brk[c]{5}_{su,tu^2}x^{4}=u^{4}\brk[c]{5}_{s,t}x^4.
\end{equation*}
Then, $\mathbf{D}_{F,u}x^5=5u^4x^4$, $\mathbf{D}_{P,u}x^5=29u^4x^4$, $\mathbf{D}_{J,u}x^5=5u^4x^4$, $\mathbf{D}_{M,u}x^5=31u^4x^4$ and
\begin{align*}
    D_{U,u}x^{5}&=u^4(16t^4-12t^2+1)x^4,\\
    D_{P,-Q,u}x^{5}&=u^{4}(P^4-3P^2Q+Q^2)x^{4}.
\end{align*}
\end{example}

\begin{example}
The ordinary and exponential generating functions of the sequence $(1,1,\ldots)$ are transformed into the ordinary and exponential generating functions of the polynomials $\brk[c]{n}_{s,t}$, respectively, through the operator in differences $\mathbf{D}_{s,t}$
    \begin{align*}
        x\mathbf{D}_{s,t}\left(\frac{1}{1-x}\right)&=\frac{x}{1-sx-tx^2},\\
        x\mathbf{D}_{s,t}e^{x}&=\frac{e^{\varphi_{s,t}x}-e^{\varphi_{s,t}^{\prime}x}}{\sqrt{s^2+4t}}.
    \end{align*}
\end{example}

For two fixed complex numbers $s,t$ the ring $\ward_{s,t,\C}[[z]]$ is equipped with the family of derivatives $\mathbf{D}_{su,tu^2}$, for every nonzero complex number $u$. In particular, in the ring $\ward_{s,t,\C}[[z]]$, we define the following derivatives
\begin{equation}\label{eqn_qdiff}
    D_q=\mathbf{D}_{1+q,-q}=\mathbf{D}_{s(1/\varphi_{s,t}),t(1/\varphi_{s,t})^2}
\end{equation}
and
\begin{equation}\label{eqn_Udiff}
    D_{U}=\mathbf{D}_{2(is/2\sqrt{t}),-1}=\mathbf{D}_{s(i/\sqrt{t}),t(i/\sqrt{t})^2}.
\end{equation}

The following results on the deformed $(s,t)$-derivative are standard:
\begin{align}
    \mathbf{D}_{su,tu^2}(f(x)+g(x))&=(\mathbf{D}_{su,tu^2}f)(x)+(\mathbf{D}_{su,tu^2}g)(x),\nonumber\\
    \mathbf{D}_{su,tu^2}(\alpha f(x))&=\alpha(\mathbf{D}_{su,tu^2}f)(x),\text{ for all }\alpha\in\C,\nonumber\\
    \mathbf{D}_{su,tu^2}(f(x)g(x))&=f(u\varphi_{s,t}x)(\mathbf{D}_{su,tu^2}g)(x)+g(u\varphi_{s,t}^{\prime}x)(\mathbf{D}_{su,tu^2}f)(x),\label{eqn_der_prod1}\\
    &=f(u\varphi_{s,t}^{\prime}x)(\mathbf{D}_{su,tu^2}g)(x)+g(u\varphi_{s,t}x)(\mathbf{D}_{su,tu^2}f)(x).\label{eqn_der_prod2}    
\end{align}

\begin{definition}
We will say that the function $f(x)$ is $q$-periodic if $f(y)=f(qy)$, with $y=\varphi_{s,t}x$. Let $\Per_{s,t}$ denote the set of  $q$-periodic functions. The $q$-periodic functions form the kernel of the operator $\mathbf{D}_{s,t}$.
\end{definition}
Set $s\neq0$ and $t<0$. From the condition of $q$-periodicity of $f(x)$ it follows that $f(y)=f(qy)$, with $q>0$. Then
\begin{align*}
    f(q^y)&=f(q^{y+1})\\
    G(y)&=G(y+1)
\end{align*}
where $G$ is an arbitrary periodic function with period one and $f(x)=G(\log_{q}(x))$, $x>0$ and $\log_{q}(x)$ is the logarithm function in base $q$. Thus,
\begin{align*}
    \mathbf{D}_{s,t}f(x)&=\frac{G(\log_{q}(\varphi_{s,t}x))-G(\log_{q}(\varphi_{s,t}^{\prime}x))}{(\varphi_{s,t}-\varphi_{s,t}^{\prime})x}\\
    &=\frac{G(\log_{q}(\varphi_{s,t}x))-G(\log_{q}(q\varphi_{s,t}x))}{(\varphi_{s,t}-\varphi_{s,t}^{\prime})x}\\
    &=\frac{G(\log_{q}(\varphi_{s,t}x))-G(\log_{q}(\varphi_{s,t}x))}{(\varphi_{s,t}-\varphi_{s,t}^{\prime})x}=0
\end{align*}
and $f(x)\in\Per_{s,t}$. If $s\neq0$ and $t>0$, then $q<0$ and $f(x)=G(\log(x)/(\log(-q)+i\pi))$, so that $x\in\C/\{0\}$ and $f(x)\in\Per_{s,t}$.
As $q=\frac{\varphi_{su,tu^2}^\prime}{\varphi_{su,tu^2}}=\frac{\varphi_{s,t}^\prime}{\varphi_{s,t}}$, every function in $\Per_{s,t}$ is invariant by deformations.
Thus $\Per_{s,t}=\Per_{su,tu^2}$. Now, set $s\neq0$ and $t=0$. Then $\Per_{s,0}=\C$. 

In the following theorem, we will show that the set of $\varphi_{s,t}$-periodic functions is an algebra.
\begin{theorem}
The set $\Per_{s,t}$ is a $\C$-algebra.
\end{theorem}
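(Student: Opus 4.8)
The plan is to realize $\Per_{s,t}$ as a subalgebra of the commutative $\C$-algebra $\mathcal{F}$ of all $\C$-valued functions on the relevant domain (namely $x>0$ when $t<0$, and $\C\setminus\{0\}$ when $t>0$), equipped with pointwise addition, scalar multiplication, and multiplication. Since $\mathcal{F}$ is already a commutative $\C$-algebra, it suffices to verify that $\Per_{s,t}$ is a $\C$-linear subspace that is closed under pointwise multiplication and contains the constant function $1$; associativity, commutativity and distributivity are then inherited from $\mathcal{F}$.

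First I would record the clean reformulation of the defining condition: $f\in\Per_{s,t}$ if and only if $f(\varphi_{s,t}x)=f(\varphi_{s,t}^{\prime}x)$ for all admissible $x$, which is immediate from $q\varphi_{s,t}=\varphi_{s,t}^{\prime}$ together with the definition $f(y)=f(qy)$, $y=\varphi_{s,t}x$. Equivalently, $\Per_{s,t}=\ker\mathbf{D}_{s,t}$, as already observed in the text. For the closure properties I would use this operator description, since it is shortest. Linearity of $\mathbf{D}_{s,t}$ gives $\mathbf{D}_{s,t}(\alpha f+\beta g)=\alpha\,\mathbf{D}_{s,t}f+\beta\,\mathbf{D}_{s,t}g=0$ for $f,g\in\Per_{s,t}$ and $\alpha,\beta\in\C$, so $\Per_{s,t}$ is a subspace. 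For the product, the product rule of Eq.~(\ref{eqn_der_prod1}) reads
\[
\mathbf{D}_{s,t}(fg)(x)=f(\varphi_{s,t}x)(\mathbf{D}_{s,t}g)(x)+g(\varphi_{s,t}^{\prime}x)(\mathbf{D}_{s,t}f)(x),
\]
so if $f,g\in\ker\mathbf{D}_{s,t}$ then both summands vanish identically, whence $fg\in\Per_{s,t}$. Finally $\mathbf{D}_{s,t}1=0$, so $1\in\Per_{s,t}$ and the algebra is unital. As an alternative one verifies all three properties directly from $f(\varphi_{s,t}x)=f(\varphi_{s,t}^{\prime}x)$; e.g. $(fg)(\varphi_{s,t}x)=f(\varphi_{s,t}x)g(\varphi_{s,t}x)=f(\varphi_{s,t}^{\prime}x)g(\varphi_{s,t}^{\prime}x)=(fg)(\varphi_{s,t}^{\prime}x)$.

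The only genuine point requiring care — which I would flag as the main \emph{obstacle}, although it is minor — is the domain bookkeeping. One must fix the common domain on which every element of $\Per_{s,t}$ is defined and which is stable under $x\mapsto\varphi_{s,t}x$ and $x\mapsto\varphi_{s,t}^{\prime}x$, using the explicit descriptions $f(x)=G(\log_{q}x)$ for $t<0$ and $f(x)=G(\log x/(\log(-q)+i\pi))$ for $t>0$ from the preceding discussion, so that the pointwise sum and product are well defined there and the identity $1$ genuinely lies in the set. The degenerate case $s\neq0$, $t=0$ is immediate, since $\Per_{s,0}=\C$, which is visibly a $\C$-algebra. Once the domain is pinned down, the verification reduces to the routine subalgebra check above.
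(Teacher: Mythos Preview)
Your proof is correct and follows essentially the same approach as the paper: both identify $\Per_{s,t}$ with $\ker\mathbf{D}_{s,t}$ and use linearity together with the product rule Eq.~(\ref{eqn_der_prod1}) to verify closure under sum, scalar multiple, and product. Your version is in fact more complete, since you explicitly check the unit and address the domain issue, whereas the paper's proof simply records $\mathbf{D}_{s,t}(f+g)=\mathbf{D}_{s,t}(fg)=\mathbf{D}_{s,t}(\alpha f)=0$ and declares the remaining algebra axioms inherited.
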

\begin{proof}
If $f(x),g(x)\in\Per_{s,t}$, then
\begin{align*}
 \mathbf{D}_{s,t}(f(x)+g(x))&=0\\   
 \mathbf{D}_{s,t}(f(x)g(x))&=0\\
 \mathbf{D}_{s,t}(\alpha f(x))&=0
\end{align*}
for all $\alpha\in\C$. Then $\Per_{s,t}$ is closed concerning sum, product of functions, and product by a scalar. Now the properties of $\C$-algebra follow easily.
\end{proof}
If $f(x)\in\Per_{s,t}$, then
\begin{equation*}
    \mathbf{D}_{s,t}(f(x)g(x))=f(\varphi_{s,t}x)(\mathbf{D}_{s,t}g)(x)=f(\varphi_{s,t}^\prime x)(\mathbf{D}_{s,t}g)(x).
\end{equation*}
In $\ward_{s,t,\C}[[x]]$ the only function satisfying $(\mathbf{D}_{s,t}f)(x)=0$ is the constant function $f(x)=C$. Then $\Per_{s,t}\cap\ward_{s,t,\C}[[x]]=\C$. 

\subsection{The $(s,t)$-antiderivative}

Let $f$ be an arbitrary function. In \cite{nji} the following $(p,q)$-integrals of $f$ in the interval $[0,a]$ are defined 
\begin{align}
    \int_{0}^{a} f(x)d_{p,q}x&=(p-q)a\sum_{k=0}^{\infty}\frac{q^{k}}{p^{k+1}}f\left(\frac{q^{k}}{p^{k+1}}a\right)\text{ if  }\ \Bigg\vert\frac{p}{q}\Bigg\vert>1,\label{eqn_int1}\\
    \int_{0}^{a}f(x)d_{p,q}x&=(q-p)a\sum_{k=0}^{\infty}\frac{p^{k}}{q^{k+1}}f\left(\frac{p^{k}}{q^{k+1}}a\right)\text{ if  }\ \Bigg\vert\frac{p}{q}\Bigg\vert<1\label{eqn_int2}
\end{align}
and the $(p,q)$-integral of $f$ in the interval $[a,b]$ is defined as
\begin{align*}
    \int_{a}^{b}f(x)d_{p,q}x&=\int_{0}^{b}f(x)d_{p,q}x-\int_{0}^{a}f(x)d_{p,q}x.
\end{align*}
Let $I$ denote a set of real numbers. Set $q=\varphi_{s,t}^{\prime}/\varphi_{s,t}$ such that $0<\vert q\vert<1$. We will say that $I$ is a $(q,\varphi_{s,t})$-geometric set if it contains all geometric sequences $\{xq^n/\varphi_{s,t}\}$, $n\in\N_{0}=\N\cup\{0\}$, $x\in I$. For a pair of real numbers $a$ and $b$, we define
\begin{equation*}
    [a,b]_{\varphi}=\Bigg\{a\frac{q^n}{\varphi_{s,t}}\ :\ n\in\N_{0}\Bigg\}\cup\Bigg\{b\frac{q^n}{\varphi_{s,t}}\ :\ n\in\N_{0}\Bigg\}
\end{equation*}
which we will call a $(q,\varphi_{s,t})$-interval with extreme points $a$ and $b$. For real numbers $a$ and $b$, the following property holds:
\begin{equation*}
    \text{If }a,b\in I\ \Longrightarrow\ [a,b]_{\varphi}\subset I.
\end{equation*}
Consider the function $f:[a,b]_{\varphi}\rightarrow\K$, where $\K$ is either $\R$ or $\C$. The $(s,t)$-integral in $[a,b]_{\varphi}$ of $f$ is defined as
\begin{equation}\label{eqn_int_def}
    \int_{a}^{b}f(x)d_{s,t}x=(1-q)\sum_{n=0}^{\infty}\Bigg[bf\left(b\frac{q^n}{\varphi_{s,t}}\right)-af\left(a\frac{Q^n}{\varphi_{s,t}}\right)\Bigg]q^n.
\end{equation}
Equally, set $q$ such that $\vert q\vert>1$. We will say that $I$ is a $(q^{-1},\varphi_{s,t}^{\prime})$-geometric set if it contains all geometric sequences $\{xq^{-n}/\varphi_{s,t}^{\prime}\}$, $n\in\N_{0}$, $x\in I$. Similarly, we define the $(q^{-1},\varphi_{s,t}^{\prime})$-interval with extreme points $a$ and $b$
\begin{equation*}
    [a,b]_{\varphi^{\prime}}=\Bigg\{a\frac{q^{-n}}{\varphi_{s,t}^{\prime}}\ :\ n\in\N_{0}\Bigg\}\cup\Bigg\{b\frac{q^{-n}}{\varphi_{s,t}^{\prime}}\ :\ n\in\N_{0}\Bigg\}.
\end{equation*}
The $(s,t)$-integral in $[a,b]_{\varphi^{\prime}}$ of $f$ is
\begin{equation}\label{eqn_int_def2}
    \int_{a}^{b}f(x)d_{s,t}x=(1-q)\sum_{n=0}^{\infty}\Bigg[bf\left(b\frac{q^{-n}}{\varphi_{s,t}^{\prime}}\right)-af\left(a\frac{q^{-n}}{\varphi_{s,t}^{\prime}}\right)\Bigg]q^{-n}.
\end{equation}
If $[a,b]_{q}$ and $[a,b]_{q^{-1}}$ denote respectively the $q$-interval and $q^{-1}$-interval, 
\begin{align*}
    [a,b]_{q}&=\Big\{aq^{n}\ :\ n\in\N_{0}\Big\}\cup\Big\{bq^{n}\ :\ n\in\N_{0}\Big\},\\
    [a,b]_{q^{-1}}&=\Big\{aq^{-n}\ :\ n\in\N_{0}\Big\}\cup\Big\{bq^{-n}\ :\ n\in\N_{0}\Big\},
\end{align*}
then $[a,b]_{\varphi}=[a/\varphi_{s,t},b/\varphi_{s,t}]_{q}$ shows the relationship between $(q,\varphi_{s,t})$-intervals and $q$-intervals. Likewise, $[a,b]_{\varphi^{\prime}}=[a/\varphi_{s,t}^{\prime},b/\varphi_{s,t}^{\prime}]_{q^{-1}}$. 
From \cite{nji} it is known that if $0<q<1$ and if $\vert f(x)x^{\alpha}\vert$ is bounded on the interval $(0,A]$ for some $0\leq\alpha<1$, then the $(p,q)$-integrals Eqs.(\ref{eqn_int1}) and (\ref{eqn_int2}) convergent to a function $F(x)$ on $(0,A]$, which is a $(p,q)$-antiderivative of $f(x)$, i.e.,
\begin{equation*}
    D_{p,q}\int_{0}^{x}f(r)d_{p,q}r=f(x).
\end{equation*}

In the following result, it is clear that a $(s,t)$-integral under deformation is equal to re-scaling the integration interval.
\begin{proposition}
Set $u\neq0$. For $q\neq1$,
\begin{equation*}
    \int_{a}^{b}f(x)d_{us,u^2t}x=u\int_{a/u}^{b/u}f(x)d_{s,t}x.
\end{equation*}
\end{proposition}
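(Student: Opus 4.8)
The plan is to unwind the two series definitions (\ref{eqn_int_def}) and (\ref{eqn_int_def2}) of the $(s,t)$-integral after noting that the deformation $(s,t)\mapsto(us,u^2t)$ fixes the ratio $q$. Indeed $\varphi_{us,u^2t}=u\varphi_{s,t}$ and $\varphi_{us,u^2t}^{\prime}=u\varphi_{s,t}^{\prime}$, so $q=\varphi_{us,u^2t}^{\prime}/\varphi_{us,u^2t}=\varphi_{s,t}^{\prime}/\varphi_{s,t}$ is unchanged; in particular $0<\vert q\vert<1$ holds for $(us,u^2t)$ exactly when it holds for $(s,t)$, and likewise $\vert q\vert>1$. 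Under the standing hypotheses ($s\neq0$, $s^2+4t>0$) the numbers $\varphi_{s,t},\varphi_{s,t}^{\prime}$ are real, hence $q$ is real; together with $q\neq1$ (and $q\neq-1$, which would force $s=0$) this makes the two cases $\vert q\vert<1$ and $\vert q\vert>1$ exhaustive, so it suffices to treat them separately.

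First I would treat $0<\vert q\vert<1$. By (\ref{eqn_int_def}) applied to the parameters $(us,u^2t)$, the sample points are $b\,q^n/\varphi_{us,u^2t}=(b/u)\,q^n/\varphi_{s,t}$ and $a\,q^n/\varphi_{us,u^2t}=(a/u)\,q^n/\varphi_{s,t}$, while the weights $(1-q)q^n$ are untouched. Writing $b=u\cdot(b/u)$ and $a=u\cdot(a/u)$ inside the bracket, the $n$-th summand becomes $u$ times $\left[(b/u)f\!\left((b/u)q^n/\varphi_{s,t}\right)-(a/u)f\!\left((a/u)q^n/\varphi_{s,t}\right)\right]q^n$, which is precisely $u$ times the $n$-th summand of (\ref{eqn_int_def}) for $\int_{a/u}^{b/u}f(x)\,d_{s,t}x$. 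Summing over $n$ gives the identity. The case $\vert q\vert>1$ is identical with (\ref{eqn_int_def2}) in place of (\ref{eqn_int_def}): the sample points are $b\,q^{-n}/\varphi_{us,u^2t}^{\prime}=(b/u)\,q^{-n}/\varphi_{s,t}^{\prime}$ (and similarly for $a$), the weights $(1-q)q^{-n}$ are preserved, and the same factor-of-$u$ bookkeeping closes the argument. If one prefers, one may first reduce to $a=0$ via $\int_a^b=\int_0^b-\int_0^a$, valid for both integrals, and only then run the computation.

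This argument has essentially no obstacle beyond clerical care: the one point to get right is the first paragraph, namely that $q$ is a deformation invariant (so the same branch of the definition is in force on both sides of the asserted equality) and that the two branches exhaust the admissible $(s,t)$; everything after that is the observation that the only effect of the deformation on the defining series is to replace the fixed sample point $b\,q^n/\varphi_{s,t}$ by $b\,q^n/(u\varphi_{s,t})=(b/u)\cdot q^n/\varphi_{s,t}$, which is exactly re-scaling the integration endpoints by $1/u$ while pulling out a global factor $u$.
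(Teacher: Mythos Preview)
Your proof is correct and follows essentially the same approach as the paper: both unwind the series definition (\ref{eqn_int_def}), use $\varphi_{us,u^2t}=u\varphi_{s,t}$ to rewrite the sample points as $(b/u)q^n/\varphi_{s,t}$, factor out $u$, and recognize the remaining series as $\int_{a/u}^{b/u}f\,d_{s,t}$, with the case $\vert q\vert>1$ handled analogously via (\ref{eqn_int_def2}). Your version is actually more careful than the paper's in explicitly noting that $q$ is a deformation invariant (so the same branch of the definition applies on both sides) and in arguing that the two branches exhaust the admissible parameters.
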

\begin{proof}
By Definition \ref{eqn_int_def},
\begin{align*}
    \int_{a}^{b}f(x)d_{us,u^{2}t}x&=(1-q)\sum_{n=0}^{\infty}\Bigg[bf\left(b\frac{q^n}{u\varphi_{s,t}}\right)-af\left(a\frac{q^n}{u\varphi_{s,t}}\right)\Bigg]q^n\\
    &=u(1-q)\sum_{n=0}^{\infty}\Bigg[\frac{b}{u}f\left(\frac{b}{u}\frac{q^n}{\varphi_{s,t}}\right)-\frac{a}{u}f\left(\frac{a}{u}\frac{q^n}{\varphi_{s,t}}\right)\Bigg]q^n\\
    &=u\int_{a/u}^{b/u}f(x)d_{s,t}x.
\end{align*}
The same result follows by applying Eq.(\ref{eqn_int_def2}).
\end{proof}

\begin{corollary}
\begin{enumerate}
    \item $\int_{a}^{b}f(x)d_{s,t}x=\varphi_{s,t}\int_{a/\varphi_{s,t}}^{b/\varphi_{s,t}}f(x)d_{q}x$
    \item $\int_{a}^{b}f(x)d_{s,t}x=\frac{i}{\sqrt{t}}\int_{-a\sqrt{t}i}^{-b\sqrt{t}i}f(x)d_{U}x$
\end{enumerate}
\end{corollary}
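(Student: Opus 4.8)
The plan is to obtain both identities as one-line specializations of the preceding Proposition, $\int_{a}^{b}f(x)\,d_{us,u^{2}t}x=u\int_{a/u}^{b/u}f(x)\,d_{s,t}x$, once the base calculi are identified. First I would record the two identifications that make the Proposition applicable. For the $q$-integral: writing out the defining series (\ref{eqn_int_def}) with parameters $(1+q,-q)$ and using $\varphi_{1+q,-q}=1$ (the roots of $x^{2}-(1+q)x+q$ are $1$ and $q$) shows that $d_{1+q,-q}$ is precisely the Jackson $q$-integral $d_{q}$; this is the integral counterpart of (\ref{eqn_qdiff}), where $D_{q}=\mathbf{D}_{1+q,-q}$. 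For the Chebyshev integral: by (\ref{eqn_Udiff}), $D_{U}=\mathbf{D}_{2(is/2\sqrt{t}),-1}$, so by definition $d_{U}=d_{2\tau,-1}$ with $\tau=is/(2\sqrt{t})$.

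Next I would feed in the correct deformation parameter. Since $\varphi_{s,t}^{\prime}=q\varphi_{s,t}$, we have $s=\varphi_{s,t}+\varphi_{s,t}^{\prime}=\varphi_{s,t}(1+q)$ and $t=-\varphi_{s,t}\varphi_{s,t}^{\prime}=-q\varphi_{s,t}^{2}$, i.e. $(s,t)=(u s',u^{2}t')$ with $u=\varphi_{s,t}$ and $(s',t')=(1+q,-q)$. Applying the Proposition with this $u$ and base pair (legitimate since $q\neq1$) gives
\[
\int_{a}^{b}f(x)\,d_{s,t}x=\varphi_{s,t}\int_{a/\varphi_{s,t}}^{b/\varphi_{s,t}}f(x)\,d_{1+q,-q}x=\varphi_{s,t}\int_{a/\varphi_{s,t}}^{b/\varphi_{s,t}}f(x)\,d_{q}x,
\]
which is part~1. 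For part~2 I would take $u=i/\sqrt{t}$: one checks $u\cdot 2\tau=(i/\sqrt{t})(is/\sqrt{t})=s$ and $u^{2}(-1)=-(i/\sqrt{t})^{2}=t$, so $(s,t)=(u\cdot 2\tau,u^{2}(-1))$ — this is exactly the deformation identity already displayed in Section~2. The Proposition then yields
\[
\int_{a}^{b}f(x)\,d_{s,t}x=\frac{i}{\sqrt{t}}\int_{a/(i/\sqrt{t})}^{b/(i/\sqrt{t})}f(x)\,d_{2\tau,-1}x=\frac{i}{\sqrt{t}}\int_{-a\sqrt{t}\,i}^{-b\sqrt{t}\,i}f(x)\,d_{U}x,
\]
using $1/i=-i$ to rewrite $a/(i/\sqrt{t})=-a\sqrt{t}\,i$ and likewise for $b$; this is part~2.

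The only points needing care — and the main (mild) obstacle — are the bookkeeping of the complex factor $i/\sqrt{t}$ (in particular $1/i=-i$, and the case $t<0$, where $\sqrt{t}$ is imaginary so that the substituted endpoints come back out real), and carrying along correctly the geometric-set/convergence hypotheses behind (\ref{eqn_int_def}) versus (\ref{eqn_int_def2}): whether one uses the $|q|<1$ form or the $|q|>1$ form must be matched to the base pair, but since the Proposition was established in both regimes, no new estimate is required. Past that, each part is a single substitution.
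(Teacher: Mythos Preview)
Your approach is exactly the paper's: both parts are meant as one-line specializations of the preceding Proposition, and part~1 is carried out correctly.

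In part~2 there is an arithmetic slip. With $u=i/\sqrt{t}$ you claim $u\cdot 2\tau=(i/\sqrt{t})(is/\sqrt{t})=s$, but in fact $(i/\sqrt{t})(is/\sqrt{t})=i^{2}s/t=-s/t$; likewise $u^{2}(-1)=(i^{2}/t)(-1)=1/t$, not $t$. The deformation identity in Section~2 actually runs the other way: it shows $(2\tau,-1)=(us,u^{2}t)$ with $u=i/\sqrt{t}$, i.e.\ $(2\tau,-1)$ is the deformation of the base $(s,t)$, not vice versa. To put $(s,t)$ on the left-hand side of the Proposition with base pair $(2\tau,-1)$ you must take $u=(i/\sqrt{t})^{-1}=-i\sqrt{t}$; then indeed $u\cdot 2\tau=(-i\sqrt{t})(is/\sqrt{t})=s$ and $u^{2}(-1)=(-t)(-1)=t$. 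The Proposition now gives
\[
\int_{a}^{b}f(x)\,d_{s,t}x=-i\sqrt{t}\int_{a/(-i\sqrt{t})}^{b/(-i\sqrt{t})}f(x)\,d_{U}x=-i\sqrt{t}\int_{ai/\sqrt{t}}^{bi/\sqrt{t}}f(x)\,d_{U}x.
\]
The printed Corollary seems to carry the same reciprocal slip in $u$: testing with $f(x)=x$ and $a=0$, the left side equals $b^{2}/\brk[c]{2}_{s,t}=b^{2}/s$, while the printed right side $\tfrac{i}{\sqrt{t}}\cdot\frac{(-b\sqrt{t}\,i)^{2}}{2\tau}$ evaluates to $-t\,b^{2}/s$. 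So your method is right, but the value of $u$ (and hence the prefactor and limits) needs to be $-i\sqrt{t}$, not $i/\sqrt{t}$.
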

The $(s,t)$-analog of the property of the ordinary integral
\begin{equation*}
    \int_{a}^{b}\alpha f(x)dx=\alpha\int_{a}^{b}f(x)dx,\ \alpha\in\R
\end{equation*}
is given below.
\begin{theorem}\label{theo_integ_cons}
Set $q\neq1$. For all function $G(\log_{q}(x))\in\Per_{s,t}$ and for all function $f$ $(s,t)$-integrable in $[0,r]$, $r\in\{a,b\}$, $0\leq a<b$, and continuous at $0$, it is true that
\begin{equation*}
    \int_{a}^{b}G(\log_{q}(x))f(x)d_{s,t}x=\Bigg[G\left(\log_{q}\left(\frac{r}{\varphi_{s,t}}\right)\right)\int_{0}^{r}f(x)d_{s,t}x\Bigg]_{r=a}^{b}.
\end{equation*}
\end{theorem}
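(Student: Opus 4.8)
The plan is to expand the left-hand side using the series definition of the $(s,t)$-integral in Eq.~(\ref{eqn_int_def}) and then exploit the $q$-periodicity of $G(\log_q(x))$ to pull the $G$-factor out of the sum. Concretely, write
\[
\int_{a}^{b}G(\log_q(x))f(x)\,d_{s,t}x=(1-q)\sum_{n=0}^{\infty}\Bigg[bG\!\left(\log_q\!\left(b\frac{q^n}{\varphi_{s,t}}\right)\right)f\!\left(b\frac{q^n}{\varphi_{s,t}}\right)-aG\!\left(\log_q\!\left(a\frac{q^n}{\varphi_{s,t}}\right)\right)f\!\left(a\frac{q^n}{\varphi_{s,t}}\right)\Bigg]q^n.
\]
Now the key observation: $\log_q\!\left(r\frac{q^n}{\varphi_{s,t}}\right)=\log_q\!\left(\frac{r}{\varphi_{s,t}}\right)+n$, and since $G$ has period one (this is exactly the content of $G(\log_q(x))\in\Per_{s,t}$, established in the discussion preceding the theorem, where $f(x)=G(\log_q(x))$ satisfies $G(y)=G(y+1)$), we get $G\!\left(\log_q\!\left(r\frac{q^n}{\varphi_{s,t}}\right)\right)=G\!\left(\log_q\!\left(\frac{r}{\varphi_{s,t}}\right)\right)$ for every $n\in\N_0$ and each $r\in\{a,b\}$. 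Thus each $G$-factor is independent of the summation index.

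Having pulled these constants out, the sum separates into
\[
(1-q)\,G\!\left(\log_q\!\left(\tfrac{b}{\varphi_{s,t}}\right)\right)\sum_{n=0}^{\infty}bf\!\left(b\tfrac{q^n}{\varphi_{s,t}}\right)q^n-(1-q)\,G\!\left(\log_q\!\left(\tfrac{a}{\varphi_{s,t}}\right)\right)\sum_{n=0}^{\infty}af\!\left(a\tfrac{q^n}{\varphi_{s,t}}\right)q^n,
\]
and by comparing with Eq.~(\ref{eqn_int_def}) applied on $[0,b]$ and on $[0,a]$ (the lower endpoint $0$ contributes nothing since the term $0\cdot f(0\cdot q^n/\varphi_{s,t})$ vanishes, using continuity of $f$ at $0$ to make sense of the value there), each of these is precisely $G\!\left(\log_q\!\left(\tfrac{r}{\varphi_{s,t}}\right)\right)\int_0^r f(x)\,d_{s,t}x$ for $r=b$ and $r=a$ respectively. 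Writing the difference in the bracketed evaluation notation gives exactly the claimed identity. For the case $\lvert q\rvert>1$ one repeats the argument with Eq.~(\ref{eqn_int_def2}) in place of Eq.~(\ref{eqn_int_def}), using $\log_q\!\left(r\frac{q^{-n}}{\varphi_{s,t}^\prime}\right)=\log_q\!\left(\frac{r}{\varphi_{s,t}^\prime}\right)-n$ and periodicity again; note that $\varphi_{s,t}^\prime$ and $\log_q$ must be used consistently, and one should check that the statement's $\varphi_{s,t}$ is meant in the sense appropriate to the convergent branch.

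The main obstacle is not the algebra but the analytic bookkeeping: one must justify that splitting the single convergent series into two separate series is legitimate, i.e.\ that $\sum_n bf(bq^n/\varphi_{s,t})q^n$ and $\sum_n af(aq^n/\varphi_{s,t})q^n$ each converge on their own. This follows from the hypothesis that $f$ is $(s,t)$-integrable on $[0,r]$ for each $r\in\{a,b\}$ (which by definition is the assertion that the corresponding series converges), so the decomposition is valid term by term and the rearrangement is harmless. A secondary point to handle with care is the degenerate endpoint $a=0$: there the ``$a$'' series is identically zero and the formula still reads correctly, and the continuity of $f$ at $0$ is what guarantees the $r=0$ antiderivative value $\int_0^0 f\,d_{s,t}x=0$ is the genuine limit. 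Apart from these routine justifications, the proof is a direct computation driven entirely by the translation-invariance $G(y)=G(y+1)$.
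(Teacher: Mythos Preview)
Your proposal is correct and follows essentially the same approach as the paper's proof: expand the integral via Eq.~(\ref{eqn_int_def}), use the $q$-periodicity identity $G(\log_q(rq^n/\varphi_{s,t}))=G(\log_q(r/\varphi_{s,t}))$ to pull the $G$-factors out of the sum, split into two series recognized as $\int_0^r f\,d_{s,t}x$ for $r\in\{a,b\}$, and treat $a=0$ separately. Your additional remarks on justifying the series splitting via the integrability hypothesis and on the $\lvert q\rvert>1$ branch are sound refinements that the paper leaves implicit.
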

\begin{proof}
Suppose $0<a<b$. By Definition \ref{eqn_int_def} with $0<\vert q\vert<1$
    \begin{align*}
        \int_{a}^{b}G(\log_{q}(x))f(x)d_{s,t}x&=(1-q)\sum_{n=0}^{\infty}\Bigg[bG\left(\log_{q}\left(b\frac{q^n}{\varphi_{s,t}}\right)\right)f\left(b\frac{q^n}{\varphi_{s,t}}\right)\\
        &\hspace{3cm}-aG\left(\log_{q}\left(a\frac{q^n}{\varphi_{s,t}}\right)\right)f\left(a\frac{q^n}{\varphi_{s,t}}\right)\Bigg]q^n.
    \end{align*}
Now, for $q$-periodicity $G(\log_{q}(rq^n/\varphi_{s,t}))=G(\log_{q}(r/\varphi_{s,t}))$, for $r\in\{a,b\}$. Then
\begin{align*}
    \int_{a}^{b}G(\log_{q}(x))f(x)d_{s,t}x&=(1-q)\sum_{n=0}^{\infty}\Bigg[bG\left(\log_{q}\left(\frac{b}{\varphi_{s,t}}\right)\right)f\left(b\frac{q^n}{\varphi_{s,t}}\right)\\
        &\hspace{2cm}-aG\left(\log_{q}\left(\frac{a}{\varphi_{s,t}}\right)\right)f\left(a\frac{q^n}{\varphi_{s,t}}\right)\Bigg]q^n\\
        &=(1-q)G\left(\log_{q}\left(\frac{b}{\varphi_{s,t}}\right)\right)\sum_{n=0}^{\infty}bf\left(b\frac{q^n}{\varphi_{s,t}}\right)q^n\\
        &\hspace{2cm}-(1-q)G\left(\log_{q}\left(\frac{a}{\varphi_{s,t}}\right)\right)\sum_{n=0}^{\infty}af\left(a\frac{q^n}{\varphi_{s,t}}\right)q^n\\
        &=G\left(\log_{q}\left(\frac{b}{\varphi_{s,t}}\right)\right)\int_{0}^{b}f(x)d_{s,t}x\\
        &\hspace{2cm}-G\left(\log_{q}\left(\frac{a}{\varphi_{s,t}}\right)\right)\int_{0}^{a}f(x)d_{s,t}x.
\end{align*}
If $a=0$, then
\begin{align*}
    \int_{0}^{b}G(\log_{q}(x))f(x)d_{s,t}x&=(1-q)b\sum_{n=0}^{\infty}G\left(\log_{q}\left(b\frac{q^n}{\varphi_{s,t}}\right)\right)f\left(b\frac{q^n}{\varphi_{s,t}}\right)q^n\\
    &=(1-q)bG\left(\log_{q}\left(\frac{b}{\varphi_{s,t}}\right)\right)\sum_{n=0}^{\infty}f\left(b\frac{q^n}{\varphi_{s,t}}\right)q^n\\
    &=G\left(\log_{q}\left(\frac{b}{\varphi_{s,t}}\right)\right)\int_{0}^{b}f(x)d_{s,t}x.
\end{align*}
\end{proof}

The following result is of fundamental importance for the theory that will be developed below. For $q$-integral we can see \cite{cardoso}.
\begin{proposition}\label{prop_ineq1}
Let $f$ and $g$ be two $(s,t)$-integrable functions in $[a,b]$ such that $f(x)\leq g(x)$ for all $x\in[a,b]_{\varphi}$. If $a\leq0\leq b$, then
\begin{equation}\label{eqn_ineq1}
    \int_{a}^{b}f(x)d_{s,t}x\leq\int_{a}^{b}g(x)d_{s,t}x.
\end{equation}
\end{proposition}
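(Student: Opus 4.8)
The plan is to reduce the inequality to the case $g = 0$ by linearity, and then to read off the sign of the resulting $(s,t)$-integral directly from the series defining it in Eq.(\ref{eqn_int_def}) (or Eq.(\ref{eqn_int_def2}) when $|q|>1$). First I would set $h(x) = g(x) - f(x)$, so that $h$ is $(s,t)$-integrable on $[a,b]$ and $h(x)\geq 0$ for every $x\in[a,b]_{\varphi}$; by the additivity of the sum defining the $(s,t)$-integral it suffices to show $\int_a^b h(x)\,d_{s,t}x\geq 0$. Since $a\leq 0\leq b$, I would split the integral as $\int_a^b = \int_0^b - \int_0^a$, treating the two one-sided pieces separately.

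Next I would examine $\int_0^b h(x)\,d_{s,t}x = (1-q)\sum_{n=0}^\infty b\,h\!\left(b\,q^n/\varphi_{s,t}\right)q^n$. Every evaluation point $b\,q^n/\varphi_{s,t}$ lies in $[0,b]_{\varphi}\subset[a,b]_{\varphi}$, so $h$ is nonnegative there. The subtlety is the sign of $(1-q)q^n$: under the running hypothesis $0<|q|<1$. If $0<q<1$ (i.e. $t<0$, $s>0$), then $(1-q)>0$ and $q^n>0$, so every term has the sign of $b\,h(\cdot)\geq 0$ and the sum is $\geq 0$; the $b=0$ case is trivial. If $-1<q<0$ (i.e. $t>0$), the terms alternate in sign, so I would instead pair consecutive terms: group $q^{2k}$ and $q^{2k+1}$ and show that $(1-q)$ times $\bigl(b\,h(bq^{2k}/\varphi_{s,t})q^{2k} + b\,h(bq^{2k+1}/\varphi_{s,t})q^{2k+1}\bigr)$ is nonnegative — here the factor $q^{2k}(1-q)>0$ pulls out and one is left with $b\,h(bq^{2k}/\varphi_{s,t}) - b\,|q|\,h(bq^{2k+1}/\varphi_{s,t})$, which need not be termwise nonnegative. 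This is where I expect the main obstacle, and it forces a genuine use of the continuity of $f$ (hence $h$) at $0$: I would instead argue that the partial sums $S_N = (1-q)\sum_{n=0}^{N} b\,h(bq^n/\varphi_{s,t})q^n$ telescope against the antiderivative interpretation, or — more cleanly — reduce to the $q$-integral via the Corollary ($\int_0^b h\,d_{s,t}x = \varphi_{s,t}\int_0^{b/\varphi_{s,t}} h\,d_q x$) and invoke the known positivity of the $q$-integral of a nonnegative function on a $q$-geometric set, citing \cite{cardoso} as the paper itself suggests.

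Assuming the $q$-integral positivity statement (or arguing it directly for $0<q<1$ and deducing the $-1<q<0$ and $|q|>1$ cases by the substitutions $[a,b]_\varphi = [a/\varphi_{s,t},b/\varphi_{s,t}]_q$ and $[a,b]_{\varphi'} = [a/\varphi_{s,t}',b/\varphi_{s,t}']_{q^{-1}}$ established just above Proposition \ref{prop_ineq1}), the piece $\int_0^b h\,d_{s,t}x\geq 0$ follows. For the lower piece, since $a\leq 0$ the point $a q^n/\varphi_{s,t}$ again lies in $[a,0]_\varphi\subset[a,b]_\varphi$ so $h$ is nonnegative there, and $-\int_0^a h\,d_{s,t}x = -(1-q)\sum_n a\,h(aq^n/\varphi_{s,t})q^n$; because $a\leq 0$ the sign of $-a\geq 0$ makes each (paired) contribution nonnegative by the same argument. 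Adding the two nonnegative pieces gives $\int_a^b h(x)\,d_{s,t}x\geq 0$, i.e. Eq.(\ref{eqn_ineq1}). The case $|q|>1$ is identical using Eq.(\ref{eqn_int_def2}) and the $q^{-1}$-interval, noting $|q^{-1}|<1$ so it is literally the previous case. Finally I would remark that the hypothesis $a\leq 0\leq b$ is exactly what guarantees both families of sample points $\{a q^n/\varphi_{s,t}\}$ and $\{b q^n/\varphi_{s,t}\}$ stay inside $[a,b]_\varphi$ with the correct endpoint signs, which is why the result is stated in that form.
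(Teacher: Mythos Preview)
Your approach is more elaborate than the paper's. The paper's proof is essentially two lines: from $f(rq^{n}/\varphi_{s,t})\leq g(rq^{n}/\varphi_{s,t})$ for $r\in\{a,b\}$ and $a\leq 0\leq b$ it gets the termwise bracket inequality
\[
bf\!\left(\tfrac{bq^n}{\varphi_{s,t}}\right)-af\!\left(\tfrac{aq^n}{\varphi_{s,t}}\right)\leq bg\!\left(\tfrac{bq^n}{\varphi_{s,t}}\right)-ag\!\left(\tfrac{aq^n}{\varphi_{s,t}}\right),
\]
and then simply invokes Eq.~(\ref{eqn_int_def}). There is no reduction to $h=g-f$, no splitting $\int_a^b=\int_0^b-\int_0^a$, and no separate discussion of the sign of $q$.

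Your worry about the case $-1<q<0$ is well-founded: the passage from the bracket inequality to the series inequality requires multiplying by $(1-q)q^n$, which alternates in sign when $q<0$, and the paper's proof as written does not address this. So you have actually spotted a gap that the paper glosses over. Your proposed remedy---reducing to the Jackson $q$-integral via the identity $\int_0^r h\,d_{s,t}x=\varphi_{s,t}\int_0^{r/\varphi_{s,t}}h\,d_qx$ and invoking the positivity result in \cite{cardoso}---is a legitimate way to close it, and is consistent with the paper's own pointer to \cite{cardoso} just before this proposition. The attempted ``pairing'' argument, as you yourself note, does not work without further hypotheses on $h$, so you are right to abandon it. For $0<q<1$ your direct termwise argument is fine and coincides with what the paper (implicitly) does.
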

\begin{proof}
By hypothesis the inequalities 
\begin{equation*}
    f(bq^{n}/\varphi_{s,t})\leq g(bq^{n}/\varphi_{s,t})
\end{equation*}
and 
\begin{equation*}
    f(aq^{n}/\varphi_{s,t})\leq g(aq^{n}/\varphi_{s,t})
\end{equation*}
hold for all $n=0,1,2,.\ldots$. Therefore, since $a\leq0\leq b$, we obtain
\begin{equation*}
    bf(bq^{n}/\varphi_{s,t})-af(aq^{n}/\varphi_{s,t})\leq bg(bq^{n}/\varphi_{s,t})-ag(aq^{n}/\varphi_{s,t}),
\end{equation*}
$n=0,1,2,\ldots$. Thus Eq.(\ref{eqn_ineq1}) follows from Eq.(\ref{eqn_int_def}).
\end{proof}

\begin{corollary}
If $a\leq0\leq b$ and $\vert f\vert$ is $(s,t)$-integrable in $[a,b]$, then $f$ is $(s,t)$-integrable in $[a,b]$ and the following inequality holds:
\begin{equation*}    \Bigg\vert\int_{a}^{b}fd_{s,t}\Bigg\vert\leq\int_{a}^{b}\vert f\vert d_{s,t}.
\end{equation*}
\end{corollary}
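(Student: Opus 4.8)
The plan is to treat this as the $(s,t)$-integral version of the classical triangle inequality $\lvert\int f\rvert\le\int\lvert f\rvert$, obtained from Proposition~\ref{prop_ineq1} by bracketing $f$ between $-\lvert f\rvert$ and $\lvert f\rvert$. There are two things to show: first that $f$ itself is $(s,t)$-integrable on $[a,b]$, and then the stated inequality; I would carry them out in that order.

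For integrability, I would write $f=\tfrac12(\lvert f\rvert+f)-\tfrac12(\lvert f\rvert-f)$ and set $g_1=\tfrac12(\lvert f\rvert+f)$, $g_2=\tfrac12(\lvert f\rvert-f)$, so that $0\le g_1,g_2\le\lvert f\rvert$ pointwise on $[a,b]_{\varphi}$. Under the standing assumption of this section ($0<q<1$), and because $a\le 0\le b$ gives $-a\ge 0$, every term of the series defining $\int_a^b g_i\,d_{s,t}$ in Eq.~(\ref{eqn_int_def}), namely $\bigl[bg_i(bq^n/\varphi_{s,t})-ag_i(aq^n/\varphi_{s,t})\bigr]q^n$, is nonnegative and is dominated termwise by the corresponding term $\bigl[b\lvert f\rvert(bq^n/\varphi_{s,t})-a\lvert f\rvert(aq^n/\varphi_{s,t})\bigr]q^n$ of the convergent series for $\int_a^b\lvert f\rvert\,d_{s,t}$. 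Hence each of $g_1,g_2$ is $(s,t)$-integrable, and therefore so is $f=g_1-g_2$ by linearity of the $(s,t)$-integral. In the regime $\lvert q\rvert>1$ one argues identically using Eq.~(\ref{eqn_int_def2}) with $q^{-n}$ in place of $q^n$.

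For the inequality itself, note that $-\lvert f(x)\rvert\le f(x)\le\lvert f(x)\rvert$ for all $x\in[a,b]_{\varphi}$; the function $-\lvert f\rvert$ is $(s,t)$-integrable since $\lvert f\rvert$ is (scalar multiplication), and $f$ is $(s,t)$-integrable by the previous step. Applying Proposition~\ref{prop_ineq1} to the pair $(-\lvert f\rvert,f)$ and then to the pair $(f,\lvert f\rvert)$ — both admissible because $a\le 0\le b$ — yields $-\int_a^b\lvert f\rvert\,d_{s,t}\le\int_a^b f\,d_{s,t}\le\int_a^b\lvert f\rvert\,d_{s,t}$. Since $\lvert f\rvert\ge 0$, Proposition~\ref{prop_ineq1} applied to the pair $(0,\lvert f\rvert)$ gives $\int_a^b\lvert f\rvert\,d_{s,t}\ge 0$, so this two-sided bound is exactly $\bigl\lvert\int_a^b f\,d_{s,t}\bigr\rvert\le\int_a^b\lvert f\rvert\,d_{s,t}$, as claimed.

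The only genuinely delicate point is the integrability assertion: since ``$(s,t)$-integrable'' means convergence of the series in Eq.~(\ref{eqn_int_def}) (or Eq.~(\ref{eqn_int_def2})), the domination-and-comparison argument is legitimate precisely because the hypothesis $a\le 0\le b$ forces $-a\ge 0$ and hence makes those series terms nonnegative — the same structural fact that drives the proof of Proposition~\ref{prop_ineq1}. If one wished to allow $q<0$, the defining series is no longer termwise nonnegative, and one should then read ``$(s,t)$-integrable'' as absolute convergence of the series, after which the same comparison goes through verbatim. Everything else is routine manipulation of convergent series together with the linearity built into Eq.~(\ref{eqn_int_def}).
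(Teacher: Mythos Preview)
Your proof is correct and follows the same route as the paper: apply Proposition~\ref{prop_ineq1} to the pointwise inequalities $-\lvert f\rvert\le f\le\lvert f\rvert$. The paper's proof is a single sentence that omits the integrability assertion entirely, whereas you supply a complete argument for it via the decomposition $f=\tfrac12(\lvert f\rvert+f)-\tfrac12(\lvert f\rvert-f)$ and termwise comparison; this extra care, along with your remark on the $q<0$ case, makes your version more thorough than the original.
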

\begin{proof}
Follows from Proposition \ref{prop_ineq1} by taking into account that the inequalities $-\vert f(x)\vert\leq f(x)\leq\vert f(x)\vert$ hold for all $x\in[a,b]_{\varphi}$.
\end{proof}

\begin{definition}
For any real number $p\geq1$ and $a,b\in I$, we will denote by $\mathcal{L}_{s,t}^{p}[a,b]$ the set of functions $f:I\rightarrow\K$ such that $\vert f\vert^{p}$ is $(s,t)$-integrable in $[a,b]$, i.e.,
\begin{equation*}
    \mathcal{L}_{s,t}^{p}[a,b]=\Bigg\{f:I\rightarrow\K\ :\ \int_{a}^{b}\vert f\vert^pd_{s,t}<\infty\Bigg\}.
\end{equation*}
We also set
\begin{equation*}
    \mathcal{L}_{s,t}^{\infty}[a,b]=\Bigg\{f:I\rightarrow\K\ :\ \sup_{n\in\N_{0}}\Bigg\{\Bigg\vert f\left(\frac{aq^n}{\varphi_{s,t}}\right)\Bigg\vert,\Bigg\vert f\left(\frac{bq^n}{\varphi_{s,t}}\right)\Bigg\vert\Bigg\}<\infty\Bigg\}.
\end{equation*}
\end{definition}

\begin{theorem}[{\bf The fundamental theorem of $(s,t)$-calculus}]\label{theo_funda}
Let $0<\vert q\vert<1$, and $I=[a,b]$ a $(q,\varphi)$-interval. Fix $a,b\in I$, and let $f:I\rightarrow\K$ be a function such that $\mathbf{D}_{s,t}f\in\mathcal{L}_{s,t}^{1}[a,b]$. Then:
\begin{enumerate}
    \item The equality
    \begin{equation*}
        \int_{a}^{b}(\mathbf{D}_{s,t}f)(x)d_{s,t}x=\Bigg[f(r)-\lim_{n\rightarrow\infty}f\left(rq^n\right)\Bigg]_{r=a}^{b}
    \end{equation*}
    holds, provided the involved limits exist.
    \item In addition, if $f$ is continuous at $0$, then
    \begin{equation*}
        \int_{a}^{b}(\mathbf{D}_{s,t}f)(x)d_{s,t}x=f(b)-f(a).
    \end{equation*}
\end{enumerate}
\end{theorem}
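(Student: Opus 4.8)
The plan is to evaluate the left-hand side directly from the series definition of the $(s,t)$-integral in Eq.~(\ref{eqn_int_def}) and to recognize that it telescopes. First I would reduce to one-sided integrals: for $r\in\{a,b\}$, specializing Eq.~(\ref{eqn_int_def}) with lower endpoint $0$ gives
\[
\int_{0}^{r}(\mathbf{D}_{s,t}f)(x)\,d_{s,t}x=(1-q)\,r\sum_{n=0}^{\infty}(\mathbf{D}_{s,t}f)\!\left(\frac{rq^{n}}{\varphi_{s,t}}\right)q^{n}.
\]
Then I would substitute $x=rq^{n}/\varphi_{s,t}$ into the definition of $\mathbf{D}_{s,t}$. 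Using $\varphi_{s,t}x=rq^{n}$, $\varphi_{s,t}^{\prime}x=(\varphi_{s,t}^{\prime}/\varphi_{s,t})rq^{n}=rq^{n+1}$, and $\varphi_{s,t}-\varphi_{s,t}^{\prime}=\varphi_{s,t}(1-q)$, the $n$-th evaluation becomes
\[
(\mathbf{D}_{s,t}f)\!\left(\frac{rq^{n}}{\varphi_{s,t}}\right)=\frac{f(rq^{n})-f(rq^{n+1})}{(1-q)\,r\,q^{n}},
\]
so the cancellations collapse the $n$-th summand exactly to $f(rq^{n})-f(rq^{n+1})$. Hence
\[
\int_{0}^{r}(\mathbf{D}_{s,t}f)(x)\,d_{s,t}x=\sum_{n=0}^{\infty}\big[f(rq^{n})-f(rq^{n+1})\big],
\]
a telescoping series with $N$-th partial sum $f(r)-f(rq^{N+1})$.

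Next I would handle convergence. Unwinding the definition of $\int_{a}^{b}\vert\mathbf{D}_{s,t}f\vert\,d_{s,t}$ through the same substitution, the hypothesis $\mathbf{D}_{s,t}f\in\mathcal{L}^{1}_{s,t}[a,b]$ says precisely that $\sum_{n}\vert f(rq^{n})-f(rq^{n+1})\vert<\infty$ for $r=a$ and $r=b$; in particular the telescoping series converges and $\lim_{n\to\infty}f(rq^{n})$ exists. Therefore
\[
\int_{0}^{r}(\mathbf{D}_{s,t}f)(x)\,d_{s,t}x=f(r)-\lim_{n\to\infty}f(rq^{n}),\qquad r\in\{a,b\}.
\]
Since Eq.~(\ref{eqn_int_def}) is termwise $\int_{a}^{b}=\int_{0}^{b}-\int_{0}^{a}$, subtracting the $r=a$ identity from the $r=b$ identity yields statement (1). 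For statement (2): because $0<\vert q\vert<1$ we have $rq^{n}\to 0$, so continuity of $f$ at $0$ forces $\lim_{n}f(rq^{n})=f(0)$ for both $r=a$ and $r=b$; the two copies of $f(0)$ cancel in the difference, leaving $f(b)-f(a)$.

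The computation is elementary once the telescoping is spotted, so I do not expect a deep obstacle. The nearest thing to one is the bookkeeping: (i) justifying the termwise rearrangement of the series, which is exactly where the $\mathcal{L}^{1}_{s,t}$ hypothesis enters and which also delivers the existence of the limits $\lim_{n}f(rq^{n})$ needed in statement (1); and (ii) tracking the signs of $a$ and $b$ in Eq.~(\ref{eqn_int_def}) so that finiteness of $\int_{a}^{b}\vert\mathbf{D}_{s,t}f\vert\,d_{s,t}$ genuinely bounds each of the two telescoping tails separately. Neither is serious, so the whole proof should be short.
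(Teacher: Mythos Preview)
Your proposal is correct and follows essentially the same route as the paper: substitute the evaluation $(\mathbf{D}_{s,t}f)(rq^{n}/\varphi_{s,t})=\bigl(f(rq^{n})-f(rq^{n+1})\bigr)/\bigl((1-q)rq^{n}\bigr)$ into the series defining the $(s,t)$-integral, then telescope. The only cosmetic difference is that the paper works directly with $\int_{a}^{b}$ rather than splitting into $\int_{0}^{b}-\int_{0}^{a}$, and it treats the existence of $\lim_{n}f(rq^{n})$ as an explicit standing hypothesis (``provided the involved limits exist'') rather than deriving it from the $\mathcal{L}^{1}_{s,t}$ condition as you do; your additional observation that absolute summability of the telescoping increments forces those limits to exist is a mild sharpening.
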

\begin{proof}
Since the equality
\begin{equation*}
    r(\mathbf{D}_{s,t}f)\left(r\frac{q^n}{\varphi_{s,t}}\right)=\frac{f\left(rq^n\right)-f(rq^{n+1})}{(1-q)q^n}
\end{equation*}
holds for $r\in\{a,b\}$, we deduce
\begin{align*}
    \int_{a}^{b}(\mathbf{D}_{s,t}f)(x)d_{s,t}x&=(1-q)\sum_{n=0}^{\infty}\Bigg[b(\mathbf{D}_{s,t}f)\left(b\frac{q^n}{\varphi_{s,t}}\right)-a(\mathbf{D}_{s,t}f)\left(a\frac{q^n}{\varphi_{s,t}}\right)\Bigg]\\
    &=\sum_{n=0}^{\infty}\left((f(bq^n)-f(aq^n))-(f(bq^{n+1})-f(aq^{n+1}))\right)\\
    &=\Bigg[f(r)-\lim_{n\rightarrow\infty}f\left(rq^n\right)\Bigg]_{r=a}^{b}
\end{align*}
where we have used the telescoping property and the hypothesis that the limits $\lim_{n\rightarrow\infty}f(rq^n)$ exist for $r\in\{a,b\}$.
\end{proof}

\begin{theorem}[{\bf The $(s,t)$-integration by parts formula}]\label{theo_partes}
Assume $0<\vert q\vert<1$, and let $I=[a,b]$ be a $(q,\varphi)$-interval. Let $f:I\rightarrow\K$ and $g:I\rightarrow\K$. For $a,b\in I$, the equality
\begin{multline}
    \int_{I}(\mathbf{D}_{s,t}f)(x)g(\varphi_{s,t}^{\prime}x)d_{s,t}x=\Big[(f\cdot g)(s)-\lim_{n\rightarrow\infty}(f\cdot g)(sq^n)\Big]_{s=a}^{b}\\
    -\int_{I}f(\varphi_{s,t}x)(\mathbf{D}_{s,t}g)(x)d_{s,t}x
\end{multline}
holds, provided $f,g\in\mathcal{L}_{s,t}^{1}[a,b]$, $(\mathbf{D}_{s,t}f)(x)$ and $(\mathbf{D}_{s,t}g)(x)$ are bounded for all $x\in[a,b]_{\varphi}$, and the limits exists. If, in addition, $f$ and $g$ are continuous at $0$, then
\begin{equation*}
    \int_{I}(\mathbf{D}_{s,t}f)(x)g(\varphi_{s,t}^{\prime}x)d_{s,t}x=\Big[f\cdot g\Big]_{a}^{b}
    -\int_{I}f(\varphi_{s,t}x)(\mathbf{D}_{s,t}g)(x)d_{s,t}x.
\end{equation*}
\end{theorem}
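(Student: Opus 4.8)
The plan is to mimic the classical $q$-integration-by-parts argument, using the product rule for the deformed $(s,t)$-derivative (Eq.~(\ref{eqn_der_prod2})) together with the fundamental theorem of $(s,t)$-calculus (Theorem~\ref{theo_funda}). First I would observe that for $x\neq0$ the product rule in the form of Eq.~(\ref{eqn_der_prod2}), specialized to $u=1$, reads
\begin{equation*}
    \mathbf{D}_{s,t}(f(x)g(x))=f(\varphi_{s,t}^{\prime}x)(\mathbf{D}_{s,t}g)(x)+g(\varphi_{s,t}x)(\mathbf{D}_{s,t}f)(x).
\end{equation*}
Rearranging gives
\begin{equation*}
    g(\varphi_{s,t}x)(\mathbf{D}_{s,t}f)(x)=\mathbf{D}_{s,t}(f(x)g(x))-f(\varphi_{s,t}^{\prime}x)(\mathbf{D}_{s,t}g)(x),
\end{equation*}
which is the pointwise identity that, after integrating over $I=[a,b]$ with $d_{s,t}x$, will yield the stated formula once we replace the roles of $f$ and $g$ appropriately. (Concretely, to land exactly on the $(\mathbf{D}_{s,t}f)(x)g(\varphi_{s,t}^{\prime}x)$ integrand in the statement, I would instead start from Eq.~(\ref{eqn_der_prod1}), $\mathbf{D}_{s,t}(fg)(x)=f(\varphi_{s,t}x)(\mathbf{D}_{s,t}g)(x)+g(\varphi_{s,t}^{\prime}x)(\mathbf{D}_{s,t}f)(x)$, and solve for $g(\varphi_{s,t}^{\prime}x)(\mathbf{D}_{s,t}f)(x)$.)

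Next I would integrate this identity over $[a,b]$ against $d_{s,t}x$. By linearity of the $(s,t)$-integral (which follows directly from the series definition Eq.~(\ref{eqn_int_def})) we get
\begin{equation*}
    \int_{I}(\mathbf{D}_{s,t}f)(x)g(\varphi_{s,t}^{\prime}x)d_{s,t}x=\int_{I}\mathbf{D}_{s,t}(fg)(x)d_{s,t}x-\int_{I}f(\varphi_{s,t}x)(\mathbf{D}_{s,t}g)(x)d_{s,t}x.
\end{equation*}
To the middle term I apply Theorem~\ref{theo_funda} with the function $fg$ in place of $f$: this requires $\mathbf{D}_{s,t}(fg)\in\mathcal{L}_{s,t}^{1}[a,b]$ and that the limits $\lim_{n\to\infty}(f\cdot g)(rq^n)$ exist for $r\in\{a,b\}$, yielding
\begin{equation*}
    \int_{I}\mathbf{D}_{s,t}(fg)(x)d_{s,t}x=\Big[(f\cdot g)(s)-\lim_{n\rightarrow\infty}(f\cdot g)(sq^n)\Big]_{s=a}^{b}.
\end{equation*}
Substituting gives the first displayed equation of the theorem. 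For the second part, if $f$ and $g$ are continuous at $0$, then $fg$ is continuous at $0$ and $\lim_{n\to\infty}(fg)(rq^n)=(fg)(0)$ for each $r\in\{a,b\}$ (since $q^n\to0$ as $0<|q|<1$), so the boundary term collapses to $[f\cdot g]_a^b=(fg)(b)-(fg)(a)$ and the $(fg)(0)$ contributions cancel in the difference $[\,\cdot\,]_{s=a}^{b}$; this yields the simplified formula.

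The main obstacle is verifying that the hypothesis of Theorem~\ref{theo_funda} is genuinely met by $fg$, i.e.\ that $\mathbf{D}_{s,t}(fg)\in\mathcal{L}_{s,t}^{1}[a,b]$ given only that $f,g\in\mathcal{L}_{s,t}^{1}[a,b]$ and that $\mathbf{D}_{s,t}f$, $\mathbf{D}_{s,t}g$ are bounded on $[a,b]_\varphi$. Using the product-rule expansion, $\mathbf{D}_{s,t}(fg)(x)=f(\varphi_{s,t}x)(\mathbf{D}_{s,t}g)(x)+g(\varphi_{s,t}^{\prime}x)(\mathbf{D}_{s,t}f)(x)$; on the evaluation points $x=rq^n/\varphi_{s,t}$ one has $f(\varphi_{s,t}x)=f(rq^n)$ and $g(\varphi_{s,t}^\prime x)=g(rq^{n+1})$, so each summand is an $\mathcal{L}^1$-function times a bounded factor, and hence the product is $(s,t)$-integrable. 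I would also double-check that the telescoping/limit bookkeeping for $fg$ is legitimate — i.e.\ absolute convergence of the defining series so that the split into two integrals and the telescoping sum are valid — which is exactly what the $\mathcal{L}_{s,t}^1$ assumptions and boundedness of the difference quotients guarantee. Everything else is routine algebra with Eqs.~(\ref{eqn_der_prod1})–(\ref{eqn_der_prod2}) and the definition of the $(s,t)$-integral.
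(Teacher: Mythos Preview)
Your proposal is correct and follows essentially the same route as the paper: rearrange the product rule Eq.~(\ref{eqn_der_prod1}) to isolate $g(\varphi_{s,t}^{\prime}x)(\mathbf{D}_{s,t}f)(x)$, integrate over $[a,b]$, and apply Theorem~\ref{theo_funda} to $fg$. Your extra care in verifying that $\mathbf{D}_{s,t}(fg)\in\mathcal{L}_{s,t}^{1}[a,b]$ under the stated hypotheses goes beyond what the paper spells out, but the core argument is the same.
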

\begin{proof}
By the $(s,t)$-product rules, Eqs.(\ref{eqn_der_prod1}) and (\ref{eqn_der_prod2}) one has
\begin{equation*}
    g(\varphi_{s,t}^{\prime}x)(\mathbf{D}_{s,t}f)(x)=\mathbf{D}_{s,t}(f(x)g(x))-f(\varphi_{s,t}x)(\mathbf{D}_{s,t}g)(x),
\end{equation*}
hence, integrating both sides of this equality over the interval $[a,b]$ and taking into account Theorem \ref{theo_funda}, the result follows.
\end{proof}

Analogous results to the previous ones are obtained when $\vert q\vert>1$.

\subsection{Deformed $(s,t)$-exponential function}
In this section, the definition of deformed $(s,t)$-exponential function along with its analytical properties are given.
\begin{definition}
Set $s\neq0$. For all $u\in\C$, we define the deformed $(s,t)$-exponential function in $\ward_{s,t,\C}[[z]]$ as
\begin{equation*}
    \exp_{s,t}(z,u)=
    \begin{cases}
        \sum_{n=0}^{\infty}u^{\binom{n}{2}}\frac{z^{n}}{\brk[c]{n}_{s,t}!}&\text{ if }u\neq0;\\
        1+z&\text{ if }u=0.
    \end{cases}
\end{equation*}
Also, we define 
\begin{align*}
\exp_{s,t}(z)&=\exp_{s,t}(z,1),\\
\Exp_{s,t}(z)&=\exp_{s,t}(z,\varphi_{s,t}),\\    
\Exp^{\prime}_{s,t}(z)&=\exp_{s,t}(z,\varphi^{\prime}_{s,t}).
\end{align*}
\end{definition}

\begin{definition}
Set $s,t\in\R/\{0\}$. We define the deformed $q$-exponential and the deformed $U$-exponential functions, respectively, as
\begin{align*}
    \exp_{q}(z,u)&=\sum_{n=0}^{\infty}u^{\binom{n}{2}}\frac{z^n}{\brk[s]{n}_{q}!},\\
    \exp_{2\tau,-1}(z,u)&=1+\sum_{n=1}^{\infty}u^{\binom{n}{2}}\frac{\sin^{n}(\theta_{s,t})}{\sin(\theta_{s,t})\sin(2\theta_{s,t})\cdots\sin(n\theta_{s,t})}z^{n}
\end{align*}
with $\tau=is/2\sqrt{t}$ and $\theta_{s,t}=\arccos(\tau)\neq k\pi$, $k\in\Z$. When $s\neq0$ and $t=0$, 
\begin{equation*}
    \exp_{s,0}(z,u)=\sum_{n=0}^{\infty}\left(\frac{u}{s}\right)^{\binom{n}{2}}z^{n}.
\end{equation*}
If $u=s$
\begin{equation*}
    \Exp_{s,0}(z)=\frac{1}{1-z}\text{ and }\Exp_{s,0}^{\prime}(z)=1+z.
\end{equation*}
\end{definition}
From the above definition, it is very easy to establish that
\begin{align*}
    \Exp_{s,t}(z)&=\exp_{q}(z),\\
    \Exp_{s,t}^{\prime}(z)&=\exp_{q}(z,q),\\
    \exp_{s,t}(z,u)&=\exp_{q}(z,u/\varphi_{s,t}).
\end{align*}
where $q=\varphi_{s,t}^{\prime}/\varphi_{s,t}$.

The following Theorem follows from Theorem \ref{theo_conv_st}.
\begin{theorem}\label{theo_exp_conv}
Set $s\neq0,t\neq0$. If $\vert q\vert<1$, the function $\exp_{s,t}(z,u)$ is
\begin{enumerate}
    \item[1.] an entire function if $\vert u\vert<\vert\varphi_{s,t}\vert$,
    \item[2.] convergent in the disk $\vert z\vert<\vert\varphi_{s,t}\vert/\sqrt{s^{2}+4t}$ when $\vert u\vert=\vert\varphi_{s,t}\vert$,
    \item[3.] convergent in $z=0$ when $\vert u\vert>\vert\varphi_{s,t}\vert$.
\end{enumerate}
Suppose that $\vert q\vert>1$. The function $\exp_{s,t}(z,u)$ is
\begin{enumerate}
    \item[4.] is entire when $\vert u\vert<\vert\varphi_{s,t}^\prime\vert$,
    \item[5.] convergent in the disk $\vert z\vert<\vert\varphi_{s,t}^{\prime}\vert/\sqrt{s^2+4t}$ when $\vert u\vert=\vert\varphi_{s,t}^\prime\vert$,
    \item[6.] convergent in $z=0$ when $\vert u\vert>\vert\varphi_{s,t}^\prime\vert$.
\end{enumerate}
\end{theorem}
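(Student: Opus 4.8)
The plan is to read the statement off as the special case $a_n\equiv 1$ of Definition \ref{defi_st_ward}: with this choice $f_{s,t}(z,u)=\sum_{n=0}^{\infty}u^{\binom n2}z^n/\brk[c]{n}_{s,t}!=\exp_{s,t}(z,u)$, so the whole theorem should fall out of Theorem \ref{theo_conv_st} once the constant $\alpha$ is computed and the two radii of convergence are rewritten in the form stated here.

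First I would check that Theorem \ref{theo_conv_st} is applicable under the present hypotheses $s\neq 0$, $t\neq 0$, $s^2+4t>0$. Since $s^2+4t>0$ and $t\neq 0$, the numbers $\varphi_{s,t},\varphi_{s,t}^{\prime}$ are real and nonzero, so $q=\varphi_{s,t}^{\prime}/\varphi_{s,t}$ is a well-defined nonzero real number; moreover $q=1$ would force $s^2+4t=0$ and $q=-1$ would force $s=\varphi_{s,t}+\varphi_{s,t}^{\prime}=0$, both excluded. Hence exactly one of $|q|<1$ and $|q|>1$ holds, which is precisely the dichotomy in the statement, and also $(s,t)\neq(2,-1)$ (that pair has $s^2+4t=0$), so Theorem \ref{theo_conv_st} applies. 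Taking $a_n\equiv 1$ gives $\alpha=\lim_{n\to\infty}|a_{n+1}/a_n|=1>0$, so items 1--3 and 4--6 of Theorem \ref{theo_conv_st} translate verbatim into the ``entire'' and ``convergent only at $z=0$'' assertions here, together with convergence on $|z|<1/|1-q|$ when $|u|=|\varphi_{s,t}|$, resp.\ on $|z|<1/|q^{-1}-1|$ when $|u|=|\varphi_{s,t}^{\prime}|$.

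It then only remains to rewrite these radii. Using $\varphi_{s,t}-\varphi_{s,t}^{\prime}=\sqrt{s^2+4t}$ one gets $|1-q|=|\varphi_{s,t}-\varphi_{s,t}^{\prime}|/|\varphi_{s,t}|=\sqrt{s^2+4t}/|\varphi_{s,t}|$, hence $1/|1-q|=|\varphi_{s,t}|/\sqrt{s^2+4t}$, which is the radius in item 2; symmetrically $|q^{-1}-1|=|\varphi_{s,t}-\varphi_{s,t}^{\prime}|/|\varphi_{s,t}^{\prime}|=\sqrt{s^2+4t}/|\varphi_{s,t}^{\prime}|$, giving $1/|q^{-1}-1|=|\varphi_{s,t}^{\prime}|/\sqrt{s^2+4t}$ in item 5.

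There is no real obstacle here; the only delicate point is bookkeeping, namely that Theorem \ref{theo_conv_st} is phrased with the auxiliary nonvanishing conditions on $a_n$, so one should say a word about why they cause no trouble for the constant sequence. I would either note that those conditions merely serve to exclude degenerate (terminating) series — irrelevant for $a_n\equiv 1$ — or, to keep the reduction self-contained, simply repeat the one-line D'Alembert computation from the proof of Theorem \ref{theo_conv_st} specialized to $a_n\equiv 1$, which produces all six cases directly.
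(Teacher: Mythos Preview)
Your proposal is correct and follows exactly the paper's approach: the paper simply states that the theorem follows from Theorem \ref{theo_conv_st}, and you carry this out in detail by specializing to $a_n\equiv 1$, computing $\alpha=1$, and rewriting the resulting radii $1/|1-q|$ and $1/|q^{-1}-1|$ via $\varphi_{s,t}-\varphi_{s,t}^{\prime}=\sqrt{s^2+4t}$. Your extra remarks (that the standing assumption $s^2+4t>0$ excludes $(s,t)=(2,-1)$ and forces $|q|\neq 1$, and that the nondivisibility hypotheses in Theorem \ref{theo_conv_st} are only there to prevent termwise cancellation and are harmless for $a_n\equiv 1$) are sound and make the reduction cleaner than the paper's one-line citation.
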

It is very straightforward to obtain the following result.
\begin{theorem}
For $s\neq0,t=0$ the function $\exp_{s,0}(az,u)$, $a\neq0$, is
\begin{enumerate}
    \item an entire function if $\vert u\vert<\vert s\vert$,
    \item convergent in the disk $\vert z\vert<1/\vert a\vert$ when $\vert u\vert=\vert s\vert$,
    \item convergent in $z=0$ when $\vert u\vert>\vert s\vert$.
\end{enumerate}
\end{theorem}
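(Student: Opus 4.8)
The plan is to read the three claims straight off the power series
$\exp_{s,0}(az,u)=\sum_{n=0}^{\infty}\left(\frac{u}{s}\right)^{\binom{n}{2}}a^{n}z^{n}$,
which follows at once from the definition of $\exp_{s,0}(z,u)$ (recall $\brk[c]{n}_{s,0}!=s^{\binom{n}{2}}$) after the substitution $z\mapsto az$. Set $c_{n}=(u/s)^{\binom{n}{2}}a^{n}$. If $u\neq0$ then $c_{n}\neq0$ for all $n$ since $a,s\neq0$, so I would invoke the D'Alembert ratio test exactly as in the proof of Theorem \ref{theo_conv_st}; the degenerate case $u=0$ gives $\exp_{s,0}(az,0)=1+az$, which is entire and thus falls under statement~1 (as $0<\vert s\vert$). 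Using $\binom{n+1}{2}-\binom{n}{2}=n$ one gets $\left\vert\frac{c_{n+1}z^{n+1}}{c_{n}z^{n}}\right\vert=\left\vert\frac{u}{s}\right\vert^{n}\vert a\vert\,\vert z\vert$, so the whole question collapses to the behaviour of $\vert u/s\vert^{n}$ as $n\to\infty$.

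Then I would split into the three cases. If $\vert u\vert<\vert s\vert$, then $\vert u/s\vert<1$ and the ratio tends to $0<1$ for every $z\in\C$, so the series converges absolutely everywhere and is entire. If $\vert u\vert=\vert s\vert$, the ratio tends to $\vert a\vert\,\vert z\vert$, which is $<1$ iff $\vert z\vert<1/\vert a\vert$ and $>1$ iff $\vert z\vert>1/\vert a\vert$, giving convergence exactly in the disk $\vert z\vert<1/\vert a\vert$. If $\vert u\vert>\vert s\vert$, then $\vert u/s\vert>1$, so for any fixed $z\neq0$ the ratio tends to $\infty$ and the series diverges, while it trivially converges at $z=0$.

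I do not expect a genuine obstacle: once the series is written out explicitly this is essentially a one-line ratio-test computation, the only points requiring care being the exponent arithmetic $\binom{n+1}{2}-\binom{n}{2}=n$ and the observation that, in contrast with the case $t\neq0$ treated in Theorem \ref{theo_exp_conv}, here the deformed Fibotorial collapses to $s^{\binom{n}{2}}$ so that no $q$-numbers intervene and the radius of convergence is governed purely by $\vert u/s\vert$ together with the scaling constant $a$. As a consistency check I would note that setting $a=1$ recovers radius $1=\vert\varphi_{s,0}\vert/\vert s\vert$, in agreement with Theorem \ref{theo_exp_conv} evaluated at $t=0$, so the present statement is the natural $t\to0$ limit of the earlier one.
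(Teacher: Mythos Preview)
Your proposal is correct and is exactly the straightforward ratio-test computation the paper has in mind: the paper gives no proof of this theorem beyond the sentence ``It is very straightforward to obtain the following result,'' and your argument mirrors the D'Alembert computation of Theorem \ref{theo_conv_st} with the simplification $\brk[c]{n}_{s,0}!=s^{\binom{n}{2}}$.
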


\begin{proposition}\label{prop_exp_fibo_neg}
For all $u,v\in\C$, $v\neq0$, and $t\neq0$ the deformed $(s,t)$-exponential functions holds
\begin{equation*}
    \exp_{vs,v^2t}(z,u)=\exp_{s,t}(z,u/v).
\end{equation*} 
If $v=u$, then $\exp_{us,u^2t}(z,u)=\exp_{s,t}(z)$. 
\end{proposition}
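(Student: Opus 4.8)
The whole statement reduces to the scaling law for the deformed Fibotorial, Eq.~(\ref{eqn_fibotorial}), which gives $\brk[c]{n}_{vs,v^2t}! = v^{\binom{n}{2}}\brk[c]{n}_{s,t}!$ for every $n\ge 0$ (with $\brk[c]{0}_{s,t}!=1$), the exponent being $\binom{n}{2}=\sum_{k=1}^{n}(k-1)$. The plan is to substitute this identity term by term into the defining series of $\exp_{vs,v^2t}(z,u)$ and collect the powers of $v$; a separate (trivial) inspection handles the degenerate value $u=0$.

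First I would dispose of the case $u=0$: by definition $\exp_{vs,v^2t}(z,0)=1+z=\exp_{s,t}(z,0)=\exp_{s,t}(z,0/v)$. Assume then $u\neq 0$; since also $v\neq0$ and $t\neq0$, all Fibotorials occurring below are nonzero (cf.\ the discussion after Eq.~(\ref{eqn_fibotorial})), and, working with the underlying ordinary power series in $z$,
\begin{align*}
\exp_{vs,v^2t}(z,u)
&=\sum_{n=0}^{\infty}u^{\binom{n}{2}}\frac{z^{n}}{\brk[c]{n}_{vs,v^2t}!}
=\sum_{n=0}^{\infty}u^{\binom{n}{2}}\frac{z^{n}}{v^{\binom{n}{2}}\brk[c]{n}_{s,t}!}\\
&=\sum_{n=0}^{\infty}\left(\frac{u}{v}\right)^{\binom{n}{2}}\frac{z^{n}}{\brk[c]{n}_{s,t}!}
=\exp_{s,t}\!\left(z,\frac{u}{v}\right),
\end{align*}
which is the asserted equality. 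I would add a remark that $\exp_{vs,v^2t}(z,u)$ a priori sits in $\ward_{vs,v^2t,\C}[[z]]$ while $\exp_{s,t}(z,u/v)$ sits in $\ward_{s,t,\C}[[z]]$, so the identity is read as an equality of the underlying power series in $z$; in particular it carries the convergence conclusions of Theorem~\ref{theo_exp_conv} from one presentation to the other. Specializing to $v=u$ (which forces $u\neq0$, consistent with the hypothesis $v\neq0$) then yields $\exp_{us,u^2t}(z,u)=\exp_{s,t}(z,u/u)=\exp_{s,t}(z,1)=\exp_{s,t}(z)$.

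\textbf{Main obstacle.} There is essentially none: the proof is a one-line substitution once Eq.~(\ref{eqn_fibotorial}) is in hand. The only points requiring a word of care are the bookkeeping of the exponent $\binom{n}{2}$ in the Fibotorial scaling and the harmless change-of-ambient-ring remark above; neither is a genuine difficulty.
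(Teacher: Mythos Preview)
Your proof is correct and follows the same approach as the paper: substitute the Fibotorial scaling law $\brk[c]{n}_{vs,v^2t}! = v^{\binom{n}{2}}\brk[c]{n}_{s,t}!$ from Eq.~(\ref{eqn_fibotorial}) term by term into the defining series and regroup the powers of $v$. Your separate treatment of $u=0$ and the ambient-ring remark are small additions, but the core argument is identical to the paper's.
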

\begin{proof}
The proof is as follows
\begin{align*}
    \exp_{vs,v^2t}(z,u)&=\sum_{n=0}^{\infty}u^{\binom{n}{2}}\frac{z^n}{\brk[c]{n}_{vs,v^2t}!}=\sum_{n=0}^{\infty}u^{\binom{n}{2}}\frac{z^n}{v^{\binom{n}{2}}\brk[c]{n}_{s,t}!}=\exp_{s,t}(z,u/v).
\end{align*}
If we make $v=u$, then $\exp_{us,u^2t}(z,u)=\exp_{s,t}(z)$.
\end{proof}

\begin{theorem}\label{theo_diff_expu}
The deformed derivative of the deformed $(s,t)$-exponential function is
\begin{equation*}
    \mathbf{D}_{sv,tv^2}(\exp_{s,t}(z,u))=\exp_{s,t}(uvz,u).
\end{equation*}
\end{theorem}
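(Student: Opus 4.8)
The plan is to evaluate the operator $\mathbf{D}_{sv,tv^2}$ termwise on the series defining $\exp_{s,t}(z,u)$ and then re-index. First I would record the action of the operator on monomials. Since $\mathbf{D}_{sv,tv^2}$ is the $(p,q)$-derivative with $p=v\varphi_{s,t}$ and $q=v\varphi_{s,t}^{\prime}$, and since $s\neq 0$ together with $s^2+4t>0$ excludes the case $s=\pm 2i\sqrt t$, Binet's identity Eq.~(\ref{eqn_binet}) gives
\begin{equation*}
\mathbf{D}_{sv,tv^2}z^{n}=\frac{(v\varphi_{s,t}z)^{n}-(v\varphi_{s,t}^{\prime}z)^{n}}{v(\varphi_{s,t}-\varphi_{s,t}^{\prime})z}=v^{n-1}\brk[c]{n}_{s,t}\,z^{n-1}=\brk[c]{n}_{sv,tv^2}z^{n-1},\qquad n\ge 1,
\end{equation*}
and $\mathbf{D}_{sv,tv^2}1=0$; this is exactly the pattern exhibited in the Example above for $n=5$.

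Next I would use linearity of $\mathbf{D}_{sv,tv^2}$ to apply this to each term of $\exp_{s,t}(z,u)=\sum_{n\ge 0}u^{\binom n2}z^n/\brk[c]{n}_{s,t}!$. Working in $\ward_{s,t,\C}[[z]]$ — equivalently, on the common disc of convergence furnished by Theorem~\ref{theo_exp_conv} when $u$ and $uv$ lie in the admissible ranges, so that the termwise operation is legitimate — I obtain
\begin{equation*}
\mathbf{D}_{sv,tv^2}\exp_{s,t}(z,u)=\sum_{n\ge 1}u^{\binom n2}\frac{v^{n-1}\brk[c]{n}_{s,t}}{\brk[c]{n}_{s,t}!}\,z^{n-1}=\sum_{n\ge 1}u^{\binom n2}\frac{v^{n-1}}{\brk[c]{n-1}_{s,t}!}\,z^{n-1}.
\end{equation*}
Substituting $m=n-1$ and using the elementary identity $\binom{m+1}{2}=\binom m2+m$, so that $u^{\binom{m+1}{2}}=u^{m}u^{\binom m2}$, this becomes
\begin{equation*}
\sum_{m\ge 0}u^{\binom{m+1}{2}}\frac{v^{m}}{\brk[c]{m}_{s,t}!}\,z^{m}=\sum_{m\ge 0}u^{\binom m2}\frac{(uvz)^{m}}{\brk[c]{m}_{s,t}!}=\exp_{s,t}(uvz,u),
\end{equation*}
which is the claim. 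The degenerate case $u=0$ is immediate, since $\exp_{s,t}(z,0)=1+z$, $\mathbf{D}_{sv,tv^2}(1+z)=1$, and $\exp_{s,t}(uvz,0)=1+uvz=1$ because $uvz=0$.

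I do not expect a real obstacle here. The only point deserving a line of care is the justification for applying the difference operator term by term, which follows from absolute convergence on a common disc via Theorem~\ref{theo_exp_conv}, or else can simply be read as a formal identity in the Ward ring $\ward_{s,t,\C}[[z]]$. The combinatorial heart of the argument is the relation $\binom{m+1}{2}-\binom m2=m$, which converts the index shift in the Fibotorial denominator into the extra factor $u^{m}$; combined with the $v^{m}$ produced by the monomial rule, this rebuilds $(uvz)^{m}$, and everything else is bookkeeping.
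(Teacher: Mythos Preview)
Your proof is correct and follows essentially the same route as the paper: termwise application of the deformed derivative to the defining series, the cancellation $\brk[c]{n}_{s,t}/\brk[c]{n}_{s,t}!=1/\brk[c]{n-1}_{s,t}!$, and the index shift via $\binom{m+1}{2}=\binom m2+m$. The paper's only cosmetic difference is that it first rewrites $\brk[c]{n}_{s,t}!=v^{-\binom n2}\brk[c]{n}_{sv,tv^2}!$ before differentiating, whereas you compute $\mathbf{D}_{sv,tv^2}z^{n}=v^{n-1}\brk[c]{n}_{s,t}z^{n-1}$ directly; your added remarks on convergence and the degenerate case $u=0$ are harmless extras.
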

\begin{proof}
We have that
\begin{align*}
    \mathbf{D}_{sv,tv^2}(\exp_{s,t}(z,u))&=\mathbf{D}_{sv,tv^2}\left(\sum_{n=0}^{\infty}u^{\binom{n}{2}}\frac{z^{n}}{v^{-\binom{n}{2}}\brk[c]{n}_{sv,tv^2}!}\right)\\
    &=\sum_{n=0}^{\infty}u^{\binom{n}{2}}\frac{\mathbf{D}_{sv,tv^2}\left(z^{n}\right)}{v^{-\binom{n}{2}}\brk[c]{n}_{sv,tv^2}!}\\
    &=\sum_{n=1}^{\infty}u^{\binom{n}{2}}\frac{\brk[c]{n}_{sv,tv^2}z^{n-1}}{v^{-\binom{n}{2}}\brk[c]{n}_{sv,tv^2}!}\\
    &=\sum_{n=1}^{\infty}u^{\binom{n}{2}}\frac{z^{n-1}}{v^{-\binom{n}{2}}\brk[c]{n-1}_{sv,tv^2}!}\\
    &=\sum_{n=0}^{\infty}u^{\binom{n}{2}}u^{n}\frac{z^{n}}{v^{-\binom{n}{2}}v^{-n}\brk[c]{n}_{sv,tv^2}!}=\exp_{s,t}(uvz,u)
\end{align*}
and the proof is achieved.
\end{proof}

\begin{corollary}
From Eq.(\ref{eqn_qdiff}), the $q$-derivative of the deformed $(s,t)$-exponential function is
\begin{align*}
    D_q(\exp_{s,t}(z,u))&=\exp_{s,t}(uz/\varphi_{s,t},u).
\end{align*}
In particular
\begin{align*}
D_q(\exp_{s,t}(z))&=\exp_{s,t}(z/\varphi_{s,t}),\\
D_q(\exp_{s,t}^{\prime}(z))&=\exp_{s,t}^\prime(-tz/\varphi_{s,t}),\\
D_q(\Exp_{s,t}(z))&=\Exp_{s,t}(z),\\
D_q(\Exp_{s,t}^{\prime}(z))&=\Exp^{\prime}_{s,t}(\varphi^{\prime}_{s,t}z/\varphi_{s,t}).
\end{align*}
\end{corollary}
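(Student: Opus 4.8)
The plan is to obtain the whole statement as a one-line specialization of Theorem~\ref{theo_diff_expu}. The point is to recognize $D_q$ as the member of the family $\mathbf{D}_{sv,tv^2}$ with $v=1/\varphi_{s,t}$: this is exactly what Eq.(\ref{eqn_qdiff}) asserts, since $\mathbf{D}_{1+q,-q}=\mathbf{D}_{s(1/\varphi_{s,t}),t(1/\varphi_{s,t})^2}$, and for $v=1/\varphi_{s,t}$ the ratios attached to $\mathbf{D}_{sv,tv^2}$ are $v\varphi_{s,t}=1$ and $v\varphi^{\prime}_{s,t}=\varphi^{\prime}_{s,t}/\varphi_{s,t}=q$, so that $\mathbf{D}_{sv,tv^2}f(x)=(f(x)-f(qx))/((1-q)x)$ is literally the classical $q$-derivative with the same $q=\varphi^{\prime}_{s,t}/\varphi_{s,t}$ used throughout. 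Substituting $v=1/\varphi_{s,t}$ into the identity $\mathbf{D}_{sv,tv^2}(\exp_{s,t}(z,u))=\exp_{s,t}(uvz,u)$ of Theorem~\ref{theo_diff_expu} gives
\[
  D_q(\exp_{s,t}(z,u))=\exp_{s,t}(uz/\varphi_{s,t},u),
\]
which is the main claim. No extra analytic justification is needed, since Theorem~\ref{theo_diff_expu} already performed the termwise differentiation inside $\ward_{s,t,\C}[[z]]$.

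For the ``in particular'' list I would just read off $u$ from the definitions $\exp_{s,t}(z)=\exp_{s,t}(z,1)$, $\Exp_{s,t}(z)=\exp_{s,t}(z,\varphi_{s,t})$, $\Exp^{\prime}_{s,t}(z)=\exp_{s,t}(z,\varphi^{\prime}_{s,t})$ and plug it into the displayed formula. With $u=1$ the scaling factor is $1/\varphi_{s,t}$, giving $D_q(\exp_{s,t}(z))=\exp_{s,t}(z/\varphi_{s,t})$. With $u=\varphi_{s,t}$ the scaling factor $u/\varphi_{s,t}$ collapses to $1$ and the deformation parameter returns to $\varphi_{s,t}$, so $D_q(\Exp_{s,t}(z))=\Exp_{s,t}(z)$, the $(s,t)$-analogue of the fixed-point property $(e^{x})'=e^{x}$. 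With $u=\varphi^{\prime}_{s,t}$ one gets $D_q(\Exp^{\prime}_{s,t}(z))=\Exp^{\prime}_{s,t}(\varphi^{\prime}_{s,t}z/\varphi_{s,t})$, and for the entry written with the factor $-t/\varphi_{s,t}$ one additionally invokes the identity $\varphi^{\prime}_{s,t}=-t/\varphi_{s,t}$ (equivalently $\varphi_{s,t}\varphi^{\prime}_{s,t}=-t$) recorded just after the definition of $\varphi_{s,t}$ to rewrite the scaling factor into the stated form.

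I do not anticipate any real obstacle: this is a corollary in the strict sense. The only thing worth keeping straight is that the three occurrences of ``$q$'' --- the subscript of $D_q$, the eigenvalue ratio $\varphi^{\prime}_{s,t}/\varphi_{s,t}$, and the exponent governing the deformation in Theorem~\ref{theo_diff_expu} --- all denote the same quantity, which is precisely the content of Eq.(\ref{eqn_qdiff}); once that is noted, everything reduces to substituting $u\in\{1,\varphi_{s,t},\varphi^{\prime}_{s,t}\}$ and a single use of $\varphi_{s,t}\varphi^{\prime}_{s,t}=-t$.
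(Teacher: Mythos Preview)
Your proposal is correct and is exactly the paper's intended derivation: the corollary is placed immediately after Theorem~\ref{theo_diff_expu} with no proof, and is simply the specialization $v=1/\varphi_{s,t}$ of that theorem, invoking Eq.(\ref{eqn_qdiff}) to identify $\mathbf{D}_{s/\varphi_{s,t},\,t/\varphi_{s,t}^2}$ with $D_q$. One small remark: the symbol $\exp_{s,t}^{\prime}(z)$ in the second ``in particular'' line is never actually defined in the paper; reading that line backwards shows it must stand for $\exp_{s,t}(z,-t)$ (so $u=-t$, giving scaling $u/\varphi_{s,t}=-t/\varphi_{s,t}$ directly), rather than arising from a rewrite of the $u=\varphi^{\prime}_{s,t}$ case as your last paragraph suggests---your list $u\in\{1,\varphi_{s,t},\varphi^{\prime}_{s,t}\}$ covers only three of the four displayed specializations.
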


\begin{corollary}
From Eq.(\ref{eqn_Udiff}), the $U$-derivative of the deformed $(s,t)$-exponential functions is
\begin{align*}
    D_{U}(\exp_{s,t}(z,u))&=\exp_{s,t}(iuz/\sqrt{t},u).
\end{align*}
In particular
\begin{align*}
D_{U}(\exp_{s,t}(z))&=\exp_{s,t}(iz/\sqrt{t}),\\
D_{U}(\exp_{s,t}^{\prime}(z))&=\exp_{s,t}(-i\sqrt{t}z),\\
D_{U}(\Exp_{s,t}(z))&=\Exp_{s,t}(i\varphi_{s,t}z/\sqrt{t}),\\
D_{U}(\Exp_{s,t}^{\prime}(z))&=\Exp^{\prime}_{s,t}(i\varphi^{\prime}_{s,t}z/\sqrt{t}).
\end{align*}
\end{corollary}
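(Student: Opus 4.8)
The statement to prove is the corollary giving the $U$-derivative of the deformed $(s,t)$-exponential functions. The plan is to invoke Theorem \ref{theo_diff_expu} together with the identification of $D_U$ from Eq.(\ref{eqn_Udiff}) and then read off the four particular cases. Recall that Eq.(\ref{eqn_Udiff}) states $D_U=\mathbf{D}_{2(is/2\sqrt{t}),-1}=\mathbf{D}_{s(i/\sqrt{t}),t(i/\sqrt{t})^2}$; that is, $D_U$ is the deformed $(s,t)$-derivative $\mathbf{D}_{sv,tv^2}$ with the particular choice of deformation parameter $v=i/\sqrt{t}$.

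First I would apply Theorem \ref{theo_diff_expu}, which asserts $\mathbf{D}_{sv,tv^2}(\exp_{s,t}(z,u))=\exp_{s,t}(uvz,u)$ for every nonzero $v$. Substituting $v=i/\sqrt{t}$ immediately yields
\begin{equation*}
    D_U(\exp_{s,t}(z,u))=\mathbf{D}_{s(i/\sqrt{t}),t(i/\sqrt{t})^2}(\exp_{s,t}(z,u))=\exp_{s,t}\!\left(\tfrac{iuz}{\sqrt{t}},u\right),
\end{equation*}
which is exactly the general formula claimed. Next I would specialize $u$ to the four distinguished values. Taking $u=1$ gives $D_U(\exp_{s,t}(z))=\exp_{s,t}(iz/\sqrt{t})$, since $\exp_{s,t}(z)=\exp_{s,t}(z,1)$ by definition. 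Taking $u=\varphi_{s,t}^{\prime}$ and using $\varphi_{s,t}^{\prime}=-t/\varphi_{s,t}$ — more directly, noting $\exp_{s,t}^{\prime}(z)=\exp_{s,t}(z,\varphi_{s,t}^{\prime})$ — one gets $D_U(\exp_{s,t}^{\prime}(z))=\exp_{s,t}^{\prime}(i\varphi_{s,t}^{\prime}z/\sqrt{t})$; here the stated form $\exp_{s,t}(-i\sqrt{t}z)$ should follow from $\varphi_{s,t}^{\prime}/\sqrt{t}\cdot i = -\sqrt{t}\,i$ after using $\varphi_{s,t}\varphi_{s,t}^{\prime}=-t$, though the subscript (primed versus unprimed $\exp$) in the printed line will need a careful check. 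Finally, for $u=\varphi_{s,t}$ and $u=\varphi_{s,t}^{\prime}$ in the $\Exp$ and $\Exp^{\prime}$ notation, the general formula gives $D_U(\Exp_{s,t}(z))=\Exp_{s,t}(i\varphi_{s,t}z/\sqrt{t})$ and $D_U(\Exp_{s,t}^{\prime}(z))=\Exp_{s,t}^{\prime}(i\varphi_{s,t}^{\prime}z/\sqrt{t})$ directly, since $\Exp_{s,t}(z)=\exp_{s,t}(z,\varphi_{s,t})$ and the deformation parameter $u$ is unchanged under $\mathbf{D}$.

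The bulk of the argument is therefore a one-line substitution into an already-proved theorem, followed by bookkeeping of the four specializations. The only real subtlety — and the step I would treat most carefully — is the consistency of the intermediate case: reconciling the ``natural'' output $\exp_{s,t}^{\prime}(i\varphi_{s,t}^{\prime}z/\sqrt{t})$ with the form $\exp_{s,t}(-i\sqrt{t}z)$ written in the statement. This requires both the algebraic identity $i\varphi_{s,t}^{\prime}/\sqrt{t}=-i\sqrt{t}/\varphi_{s,t}$ (equivalently $\varphi_{s,t}\varphi_{s,t}^{\prime}=-t$) and attention to whether the argument-rescaling preserves the primed exponential or collapses it; I would verify this by expanding both sides as series in $\ward_{s,t,\C}[[z]]$ and comparing coefficients, using Eq.(\ref{eqn_fibo_fact}) to convert between $\brk[c]{n}_{s,t}!$ and $\brk[s]{n}_q!$. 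Everything else is immediate from Theorem \ref{theo_diff_expu} and the definitions of $\exp_{s,t}$, $\Exp_{s,t}$, and $\Exp_{s,t}^{\prime}$.
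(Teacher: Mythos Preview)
Your approach is correct and is exactly what the paper intends: the corollary is stated without proof and follows immediately from Theorem~\ref{theo_diff_expu} by substituting the deformation parameter $v=i/\sqrt{t}$ from Eq.~(\ref{eqn_Udiff}), then specializing $u$.

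One correction to your bookkeeping on the second particular case. You guess $\exp_{s,t}^{\prime}(z)=\exp_{s,t}(z,\varphi_{s,t}^{\prime})$, but that is the definition of $\Exp_{s,t}^{\prime}(z)$, not of $\exp_{s,t}^{\prime}(z)$. The symbol $\exp_{s,t}^{\prime}$ is never defined explicitly in the paper, but the companion corollary for $D_q$ reads $D_q(\exp_{s,t}^{\prime}(z))=\exp_{s,t}^{\prime}(-tz/\varphi_{s,t})$; matching this against Theorem~\ref{theo_diff_expu} with $v=1/\varphi_{s,t}$ forces $u=-t$, i.e.\ $\exp_{s,t}^{\prime}(z)=\exp_{s,t}(z,-t)$. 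With that identification and $v=i/\sqrt{t}$ you obtain
\[
D_U(\exp_{s,t}^{\prime}(z))=\exp_{s,t}\big((-t)(i/\sqrt{t})z,\,-t\big)=\exp_{s,t}^{\prime}(-i\sqrt{t}z),
\]
so the argument $-i\sqrt{t}z$ printed in the statement is correct, and the missing prime on the right-hand side is a typo in the paper rather than a nontrivial identity you need to prove. Your instinct to flag that line was sound; no series comparison via Eq.~(\ref{eqn_fibo_fact}) is needed.
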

In the following results, we will give the infinite product representation of the functions $\Exp_{s,t}(z)$ and $\Exp_{s,t}^{\prime}(z)$.
\begin{theorem}\label{theo_exp1}
Set $s\neq0$ and $t\neq0$. If $\vert q\vert<1$, then the meromorphic continuation of $\Exp_{s,t}(z)$ is given by
    \begin{equation*}
        \Exp_{s,t}(z)=\prod_{k=0}^{\infty}\frac{\varphi_{s,t}^{k+1}}{\varphi_{s,t}^{k+1}-(\varphi_{s,t}-\varphi_{s,t}^{\prime})\varphi_{s,t}^{\prime k}z}.
    \end{equation*}
If $\vert q\vert>1$, then $\Exp_{s,t}(z)$ is given by
\begin{equation*}
    \Exp_{s,t}(z)=\prod_{k=0}^{\infty}\left(1-(\varphi_{s,t}-\varphi_{s,t}^{\prime})\frac{\varphi_{s,t}^{k}}{\varphi_{s,t}^{\prime(k+1)}}z\right).
\end{equation*}
\end{theorem}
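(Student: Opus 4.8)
The plan is to start from the series definition $\Exp_{s,t}(z)=\exp_{s,t}(z,\varphi_{s,t})=\sum_{n\ge0}\varphi_{s,t}^{\binom n2}z^n/\brk[c]{n}_{s,t}!$ and, using the identity $\brk[c]{n}_{s,t}!=\varphi_{s,t}^{\binom n2}\brk[s]{n}_q!$ from Eq.~(\ref{eqn_fibo_fact}), rewrite it as a classical $q$-exponential: $\Exp_{s,t}(z)=\sum_{n\ge0}z^n/\brk[s]{n}_q!=\exp_q(z)$, which is precisely what the identity $\Exp_{s,t}(z)=\exp_q(z)$ recorded after Theorem~\ref{theo_exp_conv} asserts. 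So the statement reduces to giving the infinite-product (meromorphic) continuation of the $q$-exponential $\exp_q(z)=\sum_{n\ge0}z^n/\brk[s]{n}_q!$, and then translating $\brk[s]{n}_q!$ back into $\varphi_{s,t},\varphi_{s,t}^{\prime}$ via $\brk[s]{n}_q=(\varphi_{s,t}^n-\varphi_{s,t}^{\prime n})/(\varphi_{s,t}^{n}-\varphi_{s,t}^{n-1}\varphi_{s,t}^{\prime})$, i.e.\ $1-q^n$ appears with $q=\varphi_{s,t}^{\prime}/\varphi_{s,t}$.

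For the case $|q|<1$: I would establish the functional equation $\Exp_{s,t}(z)=\dfrac{1}{1-(\varphi_{s,t}-\varphi_{s,t}^{\prime})z/\varphi_{s,t}}\,\Exp_{s,t}\!\big(q z\big)$, i.e.\ in $q$-exponential language $\exp_q(z)=\frac{1}{1-(1-q)z}\exp_q(qz)$, which is immediate from shifting the summation index and using $\brk[s]{n}_q!=\brk[s]{n}_q\,\brk[s]{n-1}_q!$ together with $\brk[s]{n}_q=1+q+\cdots+q^{n-1}$; more cleanly, $D_q(\Exp_{s,t}(z))=\Exp_{s,t}(z)$ from the corollary to Theorem~\ref{theo_diff_expu} unwinds to exactly this relation. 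Iterating the functional equation $k$ times gives $\Exp_{s,t}(z)=\Big(\prod_{j=0}^{k-1}\frac{1}{1-(\varphi_{s,t}-\varphi_{s,t}^{\prime})\varphi_{s,t}^{\prime j}\varphi_{s,t}^{-j-1}z}\Big)\Exp_{s,t}(q^k z)$; since $|q|<1$, $q^kz\to0$ and $\Exp_{s,t}(0)=1$, and the partial products converge locally uniformly away from the poles $z=\varphi_{s,t}^{k+1}/[(\varphi_{s,t}-\varphi_{s,t}^{\prime})\varphi_{s,t}^{\prime k}]$ because the defect $(\varphi_{s,t}-\varphi_{s,t}^{\prime})\varphi_{s,t}^{\prime j}\varphi_{s,t}^{-j-1}=(1-q)q^j/\varphi_{s,t}$ is summable. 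This yields the stated product $\prod_{k\ge0}\varphi_{s,t}^{k+1}/(\varphi_{s,t}^{k+1}-(\varphi_{s,t}-\varphi_{s,t}^{\prime})\varphi_{s,t}^{\prime k}z)$, and by the identity theorem it agrees with the entire-on-its-disk series $\Exp_{s,t}$, giving the meromorphic continuation.

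For the case $|q|>1$: here $\exp_q(z)$ in the form $\sum z^n/\brk[s]{n}_q!$ with $q=\varphi_{s,t}^{\prime}/\varphi_{s,t}$ of modulus $>1$ — equivalently $\Exp_{s,t}(z)=\exp_{q^{-1}}(z,q^{-1})$ using the second half of Eq.~(\ref{eqn_fibo_fact}) — and the analogous functional equation reads $\Exp_{s,t}(z)=\big(1-(\varphi_{s,t}-\varphi_{s,t}^{\prime})\varphi_{s,t}^{0}\varphi_{s,t}^{\prime -1}z\big)\Exp_{s,t}(qz)$? I would instead run the recursion in the direction that contracts: multiplying out, $\Exp_{s,t}(z)=\prod_{j=0}^{k-1}\big(1-(\varphi_{s,t}-\varphi_{s,t}^{\prime})\varphi_{s,t}^{j}\varphi_{s,t}^{\prime-(j+1)}z\big)\cdot\Exp_{s,t}(q^{-k}z)$-type telescoping; since now $\varphi_{s,t}^{j}/\varphi_{s,t}^{\prime(j+1)}=(q^{-1})^{j}/\varphi_{s,t}^{\prime}\to0$, the factors $\big(1-(\varphi_{s,t}-\varphi_{s,t}^{\prime})\varphi_{s,t}^{k}\varphi_{s,t}^{\prime-(k+1)}z\big)$ have defects summable in $k$, so $\prod_{k\ge0}\big(1-(\varphi_{s,t}-\varphi_{s,t}^{\prime})\varphi_{s,t}^{k}\varphi_{s,t}^{\prime-(k+1)}z\big)$ converges to an entire function; matching its power-series coefficients at $0$ against those of $\Exp_{s,t}$ (both determined by the same functional equation plus normalization $\Exp_{s,t}(0)=1$) finishes the case.

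The main obstacle is not the formal manipulation but the convergence/identification bookkeeping: one must verify that the $q$-difference functional equation together with the normalization at $z=0$ has a \emph{unique} solution in the relevant class (analytic near $0$ in the $|q|<1$ case, entire in the $|q|>1$ case), so that the candidate infinite product — which manifestly satisfies the same equation — must equal $\Exp_{s,t}$. Uniqueness follows by comparing Taylor coefficients: the functional equation forces a recursion $c_n=\frac{1-q}{1-q^n}c_{n-1}$ (resp.\ its $q^{-1}$ analogue) with $c_0=1$, pinning down every coefficient; the only genuinely analytic point is checking that the product's defining factors are summable so the product represents a bona fide analytic/meromorphic function, which is exactly where the hypotheses $|q|<1$ and $|q|>1$ (equivalently $u=\varphi_{s,t}$ vs.\ $u=\varphi_{s,t}^{\prime}$ being the ``good'' deformation parameter in Theorem~\ref{theo_exp_conv}) are used.
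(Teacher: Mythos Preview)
Your approach is essentially the same as the paper's: derive the functional equation $\Exp_{s,t}(w)=(1-(1-q)w)^{-1}\Exp_{s,t}(qw)$ (resp.\ $\Exp_{s,t}(w)=(1-(q^{-1}-1)w)\Exp_{s,t}(q^{-1}w)$) from the derivative identity, then iterate. The paper obtains the functional equation directly from $\mathbf{D}_{s,t}\Exp_{s,t}(z)=\Exp_{s,t}(\varphi_{s,t}z)$ rather than first passing through $\Exp_{s,t}=\exp_q$ and $D_q$, and it omits the convergence and uniqueness bookkeeping you supply, but the argument is the same.
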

\begin{proof}
As $\mathbf{D}_{s,t}\Exp_{s,t}(z)=\Exp_{s,t}(\varphi_{s,t}z)$, then
\begin{equation*}
    \frac{\Exp_{s,t}(\varphi_{s,t}z)-\Exp_{s,t}(\varphi_{s,t}^{\prime}z)}{(\varphi_{s,t}-\varphi_{s,t}^{\prime})z}=\Exp_{s,t}(\varphi_{s,t}z).
\end{equation*}
Suppose $\vert q\vert<1$. Then
\begin{align*}
    \Exp_{s,t}(\varphi_{s,t}z)&=[1-(\varphi_{s,t}-\varphi_{s,t}^{\prime})z]^{-1}\Exp_{s,t}(\varphi_{s,t}^{\prime}z)\nonumber
\end{align*}
and by setting $w=\varphi_{s,t}z$, 
\begin{equation}
    \Exp_{s,t}(w)=(1-(1-q)w)^{-1}\Exp_{s,t}(qw)\label{eqn_qm1}.
\end{equation}
Now, suppose $\vert q\vert>1$. Then
\begin{equation*}
    \Exp_{s,t}(\varphi_{s,t}^{\prime}z)=[1-(\varphi_{s,t}-\varphi_{s,t}^{\prime})z]\Exp_{s,t}(\varphi_{s,t}z).
\end{equation*}
By setting $w=\varphi_{s,t}^{\prime}z$,
\begin{equation}\label{eqn_qm2}
    \Exp_{s,t}(w)=[1-(q^{-1}-1)w]\Exp_{s,t}(q^{-1}w).
\end{equation}
Iterating Eqs. (\ref{eqn_qm1}) and (\ref{eqn_qm2}) yields desired results.
\end{proof}
Thus the meromorphic function $\prod_{k=0}^{\infty}\frac{1}{1-(1-q)q^kz}$, with simple poles at $\frac{1}{(1-q)q^k}$, $k\in\N_{0}$, is a good substitute for $\Exp_{s,t}(z)$.
The following are the specializations for the infinite product representation of the $(s,t)$-exponential function:
\begin{enumerate}
\item 
\begin{equation*}
     \Exp_{1,1}(z)=\prod_{k=0}^{\infty}\frac{(1+\sqrt{5})^{k+1}}{(1+\sqrt{5})^{k+1}-\sqrt{5}(1-\sqrt{5})^{k}z}.
\end{equation*}
\item
\begin{equation*}
   \Exp_{2,1}(z)=\prod_{k=0}^{\infty}\frac{(1+\sqrt{2})^{k+1}}{(1+\sqrt{2})^{k+1}-\sqrt{2}(1-\sqrt{2})^{k}z}.
\end{equation*}
\item 
\begin{equation*}
   \Exp_{1,2}(z)=\prod_{k=0}^{\infty}\frac{2^{k+1}}{2^{k+1}-3(-1)^{k}z}.
\end{equation*}
\item 
\begin{equation*}
   \Exp_{3,-2}(z)=\prod_{k=0}^{\infty}\frac{1}{1-(z/2^{k+1})}.
\end{equation*}
\item 
\begin{equation*}
     \Exp_{2t,-1}(z)=\prod_{k=0}^{\infty}\frac{(t+\sqrt{t^2-1})^{k+1}}{(t+\sqrt{t^2-1})^{k+1}-\sqrt{t^2-1}(t-\sqrt{t^2-1})^{k}z}.
\end{equation*}
\item 
\begin{equation*}
     \e_{p,q}(z)=\prod_{k=0}^{\infty}\frac{p^{k+1}}{p^{k+1}-(p-q)q^{k}z}.
\end{equation*}
\end{enumerate}
When $s\neq0$ and $t=0$, then
\begin{equation*}
    \Exp_{s,0}(z)=\frac{1}{1-z}.
\end{equation*}
Analogously, the following theorem is proved.

\begin{theorem}\label{theo_exp2}
Set $s\neq0$ and $t\neq0$. If $\vert q\vert<1$, then
    \begin{equation*}
    \Exp_{s,t}^{\prime}(z)=\prod_{k=0}^{\infty}\left(1+(\varphi_{s,t}-\varphi_{s,t}^{\prime})\frac{\varphi_{s,t}^{\prime k}}{\varphi_{s,t}^{k+1}}z\right).
\end{equation*}
If $\vert q\vert>1$, then the meromorphic continuation of $\Exp_{s,t}^{\prime}(z)$ is given by
\begin{equation*}
        \Exp_{s,t}^{\prime}(z)=\prod_{k=0}^{\infty}\frac{\varphi_{s,t}^{\prime(k+1)}}{\varphi_{s,t}^{\prime(k+1)}+(\varphi_{s,t}-\varphi_{s,t}^{\prime})\varphi_{s,t}^{k}z}.
    \end{equation*}
\end{theorem}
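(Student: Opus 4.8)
The plan is to run the same argument as for Theorem~\ref{theo_exp1}, with the roles of $\varphi_{s,t}$ and $\varphi_{s,t}^{\prime}$ (equivalently, of $q$ and $q^{-1}$) interchanged. The starting point is a first-order $(s,t)$-difference equation for $\Exp_{s,t}^{\prime}$. Since $\Exp_{s,t}^{\prime}(z)=\exp_{s,t}(z,\varphi_{s,t}^{\prime})$, Theorem~\ref{theo_diff_expu} with $v=1$ and $u=\varphi_{s,t}^{\prime}$ gives $\mathbf{D}_{s,t}\Exp_{s,t}^{\prime}(z)=\exp_{s,t}(\varphi_{s,t}^{\prime}z,\varphi_{s,t}^{\prime})=\Exp_{s,t}^{\prime}(\varphi_{s,t}^{\prime}z)$. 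Writing out the deformed $(s,t)$-derivative and clearing the denominator turns this into the functional equation
\[
\Exp_{s,t}^{\prime}(\varphi_{s,t}z)=\bigl[1+(\varphi_{s,t}-\varphi_{s,t}^{\prime})z\bigr]\,\Exp_{s,t}^{\prime}(\varphi_{s,t}^{\prime}z),
\]
which is the engine of both cases. Note that the right-hand factor is a polynomial (not its inverse, as in Theorem~\ref{theo_exp1}), which foreshadows that $\Exp_{s,t}^{\prime}$ is entire rather than meromorphic when $\lvert q\rvert<1$.

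Assume first $\lvert q\rvert<1$. I would substitute $z\mapsto z/\varphi_{s,t}$ and use $(\varphi_{s,t}-\varphi_{s,t}^{\prime})/\varphi_{s,t}=1-q$, $\varphi_{s,t}^{\prime}/\varphi_{s,t}=q$, obtaining $\Exp_{s,t}^{\prime}(z)=[1+(1-q)z]\,\Exp_{s,t}^{\prime}(qz)$. Iterating $N$ times yields $\Exp_{s,t}^{\prime}(z)=\prod_{k=0}^{N-1}[1+(1-q)q^{k}z]\cdot\Exp_{s,t}^{\prime}(q^{N}z)$; letting $N\to\infty$, using $q^{N}z\to0$, $\Exp_{s,t}^{\prime}(0)=1$, and continuity of $\Exp_{s,t}^{\prime}$ at $0$ (legitimate since $\lvert\varphi_{s,t}^{\prime}\rvert<\lvert\varphi_{s,t}\rvert$ makes $\Exp_{s,t}^{\prime}$ entire by Theorem~\ref{theo_exp_conv}), and noting that $\sum_{k}\lvert(1-q)q^{k}z\rvert<\infty$ so the product converges absolutely and locally uniformly, gives $\Exp_{s,t}^{\prime}(z)=\prod_{k=0}^{\infty}[1+(1-q)q^{k}z]$. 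The identity $(1-q)q^{k}=(\varphi_{s,t}-\varphi_{s,t}^{\prime})\varphi_{s,t}^{\prime k}/\varphi_{s,t}^{k+1}$ then rewrites each factor in the claimed form. For $\lvert q\rvert>1$, the defining series converges only in $\lvert z\rvert<\lvert\varphi_{s,t}^{\prime}\rvert/\sqrt{s^{2}+4t}$ (Theorem~\ref{theo_exp_conv}), so I would instead solve the functional equation for the value at the smaller argument, $\Exp_{s,t}^{\prime}(\varphi_{s,t}^{\prime}z)=[1+(\varphi_{s,t}-\varphi_{s,t}^{\prime})z]^{-1}\Exp_{s,t}^{\prime}(\varphi_{s,t}z)$, substitute $z\mapsto z/\varphi_{s,t}^{\prime}$, and use $(\varphi_{s,t}-\varphi_{s,t}^{\prime})/\varphi_{s,t}^{\prime}=q^{-1}-1$, $\varphi_{s,t}/\varphi_{s,t}^{\prime}=q^{-1}$ to get $\Exp_{s,t}^{\prime}(z)=[1+(q^{-1}-1)z]^{-1}\Exp_{s,t}^{\prime}(q^{-1}z)$. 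Since $\lvert q^{-1}\rvert<1$, the same iteration (now $q^{-N}z\to0$, with $\Exp_{s,t}^{\prime}$ continuous at $0$ because the series converges near $0$) gives $\Exp_{s,t}^{\prime}(z)=\prod_{k=0}^{\infty}[1+(q^{-1}-1)q^{-k}z]^{-1}$, and $(q^{-1}-1)q^{-k}=(\varphi_{s,t}-\varphi_{s,t}^{\prime})\varphi_{s,t}^{k}/\varphi_{s,t}^{\prime(k+1)}$ puts each factor into the asserted shape.

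The only step beyond bookkeeping is, exactly as in Theorem~\ref{theo_exp1}, the $\lvert q\rvert>1$ case: one must check that the telescoped infinite product is a genuine \emph{meromorphic continuation} of $\Exp_{s,t}^{\prime}$ — that the partial products converge locally uniformly off the pole set $\{\,z=-\varphi_{s,t}^{\prime(k+1)}/((\varphi_{s,t}-\varphi_{s,t}^{\prime})\varphi_{s,t}^{k})\,\}_{k\ge0}$ and agree with the power series on its disk of convergence — rather than a merely formal identity; Theorem~\ref{theo_exp_conv} is invoked precisely to locate that disk, hence the poles. The $\lvert q\rvert<1$ case is soft by comparison, both sides being entire functions that agree with a convergent series near $0$. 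As a sanity check (and an alternative route) one may observe that $\Exp_{s,t}^{\prime}$ is obtained from $\Exp_{s,t}$ by exchanging the two roots of $x^{2}-sx-t$, which sends $q\mapsto q^{-1}$ and $(\varphi_{s,t}-\varphi_{s,t}^{\prime})\mapsto-(\varphi_{s,t}-\varphi_{s,t}^{\prime})$; feeding this into the two formulas of Theorem~\ref{theo_exp1} reproduces exactly the two formulas above.
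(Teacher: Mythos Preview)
Your proof is correct and follows exactly the route the paper intends: the paper merely writes ``Analogously, the following theorem is proved,'' meaning one repeats the proof of Theorem~\ref{theo_exp1} starting from $\mathbf{D}_{s,t}\Exp_{s,t}^{\prime}(z)=\Exp_{s,t}^{\prime}(\varphi_{s,t}^{\prime}z)$, deriving the functional equation $\Exp_{s,t}^{\prime}(\varphi_{s,t}z)=[1+(\varphi_{s,t}-\varphi_{s,t}^{\prime})z]\Exp_{s,t}^{\prime}(\varphi_{s,t}^{\prime}z)$, and iterating in the appropriate direction depending on whether $\lvert q\rvert<1$ or $\lvert q\rvert>1$. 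Your additional care about convergence of the infinite products and the legitimacy of the meromorphic continuation in the $\lvert q\rvert>1$ case goes beyond what the paper spells out, and your closing symmetry observation (swapping $\varphi_{s,t}\leftrightarrow\varphi_{s,t}^{\prime}$, i.e.\ $q\leftrightarrow q^{-1}$, in Theorem~\ref{theo_exp1}) is a nice independent check.
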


From Theorem \ref{theo_exp1} and \ref{theo_exp2} we have the following result.
\begin{corollary}\label{cor_inver_exp}
Set $s\neq0$. For all $q$ with $q\neq1$,
\begin{equation*}
    \Exp_{s,t}(z)\Exp_{s,t}^{\prime}(-z)=\Exp_{s,t}(-z)\Exp_{s,t}^{\prime}(z)=1.
\end{equation*}
\end{corollary}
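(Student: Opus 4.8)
The plan is to read the identity off, factor by factor, from the infinite product representations of $\Exp_{s,t}$ and $\Exp_{s,t}^{\prime}$ established in Theorems \ref{theo_exp1} and \ref{theo_exp2}. In each of the two regimes $\vert q\vert<1$ and $\vert q\vert>1$ the $k$-th factor of the product for $\Exp_{s,t}(z)$ turns out to be exactly the reciprocal of the $k$-th factor of the product for $\Exp_{s,t}^{\prime}(-z)$, so the two products multiply to $\prod_{k=0}^{\infty}1=1$; the companion identity $\Exp_{s,t}(-z)\Exp_{s,t}^{\prime}(z)=1$ then follows by replacing $z$ with $-z$.

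Concretely, for $\vert q\vert<1$ Theorem \ref{theo_exp1} gives the $k$-th factor of $\Exp_{s,t}(z)$ as $\varphi_{s,t}^{k+1}\big/\big(\varphi_{s,t}^{k+1}-(\varphi_{s,t}-\varphi_{s,t}^{\prime})\varphi_{s,t}^{\prime k}z\big)$, whereas Theorem \ref{theo_exp2} gives the $k$-th factor of $\Exp_{s,t}^{\prime}(z)$ as $1+(\varphi_{s,t}-\varphi_{s,t}^{\prime})\varphi_{s,t}^{\prime k}\varphi_{s,t}^{-(k+1)}z$, so the $k$-th factor of $\Exp_{s,t}^{\prime}(-z)$ is $1-(\varphi_{s,t}-\varphi_{s,t}^{\prime})\varphi_{s,t}^{\prime k}\varphi_{s,t}^{-(k+1)}z=\big(\varphi_{s,t}^{k+1}-(\varphi_{s,t}-\varphi_{s,t}^{\prime})\varphi_{s,t}^{\prime k}z\big)\big/\varphi_{s,t}^{k+1}$, precisely the reciprocal. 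For $\vert q\vert>1$ one runs the identical argument with the roles of $\varphi_{s,t}$ and $\varphi_{s,t}^{\prime}$ interchanged, using the other half of Theorems \ref{theo_exp1} and \ref{theo_exp2}: there the $k$-th factor of $\Exp_{s,t}(z)$ is $1-(\varphi_{s,t}-\varphi_{s,t}^{\prime})\varphi_{s,t}^{k}\varphi_{s,t}^{\prime-(k+1)}z$ while that of $\Exp_{s,t}^{\prime}(-z)$ is $\varphi_{s,t}^{\prime(k+1)}\big/\big(\varphi_{s,t}^{\prime(k+1)}-(\varphi_{s,t}-\varphi_{s,t}^{\prime})\varphi_{s,t}^{k}z\big)$, again mutually reciprocal. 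The degenerate case $t=0$, i.e. $q=0$, is immediate, since $\Exp_{s,0}(z)=1/(1-z)$ and $\Exp_{s,0}^{\prime}(z)=1+z$ give $\Exp_{s,0}(z)\Exp_{s,0}^{\prime}(-z)=(1-z)^{-1}(1-z)=1$.

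The only point requiring care --- and the main, if modest, obstacle --- is the analytic bookkeeping: by Theorem \ref{theo_exp_conv} the defining series of $\Exp_{s,t}$ converges only on a disk, so the equality is cleanest read as an identity between the continuations of Theorems \ref{theo_exp1} and \ref{theo_exp2} (meromorphic on one side, entire on the other), where the factorwise cancellation above establishes it away from the poles; since two meromorphic functions agreeing on a nonempty open set coincide, it then holds identically. One should also check the signs so that $\Exp_{s,t}^{\prime}(-z)$ reproduces exactly the reciprocal factor, with no stray index shift. A self-contained alternative that sidesteps continuation is to compute with the series directly: writing $\Exp_{s,t}(z)=\sum_{n=0}^{\infty}\varphi_{s,t}^{\binom{n}{2}}z^{n}/\brk[c]{n}_{s,t}!$ and $\Exp_{s,t}^{\prime}(-z)=\sum_{n=0}^{\infty}(-1)^{n}\varphi_{s,t}^{\prime\binom{n}{2}}z^{n}/\brk[c]{n}_{s,t}!$, forming their Cauchy product and inserting Eq.(\ref{eqn_fibo_bin}) together with $\varphi_{s,t}^{\prime}=q\varphi_{s,t}$ and the elementary identity $\binom{k}{2}+\binom{n-k}{2}+k(n-k)=\binom{n}{2}$ reduces the coefficient of $z^{n}/\brk[c]{n}_{s,t}!$ to $\varphi_{s,t}^{\binom{n}{2}}\sum_{j=0}^{n}(-1)^{j}q^{\binom{j}{2}}\binom{n}{j}_{q}$, which equals $\delta_{n,0}$ by the $q$-binomial theorem; this proves both identities simultaneously.
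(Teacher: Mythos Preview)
Your argument is correct and follows exactly the route the paper intends: the corollary is stated as an immediate consequence of Theorems~\ref{theo_exp1} and~\ref{theo_exp2}, and your factor-by-factor cancellation of the two infinite products (in each regime $\vert q\vert\lessgtr 1$, plus the degenerate case $t=0$) is precisely that deduction made explicit. The Cauchy-product alternative via Eq.~(\ref{eqn_fibo_bin}) and the $q$-binomial theorem is a nice self-contained bonus that the paper does not give.
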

We end this section by giving inequalities for the function $\exp_{s,t}(x,u)$.
\begin{theorem}\label{theo_ineq_exp}
Set $s,t\in\R$ such that $s\neq0$ and $s^2+4t>0$. The function $\exp_{s,t}(x,u)$ satisfies the following inequalities:
\begin{enumerate}
    \item For $s,t$ fixed and for $0<u<v$, $a\geq0$, it holds
    \begin{equation*}
        1\leq\exp_{s,t}(ax,u)\leq\exp_{s,t}(ax,v),\ 0\leq x<\infty.
    \end{equation*}
    \item Fix $t\in\R$ and tome $s_{1},s_{2}\in\R$ such that  $s_{1}<s_{2}$. Then 
    \begin{equation*}
    \exp_{s_{1},t}(x,u)>\exp_{s_{2},t}(x,u).    
    \end{equation*}
    \item Fix $s\in\R$ and tome $t_{1},t_{2}\in\R$ such that  $t_{1}<t_{2}$. Then
    \begin{equation*}
        \exp_{s,t_{1}}(x,u)>\exp_{s,t_{2}}(x,u).
    \end{equation*}
\end{enumerate}
\end{theorem}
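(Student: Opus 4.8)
The plan is to prove all three parts by a term-by-term comparison of the defining series $\exp_{s,t}(z,u)=\sum_{n\geq 0}u^{\binom{n}{2}}z^{n}/\brk[c]{n}_{s,t}!$, using $\brk[c]{0}_{s,t}!=1$ and $\brk[c]{1}_{s,t}=1$. To fix ideas I first treat the principal range $s>0$, $t>0$ (the general admissible range is discussed at the end): there the recurrence $\brk[c]{n+2}_{s,t}=s\brk[c]{n+1}_{s,t}+t\brk[c]{n}_{s,t}$ together with $\brk[c]{1}_{s,t}=1>0$ and $\brk[c]{2}_{s,t}=s>0$ gives $\brk[c]{n}_{s,t}>0$ for all $n\geq 1$, hence $\brk[c]{n}_{s,t}!>0$, so with $z=ax\geq 0$ and $u>0$ every summand is nonnegative. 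Because the coefficients are (eventually) nonnegative it suffices to verify each inequality for the partial sums and let the degree tend to infinity in $[1,+\infty]$; this also disposes of the cases in which a series diverges — by Theorem \ref{theo_exp_conv} this happens for large $x$ when $|u|=|\varphi_{s,t}|$ — both sides then being $+\infty$.

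For part 1, the $n=0$ summand equals $1$ and the $n=1$ summand equals $ax$, and neither depends on $u$; all remaining summands are $\geq 0$, whence $\exp_{s,t}(ax,u)\geq 1+ax\geq 1$. For the right-hand inequality, for $n\geq 2$ one has $\binom{n}{2}\geq 1$, so $0<u<v$ forces $u^{\binom{n}{2}}\leq v^{\binom{n}{2}}$; multiplying by the nonnegative factor $(ax)^{n}/\brk[c]{n}_{s,t}!$ and summing over $n$ (the $n=0,1$ terms agreeing) gives $\exp_{s,t}(ax,u)\leq\exp_{s,t}(ax,v)$.

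For parts 2 and 3 the crux is a monotonicity lemma for the polynomials themselves: for fixed $t>0$, the map $s\mapsto\brk[c]{n}_{s,t}$ is strictly increasing on $(0,\infty)$ for every $n\geq 2$; for fixed $s>0$, the map $t\mapsto\brk[c]{n}_{s,t}$ is strictly increasing on $(0,\infty)$ for every $n\geq 3$. Both follow by induction on $n$ from the recurrence, with base cases $\brk[c]{2}_{s,t}=s$ and $\brk[c]{3}_{s,t}=s^{2}+t$: in the inductive step, since $\brk[c]{n}_{s,t}>0$ throughout, $s\brk[c]{n+1}_{s,t}$ is a product of two positive, strictly increasing functions of $s$ — hence strictly increasing — while $t\brk[c]{n}_{s,t}$ is nondecreasing in $s$, and the analogous splitting handles the $t$-variable. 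Consequently $\brk[c]{n}_{s_{1},t}!<\brk[c]{n}_{s_{2},t}!$ for all $n\geq 2$ and $\brk[c]{n}_{s,t_{1}}!<\brk[c]{n}_{s,t_{2}}!$ for all $n\geq 3$, so the reciprocals obey the reversed inequalities, strictly for those $n$. Fixing $x>0$ and $u>0$ and comparing the two series term by term — the comparison being already strict at the $n=2$ summand for part 2 and at the $n=3$ summand for part 3 (so the difference exceeds a fixed positive amount) — yields $\exp_{s_{1},t}(x,u)>\exp_{s_{2},t}(x,u)$ and $\exp_{s,t_{1}}(x,u)>\exp_{s,t_{2}}(x,u)$.

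The one genuinely delicate point, and the main obstacle, is what happens outside the cone $s,t>0$. If $t<0$ the $\brk[c]{n}_{s,t}$ may change sign (witness the Chebyshev specialization), and if $s<0$ one has $\brk[c]{n}_{s,t}=(-1)^{n+1}\brk[c]{n}_{-s,t}$ by Proposition \ref{prop_abs_nst}, so the summands cease to be nonnegative and the termwise argument must be reorganized (e.g.\ by grouping consecutive summands, or by passing through $\exp_{|s|,t}$). Likewise, for part 3 the monotonicity of $\brk[c]{n}_{s,t}$ in $t$ on the whole admissible strip $t>-s^{2}/4$ — not merely $t\geq 0$ — is not visible from the recurrence bookkeeping above; one must extract it from Binet's identity (\ref{eqn_binet}), for instance by checking that $\partial_{t}\brk[c]{n}_{s,t}=\big(n(\varphi_{s,t}^{n-1}+\varphi_{s,t}^{\prime(n-1)})-2\brk[c]{n}_{s,t}\big)/(s^{2}+4t)$ stays positive there, and similarly for the $s$-monotonicity when $t<0$ (where both characteristic roots are positive and a direct comparison via $\brk[c]{n}_{s,t}=\sum_{j=0}^{n-1}\varphi_{s,t}^{\,n-1-j}\varphi_{s,t}^{\prime j}$ is available). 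Granting these monotonicity statements, the three inequalities follow immediately from the term-by-term comparisons described above.
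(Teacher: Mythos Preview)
Your argument --- termwise comparison of the defining series, together with an induction on the recurrence $\brk[c]{n+2}_{s,t}=s\brk[c]{n+1}_{s,t}+t\brk[c]{n}_{s,t}$ to show that $\brk[c]{n}_{s,t}$ is strictly increasing in $s$ and in $t$ --- is exactly the paper's approach. The paper's own proof is terser and does not confront the sign issues outside the cone $s>0$, $t\geq 0$ that you correctly flag (its inductive step $s_{1}\brk[c]{n}_{s_{1},t}+t\brk[c]{n-1}_{s_{1},t}<s_{2}\brk[c]{n}_{s_{2},t}+t\brk[c]{n-1}_{s_{2},t}$ is asserted without comment), so your caveats and your suggestion to pass through Binet's formula in the general case go beyond what the paper actually supplies.
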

\begin{proof}
Since $0<u<v$ and $a\geq0$, we have for $n\geq0$
\begin{align*}
    u^{\binom{n}{2}}&\leq v^{\binom{n}{2}}\\
    u^{\binom{n}{2}}a^{n}\frac{x^n}{\brk[c]{n}_{s,t}!}&\leq v^{\binom{n}{2}}a^n\frac{x^n}{\brk[c]{n}_{s,t}!}.
\end{align*}
Taking summation over $n$, we get for $0\leq x<\infty$ the desired result. Now set $t\in\R$ and assume $s_{1}<s_{2}$ for $s_{1},s_{2}\in\R$. Then $\brk[c]{2}_{s_{1},t}=s_{1}<s_{2}=\brk[c]{2}_{s_{2},t}$. Assume by induction hypothesis that $\brk[c]{n}_{s_{1},t}<\brk[c]{n}_{s_{2},t}$. In this way
\begin{align*}
    \brk[c]{n+1}_{s_{1},t}&=s_{1}\brk[c]{n}_{s_{1},t}+t\brk[c]{n-1}_{s_{1},t}\\
    &<s_{2}\brk[c]{n}_{s_{2},t}+t\brk[c]{n-1}_{s_{2},t}=\brk[c]{n+1}_{s_{2},t}
\end{align*}
and $\brk[c]{n}_{s_{1},t}<\brk[c]{n}_{s_{2},t}$ for all $n\geq2$. It follows that $\brk[c]{n}_{s_{1},t}!\leq\brk[c]{n}_{s_{2},t}!$ and that $\exp_{s_{1},t}(x,u)>\exp_{s_{2},t}(x,u)$. The last statement is proved in a similar way.
\end{proof}

\begin{theorem}\label{theo_ineq_exp2}
Set $s\neq0$, $s^2+4t>0$.
\begin{enumerate}
    \item If $\vert q\vert<1$ and $x\neq0$, then 
    \begin{equation}\label{eqn_ineq_exp1}
    \exp_{s,t}(x,u)<\Exp_{s,t}^{\prime}(x)<e^{x},    
    \end{equation}
    provided that $u<\varphi_{s,t}^\prime$. If $x<\frac{1}{1-q}$, then
    \begin{equation*}
        e^{x}<\Exp_{s,t}(x).
    \end{equation*}
    \item If $\vert q\vert>1$ and $x\neq0$, then 
    \begin{equation*}
        \exp_{s,t}(x,u)<\Exp_{s,t}(x)<e^{-x},
    \end{equation*}
    provided that $u<\varphi_{s,t}$. If $x<\frac{q}{q-1}$, then
    \begin{equation*}
        e^{x}<\Exp_{s,t}^{\prime}(x).
    \end{equation*}
\end{enumerate}
\end{theorem}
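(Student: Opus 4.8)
The whole statement reduces to two elementary scalar inequalities together with the product representations of $\Exp_{s,t}$ and $\Exp_{s,t}^{\prime}$ already established. Recall that $1+y\le e^{y}$ for every real $y$ and that $e^{y}\le\frac{1}{1-y}$ for every $y<1$, each with equality only at $y=0$. Recall also from Theorems~\ref{theo_exp1} and~\ref{theo_exp2} that, when $\vert q\vert<1$,
\[
  \Exp_{s,t}^{\prime}(x)=\prod_{k=0}^{\infty}\bigl(1+(1-q)q^{k}x\bigr),\qquad
  \Exp_{s,t}(x)=\prod_{k=0}^{\infty}\frac{1}{1-(1-q)q^{k}x},
\]
while for $\vert q\vert>1$ the two formulas hold with $q$ replaced by $q^{-1}$ and with $\Exp_{s,t}$ and $\Exp_{s,t}^{\prime}$ interchanged; the identity $(1-q)\sum_{k\ge0}q^{k}=1$ is what makes the exponents below telescope to exactly $x$.

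First I would dispose of the left-hand inequality, $\exp_{s,t}(x,u)<\Exp_{s,t}^{\prime}(x)$ when $\vert q\vert<1$ and $\exp_{s,t}(x,u)<\Exp_{s,t}(x)$ when $\vert q\vert>1$. This is in essence Theorem~\ref{theo_ineq_exp}(1): taking $v=\varphi_{s,t}^{\prime}$ (resp.\ $v=\varphi_{s,t}$), the hypothesis $u<v$ together with the standing assumption $u>0$ gives $0<u<v$, hence the $(s,t)$-exponential coefficients satisfy $u^{\binom{n}{2}}\le v^{\binom{n}{2}}$, strictly for $n\ge2$; since $\brk[c]{n}_{s,t}!>0$ in this regime, a term-by-term comparison of the two $(s,t)$-exponential series for $x>0$ gives the strict inequality.

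Next comes the comparison with the ordinary exponential. For $\vert q\vert<1$, applying $1+y\le e^{y}$ to each factor of the product for $\Exp_{s,t}^{\prime}(x)$ gives $\Exp_{s,t}^{\prime}(x)=\prod_{k}\bigl(1+(1-q)q^{k}x\bigr)<\prod_{k}e^{(1-q)q^{k}x}=e^{(1-q)x\sum_{k\ge0}q^{k}}=e^{x}$, the strictness coming from the $k=0$ factor since $x\ne0$. For the reverse bound, the hypothesis $x<\frac{1}{1-q}$ forces $1-(1-q)q^{k}x>0$ for every $k\ge0$ (the $k=0$ factor being the binding one), so $e^{y}\le\frac{1}{1-y}$ with $y=(1-q)q^{k}x$ yields $\Exp_{s,t}(x)=\prod_{k}\frac{1}{1-(1-q)q^{k}x}>\prod_{k}e^{(1-q)q^{k}x}=e^{x}$. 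The case $\vert q\vert>1$ follows by applying the same two steps to the mirrored product formulas of Theorems~\ref{theo_exp1} and~\ref{theo_exp2} (with $q^{-1}$ in place of $q$), the threshold $\frac{q}{q-1}=\frac{1}{1-q^{-1}}$ playing the role of $\frac{1}{1-q}$; alternatively one can transport the bounds just obtained for $\Exp_{s,t}^{\prime}$ to $\Exp_{s,t}$ through the reciprocal identity $\Exp_{s,t}(x)\Exp_{s,t}^{\prime}(-x)=1$ of Corollary~\ref{cor_inver_exp}.

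The part I expect to cost the most work is not any of the estimates above but the sign and convergence bookkeeping needed to legitimize them. One must verify that the infinite products really converge and keep a fixed sign on the stated $x$-ranges — this is precisely the role of the hypotheses $x<\frac{1}{1-q}$ and $x<\frac{q}{q-1}$. When $q<0$ (the case $t>0$) the numbers $(1-q)q^{k}x$ alternate in sign: the upper bound by $e^{x}$ still survives because $1+y\le e^{y}$ holds for every real $y$, but the lower bound needs the restriction on $x$ so that no factor vanishes or reverses sign. And when $s<0$ the $(s,t)$-Fibotorial changes sign with $n$, so I would first reduce to the case $s>0$ via $\vert\brk[c]{n}_{s,t}\vert=\brk[c]{n}_{\vert s\vert,t}$ from Proposition~\ref{prop_abs_nst}; this reduction also explains why the strict inequality $\exp_{s,t}(x,u)<\Exp_{s,t}^{\prime}(x)$ (resp.\ $<\Exp_{s,t}(x)$) is stated for $x\ne0$ rather than for all real $x$.
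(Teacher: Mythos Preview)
Your proposal is correct and follows essentially the same route as the paper: both arguments combine the infinite product representations of $\Exp_{s,t}$ and $\Exp_{s,t}^{\prime}$ from Theorems~\ref{theo_exp1}--\ref{theo_exp2} with the elementary bounds $1+y\le e^{y}$ and $e^{y}\le(1-y)^{-1}$, and both invoke Theorem~\ref{theo_ineq_exp} for the leftmost inequality. Your treatment is in fact more careful than the paper's about the sign and convergence bookkeeping (the $q<0$ and $s<0$ cases), which the paper handles only implicitly.
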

\begin{proof}
Set $\vert q\vert<1$. Denote
\begin{align*}
    P_{n}(x)=\prod_{k=0}^{n}(1+x(1-q)q^k).
\end{align*}
Then we have
\begin{equation*}
    P_{n}(x)<\exp\left(\sum_{k=0}^{n}x(1-q)q^{k}\right)=\exp\left(x(1-q^{n+1})\right),
\end{equation*}
hence
\begin{equation*}
    \Exp_{s,t}^{\prime}(x)=\lim_{n\rightarrow\infty}P_{n}(x)<e^{x},\ \ x>0.
\end{equation*}
On the other side, if $x<\frac{1}{1-q}$, 
\begin{align*}
    \Exp_{s,t}(x)&=\lim_{n\rightarrow\infty}\prod_{k=0}^{n}\frac{1}{1-(1-q)q^kz}\\
    &>\lim_{n\rightarrow\infty}\exp\left(\sum_{k=0}^{n}(1-q)q^{k}x\right)\\
    &=\exp\left((1-q)x\frac{1}{1-q}\right)=e^{x}.
\end{align*}
The remainder of Eq.(\ref{eqn_ineq_exp1}) follows from Theorem \ref{theo_ineq_exp}. The proof for $\vert q\vert>1$ is similar to the previous one.
\end{proof}
The following result follows from Theorem \ref{theo_ineq_exp2}.
\begin{corollary}
Set $s\neq0$ and $s^2+4t>0$. Then
\begin{enumerate}
    \item $\lim_{x\rightarrow\infty}\exp_{s,t}(-x,u)=0$, provided that $0<u<\varphi_{s,t}^{\prime}$ and $\vert q\vert<1$.
    \item $\lim_{x\rightarrow\infty}\Exp_{s,t}(x)=\infty$, provided that $\vert q\vert<1$.
    \item $\lim_{x\rightarrow\infty}\Exp_{s,t}^\prime(x)=\infty$, provided that $\vert q\vert>1$.
    \item $\lim_{x\rightarrow\infty}\Exp_{s,t}(x)=\lim_{x\rightarrow\infty}\exp_{s,t}(x,u)=0$, provided that $\vert q\vert>1$ and $0<u<\varphi_{s,t}$.
\end{enumerate}
\end{corollary}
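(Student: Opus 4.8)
The four limits come in two dual pairs, exchanged by the substitution $q\mapsto q^{-1}$ together with $\varphi_{s,t}\leftrightarrow\varphi_{s,t}^{\prime}$: items 2 and 3 are blow-up statements that I would read off the reverse inequalities in Theorem \ref{theo_ineq_exp2}, while items 1 and 4 are squeeze arguments built on the exponential upper bounds of that same theorem together with Corollary \ref{cor_inver_exp}. Throughout, $q=\varphi_{s,t}^{\prime}/\varphi_{s,t}$. (When $t=0$ the only non-vacuous assertion is item 2, which is immediate from $\Exp_{s,0}(z)=\frac{1}{1-z}$, so I may assume $t\neq0$.)

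For items 2 and 3 I would first take $|q|<1$. Theorem \ref{theo_ineq_exp2}(1) gives $e^{x}<\Exp_{s,t}(x)$ on the interval where the comparison is meaningful, and by Eq.(\ref{eqn_fibo_fact}) the series $\Exp_{s,t}(x)$ has nonnegative Taylor coefficients, hence is nondecreasing on its interval of convergence; therefore $\lim_{x}\Exp_{s,t}(x)$ exists in $(0,\infty]$, and if it were finite the series would have to converge at the right endpoint of that interval, which it does not, so the limit is $+\infty$. Item 3 follows by the identical argument applied to $\Exp_{s,t}^{\prime}(x)$ when $|q|>1$, using the second inequality of Theorem \ref{theo_ineq_exp2}(2).

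For items 1 and 4 I would take $|q|<1$ and $0<u<\varphi_{s,t}^{\prime}$. Substituting $y=-x$ with $x>0$ into the chain $\exp_{s,t}(y,u)<\Exp_{s,t}^{\prime}(y)<e^{y}$ of Theorem \ref{theo_ineq_exp2}(1) yields the upper estimate $\exp_{s,t}(-x,u)<e^{-x}\to0$. For the matching lower estimate I would use Corollary \ref{cor_inver_exp} in the form $\Exp_{s,t}^{\prime}(-x)=1/\Exp_{s,t}(x)$, so that item 2 gives $\Exp_{s,t}^{\prime}(-x)\to0$, and then bring in the monotonicity of $\exp_{s,t}(\,\cdot\,,u)$ in the deformation parameter $u$ from Theorem \ref{theo_ineq_exp} — legitimate precisely because $0<u<\varphi_{s,t}^{\prime}$ — to trap $\exp_{s,t}(-x,u)$ inside an interval around $0$ whose endpoints are governed by $\Exp_{s,t}^{\prime}(-x)$; the squeeze theorem then closes item 1. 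Item 4 is the mirror image: for $|q|>1$ and $0<u<\varphi_{s,t}$ one uses $\exp_{s,t}(x,u)<\Exp_{s,t}(x)<e^{-x}$ from Theorem \ref{theo_ineq_exp2}(2) together with $\Exp_{s,t}(x)\Exp_{s,t}^{\prime}(-x)=1$ and item 3.

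The only step I expect to require care is the lower estimate in items 1 and 4: Theorem \ref{theo_ineq_exp2} by itself bounds $\exp_{s,t}(-x,u)$ from above only, so one must combine it with Corollary \ref{cor_inver_exp} and with the parameter-monotonicity of $\exp_{s,t}$ in order to pin the function down from below and force the two one-sided bounds to meet. Everything else reduces either to substituting $y=-x$ in inequalities already proved or to the elementary fact that a power series with nonnegative coefficients which diverges at the end of its interval of convergence tends to $+\infty$ there.
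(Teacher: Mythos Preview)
The paper offers no proof beyond declaring that the corollary follows from Theorem~\ref{theo_ineq_exp2}, so your overall strategy --- read the one-sided estimates directly off that theorem, and pair items 1/4 with 2/3 via the $q\leftrightarrow q^{-1}$ duality --- is exactly what is intended.

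The point you rightly isolate as delicate is the \emph{lower} control needed in items~1 and~4, and here the tool you reach for does not do the job as stated. Theorem~\ref{theo_ineq_exp}(1) is formulated only for $a\geq0$ and $0\leq x<\infty$ (and its proof tacitly assumes $\brk[c]{n}_{s,t}!>0$), so it gives no monotonicity-in-$u$ information about $\exp_{s,t}(-x,u)$ and cannot be used to trap that quantity between multiples of $\Exp_{s,t}^{\prime}(-x)$. Worse, in the dual case of item~4 the same theorem, read literally with $a=1$ and $x>0$, would yield $\exp_{s,t}(x,u)\geq1$, contradicting the claimed limit~$0$; this shows that the implicit positivity hypothesis on $\brk[c]{n}_{s,t}!$ is essential there and that Theorem~\ref{theo_ineq_exp} is simply unavailable in the $|q|>1$ regime you need it for. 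The paper's one-line cross-reference glosses over the same lower-bound issue, so you are in no worse position than the original; but as written, the squeeze for items~1 and~4 is not closed by the results you cite.
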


\section{The equation $\mathbf{D}_{s,t}y(x)=f(x,y(x),y(ux))$}

\subsection{Existence theorem for functional-difference equations with proportional delay}

\begin{definition}
The equation
\begin{equation}\label{eqn_dde}
    (\mathbf{D}_{s,t}y)(x)=f(x,y(x),y(ux)),\ y(0)=y_{0},
\end{equation}
where $\vert f(x,y(x),y(ux))\vert x^{\alpha}$, $0\leq\alpha<1$, is bounded on some rectangle either
\begin{equation*}
    R_{\varphi}=\{\vert x\vert\leq a,\vert y(x)-y_{0}\vert\leq b/\vert\varphi_{s,t}\vert,\vert y(ux)-y_{0}\vert\leq b,a>0,\ b>0\},\hspace{0.5cm}0<u\leq\vert\varphi_{s,t}\vert
\end{equation*}
or
\begin{equation*}
    R_{\varphi\prime}=\{\vert x\vert\leq a,\vert y(x)-y_{0}\vert\leq b/\vert\varphi_{s,t}^\prime\vert,\vert y(ux)-y_{0}\vert\leq b,a>0,\ b>0\},\hspace{0.5cm}0<u\leq\vert\varphi_{s,t}^{\prime}\vert,
\end{equation*}
will be called a functional-difference equation with proportional delay.
\end{definition}

In the following results, our main objective is to prove the existence of solutions to the problem with initial value Eq.(\ref{eqn_dde}).

\begin{theorem}
A function $\phi$ is a solution of Eq.(\ref{eqn_dde}) on an interval $I$ if and only if it is a solution of the $(s,t)$-integral equation 
\begin{equation}\label{eqn_stie}
    y(x)=y_{0}+\int_{0}^{x}f(r,y(r),y(ur))d_{s,t}r
\end{equation}
on $I$.
\end{theorem}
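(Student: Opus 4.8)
The plan is to reduce the equivalence to the two structural facts about the $(s,t)$-integral already available: the fundamental theorem of $(s,t)$-calculus (Theorem~\ref{theo_funda}) and the $(p,q)$-antiderivative property $\mathbf{D}_{s,t}\int_{0}^{x}g(r)\,d_{s,t}r=g(x)$ recorded from~\cite{nji}. The first step I would take is to observe that, for a candidate solution $\phi$ on the $(q,\varphi_{s,t})$-interval $I$ whose graph stays inside $R_{\varphi}$ (resp. $R_{\varphi'}$), the composite $g(r):=f(r,\phi(r),\phi(ur))$ automatically satisfies the hypothesis that $|g(r)r^{\alpha}|$ is bounded on $[0,a]$ for some $0\le\alpha<1$, which is built into Eq.~(\ref{eqn_dde}); this is exactly what guarantees simultaneously that $\int_{0}^{x}g(r)\,d_{s,t}r$ converges and that $g\in\mathcal{L}_{s,t}^{1}[0,x]$, so that both Theorem~\ref{theo_funda} and the antiderivative statement are applicable.

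For the forward implication I would assume $(\mathbf{D}_{s,t}\phi)(x)=g(x)$ on $I$ and integrate from $0$ to $x$; by Theorem~\ref{theo_funda},
\[
  \int_{0}^{x}f(r,\phi(r),\phi(ur))\,d_{s,t}r=\int_{0}^{x}(\mathbf{D}_{s,t}\phi)(r)\,d_{s,t}r=\phi(x)-\lim_{n\rightarrow\infty}\phi(xq^{n}).
\]
Since $0<|q|<1$ we have $xq^{n}\rightarrow 0$, so continuity of $\phi$ at $0$ together with $\phi(0)=y_{0}$ makes the limit term equal to $y_{0}$, which yields Eq.~(\ref{eqn_stie}). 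The case $|q|>1$ is handled identically after replacing $I$ by a $(q^{-1},\varphi_{s,t}^{\prime})$-interval and invoking the analogue of the fundamental theorem mentioned at the end of Section~3.3.

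For the converse I would first set $x=0$ in Eq.~(\ref{eqn_stie}) to recover $\phi(0)=y_{0}+\int_{0}^{0}(\cdots)=y_{0}$, then apply $\mathbf{D}_{s,t}$ to both sides: the constant $y_{0}$ is annihilated and, since $\mathbf{D}_{s,t}$ is the $(p,q)$-derivative with $p=\varphi_{s,t}$ and $q=\varphi_{s,t}^{\prime}$, the antiderivative property from~\cite{nji} gives $\mathbf{D}_{s,t}\!\int_{0}^{x}g(r)\,d_{s,t}r=g(x)=f(x,\phi(x),\phi(ux))$, so $\phi$ solves Eq.~(\ref{eqn_dde}). I expect the only genuine obstacle to be the bookkeeping around the hypotheses — checking that $g$ meets the boundedness condition needed both for convergence of the $(s,t)$-integral and for Theorem~\ref{theo_funda}, that $I$ is genuinely a $(q,\varphi_{s,t})$-geometric set so the $(s,t)$-integral is well defined on all of $I$, and that continuity of $\phi$ at $0$ is really necessary (without it the forward direction only delivers the weaker identity carrying the extra term $\lim_{n}\phi(xq^{n})$). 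Everything beyond that is the telescoping and antiderivative machinery already assembled in Section~3.
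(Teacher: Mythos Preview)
Your proposal is correct and follows essentially the same route as the paper: forward via Theorem~\ref{theo_funda} (the fundamental theorem of $(s,t)$-calculus) and converse by $(s,t)$-differentiating the integral equation using the antiderivative property from~\cite{nji}. You are in fact more explicit than the paper about the limit term $\lim_{n}\phi(xq^{n})$ and the role of continuity at $0$, which the paper absorbs silently into its invocation of Theorem~\ref{theo_funda}.
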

\begin{proof}
Let $\phi$ be a solution of the Eq.(\ref{eqn_dde}) on an interval $I$. Then
\begin{equation}\label{eqn_sol_dde}
    \mathbf{D}_{s,t}\phi(x)=f(x,\phi(x),\phi(ux)),\ \phi(0)=y_{0},
\end{equation}
where either $0<u\leq\vert\varphi_{s,t}\vert$ or $0<u\leq\vert\varphi_{s,t}^{\prime}\vert$. From Theorem \ref{theo_funda}, the equivalent $(s,t)$-integral equation to Eq.(\ref{eqn_sol_dde}) is
\begin{equation}\label{eqn_stie2}
    \phi(x)=\phi(0)+\int_{0}^{x}f(r,\phi(r),\phi(ur))d_{s,t}r
\end{equation}
with $\phi(0)=y_{0}$. Thus $\phi$ is a solution of Eq.(\ref{eqn_stie}). Conversely, suppose that Eq.(\ref{eqn_stie2}) hold. By $(s,t)$-differentiate Eq.(\ref{eqn_stie2}), we get
\begin{equation*}
    \mathbf{D}_{s,t}\phi(x)=f(x,\phi(x),\phi(u x))
\end{equation*}
where either $0<u<\vert\varphi_{s,t}\vert$ or $0<u<\vert\varphi_{s,t}^{\prime}\vert$, for all $x\in I$. From Eq.(\ref{eqn_stie}), $\phi(0)=y_{0}$. Hence $\phi$ is a solution of Eq.(\ref{eqn_dde}).
\end{proof}

\begin{theorem}
Let $\vert f(x,y(x),y(ux))x^{\alpha}\vert$ is bounded on $R_{\varphi}$ for some $0\leq\alpha<1$ and $\vert f\vert\leq M$ on $R_{\varphi}$. The successive approximation 
\begin{equation}\label{eqn_succ_app}
    \phi_{0}(x)=y_{0},\hspace{0.5cm}\phi_{k+1}(x)=y_{0}+\int_{0}^{x}f(r,\phi_{k}(r),\phi_{k}(ur))d_{s,t}r,\hspace{0.5cm}k=0,1,2,3,\ldots,
\end{equation}
$0<u\leq\vert\varphi_{s,t}\vert$, exist on the interval $I_{\varphi}=[-\frac{\alpha}{\vert\varphi_{s,t}\vert},\frac{\alpha}{\vert\varphi_{s,t}\vert}]$, where $\alpha=\min\{a,\frac{b}{M}\}$. If $x\in I_{\varphi}$, then $(x,\phi_{k}(x),\phi_{k}(ux))\in R_{\varphi}$ and $\vert\phi_{k}(x)-y_{0}\vert\leq M\vert x\vert$, $\vert\phi_{k}(ux)-y_{0}\vert\leq Mu\vert x\vert$.
\end{theorem}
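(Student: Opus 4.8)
The plan is to prove the three claims — existence of the iterates $\phi_k$ on $I_\varphi$, the containment $(x,\phi_k(x),\phi_k(ux))\in R_\varphi$, and the bounds $|\phi_k(x)-y_0|\le M|x|$, $|\phi_k(ux)-y_0|\le Mu|x|$ — simultaneously by induction on $k$. The base case $k=0$ is immediate: $\phi_0(x)\equiv y_0$ is defined everywhere, $(x,y_0,y_0)\in R_\varphi$ whenever $|x|\le a$ (in particular for $x\in I_\varphi$, since $\alpha/|\varphi_{s,t}|\le a$), and the bounds hold trivially since $\phi_0-y_0=0$. For the inductive step, assume the three statements hold for $\phi_k$. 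Then for $x\in I_\varphi$ the points $(r,\phi_k(r),\phi_k(ur))$ lie in $R_\varphi$ for all $r$ in the relevant $(q,\varphi)$-geometric chain through $x$ (here one uses that $I_\varphi$ is a $(q,\varphi_{s,t})$-geometric set, so that $r=xq^n/\varphi_{s,t}$ stays in $I_\varphi$, and that $|r|\le|x|$ because $0<|q|<1$), hence $|f(r,\phi_k(r),\phi_k(ur))|\le M$ on that chain; this makes the integrand $(s,t)$-integrable — the boundedness of $|f\,x^\alpha|$ with $0\le\alpha<1$ is exactly the hypothesis invoked in the convergence result for $(p,q)$-integrals cited from \cite{nji} just before Theorem \ref{theo_funda} — so $\phi_{k+1}(x)$ is well-defined on $I_\varphi$.

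Next I would estimate $|\phi_{k+1}(x)-y_0|$. Writing $\phi_{k+1}(x)-y_0=\int_0^x f(r,\phi_k(r),\phi_k(ur))\,d_{s,t}r$ and applying the corollary to Proposition \ref{prop_ineq1} (the $(s,t)$-triangle inequality $|\int_0^x g\,d_{s,t}|\le\int_0^x|g|\,d_{s,t}$, valid since $0$ lies between the endpoints), together with $|f|\le M$ on the chain and the monotonicity of the $(s,t)$-integral from Proposition \ref{prop_ineq1}, gives $|\phi_{k+1}(x)-y_0|\le M\int_0^{|x|}d_{s,t}r=M|x|$, where $\int_0^{x}1\,d_{s,t}r=x$ follows from the explicit series defining the $(s,t)$-integral (it telescopes to $(1-q)\sum_{n\ge0}xq^n=x$). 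Replacing $x$ by $ux$ and using $u\le|\varphi_{s,t}|$ yields $|\phi_{k+1}(ux)-y_0|\le Mu|x|$ — one must check that $ux\in I_\varphi$-type chains are still inside $R_\varphi$, which holds because $u|x|\le|\varphi_{s,t}|\cdot\alpha/|\varphi_{s,t}|=\alpha\le a$.

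Finally, the containment $(x,\phi_{k+1}(x),\phi_{k+1}(ux))\in R_\varphi$ for $x\in I_\varphi$ follows from the two bounds just proved: $|\phi_{k+1}(x)-y_0|\le M|x|\le M\cdot\frac{\alpha}{|\varphi_{s,t}|}\le\frac{b}{|\varphi_{s,t}|}$ since $M\alpha\le M\cdot\frac{b}{M}=b$, and similarly $|\phi_{k+1}(ux)-y_0|\le Mu|x|\le M\alpha\le b$; and $|x|\le\alpha/|\varphi_{s,t}|\le a$. This closes the induction. The main obstacle I anticipate is bookkeeping rather than conceptual: one must be careful that every intermediate point $r$, $ur$, and the scaled argument $ux$ that appears in the $(s,t)$-integral — which only samples $f$ along the geometric net $\{xq^n/\varphi_{s,t}\}$ — actually lies within the rectangle $R_\varphi$ and within the $(q,\varphi_{s,t})$-geometric domain on which $f$ is assumed bounded, and that the factor $1/|\varphi_{s,t}|$ in the first coordinate of $R_\varphi$ is precisely what is needed to absorb the possible expansion $|\varphi_{s,t}|\ge1$; the choice $\alpha=\min\{a,b/M\}$ and the length $\alpha/|\varphi_{s,t}|$ of $I_\varphi$ are calibrated exactly for these inequalities to close.
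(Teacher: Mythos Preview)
Your proposal is correct and follows essentially the same inductive argument as the paper: both prove the existence, containment in $R_\varphi$, and the bounds $|\phi_k(x)-y_0|\le M|x|$, $|\phi_k(ux)-y_0|\le Mu|x|$ simultaneously by induction on $k$, using $|f|\le M$ and the estimate $\bigl|\int_0^x f\,d_{s,t}\bigr|\le M|x|$ to close the step. Your write-up is in fact somewhat more careful than the paper's about why the sampled points of the $(s,t)$-integral remain in $R_\varphi$ and why $\int_0^x 1\,d_{s,t}r = x$, but the route is the same.
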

\begin{proof}
We prove the result by mathematical induction. Clearly $\phi_{0}(x)=y_{0}$ is $(s,t)$-integrable on $I_{\varphi}$. Thus, the theorem is true for $k=0$. For $k=1$, we have
\begin{align*}
    \phi_{1}(x)&=y_{0}+\int_{0}^{x}f(r,\phi_{0}(r),\phi_{0}(ur))d_{s,t}r\\
    &=y_{0}+\int_{0}^{x}f(r,y_{0},y_{0})d_{s,t}r
\end{align*}
and since $\vert f(x,\phi(x),\phi(ux))x^{\alpha}\vert$ is bounded on $R_{\varphi}$, then $\phi_{1}(x)$ exist. Also
\begin{align*}
    \vert\phi_{1}(x)-y_{0}\vert&=\Bigg\vert\int_{0}^{x}f(r,\phi_{0}(r),\phi_{0}(ur))d_{s,t}r\Bigg\vert\\
    &\leq\int_{0}^{x}\vert f(r,y_{0},y_{0})\vert d_{s,t}r\\
    &\leq M\vert x\vert\\
    &\leq \frac{b}{\vert\varphi_{s,t}\vert},\hspace{0.5cm}x\in I_{\varphi}
\end{align*}
and
\begin{align*}
    \vert\phi_{1}(ux)-y_{0}\vert&=\Bigg\vert\int_{0}^{ux}f(r,\phi_{0}(r),\phi_{0}(ur))d_{s,t}r\Bigg\vert\\
    &\leq\int_{0}^{ux}\vert f(r,y_{0},y_{0})\vert d_{s,t}r\\
    &\leq Mu\vert x\vert,\hspace{0.5cm}0<u\leq\vert\varphi_{s,t}\vert \\
    &\leq b,\ x\in I_{\varphi}.
\end{align*}
Thus, for $x\in I_{\varphi}$, $(x,\phi_{1}(x),\phi_{1}(ux))\in R_{\varphi}$ and $\vert\phi_{1}(x)-y_{0}\vert\leq M\vert x\vert$, $\vert\phi_{1}(ux)-y_{0}\vert\leq Mu\vert x\vert$. The theorem is true for $k=1$. Assume that theorem is true for $k=n$, i.e., for $x\in I_{\varphi}$, $(x,\phi_{n}(x),\phi_{n}(ux))\in R$ and $\vert\phi_{n}(x)-y_{0}\vert\leq M\vert x\vert$, $\vert\phi_{n}(ux)-y_{0}\vert\leq Mu\vert x\vert$. For $k=n+1$
\begin{equation*}
    \phi_{n+1}(x)=y_{0}+\int_{0}^{x}f(r,\phi_{n}(r),\phi_{n}(ur))d_{s,t}r.
\end{equation*}
Since $\vert f(x,\phi_{n}(x),\phi_{n}(ux))x^\alpha\vert$ is bounded on $R_{\varphi}$, $0\leq\alpha<1$, then $\phi_{n+1}(x)$ exists on $I_{\varphi}$. Also,
\begin{align*}
    \vert\phi_{n+1}(x)-y_{0}\vert&\leq M\vert x\vert\\
    &\leq \frac{b}{\vert\varphi_{s,t}\vert},\hspace{0.5cm}x\in I_{\varphi}
\end{align*}
and
\begin{align*}
    \vert\phi_{n+1}(ux)-y_{0}\vert&\leq Mu\vert x\vert\\
    &\leq M\vert\varphi_{s,t}\vert\vert x\vert,\hspace{0.5cm}0<u\leq\vert\varphi_{s,t}\vert\\
    &\leq b,\hspace{0.5cm}x\in I_{\varphi}.
\end{align*}
Thus, if $x\in I_{\varphi}$, $(x,\phi_{n+1}(x),\phi_{n+1}(ux))\in R_{\varphi}$ and $\vert\phi_{n+1}(x)-y_{0}\vert\leq M\vert x\vert$, $\vert\phi_{n+1}(ux)\vert\leq Mu\vert x\vert$.
\end{proof}

\begin{theorem}
Let $\vert f(x,y(x),y(ux))x^{\alpha}\vert$ is bounded on $R_{\varphi^\prime}$ for some $0\leq\alpha<1$ and $\vert f\vert\leq M$ on $R_{\varphi^\prime}$. The successive approximation 
\begin{equation*}
    \phi_{0}(x)=y_{0},\hspace{0.5cm}\phi_{k+1}(x)=y_{0}+\int_{0}^{x}f(r,\phi_{k}(r),\phi_{k}(ur))d_{s,t}r,\hspace{0.5cm}k=0,1,2,3,\ldots,
\end{equation*}
$0<u\leq\vert\varphi_{s,t}^\prime\vert$, exist on the interval $I_{\varphi^\prime}=[-\frac{\alpha}{\vert\varphi_{s,t}^\prime\vert},\frac{\alpha}{\vert\varphi_{s,t}^\prime\vert}]$, where $\alpha=\min\{a,\frac{b}{M}\}$. If $x\in I_{\varphi^\prime}$, then $(x,\phi_{k}(x),\phi_{k}(ux))\in R_{\varphi^\prime}$ and $\vert\phi_{k}(x)-y_{0}\vert\leq M\vert x\vert$, $\vert\phi_{k}(ux)-y_{0}\vert\leq Mu\vert x\vert$.
\end{theorem}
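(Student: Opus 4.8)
The plan is to replay, \emph{mutatis mutandis}, the proof of the preceding theorem, now working with the $(q^{-1},\varphi_{s,t}^{\prime})$-integral of Eq.(\ref{eqn_int_def2}) in place of Eq.(\ref{eqn_int_def}), since here $\vert q\vert>1$. I would argue by induction on $k$. For the base case $k=0$, the function $\phi_{0}(x)\equiv y_{0}$ is constant, hence trivially $(s,t)$-integrable on $I_{\varphi^{\prime}}$, and $(x,\phi_{0}(x),\phi_{0}(ux))=(x,y_{0},y_{0})\in R_{\varphi^{\prime}}$ with $\vert\phi_{0}(x)-y_{0}\vert=0\leq M\vert x\vert$ and $\vert\phi_{0}(ux)-y_{0}\vert=0\leq Mu\vert x\vert$, so the statement holds for $k=0$.

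For the inductive step, assume the claim for $k=n$: for every $x\in I_{\varphi^{\prime}}$ one has $(x,\phi_{n}(x),\phi_{n}(ux))\in R_{\varphi^{\prime}}$, $\vert\phi_{n}(x)-y_{0}\vert\leq M\vert x\vert$, and $\vert\phi_{n}(ux)-y_{0}\vert\leq Mu\vert x\vert$. First I would check that $\phi_{n+1}$ is well defined on $I_{\varphi^{\prime}}$. Since $(r,\phi_{n}(r),\phi_{n}(ur))\in R_{\varphi^{\prime}}$ for $r\in I_{\varphi^{\prime}}$, the hypothesis gives $\vert f(r,\phi_{n}(r),\phi_{n}(ur))\vert\,\vert r\vert^{\alpha}\leq C$ for some constant $C$ and some $0\leq\alpha<1$; evaluating the summand of Eq.(\ref{eqn_int_def2}) at the endpoints shows it is bounded, up to a constant, by $q^{n\alpha}\cdot q^{-n}=q^{-n(1-\alpha)}$, which is summable because $\vert q\vert>1$ and $1-\alpha>0$. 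This is exactly the $\vert q\vert>1$ counterpart of the $(p,q)$-integral convergence statement of \cite{nji} invoked earlier in the paper, and it yields $\phi_{n+1}\in\mathcal{L}_{s,t}^{1}[0,x]$ for every $x\in I_{\varphi^{\prime}}$, so $\phi_{n+1}(x)$ exists.

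Next I would estimate $\phi_{n+1}$. Using $\vert f\vert\leq M$ on $R_{\varphi^{\prime}}$ together with the corollary to Proposition \ref{prop_ineq1} (namely $\lvert\int_{0}^{x}f\,d_{s,t}\rvert\leq\int_{0}^{x}\lvert f\rvert\,d_{s,t}$), for $x\in I_{\varphi^{\prime}}$ we get
\begin{equation*}
    \vert\phi_{n+1}(x)-y_{0}\vert=\Bigg\vert\int_{0}^{x}f(r,\phi_{n}(r),\phi_{n}(ur))\,d_{s,t}r\Bigg\vert\leq M\vert x\vert\leq M\frac{\alpha}{\vert\varphi_{s,t}^{\prime}\vert}\leq\frac{b}{\vert\varphi_{s,t}^{\prime}\vert},
\end{equation*}
since $\alpha=\min\{a,b/M\}\leq b/M$, and likewise, using $0<u\leq\vert\varphi_{s,t}^{\prime}\vert$,
\begin{equation*}
    \vert\phi_{n+1}(ux)-y_{0}\vert\leq Mu\vert x\vert\leq M\vert\varphi_{s,t}^{\prime}\vert\,\vert x\vert\leq M\vert\varphi_{s,t}^{\prime}\vert\frac{\alpha}{\vert\varphi_{s,t}^{\prime}\vert}=M\alpha\leq b.
\end{equation*}
Hence $(x,\phi_{n+1}(x),\phi_{n+1}(ux))\in R_{\varphi^{\prime}}$ for all $x\in I_{\varphi^{\prime}}$, and the two displayed estimates are precisely the bounds claimed for $k=n+1$; this closes the induction.

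The only genuinely non-routine point is the convergence of the $(q^{-1},\varphi_{s,t}^{\prime})$-integral in Eq.(\ref{eqn_int_def2}) under the stated boundedness of $\vert f\vert\,\vert x\vert^{\alpha}$; everything else is bookkeeping with the constant $\alpha=\min\{a,b/M\}$ and the inequality $u\leq\vert\varphi_{s,t}^{\prime}\vert$. I expect that obstacle to be mild, since it is the mirror image — replacing $q$ by $q^{-1}$ and $\varphi_{s,t}$ by $\varphi_{s,t}^{\prime}$ — of the convergence fact already quoted, and the paper explicitly notes that analogous results are obtained when $\vert q\vert>1$.
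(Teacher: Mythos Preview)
Your proposal is correct and matches the paper's approach exactly: the paper states this theorem without proof, relying on the fact that it is the word-for-word analogue of the immediately preceding theorem with $\varphi_{s,t}$ replaced by $\varphi_{s,t}^{\prime}$ (and correspondingly working in the regime $\vert q\vert>1$ via Eq.~(\ref{eqn_int_def2})). Your inductive argument is precisely that replication, with the added care of spelling out why the $(q^{-1},\varphi_{s,t}^{\prime})$-integral converges under the $\vert f\vert\vert x\vert^{\alpha}$ boundedness hypothesis---a point the paper leaves implicit under the blanket remark that ``analogous results are obtained when $\vert q\vert>1$.''
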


\begin{theorem}[{\bf Existence Theorem 1}]\label{theo_exist}
Let $\vert f(x,y(x),y(ux))x^{\alpha}\vert$ be a function bounded on $R_{\varphi}$, $0\leq\alpha<1$, with $\vert f\vert\leq M$ on $R_{\varphi}$. Further, suppose $f$ satisfies the Lipschitz condition in the second and third variables with Lipschitz constant $L_{1}$ and $L_{2}$ such that
\begin{equation*}
    \vert f(x,y_{1}(x),y_{1}(ux))-f(x,y_{2}(x),y_{2}(ux))\vert\leq L_{1}\vert y_{1}(x)-y_{2}(x)\vert+L_{2}\vert y_{1}(ux)-y_{2}(ux)\vert.
\end{equation*}
Then the successive approximations Eq.(\ref{eqn_succ_app}) converge on the interval $I_{\varphi}=[-\frac{\alpha}{\vert\varphi_{s,t}\vert},\frac{\alpha}{\vert\varphi_{s,t}\vert}]$, where $\alpha=\min\{a,b/M\}$, to a solution $\phi$ of the initial value problem
\begin{equation}\label{eqn_fdu}
    \mathbf{D}_{s,t}y=f(x,y(x),y(ux)),\hspace{0.5cm}y(0)=y_{0}
\end{equation}
on $I_{\varphi}$ when $0<u\leq\vert\varphi_{s,t}\vert$ and $(q,\varphi_{s,t})$ belongs to one of the sets $S_{1},S_{2},S_{3},S_{6},S_{7},S_{8},S_{9}$ of the Theorem \ref{theo_conv_panto} and $I_{\varphi}$ is contained in one the intervals
\begin{align*}
&\left(-\frac{1}{L_{1}\vert1-q\vert},\frac{1}{L_{1}\vert1-q\vert}\right),\ \left(-\frac{1}{L_{2}\vert1-q\vert},\frac{1}{L_{2}\vert1-q\vert}\right),\\
&\left(-\frac{1}{(L_{1}+L_{2}))\vert1-q\vert},\frac{1}{(L_{1}+L_{2}))\vert1-q\vert}\right).
\end{align*}
\end{theorem}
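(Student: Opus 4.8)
The plan is to transcribe the classical Picard iteration and majorant argument into the $(s,t)$-calculus: the ordinary integral inequalities are replaced by Proposition~\ref{prop_ineq1} and its corollary, and the ordinary exponential majorant by the $q$-exponential series $\exp_q$. By the equivalence established above it suffices to produce a uniform limit of the successive approximations solving the $(s,t)$-integral equation Eq.(\ref{eqn_stie}). The preceding theorem already provides that the iterates $\phi_k$ of Eq.(\ref{eqn_succ_app}) are defined on $I_\varphi$, that $(x,\phi_k(x),\phi_k(ux))\in R_\varphi$ there, and that $\vert\phi_1(x)-\phi_0(x)\vert\le M\vert x\vert$; writing $\psi_k:=\phi_{k+1}-\phi_k$, it is then enough to show $\sum_{k\ge0}\sup_{I_\varphi}\vert\psi_k\vert<\infty$, since then $\phi_k=\phi_0+\sum_{j<k}\psi_j$ converges uniformly to some $\phi$ and, $R_\varphi$ being closed, $(x,\phi(x),\phi(ux))\in R_\varphi$.

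Subtracting consecutive lines of Eq.(\ref{eqn_succ_app}) gives $\psi_k(x)=\int_0^x\bigl[f(r,\phi_k(r),\phi_k(ur))-f(r,\phi_{k-1}(r),\phi_{k-1}(ur))\bigr]d_{s,t}r$, so the Lipschitz hypothesis and the corollary to Proposition~\ref{prop_ineq1} yield the recursion $\vert\psi_k(x)\vert\le\int_0^{\vert x\vert}\bigl[L_1\vert\psi_{k-1}(r)\vert+L_2\vert\psi_{k-1}(ur)\vert\bigr]d_{s,t}r$. One then proves by induction on $k$ that
\begin{equation*}
\vert\psi_k(x)\vert\ \le\ M\,\frac{\prod_{j=1}^{k}\bigl(L_1+L_2u^{\,j}\bigr)}{\brk[c]{k+1}_{\vert s\vert,t}!}\,\vert x\vert^{k+1},
\end{equation*}
the case $k=0$ being $\vert\psi_0\vert\le M\vert x\vert$ (empty product $=1$, $\brk[c]{1}_{\vert s\vert,t}!=1$), and the inductive step inserting the bound for $\psi_{k-1}$ into the recursion and using $\vert ur\vert=u\vert r\vert$, the power rule $\int_0^x r^{k}d_{s,t}r=x^{k+1}/\brk[c]{k+1}_{s,t}$ (a consequence of Theorem~\ref{theo_funda}), and Proposition~\ref{prop_abs_nst} to carry absolute values past the $(s,t)$-factorials.

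Next one majorizes. Since $0<u\le\vert\varphi_{s,t}\vert$ and $(q,\varphi_{s,t})$ lies in one of the sets $S_1,S_2,S_3,S_6,S_7,S_8,S_9$ of Theorem~\ref{theo_conv_panto} (a regime in which $\vert q\vert<1$ and $\vert\varphi_{s,t}\vert\ge1$), one has $\prod_{j=1}^{k}(L_1+L_2u^{\,j})\le(L_1+L_2)^{k}\vert\varphi_{s,t}\vert^{\binom{k+1}{2}}$, while Eq.(\ref{eqn_fibo_fact}) and Proposition~\ref{prop_abs_nst} give $\brk[c]{k+1}_{\vert s\vert,t}!=\vert\varphi_{s,t}\vert^{\binom{k+1}{2}}\bigl\vert\brk[s]{k+1}_q!\bigr\vert$; the factor $\vert\varphi_{s,t}\vert^{\binom{k+1}{2}}$ cancels, leaving $\vert\psi_k(x)\vert\le M(L_1+L_2)^{k}\vert x\vert^{k+1}/\bigl\vert\brk[s]{k+1}_q!\bigr\vert$. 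Hence $\sum_{k\ge0}\vert\psi_k(x)\vert\le M\sum_{k\ge0}(L_1+L_2)^{k}\vert x\vert^{k+1}/\bigl\vert\brk[s]{k+1}_q!\bigr\vert$, and the ratio test shows this series (the $q$-exponential $\exp_q$ up to normalization) converges as soon as $(L_1+L_2)\vert x\vert<1/\vert1-q\vert$, which holds on $I_\varphi$ by the hypothesis that $I_\varphi$ lies inside the third listed interval; the $L_1$- and $L_2$-intervals accommodate the sharper estimate available when one of $L_1\vert\psi_{k-1}(r)\vert$, $L_2\vert\psi_{k-1}(ur)\vert$ is absent (i.e.\ $f$ independent of $y(x)$, resp.\ of $y(ux)$). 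Thus $\sum\psi_k$ converges uniformly on $I_\varphi$.

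It remains to pass to the limit in Eq.(\ref{eqn_succ_app}). As $f$ is continuous in its last two arguments (Lipschitz) and $\phi_k\to\phi$ uniformly, $f(r,\phi_k(r),\phi_k(ur))\to f(r,\phi(r),\phi(ur))$ at every point of the geometric set $\{xq^{n}/\varphi_{s,t}\}_{n\ge0}$, while $\vert f\vert\le M$ on $R_\varphi$ furnishes the summable dominating series $\vert1-q\vert\vert x\vert M\sum_{n}\vert q\vert^{n}$ for the series defining $\int_0^x(\cdot)\,d_{s,t}r$; dominated convergence for series then lets one interchange $\lim_k$ with the $(s,t)$-integral, giving $\phi(x)=y_0+\int_0^x f(r,\phi(r),\phi(ur))d_{s,t}r$, whence $\phi$ solves Eq.(\ref{eqn_fdu}) on $I_\varphi$ by the equivalence theorem. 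The case $\vert q\vert>1$ is identical after replacing $\varphi_{s,t},q,R_\varphi,I_\varphi$ and the power rule by their $\varphi_{s,t}^{\prime},q^{-1},R_{\varphi^{\prime}},I_{\varphi^{\prime}}$ counterparts. I expect the main obstacle to be the third step: arranging that $\prod_j(L_1+L_2u^{\,j})$ is controlled by exactly the factor $\vert\varphi_{s,t}\vert^{\binom{k+1}{2}}$ that cancels the $(s,t)$-factorial — so that the majorant is the genuine $q$-exponential of radius $1/\vert1-q\vert$, which is where $u\le\vert\varphi_{s,t}\vert$, the $S_i$-regime and the precise length of $I_\varphi$ all enter — together with checking that the shifted argument $ur$ keeps every iterate inside the rectangle $R_\varphi$ on which the bounds are valid.
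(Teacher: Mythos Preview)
Your skeleton---telescoping $\phi_k=\phi_0+\sum\psi_j$, deriving the recursive Lipschitz estimate, proving by induction that
\[
|\psi_k(x)|\le M\,\frac{\prod_{j=1}^{k}(L_1+L_2u^{j})}{\brk[c]{k+1}_{|s|,t}!}\,|x|^{k+1},
\]
and then passing to the limit in the integral equation---is exactly what the paper does. The divergence, and the weak point of your argument, is the majorization step.

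The paper does \emph{not} bound $\prod_{j}(L_1+L_2u^{j})$ any further. It simply observes that the resulting series is, up to a constant, $\EE_{|s|,t}(L_1,L_2,u;|x|)$ itself (Definition~\ref{def_EE}), and then invokes Theorem~\ref{theo_conv_panto} directly. That is precisely why the sets $S_1,\dots,S_9$ and the three intervals appear in the statement: they are the convergence regimes of $\EE_{|s|,t}$ given by Theorem~\ref{theo_conv_panto}, with $a=L_1$, $b=L_2$. In particular the three intervals are the radii $1/(L_1|1-q|)$, $1/(L_2|1-q|)$, $1/((L_1+L_2)|1-q|)$ arising from $S_3$, $S_8$, $S_9$ respectively---not, as you suggest, cases where $f$ happens to be independent of one of its arguments.

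Your replacement bound $\prod_{j=1}^{k}(L_1+L_2u^{j})\le(L_1+L_2)^{k}|\varphi_{s,t}|^{\binom{k+1}{2}}$ and the parenthetical ``a regime in which $|q|<1$ and $|\varphi_{s,t}|\ge1$'' are both inaccurate: $S_2$ has $|q|>1$ and $|\varphi_{s,t}|\le1$, and $S_6$ has $|q|>1$, so your reduction to the plain $\exp_q$ with radius $1/|1-q|$ does not cover all the cases the theorem asserts. The fix is simply to drop the extra majorization and quote Theorem~\ref{theo_conv_panto} for $\EE_{|s|,t}(L_1,L_2,u;\cdot)$, exactly as the paper does; the rest of your argument (including the passage to the limit) then goes through unchanged.
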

\begin{proof}
Let's note that $\phi_{n}$ can be written as
\begin{equation*}
    \phi_{n}=\phi_{0}+(\phi_{1}-\phi_{0})+(\phi_{2}-\phi_{1})+\cdots+(\phi_{n}-\phi_{n-1}),
\end{equation*}
and hence $\phi_{n}(x)$ is a partial sum for the series
\begin{equation}\label{eqn_series_sol}
    \phi_{0}+\sum_{n=1}^{\infty}[\phi_{n}(x)-\phi_{n-1}(x)].
\end{equation}
Therefore to show that the sequence $\{\phi_{n}(x)\}$ converges is equivalent to showing that the series of the Eq.(\ref{eqn_series_sol}) converges. To prove the latter we must estimate the terms $\phi_{n}(x)-\phi_{n-1}(x)$ of this series. We have that
\begin{align}\label{eqn_phi_1-0}
    \vert\phi_{1}(x)-\phi_{0}(x)\vert&=\Bigg\vert\int_{0}^{x}f(r,y_{0},y_{0})d_{s,t}r\Bigg\vert
    \leq\Bigg\vert\int_{0}^{x}\vert f(r,y_{0},y_{0})\vert d_{s,t}r\Bigg\vert\leq M\vert x\vert.
\end{align}
Writing down the relations defining $\phi_{2}$ and $\phi_{1}$, and subtracting, we obtain
\begin{equation*}
    \phi_{2}(x)-\phi_{1}(x)=\int_{0}^{x}[f(r,\phi_{1}(r),\phi_{1}(ux))-f(r,\phi_{0}(r),\phi_{0}(ur))]d_{s,t}r.
\end{equation*}
Therefore
\begin{align*}
    \vert\phi_{2}(x)-\phi_{1}(x)\vert&\leq\Bigg\vert\int_{0}^{x}\vert f(r,\phi_{1}(r),\phi_{1}(ux))-f(r,\phi_{0}(r),\phi_{0}(ur))\vert d_{s,t}r\Bigg\vert\\
    &\leq\Bigg\vert\int_{0}^{x}[L_{1}\vert\phi_{1}(r)-\phi_{0}(r)\vert+L_{2}\vert\phi_{1}(ur)-\phi_{0}(ur)\vert]d_{s,t}r\Bigg\vert\\
    &\leq M(L_{1}+L_{2}u)\Bigg\vert\frac{x^2}{\brk[c]{2}_{s,t}}\Bigg\vert\\
    &\leq M(L_{1}+L_{2}u)\frac{\vert x\vert^2}{\brk[c]{2}_{\vert s\vert,t}}.
\end{align*}
It can also be shown that
\begin{equation*}
    \vert\phi_{3}(x)-\phi_{2}(x)\vert\leq M(L_{1}+L_{2}u)(L_{1}+L_{2}u^2)\frac{\vert x\vert^3}{\brk[c]{3}_{\vert s\vert,t}!}
\end{equation*}
and
\begin{equation*}
    \vert\phi_{4}(x)-\phi_{3}(x)\vert\leq M(L_{1}+L_{2}u)(L_{1}+L_{2}u^2)(L_{1}+L_{2}u^3)\frac{\vert x\vert^4}{\brk[c]{4}_{\vert s\vert,t}!}.
\end{equation*}
In general, it is easy to show by induction for $n\geq1$ that
\begin{align}\label{eqn_estimate}
    \vert\phi_{n}(x)-\phi_{n-1}(x)\vert&\leq M\prod_{k=1}^{n-1}(L_{1}+L_{2}u^{k})\frac{\vert x\vert^n}{\brk[c]{n}_{\vert s\vert,t}!}=\frac{M}{L_{1}+L_{2}}(L_{1}\oplus L_{2})_{1,u}^{n}\frac{\vert x\vert^n}{\brk[c]{n}_{\vert s\vert,t}!}
\end{align}
where we have used Proposition \ref{prop_abs_nst}. Then
\begin{align*}
    \vert\phi_{n}(x)-y_{0}\vert&=\Bigg\vert\sum_{k=1}^{n}[\phi_{k}(x)-\phi_{k-1}(x)]\Bigg\vert\leq\sum_{k=1}^{n}\vert\phi_{k}(x)-\phi_{k-1}(x)\vert\\
    &\leq\frac{M}{L_{1}+L_{2}}\sum_{k=1}^{n}(L_{1}\oplus L_{2})_{1,u}^{k}\frac{\vert x\vert^k}{\brk[c]{k}_{\vert s\vert,t}!}\\
    &\leq\frac{M}{L_{1}+L_{2}}\sum_{k=1}^{\infty}(L_{1}\oplus L_{2})_{1,u}^{k}\frac{\vert x\vert^k}{\brk[c]{k}_{\vert s\vert,t}!}\\
    &\leq\frac{M}{L_{1}+L_{2}}(\EE_{\vert s\vert,t}(L_{1},L_{2},u;\vert x\vert)-1).
\end{align*}
From Theorem \ref{theo_conv_panto}, the power series for $\EE_{\vert s\vert,t}(L_{1},L_{2},u;\vert x\vert)$ is convergent, then the series
\begin{equation*}
    \vert y_{0}\vert+\sum_{n=1}^{\infty}\vert\phi_{n}(x)-\phi_{n-1}(x)\vert
\end{equation*}
is convergent on $I_{\varphi}$. This implies that the series (\ref{eqn_series_sol}) is convergent on $I_{\varphi}$. Therefore the $n$-th partial sum of Eq.(\ref{eqn_series_sol}), which is just $\phi_{n}(x)$, tends to a limits $\phi(x)$ as $n\rightarrow\infty$, for each $x\in I_{\varphi}$. 

This limit function $\phi$ is a solution to our problem on $I_{\varphi}$. First, let us show that $\phi$ is continuous on $I_{\varphi}$. This may be seen in the following way. If $x_{1},x_{2}\in I$
\begin{align*}
    \vert\phi_{n+1}(x_{1})-\phi_{n+1}(x_{2})\vert=\Bigg\vert\int_{x_{1}}^{x_{2}}f(r,\phi_{n}(r),\phi_{n}(ur))d_{s,t}r\Bigg\vert\leq M\vert x_{1}-x_{2}\vert,
\end{align*}
which implies, by letting $n\rightarrow\infty$,
\begin{equation}\label{eqn_contin}
    \vert\phi(x_{1})-\phi(x_{2})\vert\leq M\vert x_{1}-x_{2}\vert.
\end{equation}
Then $\phi(x)$ is continuous on $I_{\varphi}$. Also, by setting $x_{1}=x$, $x_{2}=0$ in Eq.(\ref{eqn_contin}) we obtain
\begin{align*}
    \vert\phi(x)-y_{0}\vert&\leq M\vert x\vert\hspace{0.5cm}\text{and}\hspace{0.5cm}\vert\phi(ux)-y_{0}\vert\leq Mu\vert x\vert
\end{align*}
which implies that the points $(x,\phi(x),\phi(ux))$ are in $R_{\varphi}$ for all $x\in I_{\varphi}$. We now estimate $\vert\phi(x)-\phi_{p}(x)\vert$. We have
\begin{equation*}
    \phi(x)=\phi_{0}(x)+\sum_{k=1}^{\infty}[\phi_{k}(x)-\phi_{k-1}(x)],
\end{equation*}
and
\begin{equation*}
    \phi_{p}(x)=\phi_{0}(x)+\sum_{k=1}^{p}[\phi_{k}(x)-\phi_{k-1}(x)].
\end{equation*}
Therefore, using Eq.(\ref{eqn_estimate}), we find that 
\begin{align}
    \vert\phi(x)-\phi_{p}(x)\vert&=\Bigg\vert\sum_{n=p+1}^{\infty}[\phi_{n}(x)-\phi_{n-1}(x)]\Bigg\vert\nonumber\\
    &\leq\sum_{n=p+1}^{\infty}\vert\phi_{n}(x)-\phi_{n-1}(x)\vert\nonumber\\
    &\leq\frac{M}{L_{1}+L_{2}}\sum_{n=p+1}^{\infty}(L_{1}\oplus L_{2})_{1,u}^{n}\frac{a^n}{\brk[c]{n}_{\vert s\vert,t}!}\nonumber\\
    &=\frac{M}{L_{1}+L_{2}}\sum_{n=0}^{\infty}(L_{1}\oplus L_{2})_{1,u}^{n+p+1}\frac{a^{n+p+1}}{\brk[c]{n+p+1}_{\vert s\vert,t}!}\nonumber\\
    &\leq\frac{Ma^{p+1}(L_{1}\oplus L_{2})_{1,u}^{p+1}}{(L_{1}+L_{2})\brk[c]{p+1}_{\vert s\vert,t}!}\sum_{n=0}^{\infty}(L_{1}\oplus L_{2}u^{p+1})_{1,u}^n\frac{a^n}{\brk[c]{n}_{\vert s\vert,t}!}\nonumber\\  &=\frac{Ma^{p+1}(L_{1}\oplus L_{2})_{1,u}^{p+1}}{(L_{1}+L_{2})\brk[c]{p+1}_{\vert s\vert,t}!}\EE_{\vert s\vert,t}(L_{1},L_{2}u^{p+1},u;a)\label{eqn_estim2}
\end{align}
Letting $\sigma_{p}=a^{p+1}(L_{1}\oplus L_{2})_{1,u}^{p+1}/\brk[c]{p+1}_{\vert s\vert,t}!$, we see that $\sigma_{p}\rightarrow0$ as $p\rightarrow\infty$, since $\sigma_{p}$ is a general term for the series for $\EE_{\vert s\vert,t}(L_{1},L_{2},u;a)$. In terms of $\sigma_{p}$ Eq.(\ref{eqn_estim2}) may be written as
\begin{equation}\label{eqn_estim1}
    \vert\phi(x)-\phi_{p}(x)\vert\leq \frac{M}{L_{1}+L_{2}}\sigma_{p}\EE_{\vert s\vert,t}(L_{1},L_{2}u^{p+1},u;a).
\end{equation}
To complete the proof we must show that 
\begin{equation}\label{eqn_sol_phi}
    \phi(x)=y_{0}+\int_{0}^{x}f(r,\phi(r),\phi(ur))d_{s,t}r
\end{equation}
for all $x\in I_{\varphi}$. For this purpose, we will show that
\begin{equation}\label{eqn_sol_final}
    \int_{0}^{x}f(r,\phi_{k}(r),\phi_{k}(ur))d_{s,t}r\rightarrow\int_{0}^{x}f(r,\phi(r),\phi(ur))d_{s,t}r
\end{equation}
for all $x$ in $R_{\varphi}$ as $k\rightarrow\infty$. We have
\begin{align}
    &\Bigg\vert\int_{0}^{x}f(r,\phi(r),\phi(ur))d_{s,t}r-\int_{0}^{x}f(r,\phi_{k}(r),\phi_{k}(ur))d_{s,t}r\Bigg\vert\nonumber\\
    &\hspace{3cm}\leq\Bigg\vert\int_{0}^{x}\vert f(r,\phi(r),\phi(ur))-f(r,\phi_{k}(r),\phi_{k}(ur))\vert d_{s,t}r\Bigg\vert\nonumber\\
    &\hspace{3cm}\leq L_{1}\Bigg\vert\int_{0}^{x}\vert\phi(r)-\phi_{k}(r)\vert d_{s,t}r\Bigg\vert+ L_{2}\Bigg\vert\int_{0}^{x}\vert\phi(ur)-\phi_{k}(ur)\vert d_{s,t}r\Bigg\vert\label{eqn_final}
\end{align}
using the fact that $f$ satisfies a Lipschitz condition. The estimate Eq.(\ref{eqn_estim1}) can now used in Eq.(\ref{eqn_final}) to obtain
\begin{align*}
    &\Bigg\vert\int_{0}^{x}f(r,\phi(r),\phi(ur))d_{s,t}r-\int_{0}^{x}f(r,\phi_{k}(r),\phi_{k}(ur))d_{s,t}r\Bigg\vert\\
    &\hspace{1cm}\leq\frac{M(L_{1}+L_{2})}{L_{1}+L_{2}}\sigma_{k}\EE_{\vert s\vert,t}(L_{1},L_{2}u^{k+1},u;a)\vert x\vert=M\sigma_{k}\EE_{\vert s\vert,t}(L_{1},L_{2}u^{k+1},u;a)\vert x\vert
\end{align*}
which tends to zero as $k\rightarrow\infty$, for each $x$ in $I_{\varphi}$. This proves Eq.(\ref{eqn_sol_final}), and hence that satisfies Eq.(\ref{eqn_sol_phi}). Thus our proof of theorem is now complete.
\end{proof}

The estimate Eq.(\ref{eqn_estim2}) of how well the $p$-th approximation $\phi_{p}$ approximates the solution $\phi$ is worthy of special attention.

\begin{theorem}
The $p$-th successive approximation $\phi_{p}$ to the solution $\phi$ of the initial value problem of Theorem \ref{theo_exist} satisfies
\begin{align*}
    \vert\phi(x)-\phi_{p}(x)\vert\leq\frac{Ma^{p+1}(L_{1}\oplus L_{2})_{1,u}^{p+1}}{(L_{1}+L_{2})\brk[c]{p+1}_{\vert s\vert,t}!}\EE_{\vert s\vert,t}(L_{1},L_{2}u^{p+1},u;a)
\end{align*}
for all $x$ in $I_{\varphi}$.
\end{theorem}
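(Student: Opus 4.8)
The inequality stated here is nothing other than the intermediate bound Eq.(\ref{eqn_estim2}) obtained in the course of proving Theorem~\ref{theo_exist}; the plan is therefore to extract and repackage that computation as a standalone estimate. First I would recall from Theorem~\ref{theo_exist} that on $I_{\varphi}$ the successive approximations converge, with
\[
\phi(x)=\phi_{0}(x)+\sum_{k=1}^{\infty}\big[\phi_{k}(x)-\phi_{k-1}(x)\big]
\quad\text{and}\quad
\phi_{p}(x)=\phi_{0}(x)+\sum_{k=1}^{p}\big[\phi_{k}(x)-\phi_{k-1}(x)\big],
\]
so that $\phi(x)-\phi_{p}(x)=\sum_{n=p+1}^{\infty}[\phi_{n}(x)-\phi_{n-1}(x)]$; the triangle inequality then reduces everything to summing the tail of the term-by-term bound.

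Next I would insert the inductive estimate Eq.(\ref{eqn_estimate}),
\[
\vert\phi_{n}(x)-\phi_{n-1}(x)\vert\le\frac{M}{L_{1}+L_{2}}\,(L_{1}\oplus L_{2})_{1,u}^{n}\,\frac{\vert x\vert^{n}}{\brk[c]{n}_{\vert s\vert,t}!},
\]
which itself rests on Proposition~\ref{prop_abs_nst}, and reindex the tail by $n\mapsto n+p+1$. The only genuine manipulation is the factorization of the deformed product, $(L_{1}\oplus L_{2})_{1,u}^{n+p+1}=(L_{1}\oplus L_{2})_{1,u}^{p+1}\,(L_{1}\oplus L_{2}u^{p+1})_{1,u}^{n}$ (immediate from $(L_{1}\oplus L_{2})_{1,u}^{m}=\prod_{k=0}^{m-1}(L_{1}+L_{2}u^{k})$), combined with the comparison $\brk[c]{p+1}_{\vert s\vert,t}!\,\brk[c]{n}_{\vert s\vert,t}!\le\brk[c]{n+p+1}_{\vert s\vert,t}!$, equivalently $\fibonomial{n+p+1}{n}_{\vert s\vert,t}\ge1$, which is valid for the families $(s,t)$ admitted in Theorem~\ref{theo_exist}. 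Using $\vert x\vert\le a$ for $x\in I_{\varphi}$ and pulling the $(p+1)$-st factor out of the sum gives
\[
\vert\phi(x)-\phi_{p}(x)\vert\le\frac{M}{L_{1}+L_{2}}\cdot\frac{a^{p+1}(L_{1}\oplus L_{2})_{1,u}^{p+1}}{\brk[c]{p+1}_{\vert s\vert,t}!}\sum_{n=0}^{\infty}(L_{1}\oplus L_{2}u^{p+1})_{1,u}^{n}\,\frac{a^{n}}{\brk[c]{n}_{\vert s\vert,t}!}.
\]

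Finally I would recognize the remaining series as $\EE_{\vert s\vert,t}(L_{1},L_{2}u^{p+1},u;a)$ and invoke Theorem~\ref{theo_conv_panto} for its convergence — exactly the point already used in Theorem~\ref{theo_exist} — so that the right-hand side is finite; collecting the constants then yields the asserted bound for every $x\in I_{\varphi}$. I expect no real obstacle, since the statement merely isolates Eq.(\ref{eqn_estim2}); the only points requiring care are the comparison of $(\vert s\vert,t)$-Fibotorials used to split the factorial (which I would justify from the admissibility hypotheses on $(q,\varphi_{s,t})$ inherited from Theorem~\ref{theo_exist}) and the domination $\vert x\vert\le a$ on $I_{\varphi}$, with the convergence of $\EE_{\vert s\vert,t}$ being what makes the bound meaningful.
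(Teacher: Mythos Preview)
Your proposal is correct and follows exactly the paper's own approach: the theorem is simply a restatement of the intermediate estimate Eq.(\ref{eqn_estim2}) derived inside the proof of Theorem~\ref{theo_exist}, and you recover it by summing the tail via Eq.(\ref{eqn_estimate}), reindexing, factoring the deformed product, and bounding the factorial ratio by a Fibonomial coefficient. The only step you flag beyond what the paper spells out---the inequality $\brk[c]{p+1}_{\vert s\vert,t}!\,\brk[c]{n}_{\vert s\vert,t}!\le\brk[c]{n+p+1}_{\vert s\vert,t}!$---is indeed used implicitly in the paper's chain leading to Eq.(\ref{eqn_estim2}), so your treatment is if anything slightly more explicit.
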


\begin{theorem}[{\bf Existence Theorem 2}]\label{theo_exist2}
Let $\vert f(x,y(x),y(ux))x^{\alpha}\vert$ be a function bounded on $R_{\varphi^\prime}$, $0\leq\alpha<1$, with $\vert f\vert\leq M$ on $R_{\varphi^\prime}$. Further, suppose $f$ satisfies the Lipschitz condition in the second and third variables with Lipschitz constant $L_{1}$ and $L_{2}$ such that
\begin{equation*}
    \vert f(x,y_{1}(x),y_{1}(ux))-f(x,y_{2}(x),y_{2}(ux))\vert\leq L_{1}\vert y_{1}(x)-y_{2}(x)\vert+L_{2}\vert y_{1}(ux)-y_{2}(ux)\vert.
\end{equation*}
Then the successive approximations Eq.(\ref{eqn_succ_app}) converge on the interval $I_{\varphi^\prime}=[-\frac{\alpha}{\vert\varphi_{s,t}^\prime\vert},\frac{\alpha}{\vert\varphi_{s,t}^\prime\vert}]$, where $\alpha=\min\{a,b/M\}$, to a solution $\phi$ of the initial value problem
\begin{equation}\label{eqn_fdu2}
    \mathbf{D}_{s,t}y=f(x,y(x),y(ux)),\hspace{0.5cm}y(0)=y_{0}
\end{equation}
on $I_{\varphi^\prime}$ when $0<u\leq\vert\varphi_{s,t}^\prime\vert$ and $(q^{-1},\varphi_{s,t}^\prime)$ belongs to one of the sets $T_{1},T_{2},T_{3},T_{6},T_{7},T_{8},T_{9}$ of the Theorem \ref{theo_conv_panto_prime} and $I_{\varphi^\prime}$ is contained in one the intervals
\begin{align*}
&\left(-\frac{1}{L_{1}\vert q^{-1}-1\vert},\frac{1}{L_{1}\vert q^{-1}-1\vert}\right),\ \left(-\frac{1}{L_{2}\vert q^{-1}-1\vert},\frac{1}{L_{2}\vert q^{-1}-1\vert}\right),\\
&\left(-\frac{1}{(L_{1}+L_{2}))\vert q^{-1}-1\vert},\frac{1}{(L_{1}+L_{2}))\vert q^{-1}-1\vert}\right).
\end{align*}
\end{theorem}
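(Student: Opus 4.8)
The plan is to run the proof of Theorem~\ref{theo_exist} essentially verbatim, replacing $\varphi_{s,t}$ by $\varphi_{s,t}^\prime$ throughout, the $(s,t)$-integral representation Eq.(\ref{eqn_int_def}) by its $\vert q\vert>1$ counterpart Eq.(\ref{eqn_int_def2}), and the convergence input Theorem~\ref{theo_conv_panto} by Theorem~\ref{theo_conv_panto_prime}. First I would invoke the (already noted) $\vert q\vert>1$ analogue of the fundamental theorem of $(s,t)$-calculus, Theorem~\ref{theo_funda}, so that $\phi$ solves Eq.(\ref{eqn_fdu2}) on $I_{\varphi^\prime}$ if and only if it solves the $(s,t)$-integral equation Eq.(\ref{eqn_stie}), $y(x)=y_0+\int_0^x f(r,y(r),y(ur))\,d_{s,t}r$. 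The preceding theorem (the $R_{\varphi^\prime}$-version of the successive-approximation existence result) then guarantees that every $\phi_k$ from Eq.(\ref{eqn_succ_app}) is defined on $I_{\varphi^\prime}$, that $(x,\phi_k(x),\phi_k(ux))\in R_{\varphi^\prime}$, and that $\vert\phi_k(x)-y_0\vert\le M\vert x\vert$ and $\vert\phi_k(ux)-y_0\vert\le Mu\vert x\vert$ there.

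Next I would write $\phi_n=\phi_0+\sum_{k=1}^n(\phi_k-\phi_{k-1})$ and estimate the increments exactly as before: using the Lipschitz hypothesis, the triangle inequality for the $(s,t)$-integral (the corollary after Proposition~\ref{prop_ineq1}) and monotonicity of the integral in the $\vert q\vert>1$ setting, together with the power rule $\int_0^x r^{\,n}\,d_{s,t}r=x^{\,n+1}/\brk[c]{n+1}_{s,t}$ and Proposition~\ref{prop_abs_nst}, an induction on $n$ reproduces the estimate Eq.(\ref{eqn_estimate}), namely $\vert\phi_n(x)-\phi_{n-1}(x)\vert\le\frac{M}{L_1+L_2}(L_1\oplus L_2)_{1,u}^{\,n}\,\vert x\vert^{\,n}/\brk[c]{n}_{\vert s\vert,t}!$. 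Summing, the partial sums of $\sum_n(\phi_n-\phi_{n-1})$ are dominated by $\frac{M}{L_1+L_2}\bigl(\EE_{\vert s\vert,t}(L_1,L_2,u;\vert x\vert)-1\bigr)$, and this majorant converges on $I_{\varphi^\prime}$ precisely because $(q^{-1},\varphi_{s,t}^\prime)$ lies in one of $T_1,T_2,T_3,T_6,T_7,T_8,T_9$ and $I_{\varphi^\prime}$ is contained in the stated intervals built from $\vert q^{-1}-1\vert$; this is exactly where Theorem~\ref{theo_conv_panto_prime} enters. Hence $\phi_n\to\phi$ uniformly on $I_{\varphi^\prime}$.

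Finally I would verify that the limit $\phi$ solves the problem, again mirroring Theorem~\ref{theo_exist}: the uniform bound $\vert\phi_{n+1}(x_1)-\phi_{n+1}(x_2)\vert\le M\vert x_1-x_2\vert$ passes to the limit, so $\phi$ is continuous and $(x,\phi(x),\phi(ux))\in R_{\varphi^\prime}$ for $x\in I_{\varphi^\prime}$; the tail estimate $\vert\phi(x)-\phi_p(x)\vert\le\frac{M}{L_1+L_2}\sigma_p\,\EE_{\vert s\vert,t}(L_1,L_2u^{p+1},u;a)$ with $\sigma_p\to0$, combined with the Lipschitz condition, lets me pass to the limit under the integral sign and conclude $\phi(x)=y_0+\int_0^x f(r,\phi(r),\phi(ur))\,d_{s,t}r$, i.e. $\phi$ solves Eq.(\ref{eqn_fdu2}) on $I_{\varphi^\prime}$.

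The only genuine work, and the step to watch, is transporting the integral-calculus machinery from $0<\vert q\vert<1$ to $\vert q\vert>1$: one must use Eq.(\ref{eqn_int_def2}), whose nodes $a q^{-n}/\varphi_{s,t}^\prime$ now contract because $\vert q^{-1}\vert<1$, check that the monotonicity inequality of Proposition~\ref{prop_ineq1} and the power rule for $\mathbf{D}_{s,t}$ still deliver $\int_0^x r^{\,n}\,d_{s,t}r=x^{\,n+1}/\brk[c]{n+1}_{s,t}$, and track that the absolute values collapse to $\brk[c]{n}_{\vert s\vert,t}!$ via Proposition~\ref{prop_abs_nst} just as in the $\varphi_{s,t}$ case. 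Once these analogues are in place, the combinatorial core — controlling $\prod_{k=1}^{n-1}(L_1+L_2u^k)$ by the $(s,t)$-exponential-type function $\EE_{\vert s\vert,t}$ — is word-for-word identical, so no new estimate is required.
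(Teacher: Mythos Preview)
Your proposal is correct and is exactly the approach the paper takes: the paper gives no separate proof of Theorem~\ref{theo_exist2} at all, leaving it as the evident $\varphi_{s,t}^\prime$/$\vert q\vert>1$ analogue of the fully written-out proof of Theorem~\ref{theo_exist}, with Theorem~\ref{theo_conv_panto_prime} substituted for Theorem~\ref{theo_conv_panto}. Your identification of the one place requiring care---that the $(s,t)$-integral, the monotonicity inequality, and the fundamental theorem must be used in their $\vert q\vert>1$ forms via Eq.(\ref{eqn_int_def2})---is the right caveat, and the paper acknowledges this only by the sentence ``Analogous results to the previous ones are obtained when $\vert q\vert>1$'' after Theorem~\ref{theo_partes}.
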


\subsection{Solutions of equation $\mathbf{D}_{s,t}y=f(y(ux))$ via Bell polynomials}

The differential equation $y^{\prime}=f(y)$ is studied in \cite{orozco1,orozco2,orozco3,orozco4}. It is a known fact that Bell polynomials are closely related to the derivatives of the compositions of functions \cite{Faa}, \cite{Foissy}, \cite{Riordan_j}. Let $f$ and $g$ be functions with exponential generating functions $\sum a_{n}\frac{x^{n}}{n!}$ and $\sum b_{n}\frac{x^{n}}{n!}$ 
respectively, with $a_{n},b_{n}\in \C$. Then
\begin{equation}\label{eqn_faa}
f(g(x))= f(b_{0}) + \sum_{n=1}^{\infty}\sum_{k=1}^{n}f^{(k)}(b_{0})B_{n,k}(b_{1},...,b_{n-k+1})\frac{x^{n}}{n!}
\end{equation}
where $B_{n,k}(b_{1},...,b_{n-k+1})$ is the $n$-th partial Bell polynomial
\begin{equation*}
B_{n,k}(b_{1},...,b_{n-k+1})=\sum_{\vert p(n)\vert=k}\frac{n!}{j_{1}!j_{2}!\cdots j_{n}!}\left[\frac{b_{1}}{1!}\right]^{j_{1}}\left[\frac{b_{2}}{2!}\right]^{j_{2}}\cdots \left[\frac{b_{n}}{n!}\right]^{j_{n}}
\end{equation*}
and the sum runs over all partitions $p(n)$ of $n$, that is, $n=j_{1}+2j_{2}+\cdots+nj_{n}$, $j_{h}$ denotes the number of parts of size $h$ and $\vert p(n)\vert=j_{1}+j_{2}+\cdots+j_{n}$ is the length 
of the partition $p(n)$. All the above together with the identity
\begin{equation}\label{eqn_hom}
    B_{n,k}(abx_{1},ab^{2}x_{2},...,ab^{n-k+1}x_{n-k+1})=a^{k}b^{n}B_{n,k}(x_{1},x_{2},...,x_{n-k+1})
\end{equation}
allows us to obtain the functional-difference equation with proportional delay $(s,t)$-analog of $y^{\prime}=f(y)$.

\begin{theorem}
Suppose $f$ with representation in power series. Then the series
\begin{equation*}
    y(x)=y_{0}+f\left(y_{0}\right)x+\sum_{n=1}^{\infty}\frac{u^n}{\brk[c]{n+1}_{s,t}}\sum_{k=1}^{n}f^{(k)}\left(y_{0}\right)B_{n,k}\left(y_{1},\ldots,y_{n-k+1}\right)\frac{x^{n+1}}{n!},
\end{equation*}
where the $y_{i}$ are in terms of $y_{0}$, is solution of the 
autonomous equation with proportional delay 
\begin{equation*}
\mathbf{D}_{s,t}y=f(y(ux)),\ y(0)=y_{0}.
\end{equation*}
\end{theorem}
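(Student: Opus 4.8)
The plan is to construct an ordinary power series solution by undetermined coefficients. Write the sought solution in exponential‑generating form $y(x)=\sum_{n\ge0}y_n x^{n}/n!$, where $y_0$ is the prescribed initial value and $y_1,y_2,\dots$ are to be determined; these $y_n$ will be precisely the quantities appearing inside the Bell polynomials in the statement. First I would expand the left‑hand side: since $\mathbf{D}_{s,t}x^{n}=\brk[c]{n}_{s,t}x^{n-1}$ (as in the worked example above), termwise application gives
\[
\mathbf{D}_{s,t}y(x)=\sum_{n\ge0}\frac{\brk[c]{n+1}_{s,t}}{(n+1)!}\,y_{n+1}\,x^{n}.
\]

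Next I would expand $f(y(ux))$ with Fa\`a di Bruno's formula in Bell‑polynomial form. Since $y(ux)=\sum_{n\ge0}(u^{n}y_n)x^{n}/n!$, its exponential‑generating coefficients are $b_0=y_0$ and $b_n=u^{n}y_n$ for $n\ge1$, so by Eq.(\ref{eqn_faa}),
\[
f(y(ux))=f(y_0)+\sum_{n=1}^{\infty}\sum_{k=1}^{n}f^{(k)}(y_0)\,B_{n,k}\!\bigl(uy_1,u^{2}y_2,\dots,u^{\,n-k+1}y_{n-k+1}\bigr)\frac{x^{n}}{n!}.
\]
Applying the homogeneity identity Eq.(\ref{eqn_hom}) with $a=1,\ b=u$ replaces $B_{n,k}(uy_1,u^{2}y_2,\dots)$ by $u^{n}B_{n,k}(y_1,\dots,y_{n-k+1})$, which is exactly where the factor $u^{n}$ in the claimed series originates.

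Finally I would match the coefficient of $x^{n}$ on the two sides. The $x^{0}$‑term forces $\brk[c]{1}_{s,t}y_1=f(y_0)$, i.e. $y_1=f(y_0)$ because $\brk[c]{1}_{s,t}=1$; and for $n\ge1$ one obtains the recursion
\[
y_{n+1}=\frac{(n+1)\,u^{n}}{\brk[c]{n+1}_{s,t}}\sum_{k=1}^{n}f^{(k)}(y_0)\,B_{n,k}(y_1,\dots,y_{n-k+1}),
\]
which is well posed because $B_{n,k}$ involves only $y_1,\dots,y_{n-k+1}$ with $n-k+1\le n$; hence each $y_m$ is recovered in finitely many steps as a polynomial in $y_0$ and the $f^{(j)}(y_0)$. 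Substituting this recursion back into $y(x)=y_0+y_1x+\sum_{n\ge1}y_{n+1}x^{n+1}/(n+1)!$ and using $(n+1)/(n+1)!=1/n!$ reproduces verbatim the series in the statement, and $y(0)=y_0$ holds by construction.

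For the legitimacy of the termwise operations I would note that, $f$ being given by a convergent power series, it is locally Lipschitz, so on a small enough rectangle the hypotheses of Existence Theorem \ref{theo_exist} are met and the series converges near $0$; alternatively a direct majorant estimate works, the denominators $\brk[c]{n+1}_{s,t}$ only aiding convergence. The one step demanding real care is the middle one: correctly passing from the ordinary series $y(ux)$ to its exponential‑generating coefficients and tracking the power of $u$ produced by Eq.(\ref{eqn_hom}). Once that is pinned down, the coefficient comparison and the reassembly of the series are routine.
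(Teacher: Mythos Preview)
Your argument is correct and follows essentially the same route as the paper: the paper does not give a separate proof of this theorem but establishes the more general version with $q$-periodic coefficients $p_n(x)$ immediately afterward, using exactly your ansatz $y(x)=\sum_{n\ge0}p_n(x)x^n/n!$, the Fa\`a di Bruno expansion Eq.~(\ref{eqn_faa}) for $f(y(ux))$, the homogeneity identity Eq.~(\ref{eqn_hom}) to extract $u^n$, and coefficient matching to obtain the recursion for $p_{n+1}$; your proof is the specialization $p_n(x)\equiv y_n$ of that argument.
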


\begin{corollary}
Suppose $f$ with representation in power series. Then the series
\begin{equation*}
    y(x)=y_{0}+f\left(y_{0}\right)x+\sum_{n=1}^{\infty}u^n\sum_{k=1}^{n}f^{(k)}\left(y_{0}\right)B_{n,k}\left(y_{1},\ldots,y_{n-k+1}\right)\frac{x^{n+1}}{(n+1)!},
\end{equation*}
where the $y_{i}$ are in terms of $y_{0}$, is solution of the 
autonomous equation with proportional delay 
\begin{equation*}
y^\prime=f(y(ux)),\ y(0)=y_{0}.
\end{equation*}
\end{corollary}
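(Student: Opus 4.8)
The plan is to obtain this corollary as the special case $(s,t)=(2,-1)$ of the preceding Theorem. First I would recall from the list of specializations that $\brk[c]{n}_{2,-1}=n$ for all $n$, hence $\brk[c]{n+1}_{2,-1}=n+1$, and that, because $\varphi_{2,-1}=\varphi_{2,-1}^{\prime}=1$ (here $s^{2}+4t=0$), the operator $\mathbf{D}_{2,-1}$ is to be read as the confluent limit $p=q=1$ of the $(p,q)$-derivative, that is, the ordinary derivative: $(\mathbf{D}_{2,-1}f)(x)=f^{\prime}(x)$ for every differentiable $f$. Accordingly the autonomous functional-difference equation $\mathbf{D}_{s,t}y=f(y(ux))$, $y(0)=y_{0}$, of the Theorem specializes to $y^{\prime}=f(y(ux))$, $y(0)=y_{0}$.

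Next I would substitute $(s,t)=(2,-1)$ into the series produced by the Theorem. This turns its generic coefficient $\frac{u^{n}}{\brk[c]{n+1}_{s,t}}\cdot\frac{x^{n+1}}{n!}$ into $\frac{u^{n}}{n+1}\cdot\frac{x^{n+1}}{n!}=u^{n}\frac{x^{n+1}}{(n+1)!}$, so the Theorem's series becomes exactly the series asserted in the corollary; since the Theorem guarantees that this series solves $\mathbf{D}_{2,-1}y=f(y(ux))$ with $y(0)=y_{0}$, and $\mathbf{D}_{2,-1}=\frac{d}{dx}$, the corollary follows.

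For completeness — and to avoid invoking the Theorem at the degenerate parameter value — I would also verify the claim directly. Writing $y(x)=\sum_{m\geq0}c_{m}x^{m}/m!$ with $c_{0}=y_{0}$, one has $y(ux)=\sum_{m\geq0}c_{m}u^{m}x^{m}/m!$, so the Fa\`a di Bruno expansion Eq.(\ref{eqn_faa}) applied to $f(y(ux))$, followed by the homogeneity identity Eq.(\ref{eqn_hom}) with $a=1$ and $b=u$, gives
\[
 f(y(ux))=f(y_{0})+\sum_{n\geq1}u^{n}\sum_{k=1}^{n}f^{(k)}(y_{0})\,B_{n,k}(c_{1},\ldots,c_{n-k+1})\,\frac{x^{n}}{n!}.
\]
Comparing this with $y^{\prime}(x)=\sum_{n\geq0}c_{n+1}x^{n}/n!$ forces $c_{1}=f(y_{0})$ and, for $n\geq1$, the recursion $c_{n+1}=u^{n}\sum_{k=1}^{n}f^{(k)}(y_{0})B_{n,k}(c_{1},\ldots,c_{n-k+1})$; since its right-hand side involves only $c_{1},\ldots,c_{n}$, this determines every $c_{i}$ as an explicit expression in $y_{0}$, and setting $y_{i}=c_{i}$ reproduces the stated series.

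I do not anticipate a real obstacle here. The only step deserving genuine care is the reading of $\mathbf{D}_{2,-1}$: at $(s,t)=(2,-1)$ the defining difference quotient is of the indeterminate form $0/0$ and must be replaced by its limit, the ordinary derivative — a point one should spell out explicitly. The rest is routine bookkeeping with Bell polynomials together with the identity $\brk[c]{n}_{2,-1}=n$.
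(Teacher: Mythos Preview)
Your proposal is correct and matches the paper's intended approach: the corollary is stated without proof, immediately after the preceding Theorem, and is meant to be read as the specialization $(s,t)=(2,-1)$, where $\brk[c]{n+1}_{2,-1}=n+1$ collapses the coefficient $\frac{u^{n}}{\brk[c]{n+1}_{s,t}}\cdot\frac{1}{n!}$ to $\frac{u^{n}}{(n+1)!}$ and $\mathbf{D}_{2,-1}$ is the ordinary derivative. Your direct verification via Fa\`a di Bruno and the homogeneity identity Eq.(\ref{eqn_hom}) is a sensible addition, since the paper's standing hypothesis $s^{2}+4t>0$ formally excludes $(2,-1)$ and the difference quotient defining $\mathbf{D}_{2,-1}$ is indeterminate there; the paper does not spell this out, so your care on that point is an improvement rather than a deviation.
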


In the following result, we show that there is an infinite number of solutions to a functional difference equation with proportional delay if $x$ is taken to be positive.
\begin{theorem}
Suppose $f$ with representation in power series. Take $p(x)=G(\log_{q}(x))$, $x>0$, in $\Per_{s,t}$. The series
\begin{multline}\label{eqn_sol_gen}
    y(x)=p(x)+f\left(p\left(\frac{u}{\varphi_{s,t}}x\right)\right)x\\
    +\sum_{n=1}^{\infty}\frac{u^n}{\brk[c]{n+1}_{s,t}}\sum_{k=1}^{n}f^{(k)}\left(p\left(\frac{u}{\varphi_{s,t}}x\right)\right)B_{n,k}\left(p_{1}\left(\frac{u}{\varphi_{s,t}}x\right),\ldots,p_{n-k+1}\left(\frac{u}{\varphi_{s,t}}x\right)\right)\frac{x^{n+1}}{n!},
\end{multline}
where the $p_{i}(x)$ are functions in $p(x)$, is solution of the 
autonomous equation with proportional delay 
\begin{equation}\label{eqn_assd}
\mathbf{D}_{s,t}y=f(y(ux)).    
\end{equation}
This means that there are an infinite number of solutions to Eq.(\ref{eqn_assd}).
\end{theorem}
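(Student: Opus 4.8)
The plan is to verify \eqref{eqn_sol_gen} directly, viewing it as the ``promotion'' of the constant $y_{0}$ of the previous theorem to a $q$-periodic function $p\in\Per_{s,t}$. Write the series as $y(x)=\sum_{n\ge 0}C_{n}(x)x^{n}$ with $C_{0}(x)=p(x)$, $C_{1}(x)=f\!\left(p(\tfrac{u}{\varphi_{s,t}}x)\right)$, and, for $n\ge 1$,
\[
  C_{n+1}(x)=\frac{u^{n}}{\brk[c]{n+1}_{s,t}\,n!}\sum_{k=1}^{n}f^{(k)}\!\left(p(\tfrac{u}{\varphi_{s,t}}x)\right)B_{n,k}\!\left(p_{1}(\tfrac{u}{\varphi_{s,t}}x),\dots,p_{n-k+1}(\tfrac{u}{\varphi_{s,t}}x)\right).
\]
Each $C_{n}$ lies in $\Per_{s,t}$: it is a product of compositions of fixed analytic functions and polynomials with the $q$-periodic function $p(\tfrac{u}{\varphi_{s,t}}x)$, and $\Per_{s,t}$ is a $\C$-algebra that is, moreover, stable under left-composition since $c\in\Per_{s,t}$ means $c(\varphi_{s,t}x)=c(\varphi_{s,t}^{\prime}x)$. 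The first step I would carry out is the $q$-periodic product rule: for such $c$, writing $c^{*}(x):=c(\varphi_{s,t}x)=c(\varphi_{s,t}^{\prime}x)$, one gets $\mathbf{D}_{s,t}\!\left(c(x)x^{n}\right)=c^{*}(x)\brk[c]{n}_{s,t}x^{n-1}$, hence $\mathbf{D}_{s,t}y(x)=\sum_{n\ge 1}C_{n}^{*}(x)\brk[c]{n}_{s,t}x^{n-1}$. The crucial observation here is that passing from $C_{n}$ to $C_{n}^{*}$ replaces the scaled argument $\tfrac{u}{\varphi_{s,t}}x$ by exactly $ux$; this is precisely why that scaling was inserted in \eqref{eqn_sol_gen}, and it is the mechanism producing the proportional delay $y(ux)$.

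Next I would re-index and cancel: $\brk[c]{n+1}_{s,t}$ kills the $1/\brk[c]{n+1}_{s,t}$ in $C_{n+1}$, giving
\[
  \mathbf{D}_{s,t}y(x)=f\!\left(p(ux)\right)+\sum_{m\ge 1}\frac{(ux)^{m}}{m!}\sum_{k=1}^{m}f^{(k)}\!\left(p(ux)\right)B_{m,k}\!\left(p_{1}(ux),\dots,p_{m-k+1}(ux)\right).
\]
By Fa\`a di Bruno's formula \eqref{eqn_faa}, together with the homogeneity identity \eqref{eqn_hom} used to reabsorb the powers of $u$ into the Bell polynomials (exactly as in the proof of the preceding theorem, with the constant $y_{0}$ there playing the role of the $q$-periodic ``constant'' $p$ here), the right-hand side is recognized as $f\!\left(y(ux)\right)$, the $p_{i}$ being precisely the components of the formal solution generated by this triangular recursion — the $q$-periodic analogue of ``the $y_{i}$ in terms of $y_{0}$'', i.e.\ the nested compositions of $f,f^{\prime},\dots$ with $p$ and with rescalings of its argument. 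Convergence of the series, for $p$ bounded on the relevant $(q,\varphi_{s,t})$-geometric set and $f$ with a power-series representation, follows from Existence Theorems~\ref{theo_exist} and~\ref{theo_exist2}; otherwise one argues at the level of formal series. Finally, since $G$ — and hence $p(x)=G(\log_{q}x)$ — ranges over the whole infinite-dimensional space of $1$-periodic functions, and two distinct choices of $G$ already yield solutions with distinct $n=0$ term $p(x)$, equation \eqref{eqn_assd} admits infinitely many solutions.

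\textbf{Main obstacle.} The delicate part is the bookkeeping in the two middle steps: one must check that $q$-periodicity genuinely collapses both the $\varphi_{s,t}x$- and $\varphi_{s,t}^{\prime}x$-evaluations of each $C_{n}$ to the single value $C_{n}^{*}(x)$ — this is what makes $\mathbf{D}_{s,t}y$ telescope cleanly and what turns the $\tfrac{u}{\varphi_{s,t}}$-scaled arguments into $u$-scaled ones — and that the powers of $u$ liberated by the cancellation of $\brk[c]{n+1}_{s,t}$ match the homogeneity weights of $B_{n,k}$ so that the recursion closes into precisely the closed Bell-polynomial form of \eqref{eqn_sol_gen}. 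Once these two alignments are confirmed, the rest is a routine transcription of the non-periodic argument.
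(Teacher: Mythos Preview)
Your proposal is correct and follows essentially the same route as the paper. The only organizational difference is that the paper first \emph{derives} the recursion for the $p_n$ by plugging the ansatz $y(x)=\sum_{n\ge0}p_n(x)\,x^n/n!$ with $p_n\in\Per_{s,t}$ into the equation and matching terms --- which is what pins down the triangular recursion you allude to --- and only afterwards carries out the direct verification you sketch; you collapse these two passes into one. The two key mechanisms you isolate (that $q$-periodicity makes $\mathbf{D}_{s,t}(c(x)x^n)=c(\varphi_{s,t}x)\brk[c]{n}_{s,t}x^{n-1}$, and that this turns every argument $\tfrac{u}{\varphi_{s,t}}x$ into $ux$) are exactly the ones the paper uses, and the Fa\`a di Bruno/homogeneity step is identical.
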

\begin{proof}
Let $(p_{n}(x))_{n\in\N_{0}}$ be a sequence of functions in $\Per_{s,t}$ and suppose that $y(x)=\sum_{n=0}^{\infty}p_{n}(x)x^n/n!$ is solution of Eq.(\ref{eqn_assd}). Then Eq.(\ref{eqn_assd}) is equivalent to
\begin{multline*}
    \sum_{n=0}^{\infty}p_{n+1}(\varphi_{s,t}x)\brk[c]{n+1}_{s,t}\frac{x^n}{(n+1)!}\\
    =f(p_{0}(ux))+\sum_{n=1}^{\infty}\sum_{k=1}^{n}f^{(k)}(p_{0}(ux))B_{n,k}(up_{1}(ux),\ldots,u^{n-k+1}p_{n-k+1}(ux))\frac{x^n}{n!}
\end{multline*}
and if set $p_{0}(x)=p(x)$, then it follows that
\begin{align}
    p_{1}(x)&=f\left(p\left(\frac{ux}{\varphi_{s,t}}\right)\right),\label{eqn_termino_1}\\
    p_{n+1}(x)&=\frac{n+1}{\brk[c]{n+1}_{s,t}}\sum_{k=1}^{n}f^{(k)}\left(p\left(\frac{ux}{\varphi_{s,t}}\right)\right)\nonumber\\
    &\hspace{1.5cm}\times B_{n,k}\left(up_{1}\left(\frac{ux}{\varphi_{s,t}}\right),\ldots,u^{n-k+1}p_{n-k+1}\left(\frac{ux}{\varphi_{s,t}}\right)\right)\nonumber\\
    &=\frac{(n+1)u^n}{\brk[c]{n+1}_{s,t}}\sum_{k=1}^{n}f^{(k)}\left(p\left(\frac{ux}{\varphi_{s,t}}\right)\right)B_{n,k}\left(p_{1}\left(\frac{ux}{\varphi_{s,t}}\right),\ldots,p_{n-k+1}\left(\frac{ux}{\varphi_{s,t}}\right)\right)\label{eqn_termino_n_1}
\end{align}
where we have used Eq.(\ref{eqn_hom}). Taking $(s,t)$-derivative to Eq.(\ref{eqn_sol_gen}), we have
\begin{align*}
    \mathbf{D}_{s,t}y(x)&=f(p(ux))\\
    &+\sum_{n=1}^{\infty}\frac{u^n}{n!\brk[c]{n+1}_{s,t}}\sum_{k=1}^{n}f^{(k)}\left(p\left(ux\right)\right)B_{n,k}\left(p_{1}\left(ux\right),\ldots,p_{n-k+1}\left(ux\right)\right)\brk[c]{n+1}_{s,t}x^{n}\\
    &=f(p(ux))+\sum_{n=1}^{\infty}\sum_{k=1}^{n}f^{(k)}\left(p\left(ux\right)\right)B_{n,k}\left(p_{1}\left(ux\right),\ldots,p_{n-k+1}\left(ux\right)\right)\frac{(ux^n)}{n!}\\
    &=f(y(u x))
\end{align*}
which completes the proof.
\end{proof}
Some values of $p_{i}(x)$ are
\begin{align*}
p_{1}(x)&=f\left(p\left(\frac{ux}{\varphi_{s,t}}\right)\right),\\
p_{2}(x)&=\frac{u}{\brk[c]{2}_{s,t}}f^{\prime}\left(p\left(\frac{ux}{\varphi_{s,t}}\right)\right)f\left(p\left(\frac{u^2x}{\varphi_{s,t}^2}\right)\right), \\
p_{3}(x)&=\frac{u^3}{\brk[c]{3}_{s,t}!}f^{\prime}\left(p\left(\frac{ux}{\varphi_{s,t}}\right)\right)f^{\prime}\left(p\left(\frac{u^2x}{\varphi_{s,t}^2}\right)\right)f\left(p\left(\frac{u^3x}{\varphi_{s,t}^3}\right)\right)\\
&\hspace{2cm}+\frac{u^2}{\brk[c]{3}_{s,t}}f^{\prime\prime}\left(p\left(\frac{ux}{\varphi_{s,t}}\right)\right)f^{2}\left(p\left(\frac{u^2x}{\varphi_{s,t}^2}\right)\right).
\end{align*}

\subsection{Solution of equation $\mathbf{D}_{s,t}y(x)=ay(ux)$}

\begin{theorem}\label{theo_sol_pantou}
Set $s\neq0$, $t\neq0$. 
\begin{enumerate}
    \item A solution to the initial value problem
\begin{align}
    \mathbf{D}_{s,t}y(x)&=ay(ux),\label{eqn_pantou}\\
    y(0)&=\xi\nonumber
\end{align}
is
\begin{equation*}
    y(x)=\xi\exp_{s,t}(ax,u)=\xi\sum_{n=0}^{\infty}u^{\binom{n}{2}}a^{n}\frac{x^n}{\brk[c]{n}_{s,t}!}.
\end{equation*}
\item For $x>0$, the solutions of Eq.(\ref{eqn_pantou}) are of the form
\begin{equation}\label{eqn_sol_1}
y(x)=\e_{s,t}(a,x,u,p)=\sum_{n=0}^{\infty}u^{\binom{n}{2}}a^{n}p\left(\frac{u^n}{\varphi_{s,t}^{n}}x\right)\frac{x^n}{\brk[c]{n}_{s,t}!}
\end{equation}
where $p(x)=G(\log_{q}(x))$, with $G$ a periodic function with period one.
\end{enumerate}
\end{theorem}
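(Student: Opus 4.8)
The plan is to verify each of the two formulas by applying $\mathbf{D}_{s,t}$ to the displayed series term by term; the legitimacy of term-by-term differentiation (and the convergence of the series) is supplied, for $u$ in the admissible range $(0,\vert\varphi_{s,t}\vert]$ (resp.\ $(0,\vert\varphi_{s,t}^{\prime}\vert]$), by Theorem \ref{theo_exp_conv} and the standing restriction on $u$.

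For part (1) I would first record the elementary scaling rule $\mathbf{D}_{s,t}(f(ax))=a\,(\mathbf{D}_{s,t}f)(ax)$, which is immediate from the difference quotient. Combining it with Theorem \ref{theo_diff_expu} at $v=1$, namely $\mathbf{D}_{s,t}\exp_{s,t}(z,u)=\exp_{s,t}(uz,u)$, gives $\mathbf{D}_{s,t}\exp_{s,t}(ax,u)=a\,\exp_{s,t}(uax,u)=a\,\exp_{s,t}(a(ux),u)$. Equivalently one differentiates the series directly: $\mathbf{D}_{s,t}(x^{n})=\brk[c]{n}_{s,t}x^{n-1}$ cancels the Fibotorial, a shift of index together with $\binom{n+1}{2}=\binom{n}{2}+n$ pulls out one factor $a$ and one factor $u^{n}$, and $x^{n}$ turns into $(ux)^{n}$. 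Multiplying by $\xi$ then yields $\mathbf{D}_{s,t}y=a\,y(ux)$, while $y(0)=\xi\exp_{s,t}(0,u)=\xi$; this settles (1).

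For part (2) the key structural remark is that, for each fixed $n$, the function $h_{n}(x):=p(u^{n}x/\varphi_{s,t}^{n})$ is again $q$-periodic: since $p(qw)=p(w)$ for all $w$, substituting $w=u^{n}x/\varphi_{s,t}^{n}$ gives $h_{n}(qx)=h_{n}(x)$, so $h_{n}\in\Per_{s,t}$ and $\mathbf{D}_{s,t}h_{n}=0$. Writing the candidate as $y(x)=\sum_{n\ge 0}c_{n}(x)$ with $c_{n}(x)=u^{\binom{n}{2}}a^{n}h_{n}(x)\,x^{n}/\brk[c]{n}_{s,t}!$, I apply the product rule Eq.(\ref{eqn_der_prod1}) to $c_{n}=h_{n}\cdot(x^{n}/\brk[c]{n}_{s,t}!)$; the summand carrying $\mathbf{D}_{s,t}h_{n}$ drops out and leaves $\mathbf{D}_{s,t}c_{n}(x)=u^{\binom{n}{2}}a^{n}h_{n}(\varphi_{s,t}x)x^{n-1}/\brk[c]{n-1}_{s,t}!=u^{\binom{n}{2}}a^{n}p(u^{n}x/\varphi_{s,t}^{n-1})x^{n-1}/\brk[c]{n-1}_{s,t}!$, the $n=0$ term contributing nothing in accordance with $p\in\Per_{s,t}$. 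Summing over $n\ge 1$, shifting to $m=n-1$, and using $u^{\binom{m+1}{2}}=u^{\binom{m}{2}}u^{m}$ recasts the sum as $a\sum_{m\ge 0}u^{\binom{m}{2}}a^{m}p(u^{m}(ux)/\varphi_{s,t}^{m})(ux)^{m}/\brk[c]{m}_{s,t}!$, which is precisely $a\,y(ux)$ by the definition Eq.(\ref{eqn_sol_1}) of $\e_{s,t}(a,x,u,p)$. The hypothesis $x>0$ is exactly what makes $p(x)=G(\log_{q}x)$, and hence every $h_{n}$, well defined; note that $\Per_{s,t}=\Per_{su,tu^{2}}$ (shown in Section 3) is used implicitly so that ``$q$-periodic'' is unambiguous, and that taking $p\equiv\xi$ recovers part (1).

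The formal manipulations above are routine; the step I expect to require the most care is the analytic justification of term-by-term differentiation of $y(x)$, i.e.\ tying the argument back to Theorem \ref{theo_exp_conv} once the constant coefficient of $\exp_{s,t}$ has been replaced by the $q$-periodic factors $h_{n}(x)$. I would handle it by dominating $\vert y(x)\vert$ and $\vert\mathbf{D}_{s,t}y(x)\vert$, on compact subsets of the region of convergence, by majorant series of the same shape as $\exp_{s,t}(\,\cdot\,,u)$ — legitimate because each $h_{n}$ inherits a uniform bound on such subsets from $p$ — so that the convergence conclusions of Theorem \ref{theo_exp_conv} carry over and the interchange of $\mathbf{D}_{s,t}$ with the infinite sum is valid.
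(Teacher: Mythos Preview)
Your argument is correct, but it takes a different route from the paper. The paper proves part (1) \emph{constructively} via Picard iteration: it rewrites the equation as an $(s,t)$-integral equation using Theorem~\ref{theo_funda}, sets $y_{0}=\xi$, $y_{k+1}(x)=\xi+\int_{0}^{x}a\,y_{k}(ur)\,d_{s,t}r$, computes the first few iterates explicitly, and passes to the limit to obtain $\xi\exp_{s,t}(ax,u)$. For part (2) the paper does not differentiate directly either; instead it specializes the Bell-polynomial recursion Eqs.~(\ref{eqn_termino_1})--(\ref{eqn_termino_n_1}) with $f(x)=ax$ (so only $B_{n,1}=p_{n}$ survives), solves the resulting first-order recursion for $p_{n}(x)$, and reads off the series.

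Your approach---verifying both formulas by term-by-term $(s,t)$-differentiation, using Theorem~\ref{theo_diff_expu} for part (1) and the product rule together with $h_{n}\in\Per_{s,t}$ for part (2)---is shorter and more self-contained; it avoids both the integral machinery and the Bell-polynomial framework. What you give up is the constructive flavor: the paper's iteration scheme discovers the solution rather than merely checking it, and its Bell-polynomial derivation ties the result to the general method of Section~4.2. Your added paragraph on dominating by a majorant of $\exp_{s,t}$-type to justify the interchange of $\mathbf{D}_{s,t}$ with the sum is something the paper does not spell out, and it is a genuine improvement in rigor (note, however, that a uniform bound on $h_{n}$ over compacta requires $G$ to be bounded, which you should state as a hypothesis; the paper is silent on this point as well).
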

\begin{proof}
From Theorem \ref{theo_funda},
\begin{equation*}
    y(x)=\xi+\int_{0}^{x}y(ux)d_{s,t}x.
\end{equation*}
By successive approximation method define
\begin{align*}
    y_{0}(x)&=\xi,\\
    y_{k+1}(x)&=\xi+\int_{0}^{x}y_{k}(ur)d_{s,t}r.
\end{align*}
Then
\begin{align*}
    y_{0}(x)&=\xi,\\
    y_{1}(x)&=\xi+a\xi x,\\
    y_{2}(x)&=\xi+a\xi x+\xi u\frac{a^2}{\brk[c]{2}_{s,t}}x^{2},\\
    y_{3}(x)&=\xi+a\xi x+\xi u\frac{a^2}{\brk[c]{2}_{s,t}!}x^{2}+\xi u^{3}\frac{a^3}{\brk[c]{3}_{s,t}!}x^{3},\\
    &\vdots\\
    y_{k}(x)&=\xi\sum_{n=0}^{k}a^{n}u^{\binom{n}{2}}\frac{x^{n}}{\brk[c]{n}_{s,t}!}
\end{align*}
and
\begin{equation*}
    y(x)=\lim_{k\rightarrow\infty}y_{k}(x)=\xi\sum_{n=0}^{\infty}u^{\binom{n}{2}}a^{n}\frac{x^n}{\brk[c]{n}_{s,t}!}=\xi\exp_{s,t}(ax,u).
\end{equation*}
From Eqs. (\ref{eqn_termino_1}) and (\ref{eqn_termino_n_1}) with $f(x)=ax$, we have
\begin{align*}
    p_{1}(x)&=ap\left(\frac{ux}{\varphi_{s,t}}\right),\\
    p_{n+1}(x)&=a\frac{(n+1)u^n}{\brk[c]{n+1}_{s,t}}B_{n,1}\left(p_{1}\left(\frac{ux}{\varphi_{s,t}}\right),\ldots,p_{n}\left(\frac{ux}{\varphi_{s,t}}\right)\right)\\
    &=a\frac{(n+1)u^n}{\brk[c]{n+1}_{s,t}}p_{n}\left(\frac{ux}{\varphi_{s,t}}\right).
\end{align*}
Therefore
\begin{equation*}
    p_{n}(x)=\frac{a^{n}n!u^{\binom{n}{2}}}{\brk[c]{n}_{s,t}!}p\left(\frac{u^{n}}{\varphi_{s,t}^{n}}x\right)
\end{equation*}
and
\begin{equation*}
    y(x)=\sum_{n=0}^{\infty}\frac{a^{n}u^{\binom{n}{2}}}{\brk[c]{n}_{s,t}!}p\left(\frac{u^{n}}{\varphi_{s,t}^{n}}x\right)x^{n}
\end{equation*}
is a solution of Eq.(\ref{eqn_pantou}).
\end{proof}
As 
\begin{equation*}
    G\left(\log_{q}\left(\frac{u^n}{\varphi_{s,t}^n}z\right)\right)=G\left(\log_{q}\left(\frac{u^n}{\varphi_{s,t}^{\prime n}}z\right)\right)
\end{equation*}
for all periodic function $G$ with period one and all $n\in\N$, then
\begin{equation*}
\sum_{n=0}^{\infty}u^{\binom{n}{2}}a^{n}G\left(\log_{q}\left(\frac{u^n}{\varphi_{s,t}^{\prime n}}z\right)\right)\frac{z^n}{\brk[c]{n}_{s,t}!}
\end{equation*}
are also solutions of Eq.(\ref{eqn_pantou}). 

The $(p,q)$-power is defined by Sadjang in \cite{nji} by
\begin{equation}\label{eqn_pq_binomial}
(x\ominus a)_{p,q}^{n}=
\begin{cases}
1,& \text{ if }n=0;\\
\prod_{k=0}^{n-1}(p^{k}x-q^{k}a),& \text{ if }n\geq1.
\end{cases}
\end{equation}
In \cite{jagann} the following expansion is obtained
\begin{equation}\label{eqn_bin_expan}
    (x\ominus a)_{p,q}^n=\sum_{k=0}^{n}\fibonomial{n}{k}_{p,q}p^{\binom{k}{2}}q^{\binom{n-k}{2}}x^{k}(-a)^{n-k}.
\end{equation}

\begin{definition}
For all $s\neq0$, $s^2+4t>0$, we define the binomial $(s,t)$-exponential function as
\begin{align*}
\exp_{s,t}(a(x\ominus y)_{\varphi,\varphi^\prime})&=\sum_{n=0}^{\infty}a^n\frac{(x\ominus y)_{\varphi,\varphi^\prime}^{n}}{\brk[c]{n}_{s,t}!}=\Exp_{s,t}(ax)\Exp_{s,t}^{\prime}(-ay).
\end{align*}
\end{definition}

\begin{theorem}
Let $a$ be a complex number and set $s\neq0$, $s^2+4t>0$. Then the initial value problem
\begin{equation}\label{eqn_panto1}
    \mathbf{D}_{s,t}y(x)=ay(\varphi_{s,t}x),
\end{equation}
$y(0)=\xi$, have solution 
\begin{equation*}
f(x)=\xi\Exp_{s,t}(ax).
\end{equation*}
A solution of Eq.(\ref{eqn_panto1}) with initial value $y(\eta)=\xi$ is
\begin{equation*}
    f(x)=\xi\Exp_{s,t}(ax)\Exp_{s,t}^{\prime}(-a\eta)=\xi\exp_{s,t}(a(x\ominus\eta)_{\varphi,\varphi^\prime}).
\end{equation*}
Also, if $f(x)=\exp_{s,t}(a(x\ominus\eta)_{\varphi,\varphi^\prime})$ is a solution of Eq.(\ref{eqn_panto1}) with $f(\eta)=1$, $\eta>0$, then $g(x)=G(\log_{q}(x))f(x)$ is also a solution for every $G(\log_{q}(x))$ in $\Per_{s,t}$ and $g(\eta)=G(\log_{q}(\eta))$. 
\end{theorem}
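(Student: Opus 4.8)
The plan is to verify the three assertions by direct substitution, using Theorem~\ref{theo_diff_expu} (the deformed derivative of $\exp_{s,t}(z,u)$), Corollary~\ref{cor_inver_exp} (the reciprocity $\Exp_{s,t}(z)\Exp_{s,t}^{\prime}(-z)=1$), the definition of the binomial $(s,t)$-exponential function, and the $(s,t)$-product rule Eq.(\ref{eqn_der_prod1}) together with the fact that $\Per_{s,t}$ is the kernel of $\mathbf{D}_{s,t}$. All the manipulations below are legitimate at the level of $\ward_{s,t,\C}[[z]]$ and, by Theorem~\ref{theo_exp_conv}, on the relevant disk of convergence.

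For the first assertion, I would start from Theorem~\ref{theo_diff_expu} with $v=1$ and $u=\varphi_{s,t}$, which gives $\mathbf{D}_{s,t}\Exp_{s,t}(z)=\Exp_{s,t}(\varphi_{s,t}z)$, i.e.
\[
\frac{\Exp_{s,t}(\varphi_{s,t}z)-\Exp_{s,t}(\varphi_{s,t}^{\prime}z)}{(\varphi_{s,t}-\varphi_{s,t}^{\prime})z}=\Exp_{s,t}(\varphi_{s,t}z).
\]
Substituting $z=ax$ and multiplying by $a$ turns the left side into $(\mathbf{D}_{s,t}f)(x)/\xi$ for $f(x)=\xi\Exp_{s,t}(ax)$, and the right side into $af(\varphi_{s,t}x)/\xi$; since the $n=0$ term gives $\Exp_{s,t}(0)=1$, we have $f(0)=\xi$. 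This settles the first claim.

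For the second assertion, note that $\Exp_{s,t}^{\prime}(-a\eta)$ is constant in $x$, so it passes through $\mathbf{D}_{s,t}$ by linearity and the previous computation shows $f(x)=\xi\Exp_{s,t}(ax)\Exp_{s,t}^{\prime}(-a\eta)$ still satisfies $\mathbf{D}_{s,t}f(x)=af(\varphi_{s,t}x)$. Evaluating at $x=\eta$ and invoking Corollary~\ref{cor_inver_exp} in the form $\Exp_{s,t}(a\eta)\Exp_{s,t}^{\prime}(-a\eta)=1$ yields $f(\eta)=\xi$; and $\Exp_{s,t}(ax)\Exp_{s,t}^{\prime}(-a\eta)=\exp_{s,t}(a(x\ominus\eta)_{\varphi,\varphi^{\prime}})$ is exactly the definition of the binomial $(s,t)$-exponential function.

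For the third assertion, write $p(x)=G(\log_{q}(x))\in\Per_{s,t}$, so $\mathbf{D}_{s,t}p=0$, and apply Eq.(\ref{eqn_der_prod1}) to $g(x)=p(x)f(x)$:
\[
\mathbf{D}_{s,t}g(x)=p(\varphi_{s,t}x)(\mathbf{D}_{s,t}f)(x)+f(\varphi_{s,t}^{\prime}x)(\mathbf{D}_{s,t}p)(x)=p(\varphi_{s,t}x)\cdot af(\varphi_{s,t}x)=ag(\varphi_{s,t}x),
\]
using that $f$ solves Eq.(\ref{eqn_panto1}) and $\mathbf{D}_{s,t}p=0$; at $x=\eta$ we get $g(\eta)=p(\eta)f(\eta)=G(\log_{q}(\eta))$. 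I expect no substantive obstacle: the only delicate points are the scaling factor $a$ arising when $\mathbf{D}_{s,t}$ acts on the composition $x\mapsto\Exp_{s,t}(ax)$ (handled by $\mathbf{D}_{s,t}[h(a\,\cdot)](x)=a(\mathbf{D}_{s,t}h)(ax)$), and the hypothesis $\eta>0$, which is precisely what makes $\log_{q}$ well defined along the geometric orbit $\{\,\eta q^{n}/\varphi_{s,t}\,\}$.
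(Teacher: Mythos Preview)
Your proof is correct and follows essentially the same route as the paper: direct verification via the derivative rule for $\Exp_{s,t}$, the constancy of $\Exp_{s,t}^{\prime}(-a\eta)$ together with Corollary~\ref{cor_inver_exp} for the initial condition at $\eta$, and the product rule Eq.(\ref{eqn_der_prod1}) with $\mathbf{D}_{s,t}p=0$ for the $q$-periodic multiplier. The only cosmetic difference is that for the first part the paper cites Theorem~\ref{theo_sol_pantou} with $u=\varphi_{s,t}$, whereas you invoke Theorem~\ref{theo_diff_expu} and the chain-rule identity $\mathbf{D}_{s,t}[h(a\,\cdot)](x)=a(\mathbf{D}_{s,t}h)(ax)$ directly; the underlying computation is the same.
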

\begin{proof}
The solution follows from Theorem \ref{theo_sol_pantou} with $u=\varphi_{s,t}$. As
\begin{align*}
    \mathbf{D}_{s,t}(\xi\exp_{s,t}(a(x\ominus\eta)_{\varphi,\varphi^\prime})&=\xi\Exp_{s,t}^{\prime}(-a\eta)\mathbf{D}_{s,t}\Exp_{s,t}(ax)\\
    &=a\xi\Exp_{s,t}^{\prime}(-a\eta)\Exp_{s,t}(a\varphi_{s,t}x)\\
    &=a\xi\exp_{s,t}(a(\varphi_{s,t}x\ominus\eta)_{\varphi,\varphi^\prime}),
\end{align*}
then $f(x)=\xi\exp_{s,t}(a(x\ominus\eta))$ is solution of Eq.(\ref{eqn_panto1}) with initial value $f(\eta)=\xi$. Suppose $f(x)=\exp_{s,t}(a(x\ominus\eta))$ is a solution of Eq.(\ref{eqn_panto1}). Then
\begin{align*}
    \mathbf{D}_{s,t}(G(\log_{q}(x))f(x))&=G(\log_{q}(\varphi_{s,t}x))\mathbf{D}_{s,t}f(x)\\
    &=aG(\log_{q}(\varphi_{s,t}x))f(\varphi_{s,t}x)
\end{align*}
and $G(\log_{q}(x))f(x)$ is also a solution for all $\eta>0$ and for all functions $G(\log_{q}(x))\in\Per_{s,t}$.
\end{proof}

Similarly, we have the following theorem.
\begin{theorem}
Let $a$ be a complex number and set $s\neq0$, $s^2+4t>0$. Then the initial value problem
\begin{equation}\label{eqn_panto2}
    \mathbf{D}_{s,t}y(x)=ay(\varphi_{s,t}^\prime x),
\end{equation}
$y(0)=\xi$, have solution 
\begin{equation*}
f(x)=\xi\Exp_{s,t}^\prime(ax).
\end{equation*}
A solution of Eq.(\ref{eqn_panto1}) with initial value $y(\eta)=\xi$ is
\begin{equation*}
    f(x)=\xi\Exp_{s,t}^\prime(ax)\Exp_{s,t}(-a\eta)=\xi\exp_{s,t}(a((-\eta)\oplus x)_{\varphi,\varphi^\prime}).
\end{equation*}
Also, if $f(x)=\exp_{s,t}(a((-\eta)\oplus x)_{\varphi,\varphi^\prime})$ is a solution of Eq.(\ref{eqn_panto1}) with $f(\eta)=1$, $\eta>0$, then $g(x)=G(\log_{q}(x))f(x)$ is also a solution for every $G(\log_{q}(x))$ in $\Per_{s,t}$ and $g(\eta)=G(\log_{q}(\eta))$. 
\end{theorem}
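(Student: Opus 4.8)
The plan is to mirror the proof of the preceding theorem, interchanging the roles of $\varphi_{s,t}$ and $\varphi_{s,t}^{\prime}$ (equivalently, of $\Exp_{s,t}$ and $\Exp_{s,t}^{\prime}$) throughout. For the first assertion, I would recall that $\Exp_{s,t}^{\prime}(z)=\exp_{s,t}(z,\varphi_{s,t}^{\prime})$ and apply Theorem~\ref{theo_sol_pantou} with $u=\varphi_{s,t}^{\prime}$: this immediately gives that $y(x)=\xi\exp_{s,t}(ax,\varphi_{s,t}^{\prime})=\xi\Exp_{s,t}^{\prime}(ax)$ solves $\mathbf{D}_{s,t}y(x)=ay(\varphi_{s,t}^{\prime}x)$ with $y(0)=\xi$.

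For the case $y(\eta)=\xi$, I would first note, from Theorem~\ref{theo_diff_expu} with $v=1$, that $\mathbf{D}_{s,t}\Exp_{s,t}^{\prime}(z)=\Exp_{s,t}^{\prime}(\varphi_{s,t}^{\prime}z)$, and combine this with the elementary scaling $\mathbf{D}_{s,t}\big(\psi(ax)\big)=a(\mathbf{D}_{s,t}\psi)(ax)$ (immediate from the definition of $\mathbf{D}_{s,t}$) to deduce $\mathbf{D}_{s,t}\big(\Exp_{s,t}^{\prime}(ax)\big)=a\Exp_{s,t}^{\prime}(a\varphi_{s,t}^{\prime}x)$. Since $\Exp_{s,t}(-a\eta)$ is a constant,
\[
\mathbf{D}_{s,t}\big(\xi\Exp_{s,t}^{\prime}(ax)\Exp_{s,t}(-a\eta)\big)=a\xi\Exp_{s,t}^{\prime}(a\varphi_{s,t}^{\prime}x)\Exp_{s,t}(-a\eta)=af(\varphi_{s,t}^{\prime}x),
\]
so $f$ solves Eq.~(\ref{eqn_panto2}); evaluating at $x=\eta$ and invoking Corollary~\ref{cor_inver_exp} in the form $\Exp_{s,t}(-z)\Exp_{s,t}^{\prime}(z)=1$ with $z=a\eta$ gives $f(\eta)=\xi$. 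The equality $f(x)=\xi\exp_{s,t}\big(a((-\eta)\oplus x)_{\varphi,\varphi^{\prime}}\big)$ is then just the definition of the associated binomial $(s,t)$-exponential function with the arguments placed in this order, the counterpart of $\exp_{s,t}(a(x\ominus y)_{\varphi,\varphi^{\prime}})=\Exp_{s,t}(ax)\Exp_{s,t}^{\prime}(-ay)$.

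For the multiplicity statement, I would take $g(x)=G(\log_q(x))f(x)$ with $G(\log_q(x))\in\Per_{s,t}$ and $f$ a solution of Eq.~(\ref{eqn_panto2}) satisfying $f(\eta)=1$. Since $q$-periodic functions lie in the kernel of $\mathbf{D}_{s,t}$, the $(s,t)$-Leibniz rule Eq.~(\ref{eqn_der_prod1}) reduces to $\mathbf{D}_{s,t}\big(G(\log_q(x))f(x)\big)=G(\log_q(\varphi_{s,t}x))\,\mathbf{D}_{s,t}f(x)$; combining this with $G(\log_q(\varphi_{s,t}x))=G(\log_q(\varphi_{s,t}^{\prime}x))$ (valid because $\varphi_{s,t}^{\prime}=q\varphi_{s,t}$ and $G$ has period one) yields $\mathbf{D}_{s,t}g(x)=aG(\log_q(\varphi_{s,t}^{\prime}x))f(\varphi_{s,t}^{\prime}x)=ag(\varphi_{s,t}^{\prime}x)$, and $g(\eta)=G(\log_q(\eta))f(\eta)=G(\log_q(\eta))$. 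Essentially everything here is a routine transcription of the preceding proof; the only point demanding care is the bookkeeping of which of $\Exp_{s,t},\Exp_{s,t}^{\prime}$ and which of $\varphi_{s,t},\varphi_{s,t}^{\prime}$ occupies each slot — in particular selecting the branch $\Exp_{s,t}(-z)\Exp_{s,t}^{\prime}(z)=1$ of Corollary~\ref{cor_inver_exp} so that the initial condition collapses to $\xi$, and using the periodicity identity $G(\log_q(\varphi_{s,t}x))=G(\log_q(\varphi_{s,t}^{\prime}x))$ to close the last part.
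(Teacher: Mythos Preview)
Your proposal is correct and is precisely the argument the paper intends: the paper gives no proof for this theorem, prefacing it only with ``Similarly, we have the following theorem,'' so the expected proof is exactly the transcription of the preceding one with $\varphi_{s,t}\leftrightarrow\varphi_{s,t}^{\prime}$ and $\Exp_{s,t}\leftrightarrow\Exp_{s,t}^{\prime}$ that you carry out. Your extra care in invoking the periodicity identity $G(\log_q(\varphi_{s,t}x))=G(\log_q(\varphi_{s,t}^{\prime}x))$ (equivalently, using the alternate Leibniz rule Eq.~(\ref{eqn_der_prod2})) to close the last part is the one step that genuinely differs from the $\varphi_{s,t}$ case, and you handle it correctly; note also that the two occurrences of ``Eq.~(\ref{eqn_panto1})'' in the statement are evident typos for Eq.~(\ref{eqn_panto2}), as you implicitly recognize.
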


Next, we will give some properties of the functions $\e_{s,t}(a,z,u,p)$.
\begin{proposition}
Set $s\neq0$ and $s^2+4t>0$. For all $a,u\in\R$ and $p(x)=f(\log_{q}(z)),q(x)=g(\log_{q}(x))\in\Per_{s,t}$. Then
\begin{enumerate}
    \item $\e_{s,t}(a,z,u,0)=0$.
    \item $\e_{s,t}(a,z,u,c)=c\exp_{s,t}(az,u)$.
    \item $\e_{s,t}(a,z,\varphi_{s,t},p)=f(\log_{q}(z))\Exp_{s,t}(az)$.
    \item $\e_{s,t}(a,z,\varphi_{s,t}^{\prime},p)=f(\log_{q}(z))\Exp_{s,t}^{\prime}(az)$.
    \item $\e_{s,t}(a,z,u,p+q)=\e_{s,t}(a,z,u,p)+\e_{s,t}(a,z,u,q)$.
    \item $\e_{s,t}(a,z,q^{-m}u,p)=e_{q^{m}s,q^{2m}t}(a,z,u,p)$, for all $m\in\Z$.
\end{enumerate}
\end{proposition}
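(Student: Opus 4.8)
The plan is to derive all six identities directly from the series definition in Eq.(\ref{eqn_sol_1}),
\[
\e_{s,t}(a,z,u,p)=\sum_{n=0}^{\infty}u^{\binom{n}{2}}a^{n}\,p\!\left(\frac{u^{n}}{\varphi_{s,t}^{n}}z\right)\frac{z^{n}}{\brk[c]{n}_{s,t}!},
\]
working term by term and then reassembling. In each case the convergence of the resulting series on the admissible range of the deformation parameter is inherited from Theorem \ref{theo_exp_conv} applied to $\exp_{s,t}$, so no additional analytic estimate is required.

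Items (1), (2) and (5) are essentially formal. For $p\equiv 0$ every summand vanishes. For $p$ equal to a constant $c$, the factor $p(u^{n}z/\varphi_{s,t}^{n})=c$ is independent of $n$ and pulls out of the sum, leaving $c\sum_{n}u^{\binom{n}{2}}a^{n}z^{n}/\brk[c]{n}_{s,t}!=c\exp_{s,t}(az,u)$. For (5), each summand depends linearly on the value of $p$ at the point $u^{n}z/\varphi_{s,t}^{n}$, and since $\Per_{s,t}$ is a $\C$-algebra (shown above) we have $p+q\in\Per_{s,t}$; substituting $p+q$ and using $(p+q)(w)=p(w)+q(w)$ splits the series into $\e_{s,t}(a,z,u,p)+\e_{s,t}(a,z,u,q)$.

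For (3) and (4) the key observation is that the ratio $u^{n}/\varphi_{s,t}^{n}$ collapses. When $u=\varphi_{s,t}$ this ratio is $1$, so $p(u^{n}z/\varphi_{s,t}^{n})=p(z)=f(\log_{q}z)$ does not depend on $n$ and factors out, giving $f(\log_{q}z)\exp_{s,t}(az,\varphi_{s,t})=f(\log_{q}z)\Exp_{s,t}(az)$. When $u=\varphi_{s,t}^{\prime}$ the ratio equals $q^{n}$, and the argument of $p$ becomes $p(q^{n}z)=f(\log_{q}(q^{n}z))=f(n+\log_{q}z)$; here I would invoke that $f$ has period one to reduce this again to $f(\log_{q}z)$, after which the same factoring gives $f(\log_{q}z)\exp_{s,t}(az,\varphi_{s,t}^{\prime})=f(\log_{q}z)\Exp_{s,t}^{\prime}(az)$. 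Throughout I use the identifications $\Exp_{s,t}(z)=\exp_{s,t}(z,\varphi_{s,t})$ and $\Exp_{s,t}^{\prime}(z)=\exp_{s,t}(z,\varphi_{s,t}^{\prime})$ recorded earlier.

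The only identity that needs genuine bookkeeping is (6), and I expect that to be the main (though modest) obstacle. I would start from the right-hand side $\e_{q^{m}s,q^{2m}t}(a,z,u,p)$ and rewrite it with the scaling relations from Section 2: $\varphi_{q^{m}s,q^{2m}t}=q^{m}\varphi_{s,t}$ --- from which the characteristic ratio $\varphi^{\prime}/\varphi$ is still $q$, so $p$ remains $q$-periodic and $\Per_{s,t}=\Per_{q^{m}s,q^{2m}t}$, keeping the notation consistent --- together with $\brk[c]{n}_{q^{m}s,q^{2m}t}!=q^{m\binom{n}{2}}\brk[c]{n}_{s,t}!$ from Eq.(\ref{eqn_fibotorial}). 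Substituting, the $n$-th term of $\e_{q^{m}s,q^{2m}t}(a,z,u,p)$ becomes
\[
u^{\binom{n}{2}}q^{-m\binom{n}{2}}a^{n}\,p\!\left(\frac{u^{n}}{q^{mn}\varphi_{s,t}^{n}}z\right)\frac{z^{n}}{\brk[c]{n}_{s,t}!}
=(uq^{-m})^{\binom{n}{2}}a^{n}\,p\!\left(\frac{(uq^{-m})^{n}}{\varphi_{s,t}^{n}}z\right)\frac{z^{n}}{\brk[c]{n}_{s,t}!},
\]
which is precisely the $n$-th term of $\e_{s,t}(a,z,q^{-m}u,p)$; summing over $n$ completes the proof. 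The only care needed is to keep the powers $q^{-m\binom{n}{2}}$ and $q^{-mn}$ aligned with the binomial exponent and the argument of $p$, and to record the invariance of $q$ under the rescaling so that the hypothesis $p\in\Per_{s,t}$ is legitimate on both sides.
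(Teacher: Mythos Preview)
Your proof is correct and follows exactly the route taken in the paper, which simply records ``By direct application of Eq.(\ref{eqn_sol_1}).'' You have supplied the term-by-term verification that the paper omits; in particular your handling of (3)--(4) via the collapse of $u^{n}/\varphi_{s,t}^{n}$ together with the period-one property of $f$, and of (6) via the rescaling identities $\varphi_{q^{m}s,q^{2m}t}=q^{m}\varphi_{s,t}$ and $\brk[c]{n}_{q^{m}s,q^{2m}t}!=q^{m\binom{n}{2}}\brk[c]{n}_{s,t}!$, is precisely what the one-line proof is pointing to.
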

\begin{proof}
By direct application of Eq. (\ref{eqn_sol_1}).
\end{proof}
From the previous proposition it follows that the function $\e_{s,t}(a,z,u,p)$ generalizes to the $(s,t)$-exponential functions $\Exp_{s,t}(az,u)$ and $\Exp_{s,t}^{\prime}(az,u)$. In the following theorems, elementary analytic properties of $\e_{s,t}(a,z,u,p)$ are shown.

\begin{theorem}
Take $p(x)=f(\log_{q}(x))\in\Per_{s,t}$ such that $f(x)$ is a continuous periodic function with period one with $\vert f(x)\vert\leq M$ for all $x\in\R$. Then
\begin{equation*}
    \e_{s,t}(a,x,u,p)\leq M\exp_{s,t}(ax,u)
\end{equation*}
\end{theorem}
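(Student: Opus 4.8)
The plan is to compare the defining series of the two sides term by term. By Eq.(\ref{eqn_sol_1}),
\[
\e_{s,t}(a,x,u,p)=\sum_{n=0}^{\infty}u^{\binom{n}{2}}a^{n}\,p\!\left(\frac{u^{n}}{\varphi_{s,t}^{n}}x\right)\frac{x^{n}}{\brk[c]{n}_{s,t}!},
\qquad
\exp_{s,t}(ax,u)=\sum_{n=0}^{\infty}u^{\binom{n}{2}}a^{n}\frac{x^{n}}{\brk[c]{n}_{s,t}!}.
\]
First I would observe that since $p(y)=f(\log_{q}(y))$ with $f$ periodic and $\vert f\vert\le M$ on $\R$, one has $p\!\left(u^{n}x/\varphi_{s,t}^{n}\right)\le M$ for every $n\ge0$, independently of $x$. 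Next I would note that under the standing assumptions ($s\neq0$, $s^{2}+4t>0$, so that $\varphi_{s,t}>0$ and $\brk[c]{n}_{s,t}!>0$) together with $a,x\ge0$ and $0<u\le\vert\varphi_{s,t}\vert$ — the same regime as in Theorem \ref{theo_ineq_exp} — the factor $u^{\binom{n}{2}}a^{n}x^{n}/\brk[c]{n}_{s,t}!$ is nonnegative.

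Then I would multiply the pointwise bound $p(u^{n}x/\varphi_{s,t}^{n})\le M$ by this nonnegative factor to obtain, for each $n\ge0$,
\[
u^{\binom{n}{2}}a^{n}\,p\!\left(\frac{u^{n}}{\varphi_{s,t}^{n}}x\right)\frac{x^{n}}{\brk[c]{n}_{s,t}!}\ \le\ M\,u^{\binom{n}{2}}a^{n}\frac{x^{n}}{\brk[c]{n}_{s,t}!}.
\]
By Theorem \ref{theo_exp_conv} the series for $\exp_{s,t}(ax,u)$ converges for $u$ in the stated range (entire when $\vert u\vert<\vert\varphi_{s,t}\vert$, convergent on a disk when $\vert u\vert=\vert\varphi_{s,t}\vert$), so the termwise domination above also shows the series for $\e_{s,t}(a,x,u,p)$ converges absolutely, and summing the inequality over $n\ge0$ yields $\e_{s,t}(a,x,u,p)\le M\exp_{s,t}(ax,u)$.

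The only delicate point — the main obstacle, though it is a minor one — is the sign bookkeeping: the summands of $\e_{s,t}(a,x,u,p)$ need not keep a fixed sign, since $p$ may oscillate, so the estimate cannot be read off from a monotone comparison of partial sums but must genuinely be carried out term by term; what makes it work is the nonnegativity of the kernel $u^{\binom{n}{2}}a^{n}x^{n}/\brk[c]{n}_{s,t}!$, which lets the scalar bound $p(\cdot)\le M$ pass through to the series. If instead a sign-free statement is wanted, the same argument applied with absolute values, together with Proposition \ref{prop_abs_nst}, gives $\vert\e_{s,t}(a,x,u,p)\vert\le M\exp_{\vert s\vert,t}(\vert ax\vert,u)$.
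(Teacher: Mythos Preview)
Your proof is correct and follows essentially the same route as the paper's own proof: both simply bound $p(\cdot)\le M$ termwise in the series \eqref{eqn_sol_1} and sum. Your version is in fact more careful than the paper's, since you make explicit the nonnegativity of the kernel $u^{\binom{n}{2}}a^{n}x^{n}/\brk[c]{n}_{s,t}!$ and address convergence, points the paper leaves implicit.
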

\begin{proof}
Since $f$ is continuous and periodical, then there exists a number $M>0$ such that $f$ is bounded with $\vert p(x)\vert<M$ for all $z$. Then
\begin{align*}
    \e_{s,t}(a,z,u,p)&=\sum_{n=0}^{\infty}u^{\binom{n}{2}}a^{n}p\left(\frac{u^n}{\varphi_{s,t}^n}x\right)\frac{x^n}{\brk[c]{n}_{s,t}!}\\
    &\leq M\sum_{n=0}^{\infty}u^{\binom{n}{2}}a^{n}\frac{x^n}{\brk[c]{n}_{s,t}!}\\
    &=M\exp_{s,t}(ax,u)
\end{align*}    
and $\exp_{s,t}(ax,u)$ is an upper bound of $\e_{s,t}(a,x,u,p)$. 
\end{proof}

\begin{corollary}
Set $s\neq0$ and $s^2+4t>0$ and take $p(x)=f(\log_{q}(x))\in\Per_{s,t}$ such that $\vert f(x)\vert\leq M$ for all $x>0$. Then
\begin{enumerate}
    \item $\lim_{x\rightarrow\infty}\e_{s,t}(a,x,u,p)=\infty$ provided that $u>0$.
    \item $\lim_{x\rightarrow\infty}\e_{s,t}(a,-x,u,p)=0$, provided that $\vert q\vert>1$ and $u<\varphi_{s,t}$.
    \item $\lim_{x\rightarrow\infty}\e_{s,t}(a,-x,u,p)=0$, provided that $\vert q\vert<1$ and $u<\varphi_{s,t}^\prime$.
\end{enumerate}
\end{corollary}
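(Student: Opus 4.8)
The plan is to obtain all three limits by feeding the asymptotics of the $(s,t)$-exponential functions, recorded in the Corollary that follows Theorem \ref{theo_ineq_exp2}, into the termwise comparison established in the theorem immediately above. First I would extract from the series Eq.(\ref{eqn_sol_1}) and Proposition \ref{prop_abs_nst} the two-sided bound
\[
\vert\e_{s,t}(a,z,u,p)\vert\le M\sum_{n=0}^{\infty}\vert u\vert^{\binom{n}{2}}\vert a\vert^{n}\frac{\vert z\vert^{n}}{\brk[c]{n}_{\vert s\vert,t}!}=M\exp_{\vert s\vert,t}(\vert a\vert\vert z\vert,\vert u\vert),
\]
valid for every $z$ in the domain of $p$, together with the lower bound $\e_{s,t}(a,x,u,p)\ge\delta\exp_{s,t}(ax,u)$ that the same computation gives when $a,u,x>0$ and $p\ge\delta>0$. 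With these in hand each part reduces to a limit already known for the plain $(s,t)$-exponential.

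For parts (2) and (3) the hypotheses put $u$ strictly below the relevant root, so $\exp_{s,t}(\cdot,u)$ is entire by Theorem \ref{theo_exp_conv}, and the Corollary to Theorem \ref{theo_ineq_exp2} gives $\exp_{s,t}(-x,u)\to0$ (for $\vert q\vert<1$, $0<u<\varphi_{s,t}^{\prime}$) or its $\vert q\vert>1$ analogue. I would then evaluate the displayed bound at $z=-x$, match the sign conventions through a short case split on $\operatorname{sgn}s$ and $\operatorname{sgn}q$ using the relations between $\varphi_{s,t},\varphi_{s,t}^{\prime}$ and $\varphi_{\vert s\vert,t},\varphi_{\vert s\vert,t}^{\prime}$ proved in Proposition \ref{prop_abs_nst}, and conclude by squeezing; the requirement $a\neq0$ enters only to make the argument of the dominating exponential diverge. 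If the plain triangle-inequality majorant turns out to be too lossy because the $z=-x$ series is sign-alternating, the fallback is to route the estimate through $\Exp_{s,t}$ and the product representation of Theorem \ref{theo_exp1}, which has honest decay along that ray.

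Part (1) is where the upper bound is useless and the lower bound above is the tool: for $a,u>0$ the coefficients of $\exp_{s,t}(ax,u)=\sum_{n}u^{\binom{n}{2}}(ax)^{n}/\brk[c]{n}_{s,t}!$ are nonnegative, so $\exp_{s,t}(ax,u)\ge 1+ax\to\infty$, whence $\e_{s,t}(a,x,u,p)\ge\delta\exp_{s,t}(ax,u)\to\infty$ once $p\ge\delta>0$. I expect this to be the main obstacle, because the statement only assumes $\vert f\vert\le M$: for a sign-changing $f$ the modulating factor $p(u^{n}x/\varphi_{s,t}^{n})=f\big(\log_{q}x+n\log_{q}(u/\varphi_{s,t})\big)$ sweeps through its entire range as $x\to\infty$ — as one already sees from $\e_{s,t}(a,x,u,p)=p(x)+ax\,p(ux/\varphi_{s,t})+\cdots$ — so $\e_{s,t}(a,x,u,p)$ oscillates with growing amplitude rather than tending to $+\infty$. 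The honest fix, which I would adopt, is to strengthen the hypothesis to $p$ of constant positive sign (which also makes the lower bound legitimate); with that in place, parts (1)–(3) follow from the termwise comparisons used throughout Section 4 and the Corollary to Theorem \ref{theo_ineq_exp2}.
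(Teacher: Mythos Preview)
Your approach is exactly the one the paper intends: the Corollary is stated immediately after the comparison theorem $\e_{s,t}(a,x,u,p)\le M\exp_{s,t}(ax,u)$, and the paper offers no separate proof beyond the implicit ``combine this bound with the limits of $\exp_{s,t}(\cdot,u)$ recorded in the Corollary to Theorem~\ref{theo_ineq_exp2}.'' For parts (2) and (3) your plan and the paper's coincide; the $\vert s\vert$ detour via Proposition~\ref{prop_abs_nst} is a harmless refinement.

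Your diagnosis of part (1) is sharper than what the paper provides. The paper's only tool here is the \emph{upper} bound $\e_{s,t}\le M\exp_{s,t}$, which, as you note, cannot force divergence to $+\infty$; and the stated hypothesis $\vert f\vert\le M$ indeed allows sign-changing $p$, in which case the leading terms $p(x)+ax\,p(ux/\varphi_{s,t})+\cdots$ oscillate with unbounded amplitude. So the gap you flag is real and is present in the paper itself, not in your argument. Your proposed repair---assume $p\ge\delta>0$ so that a matching lower bound $\e_{s,t}(a,x,u,p)\ge\delta\exp_{s,t}(ax,u)$ holds---is the natural fix and is consistent with how the preceding theorem is actually used.
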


\begin{example}
Find the solution to the functional equation
\begin{equation}\label{eqn_example}
    f(3x)-f(2x)=xf(x/2),\ f(0)=1.
\end{equation}
We have that $\varphi_{s,t}=3$ and $\varphi_{s,t}^{\prime}=2$, with $s=5$ and $t=-6$. Therefore, the solution is
\begin{equation*}
    f(x)=\exp_{5,-6}(x,1/2)=\sum_{n=0}^{\infty}\left(\frac{1}{2}\right)^{\binom{n}{2}}\frac{x^n}{\brk[c]{n}_{5,-6}!},
\end{equation*}
convergent in $\R$, where
\begin{equation*}
    \brk[c]{n}_{5,-6}=(0,1,5,19,65,211,\ldots).
\end{equation*}
The Eq.(\ref{eqn_example}) with initial value $f(\eta)=\xi$, $\eta>0$ have solutions
\begin{equation*}
    f(x)=\e_{5,-6}(1,x,1/2,p)=\sum_{n=0}^{\infty}\left(\frac{1}{2}\right)^{\binom{n}{2}}G\left(\log_{2/3}\left(\frac{x}{6^n}\right)\right)\frac{x^n}{\brk[c]{n}_{5,-6}!},
\end{equation*}
for each $p(x)=G(\log_{2/3}(x))\in\Per_{5,-6}$. If $G(x)=\sin(2\pi x)$, then 
\begin{equation*}
\e_{5,-6}(1,x,1/2,p)\leq\exp_{5,-6}(x,1/2)    
\end{equation*}
and $\e_{5,-6}(1,x,1/2,p)$ is convergent in $\R$, too. Also, 
\begin{equation*}
    \lim_{x\rightarrow\infty}\e_{5,-6}(1,-x,1/2,p)=0.
\end{equation*}
\end{example}

\subsection{Solution of $\mathbf{D}_{s,t}y=ay(x)+by(ux)$}

\begin{theorem}\label{theo_st_panto}
The Pantograph equation
\begin{equation*}
    \mathbf{D}_{s,t}y=ay(x)+by(ux),\ y(0)=1
\end{equation*}
has solution
\begin{equation*}
    y(x)=\sum_{n=0}^{\infty}(a\oplus b)_{1,u}^{n}\frac{x^n}{\brk[c]{n}_{s,t}!}
\end{equation*}
\end{theorem}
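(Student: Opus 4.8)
The plan is to exhibit the series and check directly that it solves the problem, the whole argument resting on the single recursion obeyed by the $(p,q)$-type power $(a\oplus b)_{1,u}^{n}=\prod_{k=0}^{n-1}(a+u^{k}b)$. Writing $c_{n}=(a\oplus b)_{1,u}^{n}$, one has $c_{0}=1$ and $c_{n+1}=(a+u^{n}b)c_{n}$; moreover the candidate $y(x)=\sum_{n\ge0}c_{n}x^{n}/\brk[c]{n}_{s,t}!$ is exactly the function $\EE_{s,t}(a,b,u;x)$ already met in the Existence Theorems, so its convergence on the relevant range of $(q,\varphi_{s,t})$ and of $x$ is supplied by Theorem \ref{theo_conv_panto} (and by Theorem \ref{theo_conv_panto_prime} when $\vert q\vert>1$). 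All termwise operations below take place on that domain.

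First I would record the two monomial identities $\mathbf{D}_{s,t}x^{n}=\brk[c]{n}_{s,t}x^{n-1}$ and, via Theorem \ref{theo_funda}, $\int_{0}^{x}r^{n}\,d_{s,t}r=x^{n+1}/\brk[c]{n+1}_{s,t}$. Then the proof forks into two equivalent routes. Route one (direct): term-by-term $(s,t)$-differentiation gives $\mathbf{D}_{s,t}y(x)=\sum_{n\ge0}c_{n+1}x^{n}/\brk[c]{n}_{s,t}!$, while $ay(x)+by(ux)=\sum_{n\ge0}(a+bu^{n})c_{n}x^{n}/\brk[c]{n}_{s,t}!$; the two series coincide precisely because $c_{n+1}=(a+u^{n}b)c_{n}$, and $y(0)=c_{0}=1$. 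Route two (in the spirit of the rest of the section): run the successive approximation scheme attached by Theorem \ref{theo_funda} to the equivalent $(s,t)$-integral equation $y(x)=1+\int_{0}^{x}(ay(r)+by(ur))\,d_{s,t}r$, namely $y_{0}=1$ and $y_{k+1}(x)=1+\int_{0}^{x}(ay_{k}(r)+by_{k}(ur))\,d_{s,t}r$; an induction on $k$, using the monomial integral above and the recursion for $(a\oplus b)_{1,u}^{n}$, gives $y_{k}(x)=\sum_{n=0}^{k}c_{n}x^{n}/\brk[c]{n}_{s,t}!$, and letting $k\to\infty$ yields the claimed series.

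The only genuine obstacle is analytic, not algebraic: one must ensure $y$ converges and that differentiation (or integration) may legitimately be carried out term by term. For this I would rely entirely on Theorem \ref{theo_conv_panto}, which pins down exactly the pairs $(q,\varphi_{s,t})$ and the values of $x$ for which $\EE_{s,t}(a,b,u;x)$ is entire or has a positive radius of convergence; on that set the successive approximations converge uniformly on compacta, so the formal computation above is rigorous. Everything else — $(a\oplus b)_{1,u}^{0}=1$, the telescoping recursion, and the monomial rules for $\mathbf{D}_{s,t}$ and the $(s,t)$-integral — is routine bookkeeping.
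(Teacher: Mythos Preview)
Your proposal is correct, and Route two is exactly the paper's argument: set up the successive approximations $y_{0}=1$, $y_{k+1}(x)=1+\int_{0}^{x}(ay_{k}(r)+by_{k}(ur))\,d_{s,t}r$, observe by induction that $y_{k}(x)=\sum_{n=0}^{k}(a\oplus b)_{1,u}^{n}x^{n}/\brk[c]{n}_{s,t}!$, and pass to the limit. Your Route one (direct verification via $c_{n+1}=(a+u^{n}b)c_{n}$) is a legitimate and slightly more economical alternative that the paper does not give; and your explicit appeal to Theorem~\ref{theo_conv_panto} for the analytic justification is actually more careful than the paper's own proof, which treats the computation purely formally.
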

\begin{proof}
By successive approximation method, we obtain
\begin{align*}
    y_{0}(x)&=1,\\
    y_{k+1}(x)&=1+\int_{0}^{x}(ay_{k}(r)+by(ur))d_{s,t}r.
\end{align*}
Then
\begin{align*}
    y_{0}(x)&=1,\\
    y_{1}(x)&=1+(a+b)x,\\
    y_{2}(x)&=1+(a+b) x+(a+b)(a+bu)\frac{x^2}{\brk[c]{2}_{s,t}!},\\
    y_{3}(x)&=1+(a+b)x+(a+b)(a+bu)\frac{x^2}{\brk[c]{2}_{s,t}!}\\
    &\hspace{1cm}+(a+b)(a+bu)(a+bu^2)\frac{x^3}{\brk[c]{3}_{s,t}!}.
\end{align*}
In general
\begin{equation*}
    y_{k}(x)=1+\sum_{n=1}^{k}\prod_{i=0}^{n-1}(a+bu^{i})\frac{x^n}{\brk[c]{n}_{s,t}!}
\end{equation*}
and
\begin{equation*}
    y(x)=\lim_{k\rightarrow\infty}y_{k}(x)=1+\sum_{n=1}^{\infty}\prod_{i=0}^{n-1}(a+bu^{i})\frac{x^n}{\brk[c]{n}_{s,t}!}.
\end{equation*}
\end{proof}

\begin{definition}\label{def_EE}
We define the $(1,u)$-deformed $(s,t)$-exponential function as
    \begin{equation*}
        \EE_{s,t}(a,b,u;z)=\sum_{n=0}^{\infty}(a\oplus b)_{1,u}^n\frac{z^n}{\brk[c]{n}_{s,t}!}.
    \end{equation*}
When $u=q$, we define $\EE_{q}(a,b;z)=\EE_{1+q,-q}(a,b,q;z)$.
\end{definition}
Some special values of the function $\EE_{s,t}(a,b,u;x)$ are
\begin{align*}
    \EE_{s,t}(a,0,-;x)&=\exp_{s,t}(ax),\\
    \EE_{s,t}(0,a,u;x)&=\exp_{s,t}(ax,u),\\
    \EE_{s,t}(a,b,1;x)&=\exp_{s,t}((a+b)x),\\
    \EE_{s,t}(ac,bc,u;x)&=\EE_{s,t}(a,b,u;cx).
\end{align*}

\begin{theorem}\label{theo_conv_panto}
Suppose that $a\neq0$ and $b\neq0$. If $\vert u\vert<\vert\varphi_{s,t}\vert$, then the function $\EE_{s,t}(a,b,u;z)$ is 
\begin{enumerate}
    \item entire only if either $(q,\varphi_{s,t})\in S_{1}$ or $(q,\varphi_{s,t})\in S_{2}$,
    \begin{equation*}
        S_{1}=\{(q,\varphi_{s,t}):\vert q\vert\neq1,\ \vert\varphi_{s,t}\vert>1\},\ S_{2}=\{(q,\varphi_{s,t}):\vert q\vert>1,\ 1\leq\vert\varphi_{s,t}^{-1}\vert<\vert q\vert\}
    \end{equation*}
\item convergent in $\vert z\vert<\frac{1}{\vert1-q\vert\vert a\vert}$ only if $(q,\varphi_{s,t})\in S_{3}$,
\begin{equation*}
   S_{3}=\{(q,\varphi_{s,t}):0<\vert q\vert<1,\ \vert\varphi_{s,t}\vert=1\},
\end{equation*}
\item convergent in $z=0$ if either $(q,\varphi_{s,t})\in S_{4}$ or $(q,\varphi_{s,t})\in S_{5}$,
\begin{equation*}
S_{4}=\{(q,\varphi_{s,t}):0<\vert q\vert<1,\ 0<\vert\varphi_{s,t}\vert<1\},\ S_{5}=\{(q,\varphi_{s,t}):\vert q\vert>1,\ \vert\varphi_{s,t}^{-1}\vert>\vert q\vert\}.    
\end{equation*}
\end{enumerate}
If $\vert u\vert=\vert\varphi_{s,t}\vert$, then the function $\EE_{s,t}(a,b,u;z)$ is
\begin{enumerate}
\item entire only if either $(q,\varphi_{s,t})\in S_{6}$ or $(q,\varphi_{s,t})\in S_{7}$, $a+b=0$,
\begin{equation*}
S_{6}=\{(q,\varphi_{s,t}):\vert q\vert>1,\ \vert\varphi_{s,t}\vert>1\},\ S_{7}=\{(q,\varphi_{s,t}):0<\vert q\vert<1,\ \vert\varphi_{s,t}\vert=1\}, \end{equation*}
\item convergent in $\vert z\vert<\frac{1}{\vert b\vert\vert1-q\vert}$ only if $(q,\varphi_{s,t})\in S_{8}$,
\begin{equation*}
S_{8}=\{(q,\varphi_{s,t}):0<\vert q\vert<1,\ \vert\varphi_{s,t}\vert>1\}, \end{equation*}
\item convergent in $\vert z\vert<\frac{1}{\vert a+b\vert\vert1-q\vert}$ only if $(q,\varphi_{s,t})\in S_{9}$, $a+b\neq0$,
\begin{equation*}
S_{9}=\{(q,\varphi_{s,t}):0<\vert q\vert<1,\ \vert\varphi_{s,t}\vert=1\}.
\end{equation*}
\end{enumerate}
\end{theorem}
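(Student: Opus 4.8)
The plan is to apply the ratio (D'Alembert) test to the power series $\EE_{s,t}(a,b,u;z)=\sum_{n\ge 0}c_{n}z^{n}$ with $c_{n}=(a\oplus b)_{1,u}^{n}/\brk[c]{n}_{s,t}!$, exactly as in the proof of Theorem \ref{theo_conv_st}. Since $(a\oplus b)_{1,u}^{n+1}=(a\oplus b)_{1,u}^{n}(a+bu^{n})$ and, by Eq.(\ref{eqn_fibo_q}), $\brk[c]{n+1}_{s,t}=\varphi_{s,t}^{\,n}\brk[s]{n+1}_{q}=\varphi_{s,t}^{\prime\,n}\brk[s]{n+1}_{q^{-1}}$, we get
\begin{equation*}
    \frac{c_{n+1}}{c_{n}}=\frac{a+bu^{n}}{\brk[c]{n+1}_{s,t}}
    =\frac{(1-q)(a+bu^{n})}{\varphi_{s,t}^{\,n}\,(1-q^{n+1})}
    =\frac{(1-q^{-1})(a+bu^{n})}{\varphi_{s,t}^{\prime\,n}\,(1-q^{-(n+1)})}.
\end{equation*}
First I would set aside two degenerate situations: if $a+b=0$, the factor with $i=0$ in $(a\oplus b)_{1,u}^{n}$ vanishes, so $\EE_{s,t}(a,b,u;z)\equiv 1$; and if $a+bu^{i_{0}}=0$ for some $i_{0}\ge1$, then $\EE_{s,t}(a,b,u;z)$ is a polynomial of degree $i_{0}$. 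In both cases the function is trivially entire, consistent with the $a+b=0$ branch of the statement. In all other cases $c_{n}\ne0$ for every $n$ and the ratio test applies (the relevant limit exists in each regime below). Moreover, by Proposition \ref{prop_abs_nst} we may assume $s>0$, hence $\varphi_{s,t}>0$; combined with $u>0$ this gives $(u/\varphi_{s,t})^{n}\to0$ when $u<\varphi_{s,t}$ and $(u/\varphi_{s,t})^{n}=1$ for all $n$ when $u=\varphi_{s,t}$, and, when $|q|>1$ (so that $|\varphi_{s,t}^{\prime}|>\varphi_{s,t}\ge u$), also $(u/\varphi_{s,t}^{\prime})^{n}\to0$.

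Next I treat $|q|<1$, where $q^{n+1}\to0$ so $|1-q^{n+1}|\to1$ and
\begin{equation*}
    \lim_{n\to\infty}\Bigl|\tfrac{c_{n+1}}{c_{n}}\Bigr|
    =|1-q|\,\lim_{n\to\infty}\Bigl|\,a\,\varphi_{s,t}^{-n}+b\,(u/\varphi_{s,t})^{n}\,\Bigr|.
\end{equation*}
The inner limit is evaluated by combining the three regimes $|\varphi_{s,t}|>1$, $|\varphi_{s,t}|=1$, $|\varphi_{s,t}|<1$ (in which $|a\varphi_{s,t}^{-n}|\to0$, $=|a|$, $\to\infty$ respectively) with $u<\varphi_{s,t}$ versus $u=\varphi_{s,t}$, and the radius of convergence is then the reciprocal of $|1-q|$ times this limit. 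This gives: entire for $|\varphi_{s,t}|>1$ (the $|q|<1$ part of $S_{1}$, with $u<\varphi_{s,t}$) and for $u=\varphi_{s,t}=1$ with $a+b=0$ ($S_{7}$); convergent in $|z|<1/(|a|\,|1-q|)$ for $u<\varphi_{s,t}$, $|\varphi_{s,t}|=1$ ($S_{3}$); in $|z|<1/(|b|\,|1-q|)$ for $u=\varphi_{s,t}$, $|\varphi_{s,t}|>1$ ($S_{8}$); in $|z|<1/(|a+b|\,|1-q|)$ for $u=\varphi_{s,t}=1$, $a+b\ne0$ ($S_{9}$); and convergent only at $z=0$ for $u<\varphi_{s,t}$, $|\varphi_{s,t}|<1$ ($S_{4}$).

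For $|q|>1$ the analogous computation through the third expression for $c_{n+1}/c_{n}$ yields
\begin{equation*}
    \lim_{n\to\infty}\Bigl|\tfrac{c_{n+1}}{c_{n}}\Bigr|
    =|1-q^{-1}|\,\lim_{n\to\infty}\Bigl|\,a\,\varphi_{s,t}^{\prime-n}+b\,(u/\varphi_{s,t}^{\prime})^{n}\,\Bigr|,
\end{equation*}
and since $(u/\varphi_{s,t}^{\prime})^{n}\to0$ here, the behaviour is governed by whether $|\varphi_{s,t}^{\prime}|>1$, $=1$, or $<1$. The key bookkeeping step is to rewrite this using $|q|=|\varphi_{s,t}^{\prime}|/|\varphi_{s,t}|$, so that $|\varphi_{s,t}^{\prime}|>1\iff|\varphi_{s,t}^{-1}|<|q|$ and $|\varphi_{s,t}^{\prime}|<1\iff|\varphi_{s,t}^{-1}|>|q|$. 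Then $|\varphi_{s,t}^{\prime}|>1$ gives an entire function, which for $|u|<|\varphi_{s,t}|$ corresponds to $|\varphi_{s,t}|>1$ (the $|q|>1$ part of $S_{1}$) or $1\le|\varphi_{s,t}^{-1}|<|q|$ ($S_{2}$), and for $u=\varphi_{s,t}$ to $|\varphi_{s,t}|>1$ ($S_{6}$); while $|\varphi_{s,t}^{\prime}|<1$ gives convergence only at $z=0$, i.e. $|\varphi_{s,t}^{-1}|>|q|$ ($S_{5}$). Collecting the conclusions of the last two paragraphs reproduces the statement.

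The only genuinely delicate point is this last identification of each asymptotic regime with the correct set $S_{i}$; everything else is the routine evaluation of $\lim_{n}|a\varphi_{s,t}^{-n}+b(u/\varphi_{s,t})^{n}|$ and its primed analogue together with the reciprocal giving the radius of convergence. One small care point worth recording in the write-up: when $t>0$ one has $q<0$, so $\brk[s]{n+1}_{q}$ (resp. $\brk[s]{n+1}_{q^{-1}}$) oscillates in sign, but only $|1-q^{n+1}|\to1$ (resp. $|1-q^{-(n+1)}|\to1$) is used, which holds since $|q|<1$ (resp. $|q^{-1}|<1$); hence the ratio-test limits still exist and the argument is unaffected.
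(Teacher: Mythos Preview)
Your proposal is correct and follows essentially the same approach as the paper: compute the ratio $\bigl|c_{n+1}/c_{n}\bigr|=\vert z\vert\,\vert1-q\vert\,\bigl|\,a\varphi_{s,t}^{-n}+b(u/\varphi_{s,t})^{n}\,\bigr|\,\vert1-q^{n+1}\vert^{-1}$ and read off the radius of convergence in each regime of $(q,\varphi_{s,t})$. In fact you are more thorough than the paper, which stops after writing the ratio and simply says ``now apply each condition for $q$ and $\varphi_{s,t}$''; your explicit case analysis, the handling of the degenerate situations $a+b=0$ or $a+bu^{i_0}=0$, and the remark on the sign oscillation when $q<0$ are all details the paper leaves implicit.
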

\begin{proof}
Define $a_{n}=(a\oplus b)_{1,u}^{n}\frac{z^n}{\brk[c]{n}_{s,t}!}$. Then
\begin{align*}
    \Big\vert\frac{a_{n+1}}{a_{n}}\Big\vert&=\Bigg\vert\frac{(a\oplus b)_{1,u}^{n+1}z^{n+1}}{\brk[c]{n+1}_{s,t}!}\cdot\frac{\brk[c]{n}_{s,t}!}{(a\oplus b)_{1,u}^{n}z^n}\Bigg\vert\\
    &=\vert z\vert\Bigg\vert\frac{a+bu^n}{\brk[c]{n+1}_{s,t}}\Bigg\vert\\
    &=\vert z\vert\vert1-q\vert\Bigg\vert\frac{a}{\varphi_{s,t}^n}+b\left(\frac{u}{\varphi_{s,t}}\right)^n\Bigg\vert\Bigg\vert\frac{1}{1-q^{n+1}}\Bigg\vert.
\end{align*}
Now, apply each condition for $q$ and $\varphi_{s,t}$.
\end{proof}

\begin{theorem}\label{theo_conv_panto_prime}
Suppose that $a\neq0$ and $b\neq0$. If $\vert u\vert<\vert\varphi_{s,t}^\prime\vert$, then the function $\EE_{s,t}(a,b,u;z)$ is 
\begin{enumerate}
    \item entire only if either $(q^{-1},\varphi_{s,t}^{\prime})\in T_{1}$ or $(q^{-1},\varphi_{s,t}^{\prime})\in T_{2}$,
    \begin{align*}
        T_{1}&=\{(q^{-1},\varphi_{s,t}^{\prime}):\vert q^{-1}\vert\neq1,\ \vert\varphi_{s,t}^{\prime}\vert>1\},\\
        T_{2}&=\{(q^{-1},\varphi_{s,t}^{\prime}):\vert q^{-1}\vert>1,\ 1\leq\vert\varphi_{s,t}^{\prime(-1)}\vert<\vert q^{-1}\vert\},
    \end{align*}
\item convergent in $\vert z\vert<\frac{1}{\vert q^{-1}-1\vert\vert a\vert}$ only if $(q^{-1},\varphi_{s,t}^{\prime})\in T_{3}$,
\begin{equation*}
   T_{3}=\{(q^{-1},\varphi_{s,t}^{\prime}):0<\vert q^{-1}\vert<1,\ \vert\varphi_{s,t}^\prime\vert=1\},
\end{equation*}
\item convergent in $z=0$ if either $(q^{-1},\varphi_{s,t}^{\prime})\in T_{4}$ or $(q^{-1},\varphi_{s,t}^{\prime})\in T_{5}$,
\begin{align*}
T_{4}&=\{(q^{-1},\varphi_{s,t}^\prime):0<\vert q^{-1}\vert<1,\ 0<\vert\varphi_{s,t}^\prime\vert<1\},\\
T_{5}&=\{(q^{-1},\varphi_{s,t}^\prime):\vert q^{-1}\vert>1,\ \vert\varphi_{s,t}^{\prime(-1)}\vert>\vert q^{-1}\vert\}.    
\end{align*}
\end{enumerate}
If $\vert u\vert=\vert\varphi_{s,t}^\prime\vert$, then the function $\EE_{s,t}(a,b,u;z)$ is
\begin{enumerate}
\item entire only if either $(q^{-1},\varphi_{s,t}^{\prime})\in T_{6}$ or $(q^{-1},\varphi_{s,t}^{\prime})\in T_{7}$, $a+b=0$,
\begin{align*}
T_{6}&=\{(q^{-1},\varphi_{s,t}^{\prime}):\vert q^{-1}\vert>1,\ \vert\varphi_{s,t}^{\prime}\vert>1\},\\
T_{7}&=\{(q^{-1},\varphi_{s,t}^{\prime}):0<\vert q^{-1}\vert<1,\ \vert\varphi_{s,t}^\prime\vert=1\}, 
\end{align*}
\item convergent in $\vert z\vert<\frac{1}{\vert b\vert\vert q^{-1}-1\vert}$ only if $(q^{-1},\varphi_{s,t}^{\prime})\in T_{8}$,
\begin{equation*}
T_{8}=\{(q^{-1},\varphi_{s,t}^\prime):0<\vert q^{-1}\vert<1,\ \vert\varphi_{s,t}^\prime\vert>1\}, 
\end{equation*}
\item convergent in $\vert z\vert<\frac{1}{\vert a+b\vert\vert q^{-1}-1\vert}$ only if $(q^{-1},\varphi_{s,t}^{\prime})\in T_{9}$, $a+b\neq0$,
\begin{equation*}
T_{9}=\{(q^{-1},\varphi_{s,t}^\prime):0<\vert q^{-1}\vert<1,\ \vert\varphi_{s,t}^\prime\vert=1\}.
\end{equation*}
\end{enumerate}
\end{theorem}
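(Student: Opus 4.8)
The plan is to run, verbatim, the D'Alembert ratio argument used to prove Theorem~\ref{theo_conv_panto}, only replacing the $(q,\varphi_{s,t})$-Binet representation of $\brk[c]{n+1}_{s,t}$ by its $(q^{-1},\varphi_{s,t}^{\prime})$-representation from Eq.~(\ref{eqn_fibo_q}); indeed Theorem~\ref{theo_conv_panto_prime} is the exact dual of Theorem~\ref{theo_conv_panto} under $\varphi_{s,t}\leftrightarrow\varphi_{s,t}^{\prime}$, $q\leftrightarrow q^{-1}$. Concretely, I would set $c_{n}=(a\oplus b)_{1,u}^{n}\,z^{n}/\brk[c]{n}_{s,t}!$, use $(a\oplus b)_{1,u}^{n+1}=(a\oplus b)_{1,u}^{n}(a+bu^{n})$ together with $\brk[c]{n+1}_{s,t}=\varphi_{s,t}^{\prime\,n}\,(1-q^{-(n+1)})/(1-q^{-1})$, and obtain
\begin{equation*}
\left\lvert\frac{c_{n+1}}{c_{n}}\right\rvert=\lvert z\rvert\,\lvert q^{-1}-1\rvert\left\lvert\frac{a}{\varphi_{s,t}^{\prime\,n}}+b\left(\frac{u}{\varphi_{s,t}^{\prime}}\right)^{n}\right\rvert\left\lvert\frac{1}{1-q^{-(n+1)}}\right\rvert,
\end{equation*}
which is the $(q^{-1},\varphi_{s,t}^{\prime})$-mirror of the final display in the proof of Theorem~\ref{theo_conv_panto}. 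The radius of convergence of $\EE_{s,t}(a,b,u;z)$ is then read off from $\lim_{n\to\infty}$ of the right-hand side.

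The second step is to evaluate that limit along the three nested alternatives appearing in the statement: $\lvert q^{-1}\rvert$ versus $1$, then $\lvert\varphi_{s,t}^{\prime}\rvert$ versus $1$ (equivalently $\lvert\varphi_{s,t}^{\prime(-1)}\rvert$ versus $1$), and finally $\lvert u\rvert<\lvert\varphi_{s,t}^{\prime}\rvert$ versus $\lvert u\rvert=\lvert\varphi_{s,t}^{\prime}\rvert$. The factor $\lvert1-q^{-(n+1)}\rvert^{-1}$ tends to $1$ when $\lvert q^{-1}\rvert<1$ and is comparable to $\lvert q\rvert^{\,n+1}$ when $\lvert q^{-1}\rvert>1$; since $u>0$ and, under the standing hypothesis $s^{2}+4t>0$, the numbers $\varphi_{s,t},\varphi_{s,t}^{\prime}$ are real, the sign of $\varphi_{s,t}^{\prime}$ cancels out of $\lvert a/\varphi_{s,t}^{\prime\,n}+b(u/\varphi_{s,t}^{\prime})^{n}\rvert$, which is therefore asymptotic to whichever of $\lvert a\rvert\,\lvert\varphi_{s,t}^{\prime}\rvert^{-n}$ and $\lvert b\rvert\,(u/\lvert\varphi_{s,t}^{\prime}\rvert)^{n}$ is larger. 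Substituting $q=\varphi_{s,t}^{\prime}/\varphi_{s,t}$ to collapse products such as $\lvert q\rvert^{\,n}\lvert\varphi_{s,t}^{\prime}\rvert^{-n}=\lvert\varphi_{s,t}\rvert^{-n}$, one checks in each region whether the limit is $0$ (so $\EE_{s,t}(a,b,u;z)$ is entire), a positive multiple $c\lvert z\rvert$ (radius $1/c$), or $+\infty$ for $z\neq0$ (convergence only at $z=0$); the regions that arise are exactly $T_{1},\dots,T_{9}$, and the claimed radii $1/(\lvert a\rvert\lvert q^{-1}-1\rvert)$, $1/(\lvert b\rvert\lvert q^{-1}-1\rvert)$, $1/(\lvert a+b\rvert\lvert q^{-1}-1\rvert)$ are the reciprocals of the surviving constants $c$.

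The only nonroutine point, as in Theorem~\ref{theo_conv_panto}, is bookkeeping on the boundary $\lvert u\rvert=\lvert\varphi_{s,t}^{\prime}\rvert$, where the two competing terms have equal modulus. A first observation simplifies every side condition: if $a+b=0$ then the $i=0$ factor $a+bu^{0}=a+b$ of $(a\oplus b)_{1,u}^{n}=\prod_{i=0}^{n-1}(a+bu^{i})$ vanishes, so $\EE_{s,t}(a,b,u;z)\equiv1$ is trivially entire, which is the source of every hypothesis ``$a+b=0$'' in the statement. When $a+b\neq0$ and $\lvert u\rvert=\lvert\varphi_{s,t}^{\prime}\rvert$ one has $u=\lvert\varphi_{s,t}^{\prime}\rvert$: if $\lvert\varphi_{s,t}^{\prime}\rvert>1$ the $a$-term dies in the limit and the geometric rate is set by $b$, giving radius $1/(\lvert b\rvert\lvert q^{-1}-1\rvert)$ on $T_{8}$; if $\lvert\varphi_{s,t}^{\prime}\rvert=1$ then $u=1$, the product collapses to $(a+b)^{n}$, so $\EE_{s,t}(a,b,u;z)=\exp_{s,t}((a+b)z)$, and Theorem~\ref{theo_exp_conv} yields radius $1/(\lvert a+b\rvert\lvert q^{-1}-1\rvert)$ on $T_{9}$; away from this boundary the ratio test is immediate. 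Finally, a cleaner route that bypasses the case analysis altogether is to deduce Theorem~\ref{theo_conv_panto_prime} from Theorem~\ref{theo_conv_panto} by replacing $s$ with $-s$: the series $\EE_{-s,t}(a,b,u;z)$ differs from $\EE_{s,t}(a,b,u;z)$ only by unimodular twists of its coefficients, since Proposition~\ref{prop_abs_nst} gives $\lvert\brk[c]{n}_{-s,t}!\rvert=\lvert\brk[c]{n}_{s,t}!\rvert$, hence has the same domain of convergence; and for $(-s,t)$ the pair $(q,\varphi_{s,t})$ becomes $(q^{-1},-\varphi_{s,t}^{\prime})$ while the admissible interval $(0,\lvert\varphi_{-s,t}\rvert]=(0,\lvert\varphi_{s,t}^{\prime}\rvert]$, so reading off Theorem~\ref{theo_conv_panto} carries each $S_{i}$ onto the corresponding $T_{i}$.
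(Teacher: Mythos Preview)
Your proposal is correct and follows essentially the same approach as the paper: the paper gives no separate proof of Theorem~\ref{theo_conv_panto_prime}, treating it as the immediate $(q^{-1},\varphi_{s,t}^{\prime})$-dual of Theorem~\ref{theo_conv_panto}, and your D'Alembert ratio computation via $\brk[c]{n+1}_{s,t}=\varphi_{s,t}^{\prime\,n}\brk[s]{n+1}_{q^{-1}}$ is exactly that dual. Your additional observations (the $a+b=0$ collapse, the $s\mapsto-s$ symmetry shortcut using Proposition~\ref{prop_abs_nst}) go beyond what the paper writes but are consistent with it.
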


\begin{theorem}\label{theo_prod_EE}
For $0<\vert q\vert<1$
    \begin{equation*}
        \EE_{q}(a,b;z)=\prod_{k=0}^{\infty}\frac{1+b(1-q)q^{k}z}{1-a(1-q)q^{k}z}=\frac{\E_{1/q}(bz)}{\E_{1/q}(-az)}
    \end{equation*}
where $\E_{1/q}(z)=\sum_{n=0}^{\infty}q^{\binom{n}{2}}z^{n}/[n]_{q}!=\prod_{n=0}^{\infty}(1+(1-q)q^nz)$.
For $\vert q\vert>1$
\begin{equation*}
        \EE_{q}(a,b;z)=\prod_{k=0}^{\infty}\frac{1-a(q^{-1}-1)q^{-k}z}{1+b(q^{-1}-1)q^{-k}z}.
    \end{equation*}
\end{theorem}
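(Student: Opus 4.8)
The plan is to avoid resumming the defining series and instead exploit the functional (recurrence) equation that $\EE_{q}(a,b;z)$ satisfies, then iterate it. By Definition \ref{def_EE}, $\EE_{q}(a,b;z)=\EE_{1+q,-q}(a,b,q;z)$. For $(s,t)=(1+q,-q)$ the characteristic polynomial factors as $x^{2}-(1+q)x+q=(x-1)(x-q)$, so $\{\varphi_{1+q,-q},\varphi_{1+q,-q}^{\prime}\}=\{1,q\}$; hence $\brk[c]{n}_{1+q,-q}=\brk[s]{n}_{q}$, $\brk[c]{n}_{1+q,-q}!=\brk[s]{n}_{q}!$, and $\mathbf{D}_{1+q,-q}=D_{q}$ is the ordinary $q$-derivative $D_{q}f(z)=(f(z)-f(qz))/((1-q)z)$ by Eq.(\ref{eqn_qdiff}). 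Thus, by Theorem \ref{theo_st_panto} with $s=1+q$, $t=-q$, $u=q$, the function $y(z)=\EE_{q}(a,b;z)$ is the power-series solution of
\begin{equation*}
    D_{q}y(z)=ay(z)+by(qz),\qquad y(0)=1,
\end{equation*}
and by Theorem \ref{theo_conv_panto} (case $S_{3}$) this series converges on $\vert z\vert<1/(\vert1-q\vert\vert a\vert)$ when $0<\vert q\vert<1$, so there $y$ is analytic and continuous at $0$ with $y(0)=1$.

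First take $0<\vert q\vert<1$. Clearing denominators in $D_{q}y(z)=ay(z)+by(qz)$ gives $y(z)\bigl(1-a(1-q)z\bigr)=y(qz)\bigl(1+b(1-q)z\bigr)$, i.e.
\begin{equation*}
    y(z)=\frac{1+b(1-q)z}{1-a(1-q)z}\,y(qz).
\end{equation*}
Iterating $n$ times yields $y(z)=\bigl(\prod_{k=0}^{n-1}\tfrac{1+b(1-q)q^{k}z}{1-a(1-q)q^{k}z}\bigr)y(q^{n}z)$; letting $n\to\infty$, we have $q^{n}z\to0$ so $y(q^{n}z)\to1$ by continuity, while the partial products converge absolutely since $\sum_{k}\vert b(1-q)q^{k}z\vert$ and $\sum_{k}\vert a(1-q)q^{k}z\vert$ converge and the denominators are eventually bounded away from $0$. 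This is the first product formula. Using $\E_{1/q}(w)=\prod_{n\ge0}(1+(1-q)q^{n}w)$ (the Euler identity, equivalently Theorem \ref{theo_exp2} applied to $(s,t)=(1+q,-q)$), the numerator equals $\E_{1/q}(bz)$ and the denominator equals $\E_{1/q}(-az)$, giving the second equality.

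Now take $\vert q\vert>1$. The recurrence is derived exactly as before (the $q$-derivative satisfies $D_{q}f(z)=(f(z)-f(qz))/((1-q)z)$ whichever root is labelled $\varphi_{s,t}$), so again $y(z)=\tfrac{1+b(1-q)z}{1-a(1-q)z}y(qz)$; but now I iterate in the contracting direction. Solving for $y(qz)$, substituting $w=qz$, and using $(1-q)q^{-1}=q^{-1}-1$, this becomes
\begin{equation*}
    y(w)=\frac{1-a(q^{-1}-1)w}{1+b(q^{-1}-1)w}\,y(q^{-1}w).
\end{equation*}
Iterating and letting $n\to\infty$ (now $q^{-n}w\to0$ since $\vert q^{-1}\vert<1$, so $y(q^{-n}w)\to1$) produces the claimed product $\prod_{k\ge0}\tfrac{1-a(q^{-1}-1)q^{-k}z}{1+b(q^{-1}-1)q^{-k}z}$, which converges for all $z$ off its poles.

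The main obstacle is the legitimacy of the limit passage and of the equality of functions. For $0<\vert q\vert<1$ everything is genuine: the continuity of $\EE_{q}(a,b;z)$ at $0$ and the convergence disk are supplied by Theorem \ref{theo_conv_panto}. For $\vert q\vert>1$ the defining series of $\EE_{q}(a,b;z)$ need not have positive radius of convergence (e.g.\ in case $S_{6}$ with $a+b\neq0$ it converges only at $z=0$), so the identity must be read, exactly as in Theorem \ref{theo_exp1}, as asserting that the displayed meromorphic product is the analytic realization whose Taylor series at $0$ is the formal series $\EE_{q}(a,b;z)$; this is justified by noting that the product satisfies the same functional equation, hence its Taylor coefficients obey the same recursion as $(a\oplus b)_{1,q}^{n}/\brk[c]{n}_{1+q,-q}!$. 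A minor point to watch is the sign bookkeeping in the step $(1-q)q^{-1}=q^{-1}-1$, which is what converts the $\E_{1/q}$-type product of the first case into the $(q^{-1}-1)$-type product of the second.
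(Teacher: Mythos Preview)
Your proof is correct and follows essentially the same route as the paper: both arguments use that $y(z)=\EE_{q}(a,b;z)$ solves $D_{q}y=ay+by(qz)$ with $y(0)=1$ (Theorem~\ref{theo_st_panto}), clear denominators to obtain the recurrence $y(z)=\dfrac{1+b(1-q)z}{1-a(1-q)z}\,y(qz)$, iterate, and pass to the limit using $y(0)=1$. Your write-up is in fact more careful than the paper's on several points---the explicit identification of $\varphi_{1+q,-q}=1$, $\varphi_{1+q,-q}^{\prime}=q$ and $\mathbf{D}_{1+q,-q}=D_{q}$, the appeal to Theorem~\ref{theo_conv_panto} (case $S_{3}$) for a genuine radius of convergence when $0<\vert q\vert<1$, the explicit sign bookkeeping $(1-q)q^{-1}=q^{-1}-1$ in the $\vert q\vert>1$ case, and the remark that for $\vert q\vert>1$ the identity should be read as a meromorphic continuation in the spirit of Theorem~\ref{theo_exp1}---whereas the paper simply says ``the proof for $\vert q\vert>1$ is similar.''
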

\begin{proof}
Suppose $0<\vert q\vert<1$ and write $y(x)=\EE_{q}(a,b;x)$. As $D_{q}y(x)=ay(x)+by(qx)$, then
\begin{equation*}
    y(x)-y(qx)=a(1-q)xy(x)+b(1-q)xy(qx)
\end{equation*}
and
\begin{equation}\label{eqn_EE}
    y(x)=y(qx)\frac{1+b(1-q)x}{1-a(1-q)x}.
\end{equation}
Iterating Eq.(\ref{eqn_EE}) we get
\begin{equation*}
    y(x)=y(q^{n+1}x)\prod_{k=0}^{n}\frac{1+b(1-q)q^{k}x}{1-a(1-q)q^{k}x}
\end{equation*}
and therefore
\begin{equation*}
    y(x)=\lim_{n\rightarrow\infty}y(q^{n+1}x)\prod_{k=0}^{n}\frac{1+b(1-q)q^{k}x}{1-a(1-q)q^{k}x}=y(0)\prod_{k=0}^{\infty}\frac{1+b(1-q)q^{k}x}{1-a(1-q)q^{k}x}.
\end{equation*}
The proof for $\vert q\vert>1$ is similar.
\end{proof}

\begin{proposition}
For all $a,b\in\C$
    \begin{enumerate}
        \item $\EE_{q}(a,b;z)\EE_{q}(b,a;-z)=1$.
        \item $\EE_{q^{-1}}(a,b;z)=\EE_{q}(b,a;z)$.
        \item $\EE_{q}(1,-q;z)=\frac{1}{1-(1-q)z}$.
        \item $\EE_{q}(-q,1;z)=1+(1-q)z$.
    \end{enumerate}
\end{proposition}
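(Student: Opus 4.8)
The plan is to push everything down to the explicit series $\EE_{q}(a,b;z)=\sum_{n\ge 0}(a\oplus b)_{1,q}^{n}\,z^{n}/\brk[s]{n}_{q}!$ of Definition~\ref{def_EE} (recalling $\brk[c]{n}_{1+q,-q}=\brk[s]{n}_{q}$, so that the underlying calculus is the $q$-calculus), and, for the multiplicative identity 1, to the product/quotient representation of Theorem~\ref{theo_prod_EE}. Parts 2, 3 and 4 can then be obtained by manipulations that are uniform in $q$, while part 1 is the only one that needs the $\vert q\vert<1$ vs.\ $\vert q\vert>1$ split.

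For 3 and 4 I would just compute the coefficient of $z^{n}$. Since $(1\oplus(-q))_{1,q}^{n}=\prod_{i=0}^{n-1}(1-q^{i+1})=\prod_{j=1}^{n}(1-q^{j})$ and $\brk[s]{n}_{q}!=(1-q)^{-n}\prod_{j=1}^{n}(1-q^{j})$, the $n$-th coefficient of $\EE_{q}(1,-q;z)$ is exactly $(1-q)^{n}$, so $\EE_{q}(1,-q;z)=\sum_{n\ge 0}\bigl((1-q)z\bigr)^{n}=1/(1-(1-q)z)$, valid for every $q\ne 1$. For 4, the product $((-q)\oplus 1)_{1,q}^{n}=\prod_{i=0}^{n-1}(q^{i}-q)$ contains the factor $q^{1}-q=0$ as soon as $n\ge 2$, and equals $1$ for $n=0$ and $1-q$ for $n=1$; dividing by $\brk[s]{n}_{q}!$ (which is $1$ for $n\le 1$) gives $\EE_{q}(-q,1;z)=1+(1-q)z$. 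Either identity also follows by telescoping the infinite products in Theorem~\ref{theo_prod_EE}, and, once 1 is known, 4 is simply $1/\EE_{q}(1,-q;-z)$.

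For 2 I would compare the two series termwise. From $\brk[s]{n}_{q^{-1}}=q^{1-n}\brk[s]{n}_{q}$ one gets $\brk[s]{n}_{q^{-1}}!=q^{-\binom{n}{2}}\brk[s]{n}_{q}!$, while factoring $q^{-i}$ out of each factor of $(a\oplus b)_{1,q^{-1}}^{n}=\prod_{i=0}^{n-1}(a+bq^{-i})$ yields $(a\oplus b)_{1,q^{-1}}^{n}=q^{-\binom{n}{2}}\prod_{i=0}^{n-1}(b+aq^{i})=q^{-\binom{n}{2}}(b\oplus a)_{1,q}^{n}$; the two powers of $q^{-\binom{n}{2}}$ cancel, giving $\EE_{q^{-1}}(a,b;z)=\EE_{q}(b,a;z)$ for all $q\ne 1$. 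For 1, when $0<\vert q\vert<1$ I would use the quotient form of Theorem~\ref{theo_prod_EE}, namely $\EE_{q}(a,b;z)=\E_{1/q}(bz)/\E_{1/q}(-az)$, so that $\EE_{q}(b,a;-z)=\E_{1/q}(-az)/\E_{1/q}(bz)$ and the product collapses to $1$; for $\vert q\vert>1$ I would apply 2 twice, $\EE_{q}(a,b;z)\EE_{q}(b,a;-z)=\EE_{q^{-1}}(b,a;z)\EE_{q^{-1}}(a,b;-z)$, which is the already established case at $q^{-1}$ with $\vert q^{-1}\vert<1$. A case-free alternative for 1: put $h(z)=\EE_{q}(a,b;z)\EE_{q}(b,a;-z)$, differentiate via the $(s,t)$-product rules \eqref{eqn_der_prod1}--\eqref{eqn_der_prod2} together with $D_{q}\EE_{q}(a,b;z)=a\EE_{q}(a,b;z)+b\EE_{q}(a,b;qz)$ and the reflection rule $D_{q}[g(-z)]=-(D_{q}g)(-z)$; a short computation gives $\lambda(z)\,D_{q}h=0$ with $\lambda(0)\ne 0$, hence $D_{q}h=0$, and then $h\equiv h(0)=1$ because the only $q$-periodic element of $\ward_{1+q,-q,\C}[[z]]$ is a constant.

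The main obstacle is the bookkeeping across the two regimes $0<\vert q\vert<1$ and $\vert q\vert>1$: Theorem~\ref{theo_prod_EE} supplies a convergent product (and the quotient $\E_{1/q}(bz)/\E_{1/q}(-az)$) only for $\vert q\vert<1$, so in 1 one must either route the $\vert q\vert>1$ case through 2 or argue purely formally, whereas 2, 3, 4 avoid this entirely since their proofs are series identities valid for every $q\ne 1$. One should also be careful that ``product'' in 1 means the ordinary Cauchy product of power series rather than the Ward convolution of $\ward_{1+q,-q,\C}[[z]]$, that the telescoped infinite products converge precisely on the disc where $(1-q)q^{k}z\to 0$, and that all four identities may then be read either analytically on that disc or as formal power series identities.
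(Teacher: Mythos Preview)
Your proof is correct and follows essentially the same route as the paper: part 1 via the quotient form $\EE_{q}(a,b;z)=\E_{1/q}(bz)/\E_{1/q}(-az)$ from Theorem~\ref{theo_prod_EE}, and part 2 via the termwise identity $(a\oplus b)_{1,q^{-1}}^{n}=q^{-\binom{n}{2}}(b\oplus a)_{1,q}^{n}$ together with $\brk[s]{n}_{q^{-1}}!=q^{-\binom{n}{2}}\brk[s]{n}_{q}!$. For parts 3 and 4 the paper simply sets $a=1$, $b=-q$ in the product formula and then applies part 1, whereas you give a direct coefficient computation; your version is slightly more self-contained, and your explicit treatment of the $\vert q\vert>1$ regime in part 1 (via part 2, or via the $D_{q}h=0$ argument, which indeed yields $(1+b(1-q)z)D_{q}h=0$) fills a small gap the paper leaves implicit.
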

\begin{proof}
To prove 1. we use the Definition \ref{def_EE} and Theorem \ref{theo_prod_EE}
\begin{align*}
    \EE_{q}(a,b;z)\EE_{q}(b,a;-z)&=\frac{E_{1/q}(bz)}{E_{1/q}(-az)}\cdot\frac{E_{1/q}(-az)}{E_{1/q}(bz)}=1.
\end{align*}
As $(a\oplus b)_{1,q^{-1}}^{n}=q^{-\binom{n}{2}}(b\oplus a)_{1,q}^{n}$, then
\begin{align*}
    \EE_{q^{-1}}(a,b;z)&=\sum_{n=0}^{\infty}(a\oplus b)_{1,q^{-1}}^{n}\frac{z^n}{[n]_{q^{-1}}!}\\
    &=\sum_{n=0}^{\infty}q^{-\binom{n}{2}}(b\oplus a)_{1,q}^{n}\frac{z^n}{q^{-\binom{n}{2}}[n]_{q}!}\\
    &=\EE_{q}(b,a;z)
\end{align*}
and this proves 2. Statements 3 and 4 arise easily by making $a=1$ and $b=-q$ and then applying 1.
\end{proof}

\begin{example}
The solution of the functional equation
\begin{equation}\label{eqn_example2}
    f(3x)-f(2x)=xf(x)+xf(x/2),\ f(0)=1
\end{equation}
is
\begin{equation*}
    f(x)=\E_{5,-6}(1,1,1/2;x)=\sum_{n=0}^{\infty}(1\oplus1)_{1,1/2}^{n}\frac{x^n}{\brk[c]{n}_{5,-6}!},
\end{equation*}
convergent in $\R$, since $(2/3,3)\in S_{1}$.
\end{example}

From Theorem \ref{theo_st_panto} we obtain the solution of the $(s,t)$-analog of the Ambartsumian equation
\begin{equation*}
    y^{\prime}=-y(x)+\frac{1}{v}y\left(\frac{1}{v}x\right),\ v>1,
\end{equation*}
which describes the surface brightness in Astronomy \cite{ambart,patade}. A $q$-analog of this equation can be found in \cite{abdu}. In the following theorem the $(s,t)$-analog of the Ambartsumian equation is shown.
\begin{theorem}
The solution of the $(s,t)$-Ambartsumian equation
\begin{equation*}
    \mathbf{D}_{s,t}y(x)=-y(x)+\frac{1}{v}y\left(\frac{1}{v}x\right),\ v>1,
\end{equation*}
is
\begin{equation*}
    \EE_{s,t}(-1,v^{-1},v^{-1};x)=\xi+\xi\sum_{n=1}^{\infty}\prod_{k=1}^{n}(v^{-k}-1)\frac{x^{n}}{\brk[c]{n}_{s,t}!}.
\end{equation*}
\end{theorem}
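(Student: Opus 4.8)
The plan is to read off the result as the specialization $a=-1$, $b=v^{-1}$, $u=v^{-1}$ of Theorem~\ref{theo_st_panto}. First I would check that the parameters are admissible for the successive-approximation scheme of Section~4: since $v>1$ we have $u=v^{-1}\in(0,1)$, so $u\le|\varphi_{s,t}|$ (or $u\le|\varphi_{s,t}^{\prime}|$) holds in the cases covered, and the given equation $\mathbf{D}_{s,t}y=-y(x)+v^{-1}y(v^{-1}x)$ is literally $\mathbf{D}_{s,t}y=ay(x)+by(ux)$ with the stated $a,b,u$.

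Next I would invoke Theorem~\ref{theo_st_panto}: its solution with $y(0)=1$ is $y(x)=\sum_{n=0}^{\infty}(a\oplus b)_{1,u}^{n}\,x^{n}/\brk[c]{n}_{s,t}!=1+\sum_{n=1}^{\infty}\prod_{i=0}^{n-1}(a+bu^{i})\,x^{n}/\brk[c]{n}_{s,t}!$. Substituting $a=-1$ and $b=u=v^{-1}$ gives $a+bu^{i}=-1+v^{-1}v^{-i}=v^{-(i+1)}-1$, and the reindexing $k=i+1$ turns $\prod_{i=0}^{n-1}(a+bu^{i})$ into $\prod_{k=1}^{n}(v^{-k}-1)$. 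By Definition~\ref{def_EE} the resulting series is exactly $\EE_{s,t}(-1,v^{-1},v^{-1};x)$; accounting for the initial value $y(0)=\xi$ by linearity of $\mathbf{D}_{s,t}$ yields $y(x)=\xi\,\EE_{s,t}(-1,v^{-1},v^{-1};x)=\xi+\xi\sum_{n=1}^{\infty}\prod_{k=1}^{n}(v^{-k}-1)\,x^{n}/\brk[c]{n}_{s,t}!$, which is the claimed formula.

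Finally, for completeness I would record convergence of this series by appealing to Theorem~\ref{theo_conv_panto} (or Theorem~\ref{theo_conv_panto_prime} when $|q|>1$): with $|u|=v^{-1}<1$ one inspects which of the sets $S_{1},\dots,S_{5}$ (resp. $T_{1},\dots,T_{5}$) contains $(q,\varphi_{s,t})$, and in the generic situation $|\varphi_{s,t}|>1$ one lands in $S_{1}$, so the solution is entire. I do not anticipate any real difficulty here: the statement is essentially a worked instance of Theorem~\ref{theo_st_panto}, the only points needing a line of justification being the index shift $k=i+1$ and the scaling by $\xi$.
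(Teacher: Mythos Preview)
Your proposal is correct and is exactly the approach the paper takes: the theorem is stated immediately after the sentence ``From Theorem \ref{theo_st_panto} we obtain the solution of the $(s,t)$-analog of the Ambartsumian equation\ldots'' and no separate proof is given, so specializing $a=-1$, $b=u=v^{-1}$ in Theorem~\ref{theo_st_panto} and reindexing $k=i+1$ is all that is intended. Your remarks on admissibility of $u$ and on convergence via Theorem~\ref{theo_conv_panto} go slightly beyond what the paper records, but they are consistent with it.
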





\section{Statements and Declarations}
\subsection{Conflict of Interests}
We have no conflict of interest to disclose.

\subsection{Data availability}
The author confirms that no data known is used in the manuscript.

\end{document}